\renewcommand{\subsubsection}{\@startsection
{subsubsection}
{1}
{0mm}
{0mm}
{0mm}
{\normalfont\normalsize\itshape}}
\author{Ulrich Bunke}\address{NWF I - Mathematik,
Johannes-Kepler-Universit{\"a}t Regensburg,
93040 Regensburg,
GERMANY}\email{ulrich.bunke@mathematik.uni-regensburg.de} 
\author{Thomas Schick}
\address{Mathematisches Institut, Georg-August-Universit{\"a}t G{\"o}ttingen, 
Bunsenstrasse 3, 37073 G{\"o}ttingen,
GERMANY}\email{schick@uni-math.gwdg.de}
 \dedicatory{Dedicated to Jean-Michel Bismut on the occasion
of his 60th birthday}
\title{Smooth K-Theory}
\newtheorem{theorem}{Theorem}[section] 
\newtheorem{prop}[theorem]{Proposition}
\newtheorem{lem}[theorem]{Lemma}
\newtheorem{ddd}[theorem]{Definition}
\newtheorem{kor}[theorem]{Corollary}
\newcommand{\PD}{{\tt PD}}
\newcommand{\hocolim}{{\tt hocolim\:}}
\newcommand{\Fr}{{\tt Fr}}
\newcommand{\bQ}{\mathbf{Q}}
\renewcommand{\lim}{{\tt lim\:}}
\renewcommand{\P}{{\mathbb{P}}}
\newcommand{\Z}{\mathbb{Z}}
\newcommand{\bE}{{\bf E}}
\newcommand{\R}{\mathbb{R}}
\newcommand{\Rep}{{\tt Rep}}
\newcommand{\Q}{\mathbb{Q}}
\newcommand{\tR}{{\tt R}}
\newcommand{\bK}{{\bf K}}
\renewcommand{\det}{{\tt det}}
\renewcommand{\sinh}{\mathrm{sinh}}
\newcommand{\K}{\mathbb{K}}
\newcommand{\bB}{\mathbf{B}}
\newcommand{\bL}{{\bf L}}
\newcommand{\C}{\mathbb{C}}
\renewcommand{\Pr}{{\tt Pr}}
\newcommand{\Aut}{{\tt Aut}}
\newcommand{\cM}{\mathcal{M}}
\newcommand{\cE}{\mathcal{E}}
\newcommand{\cV}{\mathcal{V}}
\newcommand{\cW}{\mathcal{W}}
\newcommand{\cK}{\mathcal{K}}
\newcommand{\Hom}{{\tt Hom}}
\newcommand{\vol}{{\tt vol}}
\newcommand{\cO}{\mathcal{O}}
\newcommand{\End}{{\tt End}}
\newcommand{\im}{{\tt im}}
\newcommand{\cF}{\mathcal{F}}
\newcommand{\ee}{{\tt e}}
\newcommand{\id}{{\tt id}}
\newcommand{\nat}{\mathbb{N}}
\def\imath{{i}}
\def\hB{\hspace*{\fill}$\Box$ \newline\noindent}
\newcommand{\ind}{{\tt index}}
\newcommand{\cS}{\mathcal{S}}
\def\hB{\hspace*{\fill}$\Box$ \\[0.5cm]\noindent}
\newcommand{\cL}{\mathcal{L}}
 \newcommand{\cG}{\mathcal{G}}
\newcommand{\cQ}{\mathcal{Q}}
\newcommand{\bW}{\mathbf{W}}
\newcommand{\pr}{{\tt pr}}
\newcommand{\ch}{{\mathbf{ch}}}
\newcommand{\bV}{\mathbf{V}}
\newcommand{\hA}{\hat{\mathbf{A}}}
\begin{document}
\frontmatter
 
\begin{abstract}
In this paper we consider smooth extensions of cohomology theories. In particular we
 construct an analytic multiplicative model of  smooth $K$-theory. We further
 introduce the notion of a smooth $K$-orientation of a proper submersion
 $p\colon W\to B$ and define the associated push-forward $\hat p_!:\hat K(W)\to \hat K(B)$. We show that the push-forward has the expected properties as
 functoriality, compatibility with pull-back diagrams, projection formula and a bordism formula.

We construct a multiplicative lift of the Chern character $\hat \ch:\hat K(B)\to \hat H(B,\Q)$, where $\hat H(B,\Q)$ denotes the smooth extension of rational cohomology, and we show that $\hat \ch$ 
 induces  a rational isomorphism.
 
If $p\colon W\to B$ is a proper submersion with a smooth $K$-orientation, then we define a class 
$A(p)\in \hat H^{ev}(W,\Q)$ 
(see Lemma \ref{weldioe})
and the modified push-forward
$\hat p_!^A:=\hat p_!(A(p)\cup\dots):\hat H(W,\Q)\to \hat H(B,\Q).$
One of our main results lifts the cohomological version of the Atiyah-Singer index theorem to 
smooth cohomology. It states that $\hat p^A_!\circ \hat \ch=\hat \ch\circ \hat
p_!$.
\end{abstract}

\begin{altabstract}
  Nous considerons les extensions differentiables des theories de
  cohomology. En particulier, nous construisons un mod\`ele analytique et
  avec multiplication de la K-theorie differentiable. Nous
  introduisons le concept d'une K-orientation differentiable d'une
  submersion propre $p\colon W\to B$. Nous contruisons une application
  d'integration associ\'e $\hat p_!:\hat K(W)\to \hat K(B)$; et nous
  demontrons les propri\'et\'es attendues comme 
 functorialit\'e, compatibilit\'e avec pull-back, formules de projection
  et de bordism. 

Nous construisons une version differentiable du charact\`ere de Chern
$\hat \ch:\hat K(B)\to \hat H(B,\Q)$, o\`u $\hat H(B,\Q)$ est une
extension differentiable de la
cohomologie rationelle, et nous
demontrons que  $\hat \ch$  
 induit un isomorphisme rationel.   

Si $p\colon W\to B$ est une submersion propre avec une $K$-orientation
differentiable, nous definissons une classe 
$A(p)\in \hat H^{ev}(W,\Q)$ 
(compare Lemma \ref{weldioe})
et une application d'integration modifi\'e
$\hat p_!^A:=\hat p_!(A(p)\cup\dots):\hat H(W,\Q)\to \hat H(B,\Q).$
Un de nos resultats principales est une version en cohomologie
differentiable du theor\`eme d'indice de Atiyah-Singer. Cette version
dits que $\hat p^A_!\circ \hat \ch=\hat \ch\circ \hat 
p_!$.
\end{altabstract}

\keywords{Deligne cohomology, smooth
  K-theory, Chern character,
  families of elliptic operators,
  Atiyah-Singer index theorem}
\subjclass{19L10, 58J28}

\thanks{Thomas Schick was funded by Courant Research Center G\"ottingen ``Higher order structures in 
   mathematics'' via the German Initiative of Excellence}
\maketitle

\tableofcontents

\mainmatter
\section{Introduction}
\subsection{The main results}

\subsubsection{}

In this paper we construct a model of a smooth extension of the generalized
 cohomology theory $K$, complex $K$-theory. Historically, the concept of
 smooth extensions of a cohomology theory started with smooth integral
 cohomology \cite{MR827262}, also called real Deligne cohomology, see
 \cite{MR1197353}. A second, geometric model of smooth integral cohomology is
 given in \cite{MR827262}, where the smooth integral cohomology classes
 were called differential characters.
 One important motivation of its definition was that one can associate natural
 differential characters to hermitean vector bundles with connection which
 refine the Chern classes. The differential character in degree two even
 classifies hermitean line bundles with connection up to isomorphism.
The multiplicative structure of smooth integral cohomology also encodes cohomology operations, see \cite{math.AT/0411043}.

 The holomorphic counterpart of the theory became an important ingredient of arithmetic geometry.

\subsubsection{}

Motivated by the problem of setting up  lagrangians for  quantum field theories with differential form field strength  it was argued in \cite{MR1769477}, \cite{hep-th/0011220} that one may need  smooth extensions of other generalized cohomology theories. The choice of the generalized cohomology theory is here dictated by a charge quantization condition, which mathematically is reflected by a lattice in real cohomology. Let $N$
be a graded real vector space such that the field strength lives in $\Omega_{d=0}(B)\otimes N$, the closed forms on the manifold $B$  with coefficients in $N$.
Let $L(B)\subset H(B,N)$ be the lattice given by the charge quantization condition on $B$.
Then one looks for a generalized cohomology theory $h$ and a natural transformation
$c:h(B)\to H(B,N)$ such that $c(h(B))=L(B)$. It was argued in \cite{MR1769477}, \cite{hep-th/0011220} that the fields of the theory should be considered as cycles for a smooth extension $\hat h$ of the pair $(h,c)$.
For example, if $N=\R$ and the charge quantization leads to $L(B)=\im(H(B,\Z)\to H(B,\R))$, then the relevant smooth extension could be the smooth integral cohomology theory of \cite{MR827262}.

In Subsection \ref{intrtrr} we will introduce the notion of a smooth extension in an axiomatic way.

\subsubsection{}

\cite{hep-th/0011220} proposes in particular to consider smooth extensions of complex and real versions of $K$-theory. In that paper it was furthermore indicated how cycle models of such smooth extensions could look like. The goal of the present paper is to carry through this program in the case of complex $K$-theory.

\subsubsection{}

In the remainder of the present subsection we describe, expanding the abstract,  our main results. 
The main ingredient is a construction of an analytic model of  smooth $K$-theory\footnote{or differentiable $K$-theory in the language of other authors} using cycles and relations.
\subsubsection{}

Our philosophy for the construction of smooth $K$-theory is that a vector bundle with connection or a family of Dirac operators with some additional geometry should represent a smooth $K$-theory class tautologically. In this way we follow the outline   in \cite{hep-th/0011220}. Our class of cycles is quite big. This makes the construction of smooth $K$-theory classes or transformations to smooth $K$-theory easy, but it complicates the verification that certain cycle level constructions out of smooth $K$-theory are well-defined. The great advantage of our choice is that the constructions of the product and the push-forward on the level of cycles are of differential geometric nature.

More precisely we use the notion of a geometric family which was introduced in \cite{math.DG/0201112}
in order to subsume all geometric data needed to define a Bismut super-connection in one notion.
A cycle of the smooth $K$-theory $\hat K(B)$ of a compact manifold $B$ is a pair $(\cE,\rho)$ of a geometric family $\cE$ and an element
$\rho\in \Omega(B)/\im(d)$, see Section \ref{tztewqze}. Therefore, cycles are differential geometric objects. Secondary spectral
invariants from local index theory, namely $\eta$-forms, enter the definition of the relations (see Definition \ref{uuu1}).
The first main result is that our construction really yields a smooth extension in the sense of Definition 
\ref{ddd556}.

\subsubsection{}

Our smooth $K$-theory $\hat K(B)$ is a contravariant functor on the category of compact smooth manifolds
(possibly with boundary)
with values in the category of  $\Z/2\Z$-graded rings. This multiplicative structure is expected since $K$-theory is a multiplicative generalized cohomology theory, and the Chern character is multiplicative, too.
As said above, the construction of the product on the level of cycles (Definition \ref{proddefin}) is of differential-geometric nature. Analysis enters the verification of well-definedness. The main result is here that our construction produces a multiplicative smooth extension in the sense of Definition \ref{multdef1}. 

\subsubsection{}

Let us consider a proper submersion $p\colon W\to B$ with closed fibres which has a topological $K$-orientation.
Then we have a push-forward $p_!\colon K(W)\to K(B)$, and it is an important part of the theory to extend
this push-forward to the smooth extension.

For this purpose one needs a smooth refinement of the notion of a $K$-orientation which we introduce in 
\ref{smmmmzuz}. We then define the associated push-forward $\hat p_!\colon
\hat K(W)\to \hat K(B)$, again by a differential-geometric construction on the
level of cycles (\ref{eq300}).
 We show that the push-forward has the expected properties: functoriality, compatibility with pull-back diagrams,
projection formula, bordism formula.

\subsubsection{}

Let $\bV=(V,h^V,\nabla^V)$ be a hermitean vector bundle with connection. In
 \cite{MR827262} a smooth refinement $\hat \ch(\bV)\in  \hat H(B,\Q)$  of the  Chern character  
was constructed. In the present paper
we construct a lift of the Chern character $\ch\colon K(B)\to H(B,\Q)$ 
to a multiplicative natural transformation of smooth cohomology theories (see (\ref{eq5001}))
 $$\hat \ch\colon \hat K(B)\to \hat H(B,\Q)$$
such that
$\hat \ch(\bV)=\hat \ch([\cV,0])$, where $\cV$ is the geometric family determined by $\bV$.
The Chern character
induces a natural  isomorphism of $\Z/2\Z$-graded rings 
$$\hat K(B)\otimes\Q\stackrel{\sim}{\to} \hat H(B,\Q)$$
(Proposition  \ref{qiso}).

\subsubsection{}\label{uiefwefwef}

If $p\colon W\to B$ is a proper submersion with a smooth $K$-orientation, then we define a class (see Lemma \ref{weldioe})
$A(p)\in \hat H^{ev}(W,\Q)$
%\footnote{In the notation used later this class is $A(p)=\hat \hA(o)-a(\sigma(o))$, where $o$ represents the smooth $K$-orientation of $p$.}
and the modified push-forward
$$\hat p_!^A:=\hat p_!(A(p)\cup\dots)\colon \hat H(W,\Q)\to \hat H(B,\Q)\ .$$
Our index  theorem \ref{main} lifts the characteristic class version of the Atiyah-Singer index theorem to 
smooth cohomology. It states that the diagram
$$\xymatrix{\hat K(W)\ar[d]^{\hat p_!}\ar[r]^{\hat \ch}&\hat
  H(W,\Q)\ar[d]^{\hat p^A_! }\\\hat K(B)\ar[r]^{\hat \ch}&\hat H(B,\Q)}$$
commutes.

% \subsubsection{}
% 
% Our philosophy for the construction of smooth $K$-theory was that a vector bundle with connection or a family of Dirac operators with some additional geometry should represent a smooth $K$-theory class tautologically. The existence of such a model was already proposed in a rough form in \cite{hep-th/0011220}.
% Our class of cycles is quite big. This makes the construction of smooth $K$-theory classes or transformations to smooth $K$-theory easy, but it complicates the verification that certain cycle level constructions out of smooth $K$-theory are well-defined. In Section \ref{sec5} we collect a number of interesting examples of natural smooth $K$-theory classes. 

\subsubsection{}

In Subsection \ref{intrtrr} we present a short introduction to  the theory of smooth extensions of generalized cohomology theories. In Subsection \ref{gabgh} we review in some detail the literature about variants of smooth $K$-theory and associated index theorems. 
In Section \ref{tztewqze} we present the  cycle model of smooth
$K$-theory. The main result is the verification that our construction
satisfies the axioms given below.
Section \ref{idwiqdqwwd} is devoted to the push-forward. We introduce the notion of a smooth $K$-orientation, and we  construct the push-forward on the cycle level. The main results are that the push-forward descends to smooth $K$-theory, and the verification of  its functorial  properties. In Section \ref{jkjdkdqwdqdwd} we discuss the ring structure in smooth $K$-theory and its compatibility with the push-forward.
Section \ref{sec5} presents a collection of natural constructions of smooth $K$-theory classes. 
In Section \ref{sqsjhss} we construct the Chern character and prove the smooth index theorem.

\subsection{A short introduction to smooth cohomology theories}\label{intrtrr}

\subsubsection{}\label{cher}

The first example of a smooth cohomology theory appeared under the name
Cheeger-Simons differential characters in \cite{MR827262}. Given a discrete
subring $\tR\subset \R$ we have a functor\footnote{In the literature, this group
  is sometimes denoted by $\hat H(B,\R/\tR)$, possibly with a degree-shift by
  one.} 
$B\mapsto \hat H(B,\tR)$ from smooth manifolds to $\Z$-graded rings. It comes
with natural transformations
\begin{enumerate}
\item $R\colon  \hat H(B,\tR)\to \Omega_{d=0}(B)\quad $ (curvature)
\item $I\colon  \hat H(B,\tR)\to  H(B,\tR)\quad $ (forget smooth data)
\item $a\colon \Omega(B)/\im(d)\to  \hat H(B,\tR)\quad $ (action of forms).
\end{enumerate}
Here $\Omega(B)$ and $\Omega_{d=0}(B)$ denote the space of smooth real differential forms and its subspace of closed forms.
The map $a$ is of degree $1$. Furthermore, one has the following properties, all shown in \cite{MR827262}.
\begin{enumerate}
\item The following diagram commutes
$$\xymatrix{\hat H(B,\tR)\ar[d]^R\ar[r]^I& H(B,\tR)\ar[d]^{\tR\to \R}\\
\Omega_{d=0}(B)\ar[r]^{dR}&H(B,\R)}\ ,$$
where $dR$ is the de Rham homomorphism.
\item $R$ and $I$ are ring homomorphisms.
\item $R\circ a=d$,
\item $a(\omega)\cup x=a(\omega\wedge R(x))$, $\forall x\in \hat H(B,\tR)$,
  $\forall \omega\in\Omega(B)/\im(d)$,
\item The sequence
\begin{equation}\label{eq9000}H(B,\tR)\stackrel{}{\to} \Omega(B)/\im(d)\stackrel{a}{\to} \hat H(B,\tR)\stackrel{I}{\to} H(B,\tR)\to 0\end{equation}
is exact.
\end{enumerate}

\subsubsection{}\label{axi1}

Cheeger-Simons differential characters are the first example of a more general structure which is described for instance in the first section of  \cite{hep-th/0011220}. In view of our
 constructions of examples for this structure in the case of bordism theories and $K$-theory, and the presence of completely different  pictures like \cite{MR2192936} we think that an axiomatic description of smooth cohomology theories
is useful.

  Let $N$ be a $\Z$-graded vector space over $\R$. We consider  a generalized cohomology theory $h$ with a  natural transformation of cohomology theories
$c\colon h(B)\to H(B,N)$. The natural universal example is given by $N:=h^*\otimes \R$, where $c$ is the canonical transformation. Let $\Omega(B,N):=\Omega(B)\otimes_\R N$.
To a pair $(h,c)$ we associate the notion of a smooth extension $\hat h$. 
Note that manifolds in the present paper may have boundaries. 
\begin{ddd}\label{ddd556}
A smooth extension of the pair $(h,c)$ is a functor $B\to\hat h(B)$ from the category of compact smooth manifolds
to $\Z$-graded groups together with natural transformations
\begin{enumerate}
\item $R\colon  \hat h(B)\to \Omega_{d=0}(B,N)$ (curvature)
\item $I\colon  \hat h(B)\to  h(B)$ (forget smooth data)
\item $a\colon \Omega(B,N)/\im(d)\to  \hat h(B)$ (action of forms)\ .
\end{enumerate}
These transformations are required to satisfy the following axioms:
 \begin{enumerate}
\item The following diagram commutes
$$\xymatrix{\hat h(B)\ar[d]^R\ar[r]^I& h(B)\ar[d]^{c}\\
\Omega_{d=0}(B,N)\ar[r]^{dR}&H(B,N)}\ .$$
\item \begin{equation}\label{drgl}
R\circ a=d\ .
\end{equation}
\item $a$ is of degree $1$.
% and saisfies $a(\omega)\cup x=a(\omega\wedge R(x))$ for $x\in \hat h(B)$ and $\omega\in\Omega(B,N)/\im(d)$.
\item The sequence
\begin{equation}\label{exax}
h(B)\stackrel{c}{\to} \Omega(B,N)/\im(d)\stackrel{a}{\to} \hat h(B)\stackrel{I}{\to} h(B)\to 0\ .
\end{equation}
is exact.
\end{enumerate}
\end{ddd}
The Cheeger-Simons smooth cohomology $B\mapsto \hat H(B,\tR)$ considered in \ref{cher} is the smooth extension of the pair $(H(\dots,\tR),i)$, where
$i\colon H(B,\tR)\to H(B,\R)$ is induced by the inclusion $\tR\to \R$. The main object of the present paper, smooth $K$-theory, is a smooth extension of the pair $(K,\ch_\R)$, and we actually work with the obvious $\Z/2\Z$-graded version of these axioms.

\subsubsection{}

If $h$ is a multiplicative cohomology theory, then one can consider a $\Z$-graded ring $N$ over $\R$ and a  multiplicative transformation $c\colon h(B)\to H(B,N)$. In this case is makes sense to talk about a multiplicative smooth extension $\hat h$ of  $(h,c)$.
\begin{ddd}\label{multdef1}
A smooth extension $\hat h$ of $(h,c)$ is called multiplicative, if $\hat h$ together with the transformations $R,I,a$ is a smooth extension of $(h,c)$, and in addition
\begin{enumerate}
\item $\hat h$ is a functor to $\Z$-graded rings,
\item $R$ and $I$ are multiplicative, 
\item $a(\omega)\cup x=a(\omega\wedge R(x))$ for $x\in \hat h(B)$ and $\omega\in\Omega(B,N)/\im(d)$. 
\end{enumerate}
\end{ddd}
The smooth extension $\hat H(\dots ,\tR)$ of ordinary cohomology $H(\dots ,\tR)$ with coefficients in a subring $\tR\subset \R$ considered in \ref{cher} is multiplicative.  The smooth extension $\hat K$ of $K$-theory which we construct in the present paper is multiplicative, too.

\subsubsection{}

Consider two pairs $(h_i,c_i)$, $i=0,1$ as in \ref{axi1} and  a transformation of generalized cohomology theories $u\colon h_0\to h_1$  such that $c_1\circ h=c_0$. Then we define the notion of a natural transformation of smooth cohomology theories which refines $u$.
\begin{ddd}\label{natdef12}
A natural transformation of smooth extensions $\hat u\colon \hat h_0\to \hat h_1$
which refines $u$ is a natural transformation $\hat u\colon \hat h_0(B)\to \hat h_1(B)$ such that
the following diagram commutes:
$$
\xymatrix{\Omega(B,N)/\im(d)\ar[r]^a\ar@{=}[d]&\hat h_0(B)\ar@/^1cm/[rr]^R\ar[r]^I\ar[d]^{\hat u}&h_0(B)\ar[d]^{u}&\Omega_{d=0}(B,N)\ar@{=}[d]\\\Omega(B,N)/\im(d)\ar[r]^a&\hat h_1(B)\ar[r]^I\ar@/_1cm/[rr]^R&h_1(B)&\Omega_{d=0}(B,N)}\   .
$$
\end{ddd}

Our main example is the Chern character $$\hat \ch\colon \hat K(B)\to \hat H(B,\Q)$$ which refines the ordinary Chern character $\ch \colon K(B)\to H(B,\Q)$. The Chern character and its smooth refinements are actually multiplicative.

\subsubsection{}

One can show that two smooth extensions of $(H(\dots, R),i)$ are canonically
isomorphic (see \cite{math.AT/0701077} and {\cite[Section 4]{bs2009}}). There is no
uniqueness result for arbitrary pairs $(h,c)$.  {Appropriate examples in the
  case of $K$-theory are presented in \cite[Section 6]{bs2009}}. In order to fix the uniqueness
problem one has to require more conditions,
which are all quite natural. 

The projection $\pr_2\colon S^1\times B\to B$ has a   canonical smooth $K$-orientation
(see \ref{pullpush1} for details). 
Hence we have a push-forward $(\hat\pr_2)_!\colon \hat K(S^1\times B)\to \hat K(B)$ (see Definition \ref{ddd1}). This map plays the role of the suspension 
for the smooth extension. It is natural in $B$, and the following diagram commutes (see Proposition \ref{mainprop})
\begin{equation}\label{eq30001}\xymatrix{\Omega(S^1\times B)/\im(d)\ar[d]^{\int_{S^1\times B/B}}\ar[r]^a&\hat K(S^1\times B)\ar@/^1cm/[rr]^R\ar[d]^{(\hat\pr_2)_!}\ar[r]^I&K(B)\ar[d]^{(\pr_2)_!}&\Omega(S^1\times B)\ar[d]^{\int_{S^1\times B/B}}\\\Omega(B)/\im(d)\ar[r]^a&\hat K( B)\ar@/_1cm/[rr]^R\ar[r]^I&K(B)&\Omega(B)}\ .
\end{equation}
Furthermore, it satisfies (see \ref{pullpush0})
\begin{equation}\label{nullww}
(\hat\pr_2)_!\circ \pr_2^*=0\ .
\end{equation}
\textcolor{black}{We have the following theorem, also discovered by Wiethaup.}
\begin{theorem}[\textcolor{black}{\cite[Section 3, Section 4]{bs2009}}]\label{wieth}
There is a unique (up to isomorphism)
smooth extension of the pair $(K,\ch_\R)$ for which in addition the push-forward along $\pr_2\colon S^1\times B\to B$ is defined, is natural in $B$, satisfies (\ref{nullww}), and is such that  (\ref{eq30001}) commutes. If we require the isomorphism to preserve $(\hat\pr_2)_!$, then
it is also unique.
\end{theorem}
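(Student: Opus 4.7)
Existence is provided by the construction of $\hat K$ in the body of this paper, so the real content is uniqueness. The plan is: given two smooth extensions $\hat K_0,\hat K_1$ of $(K,\ch_\R)$ equipped with push-forwards $(\hat\pr_2)_!^i$ satisfying naturality, (\ref{nullww}), and the compatibility (\ref{eq30001}), I would first construct a natural isomorphism $\Phi\colon\hat K_0\to\hat K_1$ of smooth extensions, and then argue that $\Phi$ is unique once one further demands compatibility with the push-forwards.

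\emph{Step 1: rigidify the flat part.} Set $\hat K^n_{i,\mathrm{flat}}(B):=\ker(R_i)$. A diagram chase through (\ref{exax}) combined with $R_i\circ a_i=d$ fits this into the short exact sequence
$$0\longrightarrow H^{n-1}(B,N)/\im(\ch_\R)\longrightarrow\hat K^n_{i,\mathrm{flat}}(B)\longrightarrow K^n(B)_{\mathrm{tors}}\longrightarrow 0,$$
which abstractly identifies $\hat K^n_{i,\mathrm{flat}}(B)$ with $K^{n-1}(B;\R/\Z)$ via the usual Bockstein. This identification is \emph{a priori} non-canonical, and the absence of canonical identifications is precisely what makes uniqueness of smooth extensions of $K$-theory fail in general. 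The role of $(\hat\pr_2)_!^i$ is to rigidify it: given a flat class $\hat x$ on $B$, the plan is to pull it back along $\pr_2$, perturb the lift by an $a$-term to normalize the curvature, and apply $(\hat\pr_2)_!^i$. The compatibility (\ref{eq30001}) guarantees that the construction respects curvatures and underlying topological classes, while (\ref{nullww}) removes the ambiguity coming from the pull-back. Iterating this and comparing with a classical model of $K$-theory with $\R/\Z$-coefficients would produce canonical natural isomorphisms $\hat K^n_{i,\mathrm{flat}}\cong K^{n-1}(-;\R/\Z)$ for $i=0,1$, hence a canonical $\Phi_{\mathrm{flat}}\colon\hat K_{0,\mathrm{flat}}\xrightarrow{\sim}\hat K_{1,\mathrm{flat}}$.

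\emph{Step 2: extend $\Phi_{\mathrm{flat}}$ and check uniqueness.} For $\hat x\in\hat K_0^n(B)$, surjectivity of $I_1$ in (\ref{exax}) supplies $\hat y\in\hat K_1^n(B)$ with $I_1(\hat y)=I_0(\hat x)$; since $R_0(\hat x)-R_1(\hat y)$ is closed with trivial de Rham class, it equals $d\omega$ for some $\omega\in\Omega^{n-1}(B,N)$, and I would set $\Phi(\hat x):=\hat y+a_1(\omega)$. Two choices of $(\hat y,\omega)$ differ by a flat class, which by Step 1 is uniquely pinned down, so $\Phi$ is well-defined; the relations $R_1\Phi=R_0$, $I_1\Phi=I_0$ and $\Phi a_0=a_1$ are immediate from the construction, and the symmetric procedure provides the inverse. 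For the uniqueness clause, given two push-forward-compatible isomorphisms $\Phi,\Phi'$, the difference $\Psi:=\Phi'-\Phi$ annihilates $R_0$, $I_0$ and $a_0$, hence factors through a natural transformation $K^n(-)\to\hat K^n_{1,\mathrm{flat}}(-)$ that is compatible with the $S^1$-suspension; under Step 1 this becomes a stable natural transformation $K\to K(-;\R/\Z)$ of degree $-1$, whose value on a point is forced to vanish by (\ref{nullww}).

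\emph{Main obstacle.} The hard part is Step 1, namely extracting a \emph{canonical} identification of the flat subtheory with $K$-theory with $\R/\Z$-coefficients using only the $\pr_2$-push-forward. Without this rigidification, uniqueness genuinely fails, as the counterexamples in \cite[Section 6]{bs2009} show; with it, the remaining steps reduce to formal diagram chases. An additional subtlety the proof would need to handle is multiplicativity: to obtain an isomorphism of \emph{multiplicative} smooth extensions one has to verify that $\Phi$ respects products, which should follow once $\Phi$ is known to respect $R$, $I$, and the form-module structure of Definition \ref{multdef1}(3), since a class is determined up to a flat correction by its curvature, its underlying $K$-class, and its $a$-action on forms.
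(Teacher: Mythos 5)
The paper does not prove Theorem~\ref{wieth} internally; it cites \cite[Sections 3 and 4]{bs2009}, and the argument there is organized quite differently from your plan. Section~3 of \cite{bs2009} first proves uniqueness of $\hat K^{ev}$ by approximating the representing space $\Z\times BU$ by compact manifolds $B_i$ and observing that the obstruction to constructing and comparing natural transformations lies in groups of the form $H^{2n-1}(B_i,\R)$, which vanish because $K$ is rationally even; Section~4 then transports this to odd degrees using the $S^1$-integration together with (\ref{nullww}). Your route of first rigidifying the flat subfunctor and then extending is a genuinely different organization, but it has two concrete gaps. First, what you call the ``main obstacle'' of Step~1 is essentially the whole theorem in disguise: to obtain a \emph{canonical} natural isomorphism $\hat K_{i,flat}\cong K\R/\Z[-1]$ from the $\pr_2$-push-forward you must show $\hat K_{i,flat}$ is a reduced cohomology theory for an \emph{abstract} $\hat K_i$ (in particular verify exactness of Mayer--Vietoris/mapping-cone sequences) and then construct a natural comparison to $K\R/\Z$; the proof of Proposition~\ref{rzind} does this for the paper's specific analytic model either via a cycle-level pairing with $K$-homology, which has no meaning in the abstract setting, or via comparison with the Hopkins--Singer theory, which is circular when the goal is Theorem~\ref{wieth} itself.

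Second, Step~2 is not well-defined as stated. The pair $(\hat y,\omega)$ you attach to a given $\hat x$ is determined only up to an arbitrary element of $a_1\bigl(H^{n-1}(B,N)\bigr)\cong H^{n-1}(B,N)/\im(\ch_{dR})$; knowing the isomorphism $\Phi_{flat}$ between the flat subgroups gives you no preferred representative in this coset --- that would require matching the two extensions of $K^n(B)\times_{H^n(B,N)}\Omega^n_{d=0}(B,N)$ by $H^{n-1}(B,N)/\im(\ch_{dR})$ as functors, not just identifying the kernels. It is exactly at this point that \cite{bs2009} retreats to the manifold approximations of $\Z\times BU$, where the ambiguity group vanishes, and then propagates by naturality. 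So even granting Step~1, $\Phi$ as written is not yet a well-defined natural transformation. (Your closing remark on multiplicativity is not needed: the theorem concerns smooth extensions of $(K,\ch_\R)$, not multiplicative ones.)
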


\subsubsection{}

The theory of \cite{MR2192936} gives the following general existence result.
\begin{theorem}[\cite{MR2192936}]
For every pair $(h,c)$ of a generalized cohomology theory and a natural transformation $h\to HN$ there exists a smooth extension $\hat h$ in the sense of Definition \ref{ddd556}.
\end{theorem}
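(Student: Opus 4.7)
The plan follows the construction of Hopkins–Singer: realize $\hat h^n(B)$ as the set of path components of a homotopy pullback that compares a classifying map $B\to E_n$ with a closed $N$-valued $n$-form on $B$, mediated by a bridging cochain. First, represent $h$ by an $\Omega$-spectrum $\bE=\{E_n\}$, so that $h^n(B)=[B,E_n]$, and represent $c$ by a spectrum map $\bE\to HN$. Then choose compatible fundamental singular cocycles $Z_n\in C^n(E_n;N)$ representing the classifying cohomology classes of $c$ in each degree, where compatibility means that the $\Omega$-spectrum structure pulls $Z_{n+1}$ back to $Z_n$ up to a fixed coboundary.

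Next, for a compact smooth manifold $B$, define $\hat h^n(B)$ to be the set of equivalence classes of triples $(f,\eta,\omega)$ consisting of a smooth classifying map $f\colon B\to E_n$, a closed form $\omega\in\Omega^n_{d=0}(B,N)$ of total degree $n$, and a singular cochain $\eta\in C^{n-1}(B;N)$ satisfying the bridging equation $\delta\eta=f^*Z_n-\omega$, where $\omega$ is viewed as a cochain via integration over smooth simplices. Two triples are declared equivalent if they extend to such a triple on the cylinder $B\times[0,1]$. Define the structure transformations by $R([f,\eta,\omega]):=\omega$, $I([f,\eta,\omega]):=[f]\in h^n(B)$, and $a([\rho]):=[\ast,\eta_\rho,d\rho]$, where $\ast$ is the constant map and the bridging cochain $\eta_\rho$ records $\rho$ through the de Rham map.

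Then verify the axioms of Definition \ref{ddd556}. The commutativity of the curvature/forgetful square is immediate because $\omega-f^*Z_n=-\delta\eta$ is a coboundary, so $[dR(\omega)]=c([f])$ in $H(B,N)$; the identity $R\circ a=d$ and the degree shift of $a$ are built into the definition of $a$. The real content lies in the exactness of (\ref{exax}). Surjectivity of $I$ follows because, $N$ being a real graded vector space, the de Rham map identifies closed forms modulo exact forms with $H(B,N)$, so any class in $h(B)$ admits a closed-form lift of its $c$-image together with a bridging cochain. Exactness in the middle is a transgression argument: given a triple with null-homotopic underlying map, any null-homotopy $F\colon B\times[0,1]\to E_n$ gives a concordance whose transgressed bridging data integrate to produce the witnessing form $\rho$. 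Finally, the composition $h(B)\to\Omega(B,N)/\im(d)\to\hat h(B)$ vanishes by constructing an explicit nullconcordance from a classifying map lifting the given cohomology class.

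The main obstacle is the technical passage between the topological spaces $E_n$ and smooth data on $B$: one must realize every continuous classifying map by a smooth one (up to concordance) in order to apply de Rham theory, and one must choose the fundamental cocycles $Z_n$ compatibly so that their pullback along smooth simplices matches de Rham integration. In Hopkins–Singer this is accomplished by the formalism of differential function spaces together with a combinatorial de Rham theorem comparing smooth forms on $B$ with singular cochains; once that foundation is in place, the axiom verifications reduce to standard obstruction-theoretic arguments.
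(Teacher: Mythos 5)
The paper does not prove this theorem; it is quoted verbatim from Hopkins--Singer \cite{MR2192936} as an external existence result, so there is no in-paper argument to compare against. Your proposal is a faithful reconstruction of the Hopkins--Singer construction of differential function spaces, and the overall strategy is the right one.

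Two points nevertheless need repair. First, the concordance relation as you state it would make $R$ ill-defined. If $(f_0,\eta_0,\omega_0)$ and $(f_1,\eta_1,\omega_1)$ are declared equivalent whenever they bound an \emph{arbitrary} triple $(F,H,\Omega)$ on $B\times[0,1]$, then $\omega_0=i_0^*\Omega$ and $\omega_1=i_1^*\Omega$ need not be equal: Stokes' theorem only gives $\omega_1-\omega_0=d\int_{[0,1]\times B/B}\Omega$, so the two curvature forms can differ by any exact form. With that relation $R$ would only descend to a map $\hat h(B)\to\Omega_{d=0}(B,N)/\im(d)$, and the axiom $R:\hat h(B)\to\Omega_{d=0}(B,N)$ of Definition~\ref{ddd556} would fail. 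Hopkins--Singer avoid this by passing to the filtration-zero subcomplex of the differential function complex, i.e.\ by requiring the form component on the cylinder (more generally, on $B\times\Delta^m$) to be pulled back from $B$; you must impose $\Omega=\pr_B^*\omega$ as part of the concordance. Second, you never address why $\hat h^n(B)$ is an abelian group. This is not formal: the addition comes from the infinite-loop-space (spectrum) structure of the $E_n$ and must be shown to be compatible with the chosen fundamental cocycles $Z_n$ and with addition of forms and bridging cochains. In Hopkins--Singer this is one of the main points of the machinery of differential function \emph{spectra}; a sketch that stops at "$\pi_0$ of a pullback" does not yet produce a functor to graded abelian groups.
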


A similar general result about multiplicative extensions is not known. Besides smooth extensions of ordinary
cohomology and $K$-theory we have a collection of multiplicative extensions of
bordism theories, again by an an explicit construction in a cycle model. The
details \textcolor{black}{can be found in {\cite{BSSW07}}}.

\subsubsection{}

Let us now assume that $(h,c)$ is multiplicative, and that
 $\hat h$ is a multiplicative smooth extension of the  pair $(h,c)$. Let $p\colon W\to B$ be a proper submersion with closed fibres. An $h$-orientation of
$p$ is given by a collection of compatible choices of $h$-Thom classes on representatives of the stable normal bundle of $p$.
Equivalently, we can fix a Thom class on the vertical tangent bundle, and we will adopt this point of view in the present paper. If $p$ is $h$-oriented, then we have a push-forward
$$p_!\colon h(W)\to h(B)\ .$$ It is an inportant question for applications and calculations   how one can lift the push-forward
to the smooth extensions. 

In the case of smooth ordinary cohomology with coefficients in $R$ it turns
out that an ordinary orientation of $p$ suffices in order to define $\hat p_!\colon \hat H(W,R)\to \hat H(B,R)$. This push-forward has been considered e.g.~in
\cite{MR1197353}, \cite{MR2179587}, \cite{Koehler}. We refer to \ref{ratdel}
for more details.

A push-forward for more general pairs $(h,c)$ has been considered in 
\cite{MR2192936} {without a discussion of  functorial properties}.

\subsubsection{}

The philosophy in the present paper is that the push-forward in $K$-theory is realized analytically using families
of fibre-wise Dirac operators. Therefore, in the present paper a smooth $K$-orientation is given by a collection
of geometric data which allows to define the push-forward on the level of
cycles, which are given by families of Dirac type operators. We add a differential form to the data in order to capture the behaviour under deformations.

\subsubsection{}

We have cycle models of multiplicative smooth extensions of bordism theories
$\Omega^{G}$, where $G$ in particular can be $SO,Spin,U, Spin^c$, see \cite{BSSW07}.
In these examples the natural transformation $c$ is the genus associated to a formal power series
$\phi(x)=1+a_1x+\dots$ with coefficients in some graded ring. These bordism theories admit a theory of orientations and push-forward which is very similar  to the case of $K$-theory. Concerning the product and the integration
bordism theories turn out to be much simpler than ordinary cohomology.
Motivated by this fact,
in a joint project with M. Kreck  we develop a bordism like version of the smooth extension of integral cohomology based on the notion of orientifolds.

We also have an equivariant version of the theory of the present paper for finite groups which will be presented in a future publication.

\subsection{Related constructions}\label{gabgh}

\subsubsection{}

Recall that \cite{MR2192936} provides a topological construction of smooth $K$-theory.
In this subsection we review the literature about analytic variants of smooth $K$-theory and related index theorems.
Note that we will completely ignore the development of holomorphic variants which are more related to arithmetic questions than to topology. 
This subsection will use the language which is set up later in the paper. It should be read in detail only after obtaining some familiarity with the main definitions
(though we tried to give sufficiently many forward references).

\subsubsection{}

Let $p\colon W\to B$ be a proper submersion with closed fibres. To give a $K$-orientation of $p$ is equivalent to give a $Spin^c$-structure on its vertical bundle $T^vp$. The $K$-orientation of $p$ yields, by a stable homotopy construction, a push-forward
$p_!\colon K(W)\to K(B)$.
Let $\hA(T^vp)$ denote the $\hA$-class of the vertical bundle, and let
$c_1(L^2)\in H^2(W,\Z)$ be the cohomology class determined by the $Spin^c$-structure (see \ref{l2def}).
The "index theorem for families" in the characteristic class version states that
$$\ch(p_!(x))=\int_{W/B} \hA(T^vp)\cup e^{\frac{1}{2}c_1(L^2)}\cup
\ch(x),\qquad\forall x\in K(W).$$
 If one realizes the push-forward in an analytic model, then
this statement is indeed an index theorem for families of Dirac operators.

\subsubsection{}\label{lottref}

The cofibre of the map of spectra $K\to H\R$ induced by the Chern character
represents a generalized cohomology theory $K\R/\Z$, called $\R/\Z$-$K$-theory. It is a module theory over  $K$-theory and therefore also admits a push-forward for $K$-oriented proper submersions. This push-forward is again defined by constructions in stable homotopy theory. An analytic/geometric  model of  $\R/\Z$-$K$-theory was proposed in \cite{MR913964}, \cite{MR1478702}. 
This led to the natural question whether there is an analytic description of the push-forward in $\R/\Z$-$K$-theory. This question was solved in \cite{MR1312690}.
The solution gives a topological interpretation of $\rho$-invariants.

Furthermore, in \cite{MR1312690}  a Chern character from $\R/\Z$-$K$-theory to cohomology with $\R/\Q$-coefficients
has been constructed, and an index theorem has been proved.

Let us now explain the relation of these constructions and results  with the present paper.
In the present paper we define the flat theory $\hat K_{flat}(B)$ as the kernel of the curvature
$R\colon \hat K(B)\to \Omega_{d=0}(B)$. It turns out that
$\hat K_{flat}(B)$ is isomorphic to $K\R/\Z(B)$ up to a degree-shift by one (Proposition \ref{rzind}).
One can actually  represent all classes of $K^{0}_{flat}(B)$ by pairs $(\cE,\rho)$, where $\cE$
is a geometric family with zero-dimensional fibre (see \ref{zerofibre}). If one restricts to these special cycles, then our model of $K^0_{flat}(B)$ and the model of $K\R/\Z^{-1}(B)$ of   \cite{MR1312690} coincide.

By an inspection of the constructions one can further check that the restriction of our cycle level push-forward (\ref{eq300})
to these particular flat cycles is the same as the one in \cite{MR1312690}. At a first glance our push-forward of flat classes seems to depend on a smooth refinement of the topological $K$-orientation of the map $p$, but
it is in fact independent of these geometric choices as can be seen using the  homotopy invariance of the flat theory.
The comparison with \cite{MR1312690} shows that the restriction of our push-forward to  flat classes coincides with the homotopy theorists' one.

The restriction of our smooth lift of the Chern character $\hat \ch\colon \hat
K(B)\to \hat H(B,\Q)$ (see Theorem \ref{mmmain}) to the flat 
theories exactly gives the Chern character of \cite{MR1312690}
$$\hat \ch\colon \hat K_{flat}(B)\to \hat H_{flat}(B,\Q)\ $$
 (using our notation and the isomorphism of $\hat H^*_{flat}(B)\cong
 H^{*-1}(B,\R/\Q)$).
If we restrict our index theorem \ref{main} to flat classes, then it
specializes to 
$$\hat\ch(\hat p_!(x))=\int_{W/B} \hA(T^vp)\cup
e^{\frac{1}{2}c_1(L^2)}\cup\hat \ch(x) ,\qquad \forall x\in \hat K(W),$$
and this is exactly the index theorem of \cite{MR1312690}.

In this sense the present paper is a direct generalization  of \cite{MR1312690} from the flat to the general case.

\subsubsection{}

The analytic model of $\R/\Z$-$K$-theory and the analytic construction of the push-forward 
in \cite{MR1312690} fits into a series of constructions of homotopy invariant functors with a push-forward  which encodes secondary spectral invariants. Let us mention the two examples
in \cite{MR1724894} which are based on flat bundles or flat bundles with duality, respectively.
The spectral geometric invariants in these examples are the analytic torsion forms of \cite{MR1303026}
and the $\eta$-forms introduced e.g.~in   \cite{MR1042214}.
The functoriality of the push-fowards under compositions is discussed in 
\cite{MR1899699} and \cite{MR2072502}.
But these construction do not fit (at least at the moment) into the world of
smooth cohomology theory, and it is still an open problem to
interpret the push-forward in topological terms.

Let us also mention  the paper \cite{MR1246617} devoted to smooth lifts of Chern classes.
\subsubsection{}

In \cite{math.DG/0703916}, \cite{math.DG/0611281} several variants of functors derived from $K$-theory are considered. In the following we recall the  names  of these groups used in that reference and explain, if possible,  their relation with the present paper.
\begin{enumerate}
\item relative $K$-theory $K_{rel}$: the  cycles are triples $(V,\nabla^V,f)$ of  $\Z/2\Z$-graded flat vector bundles and an odd selfadjoint bundle automorphism $f$ (which need not be parallel).
\item  free multiplicative $K$-theory $K_{ch}$ (also called transgressive in \cite{math.DG/0611281}): it is essentially\footnote{The connections are not assumed to be hermitean and the corresponding differential forms have complex coefficients.} a model of $\hat K^0$ based on cycles of the form $(\cE,\rho)$, where $\cE$ is a geometric family with zero-dimensional fibre coming from a geometric vector bundle (see \ref{zerofibre}). 
\item multiplicative $K$-theory $MK$: it is  the  same model of $K^0_{flat}$ as in 
\cite{MR1312690}, see  \ref{lottref}.
\item flat $K$-theory $K_{flat}$: it is the  Grothendieck group of flat vector bundles.
\end{enumerate}
Besides the definition of these groups and the investigation of their interrelation the main topic of \cite{math.DG/0703916}, \cite{math.DG/0611281} is the construction of push-forward operations.
In the following we will only discuss multiplicative and transgressive $K$-theory since they are related to the present paper. The difference to the constructions of  \cite{MR1312690} and the present paper is that Berthomiau's analytic push-forward (which we denote here by $p_!^B$) does not use the $Spin^c$-Dirac operator but the fibre-wise de Rham complex. From the point of view of analysis  the difference is essentially that the class $\hA(T^vp)\cup e^{\frac{1}{2}c_1(L^2)}$ or the corresponding differential form has to be replaced by
the Euler class $E(T^vp)$ or the Euler form of the vertical bundle. 

The advantage of working with the de Rham complex is that in  order to define
the push-forward $p_!^{B}$  one does not need a $Spin^c$-structure. If there
is one, then one can actually express $p^B_!$ in terms of $\hat p_!$
as 
$$p_!^B(x)=\hat p_!(x\cup s^*)\ ,$$ where $s^*\in K(W)$ is the class of the dual of the spinor bundle
$S^c(T^vp)$, or the $\hat K(W)$-class  represented by the geometric version of
this bundle in the case of transgressive $K$-theory, respectively. The point
here is that
the Dirac operator induced by the de Rham complex is the $Spin^c$-Dirac
operator twisted by $S^c(T^vp)^*$. 

As said above, the  homotopy theorists' $p_!$ is the push-forward associated
to a $K$-orientation of $p$.  
In contrast, the homotopy theorists' version of $p_!^B$ is the Gottlieb-Becker
transfer.

The motivation of  \cite{math.DG/0703916} , \cite{math.DG/0611281} to define the push-forward with the de Rham complex is that  it is  compatible with the push-forward  for flat $K$-theory.
The push-forward of a flat vector bundle is expressed in terms of fibre-wise cohomology which forms again a flat vector bundle on the base. This additional structure also plays a crucial role in  \cite{MR1724894}, \cite{MR1303026}, \cite{MR1899699},  and \cite{MR2072502}.
If one interprets the push-forward using the $Spin^c$-calculus, then the flat connection is lost. 
Let us mention that the first circulated version of the present paper predates
the papers \cite{math.DG/0703916} , \cite{math.DG/0611281} which actually
adapt some of our ideas.

\subsubsection{}
 
The topics of  \cite{MR2129894} are two index theorems involving $\hat
H(B,\Q)$-valued  characteristic classes. Here we only review the first one,
since the second is related to flat vector bundles. (Compare also
\cite{ma-2004} for a ``flat version'').
Let us formulate the result  of  \cite{MR2129894} in the language of the
present paper.

Let $p\colon W\to B$ be a proper submersion with closed fibres with a fibre-wise $spin$-structure over a compact base $B$.
The spin structure induces a $Spin^c$-structure, and we choose
 a representative of a smooth $K$-orientation $o:=(g^{T^vp},T^hp,\tilde \nabla,0)$, where $\tilde \nabla$ is indced from the Levi-Civita connection on $T^vp$ (see \ref{pap201} for details).
Let $\bV=(V,h^V,\nabla^V)$ be a geometric vector bundle over $W$ with
associated geometric family 
$\cV$ (compare \ref{zerofibre}). Then we can form the geometric family $\cE:=p_!\bV$ (see \ref{ddd7771}) over $B$.

The family of Dirac operators $D(\cE)$ acts on sections of a bundle of Hilbert spaces $H(\cE)\to B$. The geometric structures of the $K$-orientation $o$ and $\bV$ induce a connection $\nabla^{H(\cE)}$ (it is the connection part of the Bismut superconnection  \cite[Prop. 10.15]{bgv} associated to this situation).
We  assume that the family of Dirac operators of  $D(\cE)$  has a kernel bundle $K:=\ker(D(\cE))$.
This bundle has an induced metric $h^K$. The projection of $\nabla^{H(\cE)}$ to $K$ gives a hermitean connection $\nabla^{K}$. We thus get a geometric bundle $\bK:=(K,h^K,\nabla^K)$, and an associated  geometric family $\cK$ (see \ref{ghghghr}).
The index theorem in  \cite{MR2129894} calculates the smooth Chern character
$\hat \ch(\bK)\in \hat H(B,\Q)$ of \cite{MR827262} and states:
$$\hat \ch(\bK)=\hat p_!(\hat\hA(\mathbf{T^vp})\cup \hat \ch(\bV))+a(\eta^{BC}(\cE))\ ,$$
where we refer to (\ref{hahaha}) and \ref{etabcnot} for notation.

Note that this theorem could also be derived from our index Theorem \ref{main}.
By Corollary \ref{bcverg}, (\ref{eq300}) , our special choice of $o$, and Theorem \ref{main} (the marked step) we have 
\begin{eqnarray*}
\hat \ch(\bK)-a(\eta^{BC}(\cE))&=&\hat \ch[\cK,\eta^{BC}(\cE)]\\
&=&\hat \ch [\cE,0]\\
&=&\hat \ch([p_!\cV,0])\\
&=&\hat \ch (p_!([\cV,0]))\\
& \stackrel{!}{=}&\hat p_!^K( \hat \ch(\cV))\\
&=&p_!(\hat\hA(\mathbf{T^vp})\cup \hat \ch(\bV))\ .
\end{eqnarray*}

{\em Acknowledgement:
We thank Moritz Wiethaup for explaining to us his insights and result. We
further thank Mike Hopkins and Dan Freed for their interest in this work and
many helpful remarks. We thank the referee for many helpful comments which
lead to considerable improvements of the exposition.}

\section{Definition of smooth K-theory via cycles and relations}\label{tztewqze}
\subsection{Cycles}
\subsubsection{}
One goal of the present paper is to construct a multiplicative smooth extension of the pair $(K,\ch_\R)$ of the multiplicative generalized cohomology theory  $K$, complex $K$-theory, and the composition  $\ch_\R\colon K\stackrel{\ch}{\to} H\Q\to H\R$ of the Chern character with the natural map from ordinary cohomology with rational to real coefficients induced by the inclusion $\Q\to \R$.  In this section we define the smooth $K$-theory group $\hat K(B)$ 
of a smooth compact manifold, possibly with boundary, and construct the natural transformations $R,I,a$.
The main result of the present section is that our construction really yields
a smooth extension in the sense of Definition \ref{ddd556}. Wi discuss the multiplicative structure in Section \ref{jkjdkdqwdqdwd}.

Our restriction to compact manifolds with boundary is due to the fact that we work with absolute $K$-groups. One could in fact modify the constructions in order to produce compactly supported smooth $K$-theory or relative smooth 
$K$-theory. But in the present paper, for simplicity, we will not discuss relative smooth cohomology theories.

\subsubsection{}

We define the smooth $K$-theory $\hat K(B)$ as the group completion of a quotient of a semigroup
of isomorphism classes of cycles by an equivalence relation. We start with the
description of the cycles.

\begin{ddd}
Let $B$ be a compact manifold, possibly with boundary.
A cycle for a smooth $K$-theory class over $B$ is a pair
$(\cE,\rho)$, where $\cE$ is a geometric family, and $\rho\in
\Omega(B)/\im(d)$ is a class of differential forms.
\end{ddd}

\subsubsection{}

The notion of a geometric family has been introduced in \cite{math.DG/0201112}
in order to have a short name for the data needed to define a Bismut
super-connection \cite[Prop. 10.15]{bgv}. For the convenience of the reader we
are going to explain this notion in some detail.
\begin{ddd}
A geometric family over $B$ consists of the following data:
\begin{enumerate}
\item a proper submersion with closed fibres $\pi\colon E\to B$,
\item a vertical Riemannian metric $g^{T^v\pi}$, i.e.~a metric on the vertical
  bundle $T^v\pi\subset TE$, defined as $T^v\pi:=\ker(d\pi\colon TE\to \pi^*TB)$.
\item a horizontal distribution $T^h\pi$, i.e.~a bundle $T^h\pi\subseteq
  TE$ such that $T^h\pi\oplus T^v\pi=TE$.
\item a family of Dirac bundles $V\to E$,
\item an orientation of $T^v\pi$.
\end{enumerate}
\end{ddd}
Here, a family of Dirac bundles consists of\begin{enumerate}
\item a hermitean vector bundle with connection $(V,\nabla^V,h^V)$ on $E$,
\item a Clifford multiplication $c\colon T^v\pi\otimes V\to V$,
\item on the components where
$\dim(T^v\pi)$ has even dimension a $\Z/2\Z$-grading $z$.
\end{enumerate}
We require that the restrictions of the family Dirac bundles to the fibres $E_b:=\pi^{-1}(b)$, $b\in B$, give Dirac bundles in the usual sense (see \cite[Def. 3.1]{math.DG/0201112}): 
\begin{enumerate}
\item The vertical metric induces the Riemannian structure on $E_b$,
\item The Clifford multiplication turns $V_{|E_b}$ into a Clifford module (see \cite[Def.3.32]{bgv}) which is graded if $\dim(E_b)$ is even.
\item The restriction of the connection $\nabla^V$ to $E_b$ is  a Clifford connection   (see \cite[Def.3.39]{bgv}). 
\end{enumerate}

A geometric family is called even or odd, if $\dim(T^v\pi)$ is
even-dimensional or odd-dimensional, respectively. 

\subsubsection{}\label{zerofibre}

Here is a simple example of a geometric family with zero-dimensional fibres.
Let $V\to B$ be a complex $\Z/2\Z$-graded vector bundle. Assume that $V$ comes with a hermitean metric
$h^V$ and a hermitean connection $\nabla^V$ which are compatible with the $\Z/2\Z$-grading.
The geometric bundle $(V,h^V,\nabla^V)$ will usually be denoted by $\bV$.
 
We consider the submersion $\pi:=\id_B\colon B\to B$. In this case the vertical bundle
is the zero-dimensional bundle which has  a canonical vertical Riemannian metric $g^{T^v\pi}:=0$, and for the horizontal
bundle we must take $T^h\pi:=TB$. Furthermore, there is a canonical orientation of $p$.
The geometric bundle $\bV$   can naturally be interpreted as a family of Dirac bundles on $B\to B$. 
In this way $\bV$ gives rise to a geometric family over $B$ which we will usually denote by $\cV$.

\subsubsection{}

In order to define a representative of the negative of the smooth $K$-theory class represented by a cycle $(\cE,\rho)$ we introduce the notion of the opposite geometric family.

\begin{ddd}\label{oppdef} The opposite $\cE^{op}$ of a geometric family $\cE$ is obtained by reversing
the signs of the Clifford multiplication and the grading (in the even case) of the underlying family of Clifford bundles, and of the orientation of the vertical bundle. 
\end{ddd}

\subsubsection{}

Our smooth $K$-theory groups will be $\Z/2\Z$-graded. On the level of cycles the grading is reflected
by the notions of even and odd cycles.

\begin{ddd}
A cycle $(\cE,\rho)$ is called even (or odd, resp.), if $\cE$ is even (or odd, resp.) and $\rho\in \Omega^{odd}(B)/\im(d)$
( or $\rho\in \Omega^{ev}(B)/\im(d)$, resp.).
\end{ddd}

\subsubsection{}

Let $\cE$ and $\cE^\prime$ be two geometric families over $B$. An isomorphism $\cE\stackrel{\sim}{\to} \cE^\prime$ consists of the following data:
$$\xymatrix{V\ar[d]\ar[rr]^F&&V^\prime\ar[d]\\E\ar[dr]^\pi\ar[rr]^f&&E^\prime\ar[dl]_{\pi^\prime}\\&B&}\,$$
where
\begin{enumerate}
\item $f$ is a diffeomorphism over $B$,
\item $F$ is a bundle isomorphism over $f$,
\item $f$ preserves the horizontal distribution, the vertical metric and the orientation.
\item $F$ preserves the connection, Clifford multiplication and the grading.
 \end{enumerate}
\begin{ddd}
Two cycles $(\cE,\rho)$ and $(\cE^\prime,\rho^\prime)$ are called isomorphic if
$\cE$ and $\cE^\prime$ are isomorphic and $\rho=\rho^\prime$.
We let $G^*(B)$ denote the set of isomorphism classes of cycles over $B$ of parity $*\in \{ev,odd\}$.
\end{ddd}

\subsubsection{}
Given two geometric  families $\cE$ and $\cE^\prime$ we can form their sum $\cE\sqcup_B \cE^\prime$
over $B$. The underlying proper submersion with closed fibres  of the sum is $\pi\sqcup \pi^\prime\colon E\sqcup E^\prime\to B$.
The remaining structures of $\cE\sqcup_B\cE^\prime$ are induced in the obvious way.

\begin{ddd}\label{isodeff}
The sum of two cycles $(\cE,\rho)$ and $(\cE^\prime,\rho^\prime)$  is defined by
$$(\cE,\rho)+(\cE^\prime,\rho^\prime):=(\cE\sqcup_B \cE^\prime,\rho+\rho^\prime)\ .$$
\end{ddd}
The sum of cycles induces on $G^*(B)$ the structure of a graded abelian semigroup.
The identity element of $G^*(B)$ is the cycle $0:=(\emptyset, 0)$, where
$\emptyset$ is the empty geometric family.

\subsection{Relations}

\subsubsection{}\label{pap1}

In this subsection we introduce an equivalence relation $\sim$ on $G^*(B)$.
We show that it is compatible with the semigroup structure so that we get a semigroup 
$G^*(B)/\sim$. We then define the smooth $K$-theory 
$\hat K^*(B)$ as the group completion of  this quotient.

In order to define $\sim$ we first introduce a simpler relation "paired" which has a nice local  index-theoretic  meaning. The relation $\sim$ will be the equivalence relation generated by "paired".

\subsubsection{}\label{twdzqdqwdqwdqw}

 The main ingredients of our definition of "paired" are the notions
of a taming of a geometric family $\cE$ introduced in \cite[Def. 4.4]{math.DG/0201112}, and the $\eta$-form of a tamed family \cite[Def. 4.16]{math.DG/0201112}.

In this paragraph we shortly review the notion of a taming. For the definition of eta-forms we refer to 
\cite[Sec. 4.4]{math.DG/0201112}.  In the present paper we will use $\eta$-forms as a black box with a few important  properties which we explicitly state at the appropriate places below.
 
If $\cE$ is a geometric family over $B$, then we can form a family of Hilbert spaces $(H_b)_{b\in B}$, where $H_b:=L^2(E_b,V_{|E_b})$. If $\cE$ is even, then this family is in addition $\Z/2\Z$-graded. 
The geometric family $\cE$ gives rise to a family of Dirac operators
$(D(\cE_b))_{b\in B}$, where $D(\cE_b)$ is an unbounded selfadjoint operator on $H_b$, which is odd in the even case.

A pre-taming of $\cE$ is a family $(Q_b)_{b\in B}$ of selfadjoint operators $Q_b\in B(H_b)$
given by a smooth fibrewise integral kernel $Q\in C^\infty(E\times_B E,V\boxtimes V^*)$.
In the even case we assume in addition that $Q_b$ is odd, i.e. that it anticommutes with the grading $z$. 
The pre-taming is called a taming if $D(\cE_b)+Q_b$ is invertible for all $b\in B$.

The family of Dirac operators $(D(\cE_b))_{b\in B}$
has a $K$-theoretic  index which we denote by
$$\ind(\cE)\in K(B)\ .$$
If the geometric family $\cE$ admits a taming, then the associated family of Dirac operators operators admits an invertible compact perturbation, and hence $\ind(\cE)=0$. Vice versa, if $\ind(\cE)=0$ and the even part is empty or  has a component with  $\dim(T^v\pi)>0$, then 
by  \cite[Lemma. 4.6]{math.DG/0201112}
the geometric family admits a taming.

If the even part of $\cE$ has zero-dimensional fibres, then the existence of a taming
may require some stabilization. This means that we must add a geometric family $\cV\sqcup_B \cV^{op}$ (see \ref{zerofibre} and Definition \ref{oppdef}), where $\bV$ is the bundle $B\times \C^n\to B$ for sufficiently large $n$.
% 
% 
% In order to explain the process of stabilization we consider the following geometric family $\cF$  over $B$ given by
% \begin{enumerate}
% \item the underlying submersion of $\cF$ is $\id_B\colon B\to B$,
% \item the horizontal distribution is canonical
% \item the vertical orientation is such that the fibres are positive,
% \item the family of Clifford bundles of $\cF$ is given by the trivial bundle $B\times \C\to B$ with the trivial connection.
%  \end{enumerate}
% The remaining structures (vertical metric, Clifford multiplication) are trivial since the vertical bundle is zero dimensional.
% If the even part of $\cE$ is zero-dimensional and $\ind(\cE)=0$, then for sufficiently
% large $n$ the sum
% $$\cE\sqcup_B\underbrace{(\cF\sqcup_B \cF^{op})\sqcup_B\dots\sqcup_B (\cF\sqcup_B \cF^{op})}_{n\:\mbox{summands}} $$
% admits a taming.

\subsubsection{}

\begin{ddd}\label{tgeomf}
A geometric family $\cE$ together with a taming will be denoted by $\cE_t$ and called a tamed geometric family.
\end{ddd}
Let  $\cE_t$ be a taming of the geometric family $\cE$ by the family $(Q_b)_{b\in B}$.
\begin{ddd} The opposite tamed family  $\cE_t^{op}$ is given by the taming
$(-Q_b)_{b\in B}$ of $\cE^{op}$.
\end{ddd}

\subsubsection{}\label{uzu1}

The local index form $\Omega(\cE)\in \Omega(B)$ is a differential form
canonically associated to a geometric family. For a detailed definition we refer to \cite[Def..4.8]{math.DG/0201112}, but
we can  briefly formulate its construction as follows.
The vertical metric $T^v\pi$ and the horizontal distribution $T^h\pi$ together induce a connection $\nabla^{T^v\pi}$ on $T^v\pi$ (see \ref{ldefr} for more details).
Locally on $E$ we can assume that $T^v\pi$ has a spin structure. We let
$S(T^v\pi)$ be the associated spinor bundle. Then we can write the family of Dirac bundles $V$ as 
$V=S\otimes W$ for a twisting bundle $(W,h^W,\nabla^W,z^W)$ with
metric, metric connection, and $\Z/2\Z$-grading which is determined uniquely up to isomorphism. 
The form
$\hat A(\nabla^{T^v\pi}) \wedge \ch(\nabla^{W})\in \Omega(E)$ is globally defined, and we get the local index form by applying the integration over the fibre $\int_{E/B}\colon \Omega(E)\to \Omega(B)$:
$$\Omega(\cE):=\int_{E/B}\hat A(\nabla^{T^v\pi}) \wedge \ch(\nabla^{W})\ .$$
The local index form is closed and represents a cohomology class
$[\Omega(\cE)]\in H_{dR}(B)$.
We let $\ch_{dR}\colon K(B)\to H_{dR}(B)$ be the composition
$$ \ch_{dR}\colon K(B)\stackrel{\ch}{\to} H(B;\Q)\stackrel{can}{\to} H_{dR}(B)\ .$$

The characteristic class version of the index theorem for families is 
\begin{theorem} [\cite{AS71}]\label{thm2}
$$\ch_{dR}(\ind(\cE))=[\Omega(\cE)]\ .$$
\end{theorem}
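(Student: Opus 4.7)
The plan is to prove Theorem~\ref{thm2} via the Bismut superconnection, the analytic tool naturally adapted to the data of a geometric family $\cE$: the vertical metric, horizontal distribution, Clifford multiplication, and Clifford connection are exactly the ingredients needed to build it. I would construct the rescaled Bismut superconnection $A_t$ (for $t>0$) on the $\Z/2\Z$-graded infinite-dimensional bundle $H \to B$ whose fibre at $b$ is $L^2(E_b,V_{|E_b})$, as in \cite[Prop. 10.15]{bgv}. Its leading piece is $\sqrt{t}\,D(\cE)$ and its next piece is a canonical unitary connection on $H$ built from $\nabla^V$ and $T^h\pi$. Then $A_t^2$ is a fibrewise generalized Laplacian with coefficients in $\Omega(B)$, so $e^{-A_t^2}$ is supertrace class, and the even form $\ch(A_t) := \str(e^{-A_t^2}) \in \Omega^{ev}(B)$ is well defined. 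The transgression identity $\frac{d}{dt}\ch(A_t) = -d\,\str(\frac{dA_t}{dt}\,e^{-A_t^2})$ shows that $\ch(A_t)$ is closed and that its de Rham cohomology class is independent of $t$.

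Next I would identify $[\ch(A_t)] \in H_{dR}(B)$ with $\ch_{dR}(\ind(\cE))$ by analyzing the two extremes of $t$. After a small compact perturbation (the stabilization trick used in Subsection~\ref{twdzqdqwdqwdqw}) the family of Dirac operators may be assumed to have a kernel vector bundle $K \to B$; then $e^{-A_t^2}$ concentrates on $K$ as $t \to \infty$, and its supertrace converges to the Chern-Weil representative of $\ch(K)$ relative to the projected connection, which represents $\ch_{dR}(\ind(\cE))$ by construction of the index bundle. On the other hand, Bismut's local families index theorem gives the small-$t$ limit
\begin{equation*}
\lim_{t \to 0} \ch(A_t) = \int_{E/B} \hat A(\nabla^{T^v\pi}) \wedge \ch(\nabla^W) = \Omega(\cE).
\end{equation*}
Since all the forms $\ch(A_t)$ are cohomologous, comparing the two limits yields $\ch_{dR}(\ind(\cE)) = [\Omega(\cE)]$.

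The main obstacle is the small-$t$ computation: extracting the vertical $\hat A$-genus together with the twisted Chern character of the Clifford module from the heat-kernel asymptotics of $A_t^2$ requires the delicate Getzler rescaling argument, and essentially all the analytic work resides there. A secondary technical point, handled by the compact perturbation above, is that the fibrewise kernels need not form a smooth vector bundle; since the conclusion of Theorem~\ref{thm2} is purely cohomological, this causes no essential difficulty.
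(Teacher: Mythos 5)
Your proposal follows precisely the local index theory proof via the rescaled Bismut superconnection that the paper cites but does not reproduce (Bismut's argument \cite{B85}, as presented in \cite{bgv}); the paper simply refers the reader to this proof rather than re-deriving it. Your sketch correctly isolates the essential steps: the transgression identity making $[\ch(A_t)]$ independent of $t$, the large-$t$ convergence to the Chern–Weil form of the index bundle (after stabilizing to obtain a kernel bundle, which is harmless cohomologically), and the small-$t$ Getzler-rescaling limit producing the local index form $\Omega(\cE)$.
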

A proof using methods of local index theory has been given by \cite{B85}.
For a presentation of the proof we refer to  \cite{bgv}.
An alternative proof can be obtained from  \cite[Thm.4.18]{math.DG/0201112} by specializing to  the case of a family of closed manifolds.

\subsubsection{}\label{pap2}
If a geometric family $\cE$ admits a taming $\cE_t$ (see Definition \ref{tgeomf}), then we have $\ind(\cE)=0$.
In particular, the local index form $\Omega(\cE)$ is exact.
The important feature of local index theory in this case is that it provides an explicit form %namely the $\eta$-form $\eta(\cE_t)$ of the tamed family, 
whose boundary is $\Omega(\cE)$
(see equation (\ref{detad}) below).  

% represents the local index form as the boundary of an explicit form, namely of $\eta(\cE_t)$.
 
Let $\cE_t$ be a tamed geometric family  over $B$.  In  \cite[Def. 4.16]{math.DG/0201112} we have  defined the  $\eta$-form $\eta(\cE_t)\in \Omega(B)$. By \cite[Theorem 4.13]{math.DG/0201112}) it
%n \cite[Def. 4.16]{math.DG/0201112} we have defined a form
%$\eta(\cE_t)\in \Omega(B)$ which 
satisfies  
 \begin{equation}\label{detad}
d\eta(\cE_t)=\Omega(\cE)\ .
\end{equation} 
The first construction of $\eta$-forms has been given in \cite{MR1042214}, \cite{MR1052337}, \cite{MR1173033}
under the assumption that $\ker(D(\cE_b))$ vanishes or has constant dimension.
The variant which we use here has also been considered in \cite{MR1312690}, \cite{MR1484046}, \cite{MR1472895}.

Since the  analytic details of the definition of the $\eta$-form
$\eta(\cE_t)$ are quite complicated we will not repeat them here but refer to
\cite[Def. 4.16]{math.DG/0201112}. 
For most of the present paper we can use the construction of the $\eta$-form as a black box
refering to \cite{math.DG/0201112} for details of the construction and the proofs of properties.
Exceptions are arguments involving adiabatic limits for which we use \cite{MR2072502} as the reference.

% If a geometric family $\cE$ admits a taming $\cE_t$, then we have $\ind(\cE)=0$.
% In particular, the local index form $\Omega(\cE)$ is exact.
% The important feature of local index theory in this case  is that (\ref{detad})  represents the local index form as the boundary of an explicit form, namely of $\eta(\cE_t)$.

\subsubsection{}

Now we can introduce the relations "paired" and $\sim$.
\begin{ddd}\label{uuu1}
We call two cycles $(\cE,\rho)$ and $(\cE^\prime,\rho^\prime)$ paired if there exists a taming
$(\cE\sqcup_B \cE^{\prime op})_t$ such that
$$\rho-\rho^\prime=\eta((\cE\sqcup_B \cE^{\prime op})_t)\ .$$
We let $\sim$ denote the equivalence relation generated by the relation "paired".
\end{ddd}

\begin{lem}\label{lem1}
The relation "paired" is symmetric and reflexive.
\end{lem}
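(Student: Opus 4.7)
The plan is to handle symmetry and reflexivity separately, reducing both to two elementary facts about $\eta$-forms from \cite{math.DG/0201112}: the behaviour under taking opposites, and the existence of a distinguished taming on a ``mirrored'' family.

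For symmetry, the argument is essentially formal. Suppose $(\cE,\rho)$ and $(\cE',\rho')$ are paired via a taming $(\cE\sqcup_B \cE^{\prime op})_t$ with $\rho-\rho'=\eta((\cE\sqcup_B \cE^{\prime op})_t)$. Since taking the opposite (Definition \ref{oppdef}) commutes with disjoint union and is an involution, up to reordering of summands we have $\cE'\sqcup_B\cE^{op}=(\cE\sqcup_B\cE^{\prime op})^{op}$. The opposite of the given taming then yields a taming of $\cE'\sqcup_B\cE^{op}$, and the standard antisymmetry property $\eta(\cE_t^{op})=-\eta(\cE_t)$ of $\eta$-forms under reversal of orientation and Clifford multiplication (proved in \cite{math.DG/0201112}) gives $\rho'-\rho=-(\rho-\rho')=\eta((\cE'\sqcup_B\cE^{op})_t^{op})$, so $(\cE',\rho')$ is paired with $(\cE,\rho)$.

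For reflexivity we must exhibit, for any geometric family $\cE$, a taming of $\cE\sqcup_B\cE^{op}$ whose $\eta$-form vanishes, since then the relation $\rho-\rho=0=\eta((\cE\sqcup_B\cE^{op})_t)$ holds. The idea is to use the canonical isomorphism identifying the two summands: the Hilbert bundle of $\cE\sqcup_B\cE^{op}$ splits as $H(\cE)\oplus H(\cE)$, with Dirac operator $D\oplus(-D)$, and the off-diagonal involution $J$ swapping the two copies is odd and anticommutes with $D\oplus(-D)$. A smoothed self-adjoint multiple of this swap gives a pre-taming for which the square of the perturbed Dirac operator is $D^2\oplus D^2$ plus a positive constant, hence invertible; this is the sought taming. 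Vanishing of the $\eta$-form follows from the $\Z/2\Z$-symmetry exchanging the two summands, under which the integrand computing $\eta$ in \cite[Def. 4.16]{math.DG/0201112} is odd and therefore integrates to zero. In the degenerate even zero-dimensional fibre situation one first stabilizes by $\cV\sqcup_B\cV^{op}$ for some bundle $\bV$, as discussed in \ref{twdzqdqwdqwdqw}, before applying the same construction.

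The main obstacle is the reflexivity step: one must ensure that the swapping involution can be realised by an operator with smooth fibrewise integral kernel as required in \ref{twdzqdqwdqwdqw}, and that the symmetry argument for the vanishing of the $\eta$-form passes through the Bismut super-connection formalism. Symmetry itself is essentially book-keeping once the sign behaviour of $\eta$ under opposites is granted.
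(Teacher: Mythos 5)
Your overall strategy is the one the paper uses: symmetry is the formal bookkeeping step via $\eta(\cE_t^{op})=-\eta(\cE_t)$, and reflexivity is established by building an explicit taming of $\cE\sqcup_B\cE^{op}$ out of the swap and then showing $\eta=0$ by a symmetry argument. Two imprecisions in your reflexivity step deserve attention, one cosmetic and one substantive.

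Cosmetic: "a smoothed self-adjoint multiple of the swap" whose square contributes "a positive constant" is not quite right. A scalar multiple of the swap is not a pre-taming (no smooth integral kernel), and a bounded perturbation by a constant operator is not compact. What is needed is $Q_b=\tau\chi(D_b)$ with $\chi$ even, compactly supported, $\chi(0)=1$; this anticommutes with $D_b$ because $\tau$ does and $\chi$ is even, and $(D_b+Q_b)^2 = D_b^2+\chi(D_b)^2$, which is invertible because the positive correction term is $\chi(D_b)^2$, a positive \emph{function} of $D_b$ that fills the spectral gap near $0$ — not a constant.

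Substantive: the $\Z/2\Z$-symmetry you invoke to conclude $\eta=0$ cannot be the naive swap $\tau=\left(\begin{smallmatrix}0&1\\1&0\end{smallmatrix}\right)$. Conjugation by $\tau$ negates $D_b$ but \emph{fixes} $Q_b=\tau\chi(D_b)$ (since $\tau\tau\tau^{-1}=\tau$ and $\chi$ is even), so it sends $D_b+Q_b$ to $-D_b+Q_b$, which is \emph{not} the perturbed Dirac operator of the opposite tamed family. Hence the full superconnection (which incorporates the taming for large time) is not mapped oddly, and the "integrand is odd" claim does not follow. The paper instead uses $\sigma=\left(\begin{smallmatrix}0&i\\-i&0\end{smallmatrix}\right)$, which conjugates both $D_b\mapsto -D_b$ and $Q_b\mapsto -Q_b$, hence realizes an isomorphism $(\cE\sqcup_B\cE^{op})_t\cong(\cE\sqcup_B\cE^{op})_t^{op}$, from which $\eta=-\eta$ by the already-granted sign rule. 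The phase factor $i$ is exactly what makes the argument go through, and your write-up omits it. (Your stabilization remark is unnecessary: $H(\cE\sqcup_B\cE^{op})_b\cong H_b\oplus H_b^{op}$ always carries the operator $\tau\chi(D_b)$ regardless of the fibre dimension, so no rank-parity obstruction arises here.)
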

\begin{proof}
In order to show that "paired" is reflexive and symmetric 
we are going employ the relation \cite[Lemma 4.12]{math.DG/0201112}
\begin{eqnarray}\label{eq1}
\eta(\cE_t^{op})=-\eta(\cE_t)\ .
\end{eqnarray}
Let $\cE$ be a geometric family over $B$, and let $H_b$ denote the Hilbert space of sections
of the Dirac bundle along the fibre over $b\in B$.
The family $\cE\sqcup_B \cE^{op}$ has an involution $\tau$ which flips
the components, the signs of the Clifford multiplications, the grading and the orientations.
We use the same symbol  $\tau$ in order to denote the action of $\tau$ on the  Hilbert space of sections of the Dirac bundle of $\cE_b\sqcup_B \cE_b^{op}$. The latter can be identified with $H_b\oplus H_b^{op}$, and in this picture
$$\tau=\left(\begin{array}{cc}0&1\\1&0\end{array}\right)\ .$$  
Note that  $\tau$ anticommutes with 
$$D_b:=D(\cE_b\sqcup_B \cE_b^{op})=\left(\begin{array}{cc}D(\cE_b)&0\\0&-D(\cE_b)\end{array}\right).
$$ We choose an even, compactly supported  smooth function $\chi\colon \R\to [0,\infty)$ such that $\chi(0)=1$ and form
$$Q_b:=\tau\chi(D_b)\ .$$  This operator also anticommutes with $D_b$,
and $(D_b+Q_b)^2=D_b^2+\chi^2(D_b)$ is positive and therefore invertible for all $b\in B$. 
The family $(Q_b)_{b\in B}$ thus defines a taming $(\cE\sqcup_B \cE^{op})_t$.

The involution  
$$\sigma:=\left(\begin{array}{cc}0&i\\-i&0\end{array}\right)$$ on the  Hilbert space $H_b\oplus H_b^{op}$ is induced by
 an isomorphism 
$$(\cE\sqcup_B \cE^{op})_t\cong (\cE\sqcup_B \cE^{op})_t^{op}\ .$$
Because of the relation  (\ref{eq1}) we have 
$\eta\left((\cE\sqcup_B \cE^{op})_t\right)=0$.
It follows that
$(\cE,\rho)$ is paired with $ (\cE,\rho)$.

Assume now that
$(\cE,\rho)$ is paired with $(\cE^\prime,\rho^\prime)$ via the taming
$(\cE\sqcup_B \cE^{\prime op})_t$ so that
$\rho-\rho^\prime=\eta\left((\cE\sqcup_B \cE^{\prime op})_t\right)$. Then
$(\cE\sqcup_B \cE^{\prime op})_t^{op}$ is a taming
of $\cE^\prime\sqcup_B \cE^{op}$ such that
$\rho^\prime-\rho=\eta\left((\cE\sqcup_B \cE^{\prime op})_t^{op}\right)$, again 
by (\ref{eq1}).
It follows that $(\cE^\prime,\rho^\prime)$ is paired with $(\cE,\rho)$.
 \end{proof}

\begin{lem}\label{lem5}
The relations "paired" and $\sim$ are compatible with the semigroup structure on $G^*(B)$.
\end{lem}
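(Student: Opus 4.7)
The plan is to reduce the lemma to a compatibility statement for the relation ``paired'' alone, and then verify the latter by additivity of tamings and of $\eta$-forms under disjoint union.

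First I would observe that once ``paired'' is shown to be compatible with $+$ (in the sense: $(\cE,\rho)$ paired with $(\cE',\rho')$ implies $(\cE,\rho)+(\cF,\sigma)$ paired with $(\cE',\rho')+(\cF,\sigma)$ for every cycle $(\cF,\sigma)$), the corresponding statement for $\sim$ follows formally: any chain of ``paired'' relations from $(\cE,\rho)$ to $(\cE',\rho')$ yields, after adding $(\cF,\sigma)$ term by term, a chain of ``paired'' relations from $(\cE,\rho)+(\cF,\sigma)$ to $(\cE',\rho')+(\cF,\sigma)$.

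For the compatibility of ``paired'', suppose $(\cE,\rho)$ is paired with $(\cE',\rho')$ via a taming $(\cE\sqcup_B\cE'^{op})_t$ with $\rho-\rho'=\eta((\cE\sqcup_B\cE'^{op})_t)$. Given any cycle $(\cF,\sigma)$, construct a taming $(\cF\sqcup_B\cF^{op})_s$ exactly as in the proof of Lemma \ref{lem1}, using the flip involution; by (\ref{eq1}) its $\eta$-form vanishes. Two tamings of two geometric families assemble to a taming of the disjoint union by taking the orthogonal direct sum of the smoothing operators, and we use the canonical isomorphism
$$(\cE\sqcup_B\cF)\sqcup_B(\cE'\sqcup_B\cF)^{op}\;\cong\;(\cE\sqcup_B\cE'^{op})\sqcup_B(\cF\sqcup_B\cF^{op})$$
(which permutes the summands) to view the combined taming as a taming on the left-hand side.

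Since $\eta$-forms are additive under disjoint union of tamed families (the Bismut super-connection, and hence its rescaled trace, decomposes as a direct sum, so the integrand defining $\eta$ in \cite[Def. 4.16]{math.DG/0201112} is additive), we obtain
$$\eta\bigl((\cE\sqcup_B\cF)\sqcup_B(\cE'\sqcup_B\cF)^{op}\bigr)_{t\sqcup s}=\eta((\cE\sqcup_B\cE'^{op})_t)+\eta((\cF\sqcup_B\cF^{op})_s)=(\rho-\rho')+0,$$
which equals $(\rho+\sigma)-(\rho'+\sigma)$. Hence $(\cE,\rho)+(\cF,\sigma)$ is paired with $(\cE',\rho')+(\cF,\sigma)$, as required. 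The main point of care — and the only nontrivial input — is the additivity of $\eta$-forms under disjoint union of tamed families, but this is immediate from the definition since the underlying Hilbert bundles and Bismut super-connections split orthogonally.
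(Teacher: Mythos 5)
Your proof is correct and uses the same mechanism as the paper: assemble tamings under disjoint union and invoke the additivity of $\eta$-forms \cite[Lemma 4.12]{math.DG/0201112}. The paper states compatibility of ``paired'' in the slightly stronger two-sided form --- given two independent pairings, the disjoint union of their tamings pairs the two sums --- of which your one-sided statement (pairing $(\cF,\sigma)$ with itself via the reflexive taming of Lemma~\ref{lem1}) is the special case; both yield the same compatibility of $\sim$, in your case after an additional appeal to transitivity and commutativity.
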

\begin{proof}
In fact, if $(\cE_i,\rho_i)$ are paired with $(\cE_i^\prime,\rho^\prime_i)$ via tamings
$(\cE_i\sqcup_B \cE_i^{ \prime op})_t$ for $i=0,1$,
then
$(\cE_0,\rho_0)+(\cE^\prime_0,\rho^\prime_0)$ is paired with $(\cE_1,\rho_1)+(\cE^\prime_1,\rho^\prime_1)$ via the taming
$$\left(\cE_0\sqcup_B \cE_1\sqcup_B (\cE_0^\prime\sqcup_B
  \cE_1^\prime)^{op}\right)_t:=(\cE_0\sqcup_B \cE_0^{ \prime
  op})_t\sqcup_B(\cE_1\sqcup_B \cE_1^{\prime op })_t\ .$$
In this calculation we use the additivity of the $\eta$-form \cite[Lemma
4.12]{math.DG/0201112}
$$\eta(\cE_t\sqcup_B \cF_t)=\eta(\cE_t)+\eta(\cF_t)\ .$$
The compatibilty of $\sim$ with the sum follows from the compatibility of "paired". \end{proof}
We get an induced semigroup structure on $G^*(B)/\sim$.

\begin{lem}\label{lem3}
If $(\cE_0,\rho_0)\sim (\cE_2,\rho_2)$, then 
there exists a cycle $(\cE^\prime,\rho^\prime)$
such that
$(\cE_0,\rho_0)+(\cE^\prime,\rho^\prime)$ is paired with $(\cE_2,\rho_2)+(\cE^\prime,\rho^\prime)$.
\end{lem}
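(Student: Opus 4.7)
The plan is to unfold the definition of $\sim$, realize the equivalence as a chain of paired relations, and then absorb all intermediate cycles into an auxiliary $\cE'$ so that the composite chain collapses to a single pairing, with its $\eta$-form appearing as a telescoping sum.

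First, since $\sim$ is by Definition \ref{uuu1} the equivalence relation generated by ``paired'', and since ``paired'' is symmetric and reflexive by Lemma \ref{lem1}, the hypothesis $(\cE_0,\rho_0)\sim(\cE_2,\rho_2)$ amounts to a finite chain
$$(\cE_0,\rho_0)=(\cF_0,\sigma_0)\ \text{paired}\ (\cF_1,\sigma_1)\ \text{paired}\ \cdots\ \text{paired}\ (\cF_n,\sigma_n)=(\cE_2,\rho_2),$$
together with tamings $(\cF_i\sqcup_B\cF_{i+1}^{op})_{t,i}$ such that
$$\sigma_i-\sigma_{i+1}=\eta\bigl((\cF_i\sqcup_B\cF_{i+1}^{op})_{t,i}\bigr),\qquad i=0,\dots,n-1.$$

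Next I would set
$$\cE':=\bigsqcup_{i=1}^{n-1}\cF_i,\qquad \rho':=0.$$
The key observation, which is a purely combinatorial reordering of disjoint unions of geometric families, is that
$$(\cF_0\sqcup_B\cE')\sqcup_B(\cF_n\sqcup_B\cE')^{op}\ \cong\ \bigsqcup_{i=0}^{n-1}\bigl(\cF_i\sqcup_B\cF_{i+1}^{op}\bigr)$$
as geometric families over $B$, since on the right-hand side the terms $\cF_i$ and $\cF_i^{op}$ for $1\le i\le n-1$ each appear exactly once, just as on the left, and the opposite construction distributes over disjoint union.

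Under this isomorphism, the disjoint union of the individual tamings provides a taming of $(\cF_0\sqcup_B\cE')\sqcup_B(\cF_n\sqcup_B\cE')^{op}$. By the additivity of the $\eta$-form (\cite[Lemma 4.12]{math.DG/0201112}), applied as in the proof of Lemma \ref{lem5}, the associated $\eta$-form is the telescoping sum
$$\sum_{i=0}^{n-1}\eta\bigl((\cF_i\sqcup_B\cF_{i+1}^{op})_{t,i}\bigr)=\sum_{i=0}^{n-1}(\sigma_i-\sigma_{i+1})=\sigma_0-\sigma_n=\rho_0-\rho_2.$$
This is precisely the condition that $(\cE_0,\rho_0)+(\cE',\rho')$ be paired with $(\cE_2,\rho_2)+(\cE',\rho')$, by Definition \ref{uuu1}.

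The only real point requiring care—and essentially the only obstacle—is verifying the canonical isomorphism of geometric families used to reshuffle the disjoint union and identify $(\cE_2\sqcup_B\cE')^{op}$ with $\cE_2^{op}\sqcup_B\cE'^{op}$ up to reordering. This is routine bookkeeping given that all geometric structures (horizontal distribution, vertical metric, orientation, Dirac bundle) behave compatibly under disjoint union and under the opposite construction of Definition \ref{oppdef}, but it is what one must write out to apply the additivity of $\eta$-forms honestly.
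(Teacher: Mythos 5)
Your proof is correct and uses essentially the same mechanism as the paper: unfold $\sim$ into a chain of pairings, absorb the intermediate families into a single auxiliary cycle, and use additivity of $\eta$-forms so the correction terms telescope. The paper phrases this as a two-step base case (with $\cE'=\cE_1$) applied iteratively, whereas you form $\cE'=\bigsqcup_{i=1}^{n-1}\cF_i$ in one shot and verify the reshuffling of disjoint unions directly; the content is the same, and your one-pass version is if anything a little cleaner to state.
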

\begin{proof}
Let $(\cE_0,\rho_0)$ be paired with 
$(\cE_1,\rho_1)$ via a taming $(\cE_0\sqcup_B \cE_1^{op})_t$, and 
$(\cE_1,\rho_1)$ be paired with $(\cE_2,\rho_2)$ via $(\cE_1\sqcup_B \cE_2^{op})_t$.
Then
$(\cE_0,\rho_0)+(\cE_1,\rho_1)$ is paired with $(\cE_2,\rho_2)+(\cE_1,\rho_1)$
via the taming
$$\left((\cE_0\sqcup_B \cE_1)\sqcup_B
  (\cE_2\sqcup_B\cE_1)^{op}\right)_t:=\left(\cE_0\sqcup_B
  \cE_1^{op})_t\sqcup_B (\cE_1\sqcup_B \cE_2^{op}\right)_t\ .$$
If $(\cE_0,\rho_0)\sim (\cE_2,\rho_2)$, then there is a chain
$(\cE_{1,\alpha},\rho_{1,\alpha})$, $\alpha=1,\dots,r$ with
$(\cE_{1,1},\rho_{1,1})=(\cE_0,\rho_0)$,
$(\cE_{1,r},\rho_{1,r})=(\cE_2,\rho_2)$, such that 
$(\cE_{1,\alpha},\rho_{1,\alpha})$ is paired with
$(\cE_{1,\alpha+1},\rho_{1,\alpha+1})$.
The assertion of the Lemma follows from an $(r-1)$-fold application of the
argument above.
\end{proof}

\subsection{Smooth $K$-theory}

\subsubsection{}

In this subsection we define the contravariant functor $B\to \hat K(B)$ from compact smooth manifolds to $\Z/2\Z$-graded abelian groups. Recall the definition \ref{isodeff} of the semigroup of 
isomorphism classes of cycles. By Lemma \ref{lem5} we can form the semigroup $G^*(B)/\sim$.

 \begin{ddd}
We define the smooth $K$-theory $\hat K^*(B)$ of $B$ to be the group completion of
the abelian semigroup $G^*(B)/\sim$.
 \end{ddd}
If $(\cE,\rho)$ is a cycle, then let $[\cE,\rho]\in \hat K^*(B)$ denote the corresponding class in smooth $K$-theory.

We now collect some simple facts which are helpful for computations in $\hat K(B)$ on the level of cycles.
\begin{lem}\label{lem22}
We have
$[\cE,\rho]+[\cE^{op},-\rho]=0$.
\end{lem}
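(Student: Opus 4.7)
The plan is to reduce the identity $[\cE,\rho]+[\cE^{op},-\rho]=0$ to showing that the cycle $(\cE\sqcup_B \cE^{op},0)$ is equivalent under $\sim$ to the zero cycle $(\emptyset,0)$. By the definition of the sum of cycles (Definition \ref{isodeff}) we have
$$
(\cE,\rho)+(\cE^{op},-\rho)=(\cE\sqcup_B \cE^{op},\rho-\rho)=(\cE\sqcup_B \cE^{op},0),
$$
so once this equivalence is established, passing to the group completion yields the assertion.

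To show $(\cE\sqcup_B \cE^{op},0)\sim(\emptyset,0)$ it suffices to exhibit a taming of $\cE\sqcup_B \cE^{op}$ (noting that $\emptyset^{op}=\emptyset$, so $(\cE\sqcup_B \cE^{op})\sqcup_B \emptyset^{op}=\cE\sqcup_B \cE^{op}$) whose $\eta$-form vanishes, since then the cycles will be \emph{paired} in the sense of Definition \ref{uuu1} with $0-0=0$. The crucial observation is that precisely such a taming has already been constructed in the proof of Lemma \ref{lem1}. Namely, using the flip involution $\tau$ on the Hilbert bundle of $\cE\sqcup_B \cE^{op}$ and an even, compactly supported $\chi\in C^\infty(\R)$ with $\chi(0)=1$, one sets $Q_b:=\tau\chi(D_b)$; then $Q_b$ is a smoothing operator anticommuting with $D_b$, and $(D_b+Q_b)^2=D_b^2+\chi^2(D_b)>0$, so $(Q_b)_{b\in B}$ defines a taming $(\cE\sqcup_B \cE^{op})_t$. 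Moreover the auxiliary involution $\sigma$ from the proof of Lemma \ref{lem1} identifies this tamed family with its opposite, and the antisymmetry relation $\eta(\cE_t^{op})=-\eta(\cE_t)$ from \cite[Lemma 4.12]{math.DG/0201112} then forces $\eta((\cE\sqcup_B \cE^{op})_t)=0$.

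Putting these pieces together, $(\cE\sqcup_B \cE^{op},0)$ is paired with $(\emptyset,0)$, hence $[\cE\sqcup_B \cE^{op},0]=[\emptyset,0]=0$ in $\hat K^*(B)$. Combined with the identification above this gives $[\cE,\rho]+[\cE^{op},-\rho]=0$. There is no real obstacle here since the whole analytic content, namely the explicit taming with vanishing $\eta$-form, is already provided by Lemma \ref{lem1}; the only thing to check is the bookkeeping, that the pairing to the empty cycle is covered by the Definition \ref{uuu1} (which is immediate because $\emptyset$ is the identity for $\sqcup_B$ and has trivial $\eta$-form).
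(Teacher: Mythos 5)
Your proof is correct and follows essentially the same route as the paper: both observe that $(\cE,\rho)+(\cE^{op},-\rho)=(\cE\sqcup_B\cE^{op},0)$ is paired with $(\emptyset,0)$ via the taming $(\cE\sqcup_B\cE^{op})_t$ constructed in the proof of Lemma~\ref{lem1}, whose $\eta$-form vanishes.
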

\begin{proof}
We show that $(\cE,\rho)+(\cE^{op},-\rho)=(\cE\sqcup_B\cE^{op},0)$ is paired with $0=(\emptyset,0)$.
In fact, this relation is given by the taming
$((\cE\sqcup_B \cE^{op})\sqcup_B \emptyset^{op})_t=(\cE\sqcup \cE^{op})_t$ introduced in the proof of Lemma 
\ref{lem1} with $\eta((\cE\sqcup_B \cE^{op})_t)=0$.\end{proof}

\begin{lem}
Every element of $\hat K^*(B)$ can be represented in the form
$[\cE,\rho]$.
\end{lem}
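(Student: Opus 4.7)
The plan is short: since $\hat K^*(B)$ is defined as the group completion of the semigroup $G^*(B)/\sim$, a general element has the shape $[\cE_1,\rho_1]-[\cE_2,\rho_2]$ for some cycles $(\cE_i,\rho_i)$. The goal is to rewrite such a formal difference as a single class $[\cE,\rho]$.

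First, I would invoke Lemma \ref{lem22}, which says $[\cE_2,\rho_2]+[\cE_2^{op},-\rho_2]=0$ in $\hat K^*(B)$, and hence that the inverse of $[\cE_2,\rho_2]$ is realized by an actual cycle, namely $-[\cE_2,\rho_2]=[\cE_2^{op},-\rho_2]$. Combined with the additivity of the class map (Definition \ref{isodeff}), which gives $[\cE,\rho]+[\cE',\rho']=[\cE\sqcup_B\cE',\rho+\rho']$, this yields
$$[\cE_1,\rho_1]-[\cE_2,\rho_2]=[\cE_1,\rho_1]+[\cE_2^{op},-\rho_2]=[\cE_1\sqcup_B\cE_2^{op},\,\rho_1-\rho_2],$$
which is of the desired form.

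There is no real obstacle here; the statement is essentially a formal consequence of the construction of $\hat K^*(B)$ as a group completion together with the existence of cycle-level inverses provided by the opposite construction $\cE\mapsto \cE^{op}$ (Definition \ref{oppdef}). The only mild point worth noting is that the parity is preserved: $\cE_2^{op}$ has the same parity as $\cE_2$, and $-\rho_2$ lies in the same form-degree parity as $\rho_2$, so $(\cE_1\sqcup_B\cE_2^{op},\rho_1-\rho_2)$ is a cycle of the correct parity whenever the original two cycles were.
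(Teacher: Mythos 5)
Your proof is correct and matches the paper's own argument exactly: represent a general element as a difference, use Lemma \ref{lem22} to realize the inverse as $[\cE_2^{op},-\rho_2]$, and then combine via disjoint union. The extra remark about parity preservation is a harmless, correct observation.
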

\begin{proof}
An element of $\hat K^*(B)$ can be represented  by a difference
$[\cE_0,\rho_0]-[\cE_1,\rho_1]$. Using Lemma \ref{lem22} we get
$ [\cE_0,\rho_0]-[\cE_1,\rho_1]= [\cE_0,\rho_0]+[\cE_1^{op},-\rho_1]=[\cE_0\sqcup_B \cE^{op}_1,\rho_0-\rho_1]$.
\end{proof} 

\begin{lem}\label{lem4}
If $[\cE_0,\rho_0]=[\cE_1,\rho_1]$, then there exists a cycle
$(\cE^\prime,\rho^\prime)$ such that
$(\cE_0,\rho_0)+(\cE^\prime,\rho^\prime)$ is paired with $(\cE_1,\rho_1)+(\cE^\prime,\rho^\prime)$.
\end{lem}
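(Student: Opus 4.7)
The plan is to unwind what equality in the group completion means, and then invoke Lemma \ref{lem3} to promote a chain of pairings to a single pairing after adding one auxiliary cycle.

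Recall that for any abelian semigroup $S$, two elements $[s_0]$ and $[s_1]$ become equal in the Grothendieck group completion of $S$ if and only if there exists $t\in S$ with $s_0+t=s_1+t$ in $S$. We apply this to the semigroup $S=G^*(B)/\!\sim$ with $s_i$ the class of $(\cE_i,\rho_i)$. Thus the hypothesis $[\cE_0,\rho_0]=[\cE_1,\rho_1]$ in $\hat K^*(B)$ yields a cycle $(\cF,\sigma)$ such that, in $G^*(B)$,
$$(\cE_0,\rho_0)+(\cF,\sigma)\;\sim\;(\cE_1,\rho_1)+(\cF,\sigma).$$

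Now I apply Lemma \ref{lem3} to the pair $(\cE_0,\rho_0)+(\cF,\sigma)$ and $(\cE_1,\rho_1)+(\cF,\sigma)$. It produces a cycle $(\cE'',\rho'')$ such that
$$\bigl((\cE_0,\rho_0)+(\cF,\sigma)\bigr)+(\cE'',\rho'') \;\text{is paired with}\; \bigl((\cE_1,\rho_1)+(\cF,\sigma)\bigr)+(\cE'',\rho'').$$
Setting $(\cE^\prime,\rho^\prime):=(\cF,\sigma)+(\cE'',\rho'')$ and using associativity and commutativity of the sum operation on cycles (Definition \ref{isodeff}) then gives the claim.

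There is no real obstacle here: the only subtle point is the standard fact about group completion that lets us pass from an equality of classes in $\hat K^*(B)$ to a relation in $G^*(B)/\!\sim$ after adding an auxiliary cycle $(\cF,\sigma)$. Once that is in place, Lemma \ref{lem3} is exactly designed to absorb the intermediate pairings in the chain defining $\sim$ into a single pairing by enlarging the cycle, which finishes the argument.
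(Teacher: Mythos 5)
Your proof is correct and follows essentially the same route as the paper: unwind the group completion to get a single auxiliary cycle yielding $\sim$, then invoke Lemma \ref{lem3} and combine the two auxiliaries. The paper states this more tersely ("the assertion now follows from Lemma \ref{lem3}"), and you have merely spelled out the combination step explicitly.
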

\begin{proof}
The relation
 $[\cE_0,\rho_0]=[\cE_1,\rho_1]$ implies that there exists a cycle
$(\tilde\cE,\tilde \rho)$ such that
$(\cE_0,\rho_0)+(\tilde \cE,\rho)\sim (\cE_1,\rho_1)+(\tilde \cE,\tilde \rho)$.
The assertion now follows from Lemma \ref{lem3}. \end{proof} 

\subsubsection{}

 In this paragraph we extend $B\mapsto \hat K^*(B)$ to a contravariant functor
from smooth manifolds to $\Z/2\Z$-graded groups.
Let $f\colon B_1\to B_2$ be a smooth map. Then we have to define  a map  $f^*\colon \hat{K}^*(B_2)\to \hat K(B_1)$. 
We will first define a map of abelian semigroups
  $f^*\colon G^*(B_2)\to G^*(B_1)$, and then we show that it passes to $\hat{K}$.

If $\cE$ is a geometric family over $B_2$, then we can define an induced geometric family
$f^*\cE$ over $B_1$. The underlying submersion and vector bundle of $f^*\cE$ are given by the cartesian diagram
$$\xymatrix{f^*V\ar[d]\ar[r]&V\ar[d]\\f^*E\ar[d]^{f^*\pi}\ar[r]^F&E\ar[d]^\pi\\B_1\ar[r]^f&B_2}\ .$$
The metric  $g^{T^vf^*\pi}$ and the orientation of $T^vf^*\pi$ are  defined such that
$dF\colon T^vf^*\pi\to F^*T^v\pi$ is an isometry and orientation preserving. The horizontal distribution
$T^hf^*\pi$ is given by the condition that
$dF(T^hf^*\pi)\subseteq F^*T^h\pi$.
Finally, the Dirac bundle structure of $f^*V$ is induced from the Dirac bundle structure on $V$ in the usual way.
For $b_2\in B_2$ let $H_{b_2}$ be the Hilbert space of sections of $V$ along the fibre $E_{b_2}$.
If $b_1\in B_1$ satisfies $f(b_1)=b_2$, then we can identify the Hilbert space of sections
of $f^*V$ along the fibre $f^*E_{b_1}$ canonically with $H_{b_2}$. If $(Q_{b_2})_{b_2\in B_2}$ defines a taming $\cE_t$ of $\cE$, then the family $(Q_{f(b_1)})_{b_1\in B}$ is a taming $f^*\cE_t$ of $f^*\cE$.
We have the following relation of $\eta$-forms:
\begin{equation}\label{eq3}
\eta(f^*\cE_t)=f^*\eta(\cE_t)\ .
\end{equation}
In order to see this note the following facts. 
The geometric family $\cE$ gives rise to a bundle of Hilbert spaces $H(\cE)\to B_2$ with fibres
$H(\cE)_{b_2}=H_{b_2}$, using the notation introduced above.  We have a natural isomorphism $H(f^*\cE)\cong f^*H(\cE)$.
The geometry of $\cE$ together with the taming induces a family of super-connections
$A_s(\cE_t)$ on $H$ parametrized by $s\in (0,\infty)$ (see \cite[4.4.4]{math.DG/0201112} for explicit formulas). By construction we have $f^*A_s(\cE_t)=A_s(f^*\cE_t)$.
The $\eta$-form $\eta(\cE_t)$ is defined as an integral of the trace of a family of operators on $H(\cE)$ (with differential form coefficients)
build from $\partial_s A_s(\cE_t)$ and $A_s(\cE)^2$ \cite[Definition
4.16]{math.DG/0201112}. Equation (\ref{eq3}) now follows from
$f^*\partial_s A_s(\cE_t)=\partial_s A_s(f^*\cE_t)$ and $f^*A_s(\cE)^2=A_s(f^*\cE_t)^2$.

If $(\cE,\rho)\in G(B_2)$, then we define $f^*(\cE,\rho):=(f^*\cE,
f^*\rho)\in G(B_2)$. The pull-back preserves the disjoint union and opposites of geometric families. In particular, 
$f^*$ is a semigroup
homomorphism. Assume now that $(\cE,\rho)$ is paired with $(\cE^\prime,\rho^\prime)$ via the
taming $(\cE\sqcup_{B_2} \cE^{\prime op})_t$.
Then we can pull back the taming as well and get a taming
$f^*(\cE\sqcup_{B_2} \cE^{\prime op})_t$ of $f^*\cE\sqcup_{B_1} f^*\cE^{\prime op}$.
Equation (\ref{eq3}) now implies that
$f^*(\cE,\rho)$ is paired with $f^*(\cE^\prime,\rho^\prime)$
via the taming $f^*(\cE\sqcup_{B_2} \cE^{\prime op})_t$.
 
 Hence, the pull-back $f^*$ passes to $G^*(B)/\sim$, and being a semigroup 
  homomorphism, it induces a map of group completions
  \begin{equation*}
    f^*\colon  \hat{K}^*(B_2)\to \hat{K}^*(B_1).
  \end{equation*}

Evidently, $(\id_B)^*=\hat\id_{\hat K^*(B)}$. 
Let $f^\prime\colon B_0\to B_1$ be another smooth map.
If $\cE$ is a geometric family over $B_2$, then
$(f\circ f^\prime)^*\cE$ is isomorphic to $f^{\prime*}f^*\cE$.
This observation implies that
$$f^{\prime*}f^*=(f\circ f^\prime)^*\colon \hat K^*(B_2)\to \hat K(B_0)\ .$$
This finishes the construction of the contravariant functor
$\hat K^*$ on the level of morphisms.

 \subsection{Natural transformations and exact sequences}
 
\subsubsection{}

In this subsection we introduce the transformations $R,I,a$, and we show that they turn the functor
$\hat K$ into a smooth extension of $(K,\ch_\R)$ in the sense of Definition \ref{ddd556}.

\subsubsection{}

We first define the  natural transformation
$$I\colon \hat K(B)\to K(B)$$
by
$$I[\cE,\rho]:=\ind(\cE)\ .$$
We must show that $I$ is well-defined.
Consider
$\tilde I\colon G(B)\to K(B)$ defined by 
$\tilde I(\cE,\rho):=\ind(\cE)$.
If $(\cE,\rho)$ is paired with $(\cE^\prime,\rho^\prime)$,
then the existence of a taming
$(\cE\sqcup_B \cE^{\prime op})_t$ implies that
$\ind(\cE)=\ind(\cE^\prime)$.
The relation
\begin{equation}\label{eq5}
\ind(\cE\sqcup_B \cE^\prime)=\ind(\cE)+\ind(\cE^\prime)
\end{equation}
together with Lemma \ref{lem3} now implies that
$\tilde I$ descends to $G(B)/\sim$.
The additivity (\ref{eq5}) and the definition of $\hat K(B)$ as the group completion of
$G(B)/\sim$ implies that
$\tilde I$ further descends to the homomorphism
$I\colon \hat K(B)\to K(B)$.

The relation
$\ind(f^*\cE)=f^*\ind(\cE)$  shows that $I$ is a natural transformation of functors
from smooth manifolds to $\Z/2\Z$-graded abelian groups.

\subsubsection{}

\begin{lem}\label{lem:realize_index}
  For every compact  manifold $B$, the transformation $I\colon \hat K(B)\to K(B)$ is
  surjective. 
\end{lem}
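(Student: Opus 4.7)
The plan is straightforward: for each $x\in K^*(B)$, I will exhibit a geometric family $\cE$ with $\ind(\cE)=x$. Then $[\cE,0]\in\hat K^*(B)$ satisfies $I[\cE,0]=x$ by definition of $I$. The two parities are handled separately.

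\textbf{Even case.} Given $x\in K^0(B)$, write $x=[V_+]-[V_-]$ for complex vector bundles $V_\pm$ on $B$. Since $B$ is compact, one can choose hermitean metrics $h^{V_\pm}$ and compatible connections $\nabla^{V_\pm}$, yielding a $\Z/2\Z$-graded geometric bundle $\bV=\bV_+\oplus\bV_-$. By \ref{zerofibre} this determines a geometric family $\cV$ over $B$ with zero-dimensional fibre. The associated fibrewise Dirac operator is just the zero operator on the graded bundle $V_+\oplus V_-$, so in the family of $\Z/2\Z$-graded Hilbert spaces one reads off $\ind(\cV)=[V_+]-[V_-]=x$.

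\textbf{Odd case.} Here I use the suspension isomorphism. Let $p\colon S^1\times B\to B$ be the projection and $i\colon B\hookrightarrow S^1\times B$ the inclusion at a base point. Equip the fibres $S^1$ with the standard Riemannian metric, orientation and spin structure; this gives $p$ a canonical topological $K$-orientation (cf.~\ref{pullpush1}) and a topological push-forward $p_!\colon K^0(S^1\times B)\to K^{-1}(B)=K^1(B)$. The splitting $K^0(S^1\times B)\cong K^0(B)\oplus K^{-1}(B)$ induced by the retraction $(p,i)$ shows that $p_!$ is surjective. Given $y\in K^1(B)$, pick $x\in K^0(S^1\times B)$ with $p_!(x)=y$, and realize $x=\ind(\cV)$ by the even case for a zero-dim geometric family $\cV$ over $S^1\times B$, with underlying $\Z/2\Z$-graded geometric bundle $\bV$. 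Now form the geometric family $\cE$ over $B$ whose total space is $S^1\times B$, whose vertical bundle $T^vp=TS^1\times B$ carries the standard metric, connection and orientation, whose horizontal distribution is $\{0\}\oplus TB$, and whose Dirac bundle is $S(T^vp)\otimes\bV$ (the fibrewise spinor bundle tensored with $\bV$). This is an odd geometric family, and its fibrewise Dirac operator is the standard $S^1$-Dirac operator twisted by the restriction of $\bV$. By the (classical) family index theorem for an $S^1$-bundle of Dirac operators, $\ind(\cE)=p_!\ind(\cV)=p_!(x)=y$, so $I[\cE,0]=y$.

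The only non-trivial point is the identification $\ind(\cE)=p_!(\ind(\cV))$ in the odd case; everything else is a matter of assembling the geometric data. This identification is the $K$-theoretic family index theorem for the canonically $K$-oriented fibration $S^1\times B\to B$, which is a well-known input (and is consistent with the cycle-level push-forward that will be developed in Section~\ref{idwiqdqwwd}).
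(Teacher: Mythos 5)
Your even case matches the paper's proof exactly: represent $x\in K^0(B)$ by a $\Z/2\Z$-graded bundle, choose geometry, and take the associated $0$-dimensional geometric family, whose index is tautologically $x$.

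For the odd case you take a genuinely different route. The paper simply cites \cite[3.1.6.7]{math.DG/0201112} and gives no argument, whereas you give a self-contained construction via the $S^1$-suspension: realize a preimage $x\in K^0(S^1\times B)$ of $y$ under $p_!$ by the even case, and then form the geometric family over $B$ with total space $S^1\times B$ and Dirac bundle $S(T^vp)\otimes\bV$ --- which is exactly $p_!\cV$ in the notation that the paper introduces later in Definition~\ref{ddd7771} --- so that the family index theorem $\ind(p_!\cV)=p_!(\ind(\cV))$ gives $\ind(\cE)=y$. This is correct and arguably more instructive, since it exhibits the cycle explicitly and uses only two inputs that the paper itself relies on elsewhere (surjectivity of $p_!\colon K^0(S^1\times B)\to K^1(B)$, and the equality $\ind\circ p_!=p_!\circ\ind$ invoked in the proof of Proposition~\ref{mainprop}). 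The trade-off is that you invoke the push-forward of geometric families from Section~\ref{idwiqdqwwd} slightly ahead of its formal introduction; this is not a logical gap (the construction of $p_!\cV$ is independent of anything in Section~2), but you might wish to make the forward reference explicit.

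One small point that would merit a remark: when you twist the $S^1$-spinor bundle by the $\Z/2\Z$-graded bundle $\bV=\bV_+\oplus\bV_-$, the fibre is $1$-dimensional, so the resulting Dirac bundle should be regarded as \emph{ungraded} per the paper's definition of an odd geometric family, and one must make sure the $V^-$-summand contributes with the correct sign to the $K^1$-index. In the conventions of Definition~\ref{ddd7771} this is automatic, but an alternative way to sidestep the issue (which keeps everything transparent) is to choose the preimage $x\in K^0(S^1\times B)$ in the form $[V]-[\underline{\C^k}]$ with $V^-$ trivial, or equivalently to work with $\cE_{V^+}\sqcup_B(\cE_{V^-})^{op}$ rather than a graded Dirac bundle; either variant removes any possible sign ambiguity.
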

\begin{proof}
  We discuss even and odd degrees seperately. In the even case, a K-theory
  class $\xi \in K(B)$ is represented by a $\Z/2Z$-graded vector bundle $V$ on 
  $B$. Simply choose a hermitean metric and a connection on $V$. We obtain a
  resulting geometric family $\bV$ on $B$, with underlying submersion
  $\id\colon 
  B\to B$ (i.e.~$0$-dimensional fibres) as in \ref{zerofibre}, and clearly
  $I(\bV)= \ind(\bV)=[V]=\xi\in K^0(B)$.

  For odd degrees, the statement is proved in
  \cite[3.1.6.7]{math.DG/0201112}. 
\end{proof}

\subsubsection{}

We consider the functor
$B\mapsto \Omega^*(B)/\im(d)$, $*\in \{ev,odd\}$ as a 
functor from manifolds to $\Z/2\Z$-graded abelian groups.
We construct a parity-reversing natural transformation
$$a\colon \Omega^*(B)/\im(d)\to \hat K^*(B)$$ by
$$a(\rho):=[\emptyset,-\rho]\ .$$

\subsubsection{}

Let $\Omega_{d=0}^*(B)$ be the group of closed forms of parity $*$ on $B$. 
Again we consider $B\mapsto \Omega_{d=0}^*(B)$ as a functor from smooth manifolds to $\Z/2\Z$-graded abelian groups.
We define a natural transformation
$$R\colon \hat K(B)\to\Omega_{d=0}(B)$$
by 
$$R([\cE,\rho])=\Omega(\cE)-d\rho\ .$$
Again we must show that $R$ is well-defined.
We will use the relation (\ref{detad}) 
%\cite[Theorem 2.2.18]{math.DG/0201112}
%\begin{equation}\label{eq77}d\eta(\cE_t)=\Omega(\cE)\end{equation}
of the $\eta$-form and the local index form, 
and the obvious properties  of local index forms
$$\Omega(\cE\sqcup_B \cE^\prime)=\Omega(\cE)+\Omega(\cE^\prime)\ ,\quad \Omega(\cE^{op})=-\Omega(\cE)\ .$$

We start with
$$\tilde R\colon G(B)\to \Omega(B)\ ,\quad \tilde R(\cE,\rho):=\Omega(\cE)-d\rho\
.$$ 
Since $\Omega(\cE)$ is closed, $\tilde R(\cE,\rho)$ is closed.
If
$(\cE,\rho)$ is paired with $(\cE^\prime,\rho^\prime)$ via the taming
$(\cE\sqcup_B \cE^{\prime op})_t$,
then $\rho-\rho^\prime=\eta((\cE\sqcup_B \cE^{\prime op})_t)$.
It follows \begin{eqnarray*}
R(\cE,\rho)&=&\Omega(\cE)-d\rho\\&=&\Omega(\cE)-d\rho^\prime-d \eta((\cE\sqcup_B \cE^{\prime op})_t)\\&=&
\Omega(\cE)-d\rho^\prime-\Omega(\cE)-\Omega (\cE^{\prime op})\\&=&\Omega(\cE^\prime)-d\rho^\prime\\&=&R(\cE^\prime,\rho^\prime)\ .\end{eqnarray*}
Since $\tilde R$ is additive it descends to
$G(B)/\sim$ and finally to the map $R\colon \hat K(B)\to \Omega_{d=0}(B)$.
It follows from
$\Omega(f^*\cE)=f^*\Omega(\cE)$ that $R$ is a natural transformation.

\subsubsection{}

The natural transformations satisfy the following relations:
\begin{lem}\label{tzw}
\begin{enumerate}
\item $R\circ a=d$
\item $\ch_{dR}\circ I=[\dots]\circ R$. 
\end{enumerate}
\end{lem}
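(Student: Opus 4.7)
The plan is to verify both identities by direct computation on cycles, invoking only the definitions of $R$, $I$, $a$ together with the local index theorem for families (Theorem \ref{thm2}). Both statements reduce to one-line calculations once one unwinds the definitions.

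For assertion (1), I would simply compute on a form $\rho \in \Omega(B)/\im(d)$. By definition $a(\rho) = [\emptyset, -\rho]$, and the empty geometric family has vanishing local index form $\Omega(\emptyset) = 0$. Therefore
\[
R(a(\rho)) = R([\emptyset, -\rho]) = \Omega(\emptyset) - d(-\rho) = d\rho,
\]
which is exactly the claim. Note that this makes sense even though $\rho$ is only defined modulo exact forms, since $d$ kills $\im(d)$, so both sides only depend on the class of $\rho$.

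For assertion (2), I would start from a cycle $(\cE, \rho)$ and compute both sides of the equation in $H_{dR}(B)$. On the one hand, $I[\cE, \rho] = \ind(\cE)$, so by the families index theorem \ref{thm2},
\[
\ch_{dR}(I[\cE, \rho]) = \ch_{dR}(\ind(\cE)) = [\Omega(\cE)].
\]
On the other hand, $R([\cE, \rho]) = \Omega(\cE) - d\rho$, and passing to the de Rham cohomology class
\[
[R([\cE, \rho])] = [\Omega(\cE) - d\rho] = [\Omega(\cE)],
\]
since $d\rho$ is exact. Thus both sides agree.

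The main point to notice is that both parts of the lemma are really statements about well-definedness of the transformations composed with either $d$ or the de Rham map, and the only nontrivial input is Theorem \ref{thm2}; there is no obstacle beyond unwinding definitions. Since $R$, $I$, $a$ were shown above to descend to $\hat K(B)$, these identities hold on all of $\hat K(B)$, not just on particular representatives.
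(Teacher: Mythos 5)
Your proof is correct and is exactly the argument the paper intends: part (1) is a direct unwinding of the definitions of $R$ and $a$, and part (2) is precisely the local index theorem (Theorem \ref{thm2}) combined with the observation that $d\rho$ is exact. The paper states both facts in a single sentence each without spelling out the computation; you have simply made the obvious verification explicit.
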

\begin{proof}
The first relation is an immediate consequence of the definition of $R$ and $a$.
The second relation is the local index theorem \ref{thm2}. \end{proof}

\subsubsection{}
Via the embedding $H_{dR}(B)\subseteq \Omega(B)/\im(d)$,
the Chern character $\ch_{dR}\colon K(B)\to H_{dR}(B)$ can be considered as a natural transformation
$$\ch_{dR}\colon K(B)\to \Omega(B)/\im(d)\ .$$ 
\begin{prop}\label{prop1}
The following sequence is exact:
$$K(B)\stackrel{\ch_{dR}}{\to} \Omega(B)/\im(d)\stackrel{a}{\to} \hat K(B)\stackrel{I}{\to} K(B)\to 0\ .$$
\end{prop}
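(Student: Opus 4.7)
The plan is to verify exactness separately at each of the three positions.

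Surjectivity of $I$ is exactly Lemma \ref{lem:realize_index}. For exactness at $\hat K(B)$, the containment $\im(a)\subseteq\ker(I)$ is immediate from $I[\emptyset,-\omega]=\ind(\emptyset)=0$. Conversely, if $\hat x=[\cE,\rho]$ with $\ind(\cE)=0$, I first stabilize $\cE$ if necessary by summing with a cycle $(\cV\sqcup_B\cV^{op},0)$ for a large enough trivial bundle $\bV$, which represents $0$ by Lemma \ref{lem22} and ensures that a taming $\cE_t$ exists via the criterion recalled in \ref{twdzqdqwdqwdqw} (cf.\ \cite[Lemma 4.6]{math.DG/0201112}). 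Interpreted as a taming of $\cE\sqcup_B\emptyset^{op}$, this $\cE_t$ realizes the pairing of $(\cE,\rho)$ with $(\emptyset,\rho-\eta(\cE_t))$ in the sense of Definition \ref{uuu1}, so $\hat x=a(\eta(\cE_t)-\rho)\in\im(a)$.

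For exactness at $\Omega(B)/\im(d)$ I first treat $\im(\ch_{dR})\subseteq\ker(a)$. Given $\xi\in K(B)$, realize $\xi=\ind(\cE)$ by Lemma \ref{lem:realize_index}; Theorem \ref{thm2} then gives $\ch_{dR}(\xi)=[\Omega(\cE)]$, reducing the task to showing $a(\Omega(\cE))=0$. Parity forces any cycle representative of $a(\Omega(\cE))$ to involve a geometric family of parity opposite to $\cE$, so I aim to produce an auxiliary family $\cF$ of opposite parity such that $\tilde\cE:=\cF\sqcup_B\cF^{op}$ admits two tamings, giving $[\tilde\cE,0]=0$ (via the canonical taming of Lemma \ref{lem1} with $\eta=0$) and $[\tilde\cE,-\Omega(\cE)]=0$ (via a second taming whose $\eta$-form equals $-\Omega(\cE)$ modulo exact). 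Then $a(\Omega(\cE))=[\emptyset,-\Omega(\cE)]=[\tilde\cE,-\Omega(\cE)]-[\tilde\cE,0]=0$. The non-canonical taming is produced by a suspension construction that packages the Dirac data of $\cE$ into $\tilde\cE$, followed by a local index-theoretic computation of the Bismut superconnection as developed in \cite{math.DG/0201112}.

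For $\ker(a)\subseteq\im(\ch_{dR})$: if $a(\omega)=0$, then $R$ yields $d\omega=0$, so $\omega$ is closed. By Lemma \ref{lem3} there exist $(\tilde\cE,\tilde\rho)$ and a taming $t$ of $\tilde\cE\sqcup_B\tilde\cE^{op}$ with $\eta(t)=-\omega$. Compared with the canonical taming $t_{\mathrm{can}}$ of Lemma \ref{lem1} (which has $\eta=0$), an interpolating path $Q_s$ of perturbations, viewed as a family of invertible perturbations over $B\times[0,1]$, defines a relative index class $\xi\in K(B)$ whose Chern character is, by the variation formula for $\eta$-forms, $\ch_{dR}(\xi)=\eta(t)-\eta(t_{\mathrm{can}})=-\omega$ modulo exact. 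Hence $\omega\in\im(\ch_{dR})$. The main obstacle in the whole argument is the construction of the non-canonical taming with prescribed $\eta$-form in the $\im(\ch_{dR})\subseteq\ker(a)$ step, which requires the local index identity translating the Chern--Weil form $\Omega(\cE)$ into an $\eta$-form of a suspended family via explicit manipulation of Bismut superconnections.
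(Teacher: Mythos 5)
Your outline follows the paper's proof closely, and the steps for surjectivity of $I$ and exactness at $\hat K(B)$ are exactly what the paper does, including the stabilization trick to secure a taming. Two points, though.

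First, a minor misattribution: in the $\ker(a)\subseteq\im(\ch_{dR})$ step, the hypothesis $a(\omega)=0$ is an equality in $\hat K(B)$, so you must invoke Lemma~\ref{lem4} (which converts an equality of classes into a ``paired after stabilization'' statement), not Lemma~\ref{lem3} (which only handles the equivalence relation $\sim$ on cycles). After that, your use of the index theorem for boundary-tamed families to identify the class of $\eta(t)-\eta(t_{\mathrm{can}})$ with a Chern character is precisely the paper's argument.

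Second, and more seriously: the inclusion $\im(\ch_{dR})\subseteq\ker(a)$ hinges on producing, for a given $x\in K(B)$, a second taming of a family of the form $\cF\sqcup_B\cF^{op}$ whose $\eta$-form represents $\ch_{dR}(x)$ (equivalently $-\Omega(\cE)$ in your formulation, modulo exact). You flag this as ``the main obstacle'' and gesture at ``a suspension construction\ldots{} followed by a local index-theoretic computation of the Bismut superconnection,'' but this is not a proof. What is actually needed is the content of paragraph~\ref{pap3}: the index theorem for boundary-tamed families gives $\ch_{dR}\circ\ind((\tilde\cE\times[0,1])_{bt})=[\eta(\tilde\cE_t)-\eta(\tilde\cE_{t'})]$, and then one must show that as $\tilde\cE_{t'}$ varies over tamings of a sufficiently large family, the relative index $\ind((\tilde\cE\times[0,1])_{bt})$ hits all of $K(B)$. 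The paper proves this surjectivity by identifying the relative index map with the inverse suspension isomorphism in $KK$-theory and verifying that the classes $[\cE_t,\cE_{t'}]\in KK(\C,C(B)\otimes C_0(\R))$ exhaust the group (via sections of the bundle of groups $GL_1(H(\cE)^+)$). None of this is ``local index theory of the Bismut superconnection''; it is a genuinely separate $KK$-theoretic surjectivity argument, and without it the existence of the taming with prescribed $\eta$-form does not follow. Incidentally, the paper sidesteps your parity discussion and your choice of $\cE$ realizing $\xi$: it simply picks any sufficiently large two-parity family $\cE$ from \ref{pap3}, forms $\tilde\cE=\cE\sqcup_B\cE^{op}$, and targets $\ch_{dR}(x)$ directly as the $\eta$-form, which is slightly cleaner.
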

We give the proof in the following couple of subsection.
%\begin{proof}

\subsubsection{}
We start with the surjectivity of $I\colon\hat K(B)\to K(B)$.
The main point is the fact that every element $x\in K(B)$ can be realized as
the index of a family of Dirac operators by Lemma \ref{lem:realize_index}. So
let $x\in K(B)$ and $\cE$ be a geometric family with
$\ind(\cE)=x$. Then we have $I([\cE,0])=x$. 

\subsubsection{}
Next we show exactness at $\hat K(B)$. For $\rho\in \Omega(B)/\im(d)$  we have
$I\circ a(\rho)=I([\emptyset,-\rho])=\ind(\emptyset)=0$, hence
 $I\circ a=0$.
Consider a class $[\cE,\rho]\in \hat K(B)$ which satisfies $I([ \cE,\rho])=0$.
We can assume that the fibres of the underlying submersion are not
zero-dimensional. Indeed, if necessary, we can replace $\cE$ by $\cE\sqcup_B(\tilde\cE\sqcup_B\tilde \cE^{op})$
for some even family with nonzero-dimensional fibres without changing the
smooth $K$-theory class by Lemma \ref{lem22}.
Since $\ind(\cE)=0$ this family admits a taming $\cE_t$
(\ref{twdzqdqwdqwdqw}). 
Therefore,
$(\cE,\rho)$ is paired with  $(\emptyset,\rho-\eta(\cE_t))$.
It follows that $[\cE,\rho]=a(\eta(\cE_t)-\rho)$.

\subsubsection{}\label{pap3}
In order to prepare the proof of exactness at $\Omega(B)/\im(d)$ in \ref{pap3331}  
we need some facts about the classification of tamings of a geometric family $\cE$.
The main idea is to measure the difference between tamings of $\cE$ using a local index theorem for $\cE\times [0,1]$ (compare \cite[Cor. 2.2.19]{math.DG/0201112}).
Let us assume that the underlying submersion  $\pi\colon E\to B$ decomposes as
$E=E^{ev}\sqcup_B E^{odd}$ such that the restriction of $\pi$ to the even and
odd parts  
is surjective with nonzero- and even-dimensional and odd-dimensional fibres,
and which is such that the
Clifford bundle is nowhere zero-dimensional. 
If $\ind(\cE)=0$, then there exists a taming $\cE_t$ (see \ref{twdzqdqwdqwdqw}).
Assume that $\cE_{t^\prime}$ is a second taming.
Both tamings together induce a boundary taming of the family with boundary
$(\cE\times [0,1])_{bt}$.  In \cite{math.DG/0201112} we have discussed in
detail geometric families with boundaries and the operation of taking a
boundary of a geometric family with boundary.
In the present case   $\cE\times [0,1]$ has two boundary faces labeled by the endpoints $\{0,1\}$ of the interval. We have $\partial_0(\cE\times [0,1])\cong \cE$ and $
\partial_1(\cE\times [0,1])\cong \cE^{op}$. 
A boundary taming  $(\cE\times [0,1])_{bt}$ is given
  by tamings of $\partial_i(\cE\times [0,1])$ for $i=0,1$ (see
  \cite[Def. 2.1.48]{math.DG/0201112}). We use $\cE_t$ at $\cE\times \{0\}$
  and $\cE_{t^\prime}^{op}$ at $\cE\times \{1\}$.

The boundary tamed family has an index
$\ind((\cE\times  [0,1])_{bt})\in K(B)$ which is the obstruction against extending the boundary taming to a taming
\cite[Lemma 2.2.6]{math.DG/0201112}. The construction of the local index form extends to geometric families with boundaries. Because of the geometric product structure of $\cE\times [0,1]$ we have 
$\Omega(\cE\times [0,1])=0$. The index theorem for boundary tamed families 
\cite[Theorem 2.2.18]{math.DG/0201112} gives
$$\ch_{dR}\circ \ind((\cE\times [0,1])_{bt})=[\eta(\cE_t)-\eta(\cE_{t^\prime})]\ .$$
On the other hand, given $x\in K(B)$ and $\cE_t$, since we have chosen our
family $\cE$ sufficiently big, 
there exists a taming $\cE_{t^\prime}$ such that $\ind((\cE\times [0,1])_{bt})=x$.

 To prove this, we argue as follows. Given tamings $\cE_t$ and
 $\cE_{t^\prime}$ we obtain a family $D(\cE_t,\cE_{t^\prime})$ of perturbed
 Dirac operators over
$B\times \R$ which restricts to $D(\cE_t)$ on $B\times \{\beta\}$ for
$\beta<0$, and to $D(\cE_{t^\prime})$ for $\beta\ge 1$, and which interpolates
these families for $\beta\in [0,1]$.
Since the restriction of $D(\cE_t,\cE_{t^\prime})$ is invertible outside of a compact subset of $B\times \R$ (note that $B$ is compact) it gives rise to a class
$[\cE_t,\cE_{t^\prime}]\in KK(\C,C(B)\otimes C_0(\R))$. The Dirac operator on $\R$
provides a class $[\partial]\in KK(C_0(\R),\C)$, and one checks ---using the
method of connections as in \cite[proof of Proposition 2.11]{Bun95} or directly
working with the unbounded picture \cite{BJ}--- that
$D(\cE\times [0,1])_{bt}$ represents the Kasparov product
$$[\cE_t,\cE_{t^\prime}]\otimes_{C_0(\R)}[ \partial]\in KK(\C,C(B))\ .$$
The map
$$K_c(B\times \R)\stackrel{\sim }{\to}KK(\C,C(B)\otimes
C_0(\R))\stackrel{\cdot\otimes_{C_0(\R)}
  [\partial]}{\to}KK(\C,C(B))\stackrel{\sim}{\to} K(B)$$ is by \cite[Paragraph
5, Theorem 7]{0464.46054} the inverse of the
suspension
isomorphism, so in particular surjective.
It remains to see that one can exhaust
$KK(\C,C(B)\otimes C_0(\R))$ with classes of the form
$[\cE_t,\cE_{t^\prime}]$ by varying  the taming $\cE_{t^\prime}$.

We sketch an argument in the even-dimensional case. The odd-dimensional case is similar.
For a separable infinite-dimensional Hilbert space $H$ let $GL_1(H)\subset GL(H)$
be the group of invertible operators of the form $1+K$ with $K\in K(H)$ compact.
The space $GL_1(H)$ has the homotopy type of the classifying space for $K^1$.
The bundle of Hilbert spaces $H(\cE)^+\to B$ gives rise to a (canonically trivial, up to homotopy) bundle of groups
$GL_1(H(\cE)^+)\to B$ by taking $GL_1(\dots)$ fibrewise (it is here where we use that the family is sufficiently big so that $H(\cE)^+$ is infinite-dimensional). Let $\Gamma(GL_1(H(\cE)^+))$ be the topological group of sections.
Then we have an isomorphism
$\pi_0\Gamma(GL_1(H(\cE)^+))\cong K^1(B)$.
Let $x\in  K^1(B)$ be represented by a section $s\in \Gamma(GL_1(H(\cE)^+))$.
We can approximate $s-1$ by a smooth family of smoothing operators. Therefore we can assume that $s-1$ is given by a smooth fibrewise integral kernel (a pretaming in the language of  
\cite{math.DG/0201112})\footnote{Alternatively one can directly produce such a section using the setup described in \cite{MR}.}.

There is a bijection between 
tamings $\cE_{t^\prime}$ and sections $s\in \Gamma(GL_1(H(\cE)^+))$ of this
type which maps  
$\cE_{t^\prime}$ to  $s:=D^+(\cE_t)^{-1} D^+(\cE_{t^\prime})$.
The map which associates the   $KK$-class $[\cE_t,\cE_{t^\prime}]$ to the section
$s$  is just one realization of the suspension isomorphism
$K^1(B)\to K^0_c(B\times \R)$
(using the Kasparov picture of the latter group).
In particular we see that all classes in $K^0_c(B\times \R)$ arise as $[\cE_t,\cE_{t^\prime}]$ for various tamings $\cE_{t^\prime}$.

% At this point we argue as follows (compare \cite[proof of Prop. 3.3.13]{math.DG/0201112}). We start with the suspension isomorphism
% $$K^*(B\times I,B\times\{0,1\})\cong K^{*-1}(B)\ .$$ Next we observe that every element $x\in K^0(B\times I,B\times \partial I)$ can be realized by
% a pair $(F,K)$, where $I=[0,1]$, $F\colon B\times I\to \Fred_0(H)$ und $K\colon B\times \partial I\to K(H)$, where
% $\Fred(H)$ is the space of index zero Fredholm operators on a separable Hilbert space, and $K(H)$ denotes the space of compact operators on $H$, such that
% $F_{|B\times \partial I}+K$ is everywhere invertible.

\subsubsection{}\label{pap3331}

We now show exactness at $\Omega(B)/\im(d)$.
Let $x\in K(B)$. Then we have $a\circ \ch_{dR}(x)=[\emptyset,-\ch_{dR}(x)]$. We choose a geometric family $\cE$ as in \ref{pap3} and set $\tilde \cE:=\cE\sqcup_B \cE^{op}$. In the proof of Lemma \ref{lem1} we have constructed a taming $\tilde \cE_t$ such that $\eta(\tilde \cE_t)=0$. Using the discussion \ref{pap3} we choose a second taming
$\tilde \cE_{t^\prime}$ such that  $\ind((\tilde \cE\times [0,1])_{bt})=-x$,
hence
$\eta(\tilde \cE_{t^\prime})=\ch_{dR}(x)$.
By the taming $\tilde\cE_{t^\prime}$ we see that the cycle
$(\tilde \cE,0)$ pairs with $(\emptyset,-\ch_{dR}(x))$.
On the other hand, via $\tilde\cE_t$ the cycle
$(\tilde \cE,0)$ pairs with $0$.
It follows that $(\emptyset,-\ch_{dR}(x))\sim 0$ and hence $ a\circ\ch_{dR}=0$.

Let now $\rho\in \Omega(B)/\im(d)$ be such that $a(\rho)=[\emptyset,-\rho]=0$.
Then by Lemma \ref{lem4} there exists a cycle $(\hat \cE,\hat \rho)$ such that
$(\hat \cE,\hat \rho-\rho)$ pairs with $(\hat \cE,\hat \rho)$. Therefore there exists a taming
$\cE_{t'}$ of $\cE:=\hat \cE\sqcup_B \hat \cE^{op}$ such that
$\eta(\cE_{t'})=-\rho$.

% Using Lemma \ref{lem1} and Lemma \ref{lem5} we can add a copy $\hat \cE^{op}$ and see that
% $(\cE,\hat \rho-\rho)$ is paired with $(\emptyset,\hat \rho)$, where $\cE=\hat \cE\sqcup \hat \cE^{op}$.
% The taming which induces this relation will be denoted by $\cE_{t^\prime}$. We have
% $\eta(\cE_{t^\prime})=-\rho$. 
Let $\cE_t$ be the taming with vanishing $\eta$-form constructed in the proof of Lemma 
\ref{lem1}. The two tamings induce a boundary taming $(\cE\times [0,1])_{bt}$ such that
$\ch_{dR}\circ \ind((\cE\times [0,1])_{bt})=-\eta(\cE_{t^\prime})=\rho$.
This shows that $\rho$ is in the image of $\ch_{dR}$. \hB

\subsubsection{}

We now improve Lemma \ref{lem3}. This result will be very helpful in verifying well-definedness of maps out of smooth $K$-theory, e.g.~the smooth Chern character.
\begin{lem}\label{lem333}
If $[\cE_0,\rho_0]=[\cE_1,\rho_1]$ and
at least one of these families has a higher-dimensional component, then
$(\cE_0,\rho_0)$ is paired with $(\cE_1,\rho_1)$.
\end{lem}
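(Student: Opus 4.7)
The plan is to upgrade the conclusion of Lemma \ref{lem3} by eliminating the auxiliary stabilizing cycle $(\cE^\prime,\rho^\prime)$ using the tools developed in Proposition \ref{prop1} and the classification of tamings in \ref{pap3}. First, from $I[\cE_0,\rho_0]=I[\cE_1,\rho_1]$ I deduce $\ind(\cE_0\sqcup_B \cE_1^{op})=0$. The hypothesis that one of the $\cE_i$ has a higher-dimensional component propagates to $\cF:=\cE_0\sqcup_B \cE_1^{op}$, so the existence criterion recalled in \ref{twdzqdqwdqwdqw} provides a taming $\cF_t$. This taming witnesses the pairing of $(\cF,0)$ with $(\emptyset,-\eta(\cF_t))$, so in $\hat K(B)$ we have $[\cF,0]=a(\eta(\cF_t))$.

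Second, I would show that $\omega:=\rho_0-\rho_1-\eta(\cF_t)$ lies in $\image(\ch_{dR})$. The shift identity $[\cE,\rho+\sigma]=[\cE,\rho]+a(-\sigma)$, which is immediate from $a(\sigma)=[\emptyset,-\sigma]$ and the definition of addition of cycles, gives
\[
[\cF,\rho_0-\rho_1]=[\cF,0]+a\bigl(-(\rho_0-\rho_1)\bigr)=a\bigl(\eta(\cF_t)-(\rho_0-\rho_1)\bigr)=a(-\omega).
\]
On the other hand, by Lemma \ref{lem22} and the hypothesis $[\cE_0,\rho_0]=[\cE_1,\rho_1]$,
\[
[\cF,\rho_0-\rho_1]=[\cE_0,\rho_0]+[\cE_1^{op},-\rho_1]=[\cE_0,\rho_0]-[\cE_1,\rho_1]=0.
\]
Hence $a(-\omega)=0$, and exactness at $\Omega(B)/\im(d)$ in Proposition \ref{prop1} yields $\omega\in \image(\ch_{dR})$.

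Third, the classification of tamings in \ref{pap3} allows me to modify $\cF_t$ to a new taming $\cF_{t^\prime}$ with a prescribed change of $\eta$-form: I pick $x\in K(B)$ with $\ch_{dR}(x)=-\omega$ and choose $\cF_{t^\prime}$ so that $\ind((\cF\times[0,1])_{bt})=x$. The boundary index theorem of \ref{pap3} then gives $\eta(\cF_t)-\eta(\cF_{t^\prime})=\ch_{dR}(x)=-\omega$, so $\eta(\cF_{t^\prime})=\rho_0-\rho_1$. The taming $\cF_{t^\prime}$ of $\cE_0\sqcup_B \cE_1^{op}$ thus exhibits $(\cE_0,\rho_0)$ as paired with $(\cE_1,\rho_1)$ directly, proving the lemma.

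The main technical obstacle is the realization step in the third paragraph, where one must invoke \ref{pap3} to produce a taming $\cF_{t^\prime}$ realizing the prescribed class $x\in K(B)$. That construction requires the fibrewise Hilbert spaces $H(\cF)^{\pm}$ of the relevant parity to be infinite-dimensional so that $\pi_0\Gamma(GL_1(H(\cF)^{+}))\cong K^1(B)$ and its analogue via suspension exhaust all classes; this is exactly what the higher-dimensional component hypothesis ensures, so no stabilization (as in Lemma \ref{lem3}) is needed.
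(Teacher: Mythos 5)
Your proof is correct, but it takes a noticeably different route from the paper's. The paper begins by invoking Lemma~\ref{lem3} to obtain a stabilized pairing with auxiliary cycle $(\cE^\prime,\rho^\prime)$, then explicitly combines the resulting taming of $\cE_0\sqcup_B\cE^\prime\sqcup_B(\cE_1\sqcup_B\cE^\prime)^{op}$ with a taming of $\cE_0\sqcup_B\cE_1^{op}$ and a null-$\eta$ taming of $\cE^\prime\sqcup_B\cE^{\prime op}$, reads off the discrepancy as some $\ch_{dR}(\xi)$, and finally kills $\xi$ by adjusting the taming via~\ref{pap3}. You instead bypass Lemma~\ref{lem3} entirely: you form $\cF=\cE_0\sqcup_B\cE_1^{op}$, pick any taming $\cF_t$, and use the cycle-level identity $[\cF,\rho_0-\rho_1]=[\cE_0,\rho_0]-[\cE_1,\rho_1]=0$ together with exactness at $\Omega(B)/\im(d)$ in Proposition~\ref{prop1} to deduce directly that $\rho_0-\rho_1-\eta(\cF_t)\in\im(\ch_{dR})$; then~\ref{pap3} closes the gap exactly as in the paper. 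Your version is the cleaner and more modular argument, since it reuses the already-established exact sequence instead of redoing the taming bookkeeping by hand; the paper's version is more explicit at the cycle level. Both rely on~\ref{pap3} for the final adjustment and on the higher-dimensional component hypothesis for the same reason you identify (so that the Hilbert space is infinite-dimensional and every class of $K^1(B)$, respectively $K^0_c(B\times\R)$, is realized by a section of $GL_1$). Note that there is no circularity: Proposition~\ref{prop1} and~\ref{pap3}/\ref{pap3331} are established before Lemma~\ref{lem333}, so you are entitled to cite the exact sequence here.
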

\begin{proof}
By Lemma \ref{lem3} there exists $[\cE^\prime,\rho^\prime]$ such that
$(\cE_0,\rho_0)+(\cE^\prime,\rho^\prime)$ is paired with $(\cE_1,\rho_1)+(\cE^\prime,\rho^\prime)$ by a taming
$\left(\cE_0\sqcup_B\cE^\prime\sqcup_B (\cE_1\sqcup_B \cE^\prime)^{op}\right)_t$.
We have
$$\rho_1-\rho_0=\eta\left((\cE_0\sqcup_B\cE^\prime\sqcup_B (\cE_1\sqcup_B \cE^\prime)^{op})_t\right)\ .$$
Since $\ind(\cE_0)=\ind(\cE_1)$ there exists a taming
$(\cE_0\sqcup_B \cE_1^{op})_t$. Furthermore, there exists a taming
$(\cE^\prime\sqcup_B\cE^{\prime op})_t$
with vanishing $\eta$-invariant (see the proof of Lemma \ref{lem1}). These two tamings combine
to  a taming 
$\left(\cE_0\sqcup_B\cE^\prime\sqcup_B (\cE_1\sqcup_B
  \cE^\prime)^{op}\right)_{t'}$. 
There exists $\xi\in K(B)$ such that
\begin{eqnarray*}\ch_{dR}(\xi)&=& 
\eta\left((\cE_0\sqcup_B\cE^\prime\sqcup_B (\cE_1\sqcup_B
  \cE^\prime)^{op})_t\right)-\eta\left((\cE_0\sqcup_B\cE^\prime\sqcup_B
  (\cE_1\sqcup_B
  \cE^\prime)^{op})_{t'}\right)\\&=&\eta\left((\cE_0\sqcup_B\cE^\prime\sqcup_B
(\cE_1\sqcup_B \cE^\prime)^{op})_t\right)-\eta\left((\cE_0\sqcup_B
\cE_1^{op})_t\right) \ . 
\end{eqnarray*}
We can now adjust (using \ref{pap3}) the taming $(\cE_0\sqcup_B \cE_1^{op})_t$ such that we can choose $\xi=0$.
It follows that $\rho_1-\rho_0=\eta\left((\cE_0\sqcup_B
  \cE_1^{op})_t\right)$.
\end{proof}

\subsection{Comparison with the Hopkins-Singer theory and the flat theory}

\subsubsection{}

An important consequence of the axioms \ref{ddd556} for a smooth generalized cohomology theory is the  homotopy formula.
Let $\hat h$ be a smooth extension of a pair $(h,c)$.
Let $x\in \hat h([0,1]\times B)$, and let $i_k\colon B\to \{k\}\times B\subset
[0,1]\times B$, $k=0,1$, be the inclusions.
\begin{lem}\label{lem23}
$$i_1^*(x)-i_0^*(x)=a\left(\int_{[0,1]\times B/B} R(x)\right)\ .$$
\end{lem}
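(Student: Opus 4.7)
The plan is to verify the identity by decomposing $x$ into two special types for which the formula becomes essentially tautological. The key observation is that every $x\in \hat h([0,1]\times B)$ can be written as
$$
x = \pi^* y + a(\rho),
$$
where $\pi\colon [0,1]\times B\to B$ is the projection, $y\in \hat h(B)$, and $\rho\in \Omega([0,1]\times B,N)/\im(d)$. To establish this decomposition I would argue that $\pi$ is a smooth homotopy equivalence with homotopy inverse $i_0$, so by homotopy invariance of the generalized cohomology theory $h$ the pull-back $\pi^*\colon h(B)\to h([0,1]\times B)$ is an isomorphism. Writing $I(x)=\pi^*\bar y$ and lifting $\bar y$ to some $y\in \hat h(B)$ using surjectivity of $I$ (part of the exact sequence (\ref{exax})), one finds $I(x-\pi^*y)=0$ and therefore $x-\pi^*y=a(\rho)$ by exactness again.

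Both sides of the claimed identity are additive in $x$, so it will suffice to check the formula separately on the two summands. For $x=\pi^*y$ the relations $\pi\circ i_k=\id_B$ give $i_1^*x-i_0^*x=y-y=0$, and since $R(\pi^*y)=\pi^*R(y)$ has no $dt$-factor its fibre integral is zero, so the right-hand side vanishes too. For $x=a(\rho)$ naturality of $a$ yields $i_k^*a(\rho)=a(i_k^*\rho)$, making the left-hand side $a(i_1^*\rho-i_0^*\rho)$; axiom (\ref{drgl}) gives $R(x)=d\rho$, and Stokes' theorem for the fibre integration along $\pi$, applied modulo exact forms on $B$, yields
$$
\int_{[0,1]\times B/B}d\rho \equiv i_1^*\rho-i_0^*\rho \pmod{\im(d)},
$$
so the right-hand side also equals $a(i_1^*\rho-i_0^*\rho)$.

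The main conceptual step will be the decomposition $x=\pi^*y+a(\rho)$; once this is in hand the rest is a routine calculation using naturality, the axiom $R\circ a=d$, and fibre-wise Stokes. I expect no serious obstacle beyond this. The more obvious strategy of computing $R$ and $I$ of the difference $z:=i_1^*x-i_0^*x-a(\int_{[0,1]\times B/B}R(x))$ and invoking exactness does not close the argument on its own, because $\ker R\cap \ker I$ need not vanish for a general smooth extension; it is precisely the decomposition that sidesteps this.
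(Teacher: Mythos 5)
Your proof is correct and follows essentially the same route as the paper: decompose $x=\pi^*y+a(\rho)$ using homotopy invariance of $h$, surjectivity of $I$, and exactness of (\ref{exax}), then check the formula on the two summands (both sides vanish on $\pi^*y$; Stokes together with $R\circ a = d$ handles $a(\rho)$). Your closing remark about why the naive $R$-and-$I$ argument does not close by itself is a good observation, but the core argument matches the paper's.
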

\begin{proof}
Let $p\colon  [0,1]\times B\to B$ denote the projection.
If $x=p^*y$, then on the one hand the left-hand side of the equation is zero.
On the other hand, $R(x)=p^*R(y)$ so that
$\int_{[0,1]\times B/B} R(x)=0$, too.

Since $p$ is a homotopy equivalence there exists $\bar y \in h(B)$ such that $I(x)=p^*(\bar y)$.
Because of the surjectivity of $I$ we can choose $y\in \hat h(B )$ such that
$I(y)=\bar y$. It follows that $I(x-p^*y)=0$. By the exactness of (\ref{exax})   there exists a form
$\omega\in \Omega(I\times B)/\im(d)$ such that $x-p^*y=a(\omega)$.
By Stokes' theorem we have the equality
$i_1^*\omega-i_0^*\omega=\int_{[0,1]\times B/B} d\omega$
in $\Omega(B)/\im(d)$.
By (\ref{drgl}) we have
$d\omega=R(a(\omega))$. It follows that
$$\int_{[0,1]\times B/B} d\omega=\int_{[0,1]\times B/B} R(a(\omega))=\int_{[0,1]\times B/B} R(x-p^*y)
=\int_{[0,1]\times B/B} R(x)\ .$$
This implies
$$i_1^*x-i_0^*x=i_1^*a(\omega)-i_0^*a(\omega)=a\left(i_1^*\omega-i_0^*\omega)=a(\int_{[0,1]\times
    B/B} R(x)\right)\ .$$
\end{proof}

\subsubsection{}

Let $\hat h$ be a smooth extension of a pair $(h,c)$. We use the notation introduced in \ref{axi1}.
\begin{ddd}
The associated flat functor is defined by 
$$B\mapsto \hat h_{flat}(B):=\ker \{ R\colon \hat h(B)\to \Omega_{d=0}(B,N)\}\ .$$
\end{ddd}
Recall that a functor $F$ from smooth manifolds is homotopy invariant, if for
the two embeddings $i_k\colon B\to \{k\}\times B\to [0,1]\times B$, $k=0,1$,
we have $F(i_0)=F(i_1)$.
As a consequence of the homotopy formula Lemma \ref{lem23} the functor $\hat h_{flat}$ is homotopy invariant.

In interesting cases it is part of a generalized cohomology theory.
The map $c\colon h\to HN$ gives rise to a cofibre sequence in the stable homotopy category
$$h\stackrel{c}{\to}HN\to h_{N,\R/\Z}$$ which defines a spectrum $
h_{N,\R/\Z}$. 
%Wn the case, that $R=h^*\otimes \R$ we write $h\R/\Z:=h_{R,\R/\Z}$.
\begin{prop}\label{hsrz}
If $\hat h$ is the Hopkins-Singer extension of $(h,c)$, then we have a natural isomorphism 
$$\hat h_{flat}(B)\cong h_{N,\R/\Z}(B)[-1]\ .$$
\end{prop}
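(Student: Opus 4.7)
The strategy is to combine general structural consequences of the smooth extension axioms with the specific cocycle description of the Hopkins--Singer model in order to produce a natural isomorphism.

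First, I would extract from Definition \ref{ddd556} alone a natural short exact sequence
\begin{equation*}
0 \to H^{n-1}(B,N)/c(h^{n-1}(B)) \xrightarrow{\bar a} \hat h^n_{flat}(B) \xrightarrow{I} \ker\bigl(c\colon h^n(B) \to H^n(B,N)\bigr) \to 0.
\end{equation*}
Exactness on the right uses that $c\circ I$ factors through the de~Rham class of $R$, which vanishes on flat classes; surjectivity follows by lifting $y\in\ker c$ to some $\tilde x\in\hat h^n(B)$ and then subtracting $a(\alpha)$, where $d\alpha=R(\tilde x)$ exists because $[R(\tilde x)]=c(y)=0$. Exactness in the middle follows from (\ref{exax}) together with the curvature formula (\ref{drgl}): $a(\omega)$ is flat iff $\omega$ is closed, and $a(\omega)=0$ iff $\omega$ lies in $c(h^{n-1}(B))$ modulo exact forms.

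Second, the cofiber sequence $h\xrightarrow{c}HN\to h_{N,\R/\Z}$ induces the long exact sequence in cohomology, from which one extracts the analogous short exact sequence with middle term $h^{n-1}_{N,\R/\Z}(B)$ and with the same outer terms and their natural inclusion and projection.

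Third, I would build a natural transformation
\begin{equation*}
\Phi\colon \hat h^n_{flat}(B)\longrightarrow h^{n-1}_{N,\R/\Z}(B)
\end{equation*}
directly from the Hopkins--Singer cocycle model. There, a class in $\hat h^n(B)$ is represented by a triple $(c,h,\omega)$ consisting of a cocycle $c$ for $h$-theory in degree $n$, a closed $N$-valued $n$-form $\omega$, and a cochain $h$ of degree $n-1$ in $HN$ whose coboundary witnesses $c_*(c)=\omega$. Flatness means $\omega=0$, so the remaining pair $(c,h)$ satisfies $dh=c_*(c)$ and is precisely a cocycle in the mapping cone of $c\colon h\to HN$; it therefore represents a class in $h^{n-1}_{N,\R/\Z}(B)$. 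By construction $\Phi$ is compatible with both short exact sequences above, inducing the identity on the outer terms, so the five lemma yields that $\Phi$ is an isomorphism, and naturality in $B$ is immediate from the cocycle definition.

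The principal technical point will be verifying that $\Phi$ descends to the Hopkins--Singer equivalence relation restricted to the flat subfunctor. This amounts to checking that the differential-form data appearing in the equivalence relation contribute trivially once $\omega=0$ is imposed, which is a routine but careful unwinding of the cocycle model; alternatively, one can phrase the Hopkins--Singer construction as a homotopy pullback of $h\to HN\leftarrow\Omega_{cl}(-,N)$ and then observe that setting the right-hand vertex to zero produces exactly the homotopy fiber of $c$, namely $h_{N,\R/\Z}[-1]$, which gives a model-independent argument.
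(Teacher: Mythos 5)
Your overall strategy is sound and is essentially the standard argument: you extract the short exact sequence
\begin{equation*}
0 \to H^{n-1}(B,N)/c(h^{n-1}(B)) \xrightarrow{\bar a} \hat h^n_{flat}(B) \xrightarrow{I} \ker\bigl(c\colon h^n(B) \to H^n(B,N)\bigr) \to 0
\end{equation*}
from the axioms, extract the parallel sequence for $h^{n-1}_{N,\R/\Z}(B)$ from the cofiber long exact sequence, build a comparison map $\Phi$ from the Hopkins--Singer cocycle model, and invoke the five lemma. Your derivation of the first sequence is correct, and the compatibility of $\Phi$ with the outer terms is a straightforward cocycle computation. Note that the paper itself does not supply a proof of this proposition at all: it simply remarks that the special case $N = h^*\otimes_\Z\R$ is \cite[(4.57)]{MR2192936} and tacitly treats the general case as an identical generalization. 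So you are filling in an argument the paper omits, and what you write is essentially the argument Hopkins and Singer give.

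One caution, however, concerning your ``alternative, model-independent'' route. Passing to the homotopy pullback of $h \to HN \leftarrow \Omega_{cl}(-,N)$ and then ``setting the right-hand vertex to zero'' does \emph{not} immediately produce the homotopy fiber of $c$. The flat subfunctor $\ker(R)$ is the \emph{strict} fiber over $0$ of the projection $\hat h \to \Omega_{cl}$, whereas $h_{N,\R/\Z}[-1]$ is the \emph{homotopy} fiber of $c$, and the long exact sequence of the fibration $\hat h \to \Omega_{cl}$ interposes a quotient by the image of $\pi_1$ of the space of closed forms between these two. It is precisely the accounting for this $\pi_1$-contribution that produces the quotient $H^{n-1}(B,N)/c(h^{n-1}(B))$ --- rather than all of $H^{n-1}(B,N)$ --- on the left of your SES, so the ``model-independent'' shortcut does not actually spare you the SES bookkeeping. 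Stick with the explicit cocycle comparison and the five lemma, which is both correct and what the cited Hopkins--Singer argument amounts to.
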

In the special case that
$N=h^*\otimes_\Z \R$ this is \cite[(4.57)]{MR2192936}.

\subsubsection{}

In the case of $K$-theory and the Chern character $\ch_\R\colon K\to H(K^*\otimes_\Z\R)$
one usually writes $$K\R/\Z:=h_{K^*\otimes_\Z\R,\R/\Z}\ .$$  
The functor $B\mapsto K\R/\Z(B)$ is called $\R/\Z$-$K$-theory.  Since $\R/\Z$ is an injective abelian group
we have a universal coefficient formula
\begin{equation}\label{univc}
K\R/\Z^*(B)\cong \Hom(K_*(B),\R/\Z)\ ,
\end{equation}
where $K_*(B)$ denotes the $K$-homology of $B$. A geometric interpretation of $\R/\Z$-$K$-theory
 was first proposed in  \cite{MR913964}, \cite{MR1478702}. In these reference
 it was called multiplicative $K$-theory. The analytic construction
of the push-forward has been given in  \cite{MR1312690}.

\subsubsection{}

\begin{prop}\label{rzind}
There is a natural  isomorphism of functors
$\hat K_{flat}(B)\cong  K\R/\Z(B)[-1]$.
\end{prop}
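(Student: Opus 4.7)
The plan is to derive a short exact sequence for $\hat K_{flat}^*(B)$ from Definition \ref{ddd556}, recognize it as the Bockstein exact sequence describing $K\R/\Z^{*-1}(B)$, and construct a natural comparison isomorphism following the strategy outlined in \ref{lottref}.

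First, a diagram chase using $R\circ a=d$, $\ch_{dR}\circ I=[\,\cdot\,]\circ R$ (Lemma \ref{tzw}), the exact sequence (\ref{exax}), and surjectivity of $I$ (Lemma \ref{lem:realize_index}) yields
\begin{equation*}
0\to \frac{H^{*-1}(B,K^*\otimes\R)}{\ch_\R(K^{*-1}(B))} \stackrel{a}{\to} \hat K_{flat}^*(B) \stackrel{I}{\to} \ker\bigl(\ch_\R\colon K^*(B)\to H^*(B,K^*\otimes\R)\bigr)\to 0.
\end{equation*}
Indeed, if $I(x)=0$ and $R(x)=0$, write $x=a(\omega)$ by (\ref{exax}); then $d\omega=R(x)=0$, and the indeterminacy in $\omega$ is precisely $\im(\ch_\R)$ modulo exact forms. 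Surjectivity of $I$ onto $\ker(\ch_\R)$ is obtained by lifting a class $y$ via Lemma \ref{lem:realize_index} to $\tilde y\in\hat K(B)$ and subtracting $a$ of a primitive of the exact form $R(\tilde y)$. The Bockstein long exact sequence of the cofibre sequence $K\stackrel{\ch_\R}{\to}H(K^*\otimes\R)\to K\R/\Z$ gives the same short exact sequence with $K\R/\Z^{*-1}(B)$ in the middle.

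For the natural comparison $\phi\colon\hat K_{flat}^*(B)\to K\R/\Z^{*-1}(B)$ I would use the analytic model of $K\R/\Z$ from \cite{MR913964, MR1478702, MR1312690}, whose cycles are pairs $(\cV,\rho)$ with $\cV$ a geometric family of $0$-dimensional fibre and $d\rho=\Omega(\cV)$. Every flat class in $\hat K^*(B)$ is of this form: given $[\cE,\rho]\in\hat K_{flat}^*(B)$, use Lemma \ref{lem:realize_index} to pick a $0$-dimensional $\cV$ with $\ind(\cV)=\ind(\cE)$; then $\cE\sqcup_B\cV^{op}$ admits a taming $\tau$, the pairing relation gives $[\cE,\rho]=[\cV,\rho-\eta(\tau)]$, and flatness forces $d(\rho-\eta(\tau))=\Omega(\cV)$. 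Defining $\phi[\cV,\rho']$ as the corresponding class in the analytic model, one checks that $\phi$ restricts to the identity on the left summand $a(H^{*-1}/\im\ch_\R)$ and induces the standard Bockstein on indices; the five lemma then shows $\phi$ is an isomorphism, and naturality in $B$ follows from the pullback compatibility of all ingredients, in particular of $\eta$-forms via (\ref{eq3}).

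The main obstacle is verifying that our equivalence relation $\sim$, restricted to $0$-dimensional cycles, agrees with the equivalence relation of the analytic model: two such cycles may be $\sim$-related through a chain of pairings that passes through geometric families with higher-dimensional fibre, and one must show that this does not change the class in the analytic model. This reduction rests on the boundary-taming analysis of \ref{pap3}: any two tamings of a given $\cV\sqcup_B\cV^{\prime op}$ differ by a $K$-theory class whose Chern character, the difference of $\eta$-forms, is already absorbed into the analytic model's equivalence relation. Combined with the additivity of $\eta$-forms \cite[Lemma 4.12]{math.DG/0201112} and Lemma \ref{lem333}, this gives the required well-definedness of $\phi$.
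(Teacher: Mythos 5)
Your approach is genuinely different from both of the paper's sketched arguments. The paper's first argument extends $\hat K_{flat}$ to a reduced cohomology theory using the suspension isomorphism and then identifies it with $K\R/\Z(\cdot)[-1]$ via a pairing with $K$-homology and the universal coefficient formula (\ref{univc}), concluding by comparing coefficients; the second argument invokes uniqueness of natural isomorphisms to the Hopkins--Singer model. You instead build a cycle-level comparison map on zero-dimensional cycles and close the argument with the five lemma applied to the short exact sequences coming from (\ref{exax}) and the Bockstein of the cofibre sequence. This is in fact closest to the remark the paper makes in \ref{lottref}, not to either of its two proof sketches. Your derivation of the short exact sequence is correct, and the five-lemma scheme is sound in principle once the comparison map is in hand.

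The genuine gap is in the odd degree. A zero-dimensional geometric family has $\dim(T^v\pi)=0$ and is therefore \emph{even}; hence the reduction ``every flat class is represented by a zero-dimensional cycle'' only applies to $\hat K^0_{flat}$, and the analytic model of \cite{MR1312690} you use is a model of $K\R/\Z^{-1}$, not of $K\R/\Z^{0}$. The odd case of Lemma \ref{lem:realize_index} does not produce a zero-dimensional family, so your construction of $\phi$ yields at best an isomorphism $\hat K^{ev}_{flat}(B)\cong K\R/\Z^{ev}(B)[-1]$ and says nothing about $\hat K^1_{flat}(B)\cong K\R/\Z^{0}(B)$. The paper is explicit that a further step is needed here (the suspension isomorphism for $K\R/\Z$ together with the integration $(\hat\pr_2)_!$ for $\hat K$), and your argument needs the analogue of that step. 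Separately, the well-definedness of $\phi$ -- independence of the chosen zero-dimensional representative, including invariance under chains of pairings passing through higher-dimensional families -- is the technical heart of the even case; you flag this correctly and the inputs you cite (\ref{pap3}, Lemma \ref{lem333}, additivity of $\eta$-forms) are the right ones, but this step remains at the level of a sketch, as it does in the paper.
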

\begin{proof}
{In the following (the paragraphs \ref{twezrwe1}, \ref{twezrwe}) we sketch two conceptually very different arguments. For 
details we refer to \cite[Section 5, Section 7]{bs2009}.
}
\subsubsection{}\label{twezrwe1}
In the first step one extends $\hat K_{flat}$ to a reduced cohomology theory on smooth manifolds.
The reduced group of a pointed manifold is defined  as the kernel of the restriction
to the point. The missing structure is  a suspension
isomorphism. It is induced by the map
$\hat K(B)\to \hat K(S^1\times B)$
given by $x\mapsto \pr_1^*x_{S^1}\cup \pr_2^*x$, where $x_{S^1}\in  \hat K^1(S^1)$ is  defined in
Definition \ref{dddxs}, and the $\cup$-product is defined below in \ref{proddefin}. The inverse is induced by the push-forward
$(\hat\pr_2)_!\colon \hat K(S^1\times B)\to \hat K(B)$
along $\pr_2\colon S^1\times B\to B$ introduced below in \ref{ddd1}.
Finally one verifies the exactness of mapping cone sequences.

In order to identify the resulting reduced cohomology theory with $\R/\Z$-$K$-theory one constructs  a pairing between $\hat K_{flat}$ and $K$-homology, using an analytic model as in  
\cite{MR1312690}. This pairing, in view of the universal coefficient formula (\ref{univc})  
%$K\R/\Z$,
%$$K\R/\Z^*(B)\cong \Hom(K_*(B),\R/\Z)\ ,$$ 
gives a map of cohomology theories $\hat K_{flat}(B)\to K\R/\Z(B)[ -1]$ which
is an isomorphism by comparison of coefficients.
\subsubsection{}\label{twezrwe}

{The second argument is based on the comparison with the Hopkins-Singer theory.
We let $B\mapsto \hat K_{HS}(B)$ denote the version of the smooth $K$-theory functor
defined by Hopkins-Singer  \cite{MR2192936}. In \cite[Section 5]{bs2009} we show that there is a unique natural isomorphism
$\hat K^{ev}\stackrel{\sim}{\to} \hat K^{ev}_{HS}$.
In view of  \ref{hsrz} we get the isomorphism
$$\hat K_{flat}^{ev}(B)\stackrel{\sim}{\to} \hat K^{ev}_{HS,flat}(B)\stackrel{\sim}{\to}
K\R/\Z^{ev}[-1](B)\ .$$
In \cite{bs2009} we furthermore show that using the integration
for $\hat K$ and the suspension isomorphism for $K\R/\Z$ this isomorphism extends to
the odd parts.}

% For both extensions
% $\hat K$ and $\hat K_{HS}$ a push-forward  over the fibre of $\pr_2\colon S^1\times B\to B$ is defined and satisfies
% $(\hat\pr_2)_!\circ \pr_2^*=0$ (Corollary \ref{pullpush0}). By Theorem
% \ref{wieth}
% this fixes a unique natural isomorphism
% $$\hat K(B)\stackrel{\sim}{\to} \hat K_{HS}(B)$$
%  which is compatible with this integration.
% Proposition \ref{rzind} now follows from Proposition \ref{hsrz}.
 \end{proof}
% 
% Furthermore,
% there is a unique natural isomorphism
% $$\hat K^1(B)\stackrel{\sim}{\to} \hat K^1_{HS}(B)$$
% compatible with $ (a,R,I)$ and $(a_{HS},R_{HS},I_{HS})$
% such that in addition the diagram
% $$\xymatrix{\hat K^1(S^1\times B)\ar[d]^{p_!}\ar[r]&\hat K^1_{HS}(S^1\times B)\ar[d]^{p_!}\\
% \hat K^0(B)\ar[r] &\hat K^0_{HS}(B)}$$
% commutes, where $p_!$ is integration over the fibre of the projection $p\colon S^1\times B\to B$
% with its natural smooth $K$-orientations in our, and the Hopkins-Singer sense (see Definition \ref{ddd1} below).
% In other words, we have the following theorem:
% \begin{theorem}[Wiethaup 2006]
% There is a unique isomorphism of smooth cohomology theories 
% $$\hat K\stackrel{\sim}{\to} \hat K_{HS}$$
% which is compatible with the integration over the fibre $S^1\times B\to B$.
% \end{theorem}
% In the framework of Hopkins-Singer it is clear that
% $$K_{HS,flat}(B):=\ker(R_{HS}\colon \hat K_{HS}(B)\to \Omega(B))$$
% is isomorphic to $K\R/\Z(B)$.
% For a detailed argument we refer to  \cite{wiet}.
%  

\subsubsection{}

Many of the interesting examples given in Section \ref{sec5} can be understood
(at least to a large extend) already at this stage. We recommend to look them
up now, if one is less interseted in structural questions. This should also
serve as a motivation for the constructions in Sections \ref{idwiqdqwwd} and
\ref{jkjdkdqwdqdwd}.

\section{Push-forward}\label{idwiqdqwwd}

\subsection{$K$-orientation}

\subsubsection{}\label{fuwefiu}

The groups $Spin(n)$ and $Spin^c(n)$ fit into exact sequences
$$
\begin{CD}
%%   \xymatrix{1\ar[r]&\ar[r] \Z/2\Z\ar[d]\ar[r]& Spin(n)\ar[r]\ar[d]&
%%     SO(n)\ar[r]\ar@{=}[d]& 1\\1\ar[r]&\ar[r] U(1)\ar[r]^i&
  %%     Spin^c(n)\ar[r]^\pi& SO(n)\ar[r]& 1}
  1 @>>>\Z/2\Z @>>> Spin(n) @>>>SO(n) @>>> 1\\
  && @VVV @VVV @VV{\id}V\\
  1 @>>> U(1) @>{i}>> Spin^c(n) @>{\pi}>> SO(n) @>>> 1
\end{CD}
$$
$$1\to \Z/2\Z\to Spin^c(n)\stackrel{(\lambda,\pi)}{\to} U(1)\times SO(n)\to 1$$
such that $\lambda\circ i\colon U(1)\to U(1)$ is a double covering.
Let $P\to B$ be an $SO(n)$-principal bundle. We let $Spin^c(n)$ act on $P$ via the projection $\pi$.
\begin{ddd}\label{spincred}
A $Spin^c$-reduction of $P$ is a diagram
$$\xymatrix{Q\ar[dr]\ar[rr]^f&&P\ar[dl]\\&B&}\ ,$$
where $Q\to B$ is a $Spin^c(n)$-principal bundle and $f$ is $Spin^c(n)$-equivariant.
\end{ddd}

\subsubsection{}

Let $p\colon W\to B$ be a proper submersion with vertical bundle $T^vp$.
We assume that $T^vp$ is oriented. A choice of a vertical metric $g^{T^vp}$ gives an $SO$-reduction $SO(T^vp)$ of the frame bundle $\Fr(T^vp)$, the bundle of oriented orthonormal frames.

Usually one calls a map between manifolds $K$-oriented if its stable normal bundle is equipped with a 
$K$-theory Thom class. It is a well-known fact \cite{MR0167985}  that this is equivalent to the choice of a $Spin^c$-structure
on the stable normal bundle. Finally, isomorphism classes of choices of $Spin^c$-structures on $T^vp$ and the stable normal bundle
of $p$ are in bijective correspondence. So for the purpose of the present paper we adopt the following definition.
\begin{ddd}\label{topkor}
A topological $K$-orientation of $p$ is a $Spin^c$-reduction of  $SO(T^vp)$.
\end{ddd} 

In the present paper we prefer to work with $Spin^c$-structures on the vertical bundle since it directly gives
rise to a family of Dirac operators along the fibres.
The goal of this section is to introduce the notion of smooth $K$-orientation which refines a given topological $K$-orientation.

\subsubsection{}\label{ldefr}

In order to define such a family of Dirac operators we must choose additional geometric data.
If we choose a horizontal distribution $T^hp$, then we get a connection
$\nabla^{T^vp}$ which restricts to the Levi-Civita connection along the
fibres. Its  construction goes as follows. First one chooses a metric $g^{TB}$
on $B$. It induces a horizontal metric $g^{T^hp}$ via the isomorphism
$dp_{|T^hp}\colon T^hp\stackrel{\sim}{\to}
p^*TB$.  We get a metric $g^{T^vp}\oplus g^{T^hp}$ on $TW\cong T^vp\oplus T^hp$ which gives rise to a Levi-Civita connection. Its projection to $T^vp$ is $\nabla^{T^vp}$. Finally one checks that this connection is independent of the choice of $g^{TB}$.

\subsubsection{}

The connection $\nabla^{T^vp}$ can be considered as an $SO(n)$-principal bundle connection on the frame bundle $SO(T^vp)$. 
In order to define a family of Dirac operators, or better, the 
Bismut super-connection we must choose a $Spin^c$-reduction $\tilde \nabla$ of $\nabla^{T^vp}$, i.e. a connection on the $Spin^c$-principal bundle $Q$ which reduces to  $\nabla^{T^vp}$.  If we think of the connections $\nabla^{T^vp}$ and $\tilde \nabla$ in terms of horizontal distributions $T^hSO(T^vp)$ and $T^hQ$, then we say that  $\tilde \nabla$ reduces to $\nabla^{T^vp}$ if $d\pi(T^hQ)=\pi^*(T^hSO(T^vp))$.

\subsubsection{}\label{origeo}

The $Spin^c$-reduction of $\Fr(T^vp)$ determines a spinor bundle $S^c(T^vp)$, and the choice of $\tilde \nabla$ turns $S^c(T^vp)$ into a family of Dirac bundles.

In this way the choices of the $Spin^c$-structure and $(g^{T^vp},T^hp,\tilde \nabla)$ turn
$p\colon W\to B$ into a geometric family $\cW$.

\subsubsection{}\label{l2def}

Locally on $W$ we can choose a $Spin$-structure on $T^vp$ with associated spinor bundle $S(T^vp)$. Then we can write $S^c(T^vp)=S(T^vp)\otimes L$ for a hermitean line bundle $L$ with connection. The spin structure is given
by a $Spin$-reduction $q\colon R\to SO(T^vp)$ (similar to \ref{spincred}) which can actually be considered as a subbundle of $Q$. Since $q$ is a double covering and thus has discrete fibres, the connection $\nabla^{T^vp}$ (in contrast to the $Spin^c$-case) has a unique lift
to a $Spin(n)$-connection on $R$. The spinor bundle $S(T^vp)$ is associated to $R$ and has an induced connection. In view of the relations of the groups \ref{fuwefiu} the square of the locally defined line bundle
$L$ is the globally defined bundle $L^2\to W$ associated to the $Spin^c$-bundle $Q$ via the representation $\lambda\colon Spin^c(n)\to U(1)$. The connection $\tilde \nabla$ thus induces a connection on $\nabla^{L^2}$, and hence a connection on the locally defined square root $L$.
Note that vice versa, $\nabla^{L^2}$ and $\nabla^{T^vp}$ determine $\tilde \nabla$ uniquely.

\subsubsection{}

We introduce the form \begin{equation} \label{uzu111} c_1(\tilde \nabla):=\frac{1}{4\pi i}R^{L^2}\end{equation} 
which would be the Chern form of the bundle $L$ in case of a global $Spin$-structure.
Let $R^{\nabla^{T^vp}}\in \Omega^2(W,\End(T^vp))$ denote the curvature of $\nabla^{T^vp}$.
The closed form
$$\hA(\nabla^{T^vp}):={\det}^{1/2}\left(
\frac{
\frac{ R^{\nabla^{T^vp}}}{4\pi}}
{\sinh\left(\frac{R^{\nabla^{T^vp}}}{4\pi}\right)}\right)
$$
represents the $\hA$-class of $T^vp$. 
\begin{ddd}\label{uzu3} The relevant differential form for local index theory
  in the $Spin^c$-case is
$$\hA^c(\tilde \nabla):=\hA(\nabla^{T^vp})\wedge e^{c_1(\tilde \nabla)}\ .$$
\end{ddd}
If we consider $p\colon W\to B$ with the geometry $(g^{T^vp},T^hp,\tilde \nabla)$ and the Dirac bundle
$S^c(T^vp)$ as a geometric family $\cW$ over $B$, then by comparison with the description \ref{uzu1} of the local index form $\Omega(\cW)$ we see that  
$$\int_{W/B} \hA^c(\tilde \nabla)=\Omega(\cW)\ .$$

\subsubsection{}

The dependence of the form $\hA^c(\tilde \nabla)$ on the data is described in terms of the transgression form.
Let $(g_i^{T^vp},T_i^hp,\tilde \nabla_i)$, $i=0,1$, be two choices of geometric data. Then we can choose
geometric data $(\overline  g^{T^vp},\overline T^hp, \overline {\tilde \nabla})$ on $\overline p=\id_{[0,1]}\times p\colon [0,1]\times W\to [0,1]\times B$
(with the induced $Spin^c$-structure on $T^v\overline p$) which restricts to
$(g_i^{T^vp},T_i^hp,\tilde \nabla_i)$ on $\{i\}\times B$.
The class
$$\tilde \hA^c(\tilde \nabla_1,\tilde \nabla_0):=\int_{[0,1]\times W/W}\hA^c(\overline{\tilde \nabla})\in \Omega(W)/\im(d)$$ 
is independent of the extension and satisfies
\begin{equation}\label{eqq7}
d\tilde \hA^c(\tilde \nabla_1,\tilde\nabla_0)=\hA^c(\tilde \nabla_1)-\hA^c(\tilde \nabla_0)\ .
\end{equation}
\begin{ddd}\label{uzu4}
The form $\tilde \hA^c(\tilde \nabla_1,\tilde \nabla_0)$ is called the transgression form.
\end{ddd}
Note that we have the identity
\begin{equation}\label{eq7}
\tilde \hA^c(\tilde \nabla_2,\tilde \nabla_1)+\tilde \hA^c(\tilde \nabla_1,\tilde\nabla_0)=\tilde \hA^c(\tilde \nabla_2,\tilde\nabla_0)\ .
\end{equation}
As a consequence we get the identities
\begin{equation}\label{eq8}
\tilde \hA^c(\tilde \nabla,\tilde \nabla)=0\ ,\quad \tilde \hA^c(\tilde \nabla_1,\tilde \nabla_0)=-\hA^c(\tilde \nabla_0,\tilde \nabla_1)\ .
\end{equation}

\subsubsection{}\label{pap201}

We can now introduce the notion of a smooth $K$-orientation of a proper submersion $p\colon W\to B$.
We fix an 
 underlying topological $K$-orientation of $p$ (see Definition \ref{topkor}) which is given by a $Spin^c$-reduction of $SO(T^vp)$. In order to make this precise we must choose an orientation and a metric on $T^vp$.

We consider the set $\cO$ of tuples
$(g^{T^vp},T^hp,\tilde \nabla,\sigma)$ where the first three entries have the
same meaning as above (see \ref{ldefr}), and
$\sigma\in \Omega^{odd}(W)/\im(d)$. We introduce a relation $o_0\sim o_1$ on $\cO$:
Two tuples
$(g_i^{T^vp},T_i^hp,\tilde \nabla_i,\sigma_i)$, $i=0,1$ are related if and only if
$\sigma_1-\sigma_0=\tilde \hA(\tilde \nabla_1,\tilde \nabla_0)$.
We claim that
$\sim$ is an equivalence relation. In fact, symmetry and reflexivity follow from (\ref{eq8}), while transitivity
is a consequence of (\ref{eq7}).

\begin{ddd}\label{smmmmzuz}
The set of smooth $K$-orientations which refine a fixed underlying topological
$K$-orientation
of $p\colon W\to B$ is the set of equivalence classes $ \cO/\sim$.
\end{ddd}

\subsubsection{}
Note that $\Omega^{odd}(W)/\im(d)$ acts on the set of smooth $K$-orientations.
If $\alpha\in \Omega^{odd}(W)/\im(d)$ and $(g^{T^vp},T^hp,\tilde \nabla,\sigma)$ represents 
a smooth $K$-orientation, then the translate of this orientation by $\alpha$ is represented by
$(g^{T^vp},T^hp,\tilde \nabla,\sigma+\alpha)$.
As a consequence of (\ref{eq7}) we get:
\begin{kor}
The set of smooth $K$-orientations refining a fixed underlying topological $K$-orientation is a torsor over
$\Omega^{odd}(W)/\im(d)$.
\end{kor}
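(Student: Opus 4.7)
The plan is to verify the three standard properties of a torsor: that the action $\alpha\cdot[(g^{T^vp},T^hp,\tilde\nabla,\sigma)]:=[(g^{T^vp},T^hp,\tilde\nabla,\sigma+\alpha)]$ is well-defined on $\cO/\sim$, free, and transitive. The equivalence relation and the action only interact through the transgression form $\tilde\hA^c(\tilde\nabla_1,\tilde\nabla_0)$, so everything reduces to the two identities in (\ref{eq7}) and (\ref{eq8}).

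First, I would check well-definedness. If $(g_0^{T^vp},T_0^hp,\tilde\nabla_0,\sigma_0)\sim(g_1^{T^vp},T_1^hp,\tilde\nabla_1,\sigma_1)$, meaning $\sigma_1-\sigma_0=\tilde\hA^c(\tilde\nabla_1,\tilde\nabla_0)$, then the same identity holds with $\sigma_i$ replaced by $\sigma_i+\alpha$, so the translated tuples remain equivalent. This shows the action descends to $\cO/\sim$.

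Next, for freeness, suppose $\alpha$ fixes some class represented by $(g^{T^vp},T^hp,\tilde\nabla,\sigma)$. Then by definition of $\sim$ applied to the tuples $(g^{T^vp},T^hp,\tilde\nabla,\sigma)$ and $(g^{T^vp},T^hp,\tilde\nabla,\sigma+\alpha)$, one has $\alpha=\tilde\hA^c(\tilde\nabla,\tilde\nabla)$, which vanishes in $\Omega^{odd}(W)/\im(d)$ by the first identity in (\ref{eq8}).

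Finally, for transitivity, given representatives $(g_i^{T^vp},T_i^hp,\tilde\nabla_i,\sigma_i)$, $i=0,1$, of two smooth $K$-orientations, I set
\[
\alpha:=\sigma_1-\sigma_0-\tilde\hA^c(\tilde\nabla_1,\tilde\nabla_0)\in\Omega^{odd}(W)/\im(d).
\]
Then $(g_0^{T^vp},T_0^hp,\tilde\nabla_0,\sigma_0+\alpha)\sim(g_1^{T^vp},T_1^hp,\tilde\nabla_1,\sigma_1)$ by construction. No step here is a serious obstacle; the only thing one might worry about is that the additive structure of the torsor fits together consistently when chaining equivalences, but this is guaranteed by the cocycle identity (\ref{eq7}) for the transgression form, which ensures that the recipe above is compatible with replacing either representative by an equivalent one.
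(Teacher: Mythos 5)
Your proof is correct and follows the same route the paper gestures at: the paper simply notes the corollary is "a consequence of (\ref{eq7})," and your write-out verifies well-definedness, freeness, and transitivity of the action using exactly the identities (\ref{eq7}) and (\ref{eq8}) as the paper intends.
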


\subsubsection{}

If $o=(g^{T^vp},T^hp,\tilde \nabla,\sigma)\in \cO$ represents a smooth $K$-orientation, then we will write 
$$\hA^c(o):=\hA^c(\tilde \nabla)\ ,\quad \sigma(o):=\sigma\ .$$

\subsection{Definition of the Push-forward}

\subsubsection{}\label{pap8}

We consider a proper submersion $p\colon W\to B$  with a choice of a topological $K$-orientation.
Assume that $p$ has closed fibres.
Let $o=(g^{T^vp},T^hp,\tilde \nabla,\sigma)$ represent a smooth $K$-orientation which refines the given  topological one. 
To every geometric family $\cE$ over $W$ we want to associate a geometric family $p_!\cE$ over $B$.

Let
$\pi\colon E\to W$ denote the underlying proper submersion with closed fibres of $\cE$ which comes with the  geometric data $g^{T^v\pi}$, $T^h\pi$ and the family of Dirac bundles $(V,h^V,\nabla^V)$.

The underlying proper submersion with closed fibres of $p_!\cE$ is 
$$q:=p\circ\pi\colon E\rightarrow B\ .$$
The horizontal bundle of $\pi$ admits a 
decomposition $T^h\pi\cong \pi^* T^vp\oplus \pi^*T^hp$,
where the isomorphism is induced by $d\pi$. 
We define $T^hq\subseteq T^h\pi$ such that $d\pi\colon T^hq\cong \pi^* T^hp$.
Furthermore we have an identification
$T^vq=T^v\pi\oplus \pi^* T^vp$.
Using this decomposition we define the vertical metric
$g^{T^vq}:=g^{T^v\pi}\oplus \pi^* g^{T^vp}$.
The orientations of $T^v\pi$ and $T^vp$ induce an orientation of $T^vq$.
Finally we must construct the Dirac bundle $p_!\cV\rightarrow E$.
Locally on $W$ we choose a $Spin$-structure on $T^vp$ and let $S(T^vp)$ be the spinor bundle. Then we can  write
$S^c(T^vp)=S(T^vp)\otimes L$ for a hermitean line bundle with connection.
Locally on $E$ we can choose a $Spin$-structure on $T^v\pi$ with spinor bundle $S(T^v\pi)$. Then we can write $V=S(T^v\pi)\otimes Z$,
where   $Z$ is the twisting
bundle of $V$, a hermitean vector bundle with connection ($\Z/2\Z$-graded in
the even case).
The local spin structures on $T^v\pi$ and $\pi^*T^vp$ induce a local $Spin$-structure on
$T^vq=T^v\pi\oplus\pi^*T^vp$.
Therefore locally we can define the family of Dirac bundles
$p_!V:=S(T^vq)\otimes \pi^*L \otimes Z$. It is easy to see
that this bundle is well-defined independent of the choices of local
$Spin$-structures and therefore is a globally defined family of Dirac bundles.
 \begin{ddd}\label{ddd7771}
Let
$p_!\cE$ denote the geometric family given by $q\colon E\to B$ and $p_!V\to E$ with the geometric structures
defined above.
\end{ddd}
It immediately follows from the definitions, that
$p_!(\cE^{op})\cong (p_!\cE)^{op}$.

\subsubsection{}\label{pap55}

Let $p\colon W\to B$ be a proper submersion  with a smooth $K$-orientation represented by  $o$.
In \ref{pap8} we have constructed for each geometric family $\cE$ over $W$ a push-forward $p_!\cE$.
Now we introduce a parameter $\lambda\in (0,\infty)$ into this construction.
\begin{ddd}
For $\lambda\in (0,\infty)$ we define  the geometric family $p_!^\lambda\cE$ as in \ref{pap8} with the only difference that 
the metric on $T^vq=T^v\pi\oplus \pi^*T^vp$ is given by $g_\lambda^{T^vq}=
{\lambda}^2g^{T^v\pi}\oplus  \pi^*g^{T^vp}$. 
\end{ddd}
More specifically, we use scaling invariance of the spinor bundle to
canonically identify the Dirac bundle for the metric $g_\lambda$ locally with 
$p_!V:=S(T^vq)\otimes \pi^*L\otimes Z$ (for $g_1$). This uses the
description of $S(T^vp)$ in terms of tensor products of $S(T^v\pi)$ and
$\pi^*S(T^vp)$ (compare \cite[Section 2.1.2]{math.DG/0201112}) and the scaling
invariance of $S(T^v\pi)$. However, with this identification the Clifford
multiplication by vectors in $T^vq=T^v\pi\oplus \pi^*T^vp$ is rescaled on the
summand $T^v\pi$ by $\lambda$. The connection is slightly more complicated,
but converges for $\lambda\to 0$ to some kind of sum connection. 

The family of geometric families $p^\lambda_!\cE$ is called the adiabatic deformation of $p_!\cE$.
There is a natural way to define a geometric family $\cF$ on $(0,\infty)\times B$ such that its restriction
to $\{\lambda\}\times B$ is $p_!^\lambda\cE$. In fact, we define
$\cF:=(\id_{(0,\infty)}\times p)_!((0,\infty)\times \cE)$ with the exception
that we take the appropriate vertical metric. Note again that the underlying
bundle can be canonically identified with $(0,\infty)\times p_!V$. In the
following, we work with this identifications throughout.

Although the vertical metrics of $\cF$ and $p^\lambda_!\cE$ collapse as $\lambda\to 0$ the induced connections and the curvature tensors
on the vertical bundle $T^vq$ converge and simplify in this limit. This fact is heavily used in local index theory, and we refer to \cite[Sec 10.2]{bgv} for details. In particular, the integral
\begin{equation}\label{dtqdutwqdwqdq}
\tilde \Omega(\lambda,\cE):=\int_{(0,\lambda)\times B/B}\Omega(\cF)\end{equation}
converges, and we have
\begin{equation}\label{eq88}
\lim_{\lambda\to 0}\Omega(p_!^\lambda\cE)=\int_{W/B}\hA^c(o)\wedge \Omega(\cE)\ ,\quad \Omega(p_!^\lambda\cE)-\int_{W/B}\hA^c(o)\wedge \Omega(\cE)=d\tilde \Omega(\lambda,\cE)\ .
\end{equation}

\subsubsection{}

Let $p\colon W\to B$ be a proper submersion with closed fibres with a  smooth $K$-orientation represented by  $o$.
We now start with the construction of the push-forward $p_!\colon \hat K(W)\to \hat K(B)$. For $\lambda\in (0,\infty)$
and  a cycle $(\cE,\rho)$ we define 
\begin{equation}\label{eq300}
\hat p^\lambda_!(\cE,\rho):=[p^\lambda_!\cE,\int_{W/B}  \hA^c(o)\wedge \rho + \tilde
\Omega(\lambda,\cE)+\int_{W/B}\sigma(o) \wedge R([\cE,\rho])]\in \hat K(B)\ .
\end{equation}
Since $\hA^c(o)$ and $R([\cE,\rho])$ are closed, the maps
$$\Omega(W)/\im(d)\ni \rho\mapsto  \int_{W/B}  \hA^c(o)\wedge \rho\in
\Omega(B)/\im(d)\ ,$$ 
$$\Omega(W)/\im(d)\ni \sigma(o)\mapsto \int_{W/B}\sigma(o) \wedge
R([\cE,\rho])\in \Omega(B)/\im(d)$$ are well-defined. It immediately follows
from the definition that 
$\hat p_!^\lambda\colon G(W)\to \hat K(B)$ is a homomorphism of semigroups.

\subsubsection{}\label{pap101}

The homomorphism $\hat p^\lambda_!\colon G(W)\to \hat K(B)$ commutes with pull-back.
More precisely, let $f\colon B^\prime\to B$ be a smooth map.
Then we define the submersion $p^\prime\colon W^\prime\to B^\prime$ by the cartesian
diagram 
$$\xymatrix{W^\prime\ar[d]^{p^\prime}\ar[r]^F&W\ar[d]^p\\B^\prime\ar[r]^f&B}\ .$$
The differential
$dF\colon TW^\prime\to F^*TW$ induces an isomorphism
$dF\colon T^vW^\prime\stackrel{\sim}{\to} F^*T^vW$.
Therefore the metric, the orientation, and the $Spin^c$-structure of $T^v p$ induce by pull-back corresponding structures on $T^vp^\prime$.
We define the horizontal distribution $T^hp^\prime$ such that
$dF(T^hp^\prime)\subseteq F^*T^hp$. Finally we set $\sigma^\prime:=F^*\sigma$.
The representative of a smooth $K$-orientation given by these structures will be denoted by 
$o^\prime:=f^*o$. An inspection of the definitions shows:
\begin{lem}\label{djdgejwdewd}
The pull-back of representatives of smooth $K$-orientations preserves  equivalence and hence
 induces a pull-back of smooth $K$-orientations. 
\end{lem}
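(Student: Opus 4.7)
The plan is to check that the equivalence relation on $\cO$ defined in \ref{pap201} is compatible with the pull-back operation described just above the lemma. Writing $o_i=(g_i^{T^vp},T_i^hp,\tilde\nabla_i,\sigma_i)$ for $i=0,1$ with $o_0\sim o_1$, we need to verify
\begin{equation*}
F^*\sigma_1-F^*\sigma_0 \;=\; \tilde\hA^c(F^*\tilde\nabla_1,F^*\tilde\nabla_0)
\end{equation*}
in $\Omega^{odd}(W')/\im(d)$. By hypothesis the left-hand side equals $F^*\tilde\hA^c(\tilde\nabla_1,\tilde\nabla_0)$, so the whole lemma reduces to the naturality of the transgression form $\tilde\hA^c$ under pull-back.

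The key step is thus to establish naturality of the transgression construction from Definition \ref{uzu4}. Pick an interpolating datum $(\bar g^{T^vp},\bar T^hp,\bar{\tilde\nabla})$ on $\bar p=\id_{[0,1]}\times p\colon [0,1]\times W\to [0,1]\times B$ as in \ref{pap201}, restricting to $(g_i^{T^vp},T_i^hp,\tilde\nabla_i)$ on the endpoints. Applying the pull-back construction of \ref{pap101} to the map $\id_{[0,1]}\times f$ with fibre product $\id_{[0,1]}\times F$ yields a datum $(\overline{F^*g^{T^vp}},\overline{T^hp'},\overline{F^*\tilde\nabla})$ on $[0,1]\times W'\to[0,1]\times B'$ which restricts to $f^*o_i$ on the endpoints, so it is a legitimate interpolation for the right-hand side.

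Now I would invoke two standard naturality properties. First, the characteristic form $\hA^c$ is built polynomially from the curvature of $\tilde\nabla$ and the $\sinh$-series in $R^{\nabla^{T^vp}}$ (see Definition \ref{uzu3} and \eqref{uzu111}); since curvature pulls back under $(\id\times F)$ to the curvature of the pulled-back connection, we have
\begin{equation*}
(\id_{[0,1]}\times F)^*\hA^c(\bar{\tilde\nabla}) \;=\; \hA^c(\overline{F^*\tilde\nabla}).
\end{equation*}
Second, pull-back commutes with integration over the fibre in any cartesian square of submersions; applied to the square with vertical maps $[0,1]\times W\to W$ and $[0,1]\times W'\to W'$ and horizontal maps $F$ and $\id_{[0,1]}\times F$, this gives
\begin{equation*}
F^*\!\!\int_{[0,1]\times W/W}\hA^c(\bar{\tilde\nabla}) \;=\; \int_{[0,1]\times W'/W'}\hA^c(\overline{F^*\tilde\nabla}).
\end{equation*}
Combining the two identities yields $F^*\tilde\hA^c(\tilde\nabla_1,\tilde\nabla_0)=\tilde\hA^c(F^*\tilde\nabla_1,F^*\tilde\nabla_0)$, which is exactly what was needed.

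The only mildly delicate point, and the main thing worth writing out carefully, is verifying that the pulled-back interpolation really does restrict correctly on the endpoints — i.e.~that forming $f^*o$ commutes with the ``cylinder'' construction used to define $\tilde\hA^c$. This boils down to tracing through the definitions of the pull-back horizontal distribution in \ref{pap101} and in \ref{pap201}; both are characterized by the condition that $d(\id\times F)$ (respectively $dF$) maps the new horizontal distribution into the pull-back of the old one, and these conditions are compatible in the obvious way. Once this bookkeeping is done, the lemma follows, and the assignment $o\mapsto f^*o$ descends to a well-defined map on the quotient sets $\cO/{\sim}$, giving the pull-back of smooth $K$-orientations.
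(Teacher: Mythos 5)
Your argument is correct and is exactly what the paper leaves implicit: the paper's entire ``proof'' is the phrase ``An inspection of the definitions shows,'' and what you have written out is precisely that inspection — reducing the claim to naturality of the transgression form $\tilde\hA^c$ under pull-back, which in turn follows from naturality of the curvature-based construction of $\hA^c$ together with compatibility of pull-back and fibre integration. No gaps; you have simply made explicit what the authors took to be routine.
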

Recall from \ref{origeo} that the representatives $o$ and $o^\prime$ of the smooth $K$-orientations enhance $p$ and $p^\prime$ to geometric families $\cW$ and $\cW^\prime$. We have $f^*\cW\cong \cW^\prime$.

Note that we have
$F^*\hA^c(o)=\hA^c(o^\prime)$.
If $\cE$ is a geometric family over $W$, then an inspection of the definitions shows that
$f^*p_!(\cE)\cong p^\prime_!(F^*\cE)$. The following lemma now follows immediately from the definitions
\begin{lem}\label{lem100}
We have $f^*\circ \hat p^\lambda_!=\hat{p^\prime}^\lambda_!\circ F^*\colon G(W)\to \hat K(B^\prime)$.
\end{lem}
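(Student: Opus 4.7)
The plan is to verify equality on a cycle $(\cE,\rho) \in G(W)$ by unfolding both sides of the asserted equation using the definition (\ref{eq300}) of $\hat p^\lambda_!$ and checking that every ingredient is natural with respect to base change along the pullback square defining $(p',F,o')$. Since $\hat p^\lambda_!$ was already defined on $G(W)$ (not just on classes), I will work entirely on the level of representatives and show that the two geometric families agree up to canonical isomorphism while the two differential form summands agree in $\Omega(B')/\im(d)$.

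First I would identify the underlying geometric family. By construction $p^\lambda_!\cE$ is built from $q=p\circ\pi$, the vertical metric $g^{T^v\pi}\oplus \pi^*g^{T^vp}$ (rescaled by $\lambda$ on the first summand), the horizontal distribution characterized by the splittings of \ref{pap8}, and the twisted Dirac bundle $S(T^vq)\otimes \pi^*L\otimes Z$. All of these data pull back cleanly under $F$ because (a) the cartesian square $W'\to W$ pulls back $T^vp$, $g^{T^vp}$, $T^hp$, $\tilde\nabla$, and hence $S^c(T^vp)$ to the corresponding structures on $p'$ (this is the content of Lemma \ref{djdgejwdewd}, together with the observation $F^*\cW\cong\cW'$), and (b) a parallel check shows $F^*\cE$ has its own cartesian square over $B'$ whose iterated structure coincides with the one induced from pulling back $p^\lambda_!\cE$ via $f$. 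Assembling the two naturality statements yields a canonical isomorphism of geometric families $f^*(p^\lambda_!\cE)\cong (p')^\lambda_!(F^*\cE)$.

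Next I would check the form part. Applying $f^*$ to (\ref{eq300}) and using naturality of fibre integration along the cartesian square
$$\xymatrix{W' \ar[r]^F \ar[d]^{p'} & W \ar[d]^p \\ B' \ar[r]^f & B}$$
(i.e.\ $f^*\int_{W/B}\omega = \int_{W'/B'}F^*\omega$ for all $\omega\in\Omega(W)$), together with $F^*\hA^c(o)=\hA^c(o')$ noted just before the lemma, $F^*\sigma(o)=\sigma(o')$ by definition of $o'$, and the naturality of $R$ under pullback (Section 2.4), I obtain
$$f^*\!\!\int_{W/B}\!\!\hA^c(o)\wedge\rho = \int_{W'/B'}\!\!\hA^c(o')\wedge F^*\rho, \qquad f^*\!\!\int_{W/B}\!\!\sigma(o)\wedge R([\cE,\rho]) = \int_{W'/B'}\!\!\sigma(o')\wedge R([F^*\cE,F^*\rho]).$$
The remaining term $f^*\tilde\Omega(\lambda,\cE)=\tilde\Omega(\lambda,F^*\cE)$ follows from the same naturality applied to the family $\cF$ over $(0,\infty)\times B$ of (\ref{dtqdutwqdwqdq}): the pullback of $\cF$ along $\id_{(0,\infty)}\times f$ is canonically the analogous family for $F^*\cE$, so naturality of the local index form $\Omega(\cdot)$ and of fibre integration does the job.

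I expect no serious obstacle: everything reduces to combining the naturality of fibre integration with the compatibility of $\hA^c$, $\sigma$, $R$, $\Omega$ and the construction $p_!(-)$ itself under cartesian base change. The one item that requires a tiny bit of bookkeeping is the identification of the adiabatic deformation under pullback, since $\tilde\Omega(\lambda,\cE)$ is defined through the auxiliary family $\cF$; but $\cF$ is itself constructed functorially from $\cE$ and the $K$-orientation $o$, both of which pull back correctly, so the identification is automatic. No environments, braces, or macros beyond those already in the paper are needed.
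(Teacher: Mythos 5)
Your proof is correct and follows the same route the paper takes: the paper merely records the two key compatibilities ($F^*\hA^c(o)=\hA^c(o')$ and $f^*p_!\cE\cong p'_!(F^*\cE)$) and asserts the lemma "follows immediately from the definitions," while you carry out exactly that unfolding term by term. Your treatment of the $\tilde\Omega(\lambda,\cE)$ summand via the auxiliary family $\cF$ over $(0,\infty)\times B$ is the one detail the paper leaves implicit, and you handle it correctly.
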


\subsubsection{}

\begin{lem}
The class $\hat p_!^\lambda(\cE,\rho)$ does not depend on $\lambda\in (0,\infty)$.
\end{lem}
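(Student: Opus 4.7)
Fix $\lambda_0,\lambda_1\in(0,\infty)$ with (say) $\lambda_0<\lambda_1$. The strategy is to exhibit a single cycle on $[\lambda_0,\lambda_1]\times B$ which restricts to $p_!^{\lambda_j}\cE$ at the two endpoints, and then to apply the homotopy formula (Lemma~\ref{lem23}) to transfer the difference of the two endpoint classes into the form-component of the cycle.

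The key object is already available: the geometric family $\cF$ on $(0,\infty)\times B$ constructed in \ref{pap55}, whose restriction to $\{\lambda\}\times B$ is canonically isomorphic to $p_!^\lambda\cE$. Let $\iota\colon [\lambda_0,\lambda_1]\times B\hookrightarrow (0,\infty)\times B$ be the inclusion, let $\cF_{[\lambda_0,\lambda_1]}:=\iota^*\cF$, and consider the class $x:=[\cF_{[\lambda_0,\lambda_1]},0]\in \hat K([\lambda_0,\lambda_1]\times B)$. By naturality of the pull-back and the defining property of $\cF$, the inclusions $i_{\lambda_j}\colon B\to\{\lambda_j\}\times B$ satisfy $i_{\lambda_j}^*x=[p_!^{\lambda_j}\cE,0]$.

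Next, applying Lemma~\ref{lem23} (after the obvious affine reparametrization of $[\lambda_0,\lambda_1]$ onto $[0,1]$, which is manifestly harmless) yields
$$[p_!^{\lambda_1}\cE,0]-[p_!^{\lambda_0}\cE,0]=a\!\left(\int_{[\lambda_0,\lambda_1]\times B/B}R(x)\right)=a\!\left(\int_{[\lambda_0,\lambda_1]\times B/B}\Omega(\cF)\right)=a\bigl(\tilde\Omega(\lambda_1,\cE)-\tilde\Omega(\lambda_0,\cE)\bigr),$$
using $R([\cF_{[\lambda_0,\lambda_1]},0])=\Omega(\cF_{[\lambda_0,\lambda_1]})$ and the definition (\ref{dtqdutwqdwqdq}) of $\tilde\Omega$. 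Since $a(\omega)=[\emptyset,-\omega]$, this rearranges to
$$[p_!^{\lambda_1}\cE,\tilde\Omega(\lambda_1,\cE)]=[p_!^{\lambda_0}\cE,\tilde\Omega(\lambda_0,\cE)]\ .$$

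Finally, the remaining summands $\int_{W/B}\hA^c(o)\wedge\rho+\int_{W/B}\sigma(o)\wedge R([\cE,\rho])$ occurring in (\ref{eq300}) are manifestly independent of $\lambda$; adding the corresponding constant form via $a$ to both sides preserves equality, which gives $\hat p_!^{\lambda_1}(\cE,\rho)=\hat p_!^{\lambda_0}(\cE,\rho)$.

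The only point that requires any care is the identification $i_{\lambda_j}^*\cF_{[\lambda_0,\lambda_1]}\cong p_!^{\lambda_j}\cE$ as geometric families (not just up to isomorphism of underlying data); however, this is built into the construction of $\cF$ in \ref{pap55}, where the underlying bundle is canonically identified with $(0,\infty)\times p_!V$ and the vertical metric at $\{\lambda\}\times B$ is precisely $g^{T^vq}_\lambda$. With that identification in hand, the argument is essentially a formal consequence of the homotopy formula together with the definition of $\tilde\Omega(\lambda,\cE)$.
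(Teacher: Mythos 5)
Your argument is correct and follows essentially the same route as the paper's proof: apply the homotopy formula (Lemma~\ref{lem23}) to $[\cF,0]$ for the interpolating family $\cF$ over $[\lambda_0,\lambda_1]\times B$, identify the resulting integral with $\tilde\Omega(\lambda_1,\cE)-\tilde\Omega(\lambda_0,\cE)$ via (\ref{dtqdutwqdwqdq}), and observe that the remaining summands in (\ref{eq300}) do not involve $\lambda$. The only cosmetic difference is that you spell out the rearrangement and the canonical identification of the underlying bundles more explicitly, which the paper leaves implicit via the reference to \ref{pap55}.
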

 \begin{proof}
Consider $\lambda_0<\lambda_1$.
Note that
$$\hat p_!^{\lambda_1}(\cE,\rho)-\hat p_!^{\lambda_0}(\cE,\rho)=[p^{\lambda_1}_!\cE,\tilde \Omega(\lambda_1,\cE)]-[p_!^{\lambda_0}\cE,\tilde \Omega(\lambda_0,\cE)]\ .$$
 Consider the inclusion $i_\lambda\colon B\to\{\lambda\}\times B\subset [\lambda_0,\lambda_1]\times B$ and let
$\cF$ be the family over $[\lambda_0,\lambda_1]\times B$ as in \ref{pap55} such that $p_!^\lambda\cE=i^*_\lambda\cF$. 
We apply the homotopy formula Lemma  \ref{lem23}
to $x=[\cF,0]$:
$$i_{\lambda_1}^*(x)-i_{\lambda_0}^*(x)=a\left(\int_{[\lambda_0,\lambda_1]\times
    B/B}R(x)\right)=a\left(\int_{[\lambda_0,\lambda_1]\times B/B}\Omega(\cF)\right)=a\left(\tilde
  \Omega(\lambda_1,\cE)-\tilde \Omega(\lambda_0,\cE)\right)\ ,$$ 
where the last equality follows directly from the definition of $\tilde \Omega$.
This equality is equivalent to 
$$
[p^{\lambda_1}_!\cE,\tilde \Omega(\lambda_1,\cE)]=[p_!^{\lambda_0}\cE,\tilde \Omega(\lambda_0,\cE)]\ .
$$
\end{proof} 
In view of this Lemma we can omit the superscript $\lambda$ and write
$\hat p_!(\cE,\rho)$ for $\hat p_!^\lambda(\cE,\rho)$.

\subsubsection{}\label{pap66}

Let $\cE$ be a geometric family over $W$ which admits a taming $\cE_t$.
Recall that the taming is given by a family of smoothing operators $(Q_w)_{w\in W}$.

We have identified the Dirac bundle of $p_!^\lambda\cE$ with the Dirac bundle
of  $p^1_!\cE$ in a natural way in \ref{pap55}. The
$\lambda$-dependence of the 
Dirac operator takes the form 
$$D(p^\lambda_!\cE)=\lambda^{-1} D(\cE)+ (D^H+ R(\lambda))\ ,$$ where $D^H$ is the horizontal Dirac operator, and $R(\lambda)$ is of zero
order and remains bounded as $\lambda\to 0$. 
We now replace
$D(\cE)$ by the invertible operator $D(\cE)+Q$.
Then for small $\lambda>0$ the operator
$$ \lambda^{-1} (D(\cE)+Q)+ (D^H+ R(\lambda))$$ is invertible.
To see this, we consider its square which has the structure
$$\lambda^{-2} (D(\cE)+Q)^2+\lambda^{-1} \{D(\cE)+Q,(D^H+ R(\lambda))\}
+(D^H+ R(\lambda))^2\ .$$
The anticommutator $\{D(\cE),D^H+R(\lambda) \}$ is a first-order vertical operator which is thus dominated by a multiple of
the positive second order    
$(D(\cE)+Q)^2$. The remaining parts of the anticommutator are zero-order and
therefore also dominated by  multiples of $(D(\cE)+Q)^2$. The last summand is a square of a selfadjoint operator and hence non-negative.

The family of operators along the fibres of $p_!\cE$ induced by $Q$ is not a taming   since it is not given by a family of integral operators along the fibres of $p_!E\to B$. In order to understand its structure note the following.
For $b\in B$ the fibre of $(p_!\cE)_b$ is the total space of the  bundle
$E_{|W_b}\to W_b$. The integral kernel $Q$ induces a 
family of smoothing operators on the bundle of Hilbert spaces $H(\cE_{|W_b})\to W_b$. Using 
the natural identification
$$H(p_!\cE)_b\cong L^2(W,S(T^vp)\otimes H(\cE_{|W_b}))$$
we get the induced operator on $H(p_!\cE)_b$. We will call a family of  operators with this structure
a generalized taming.

Now recall that the $\eta$-form $\eta(\cF_t)$ of a tamed  or generalized tamed
family $\cF_t$ is build from a family of superconnections $A_s(\cF_t)$
parametrized by $s\in (0,\infty)$ (see \cite[2.2.4.3]{math.DG/0201112}).  For
$0<s<1$ the family coincides with the usual rescaled Bismut superconnection
and is independent of the taming. Therefore the taming does not affect the
analysis of $\partial_s A_s(\cF_t)\ee^{-A_s(\cF_t)^2}$ for $s\to 0$. In the
interval $s\in [1,2]$ the family $A_s(\cF_t)$  smoothly connects with the
family of superconnections given by
$$A_s(\cF_t)=sD(\cF_t)+ \mbox{terms with higher form degree}$$
for $s\ge 2$. In order to define the $\eta$-form $\eta(\cF_t)$  the main points are:
\begin{enumerate}
\item For small $s$ the family $A_s(\cF_t)$ behaves like the Bismut superconnection. 
The formula  (\ref{detad}) 
$$d\eta(\cF_t)=\Omega(\cF)$$
only depends on the  behavior of  $A_s(\cF_t)$ for small $s$. Therefore this formula
continues to hold  for generalized tamings. 
\item $\partial_s A_s(\cF_t)\ee^{-A_s(\cF_t)^2}$ is given by a family of integral operators
with smooth integral kernel. This holds true for tamed families as well as for familes which are tamed in the generalized sense explained above. A proof can be based on Duhamel's principle.
\item The integral kernel of $\partial_s A_s(\cF_t)\ee^{-A_s(\cF_t)^2}$ together with all derivatives vanishes exponentially as $s\to \infty$. This follows by spectral estimates from the invertibility and selfadjointness of $D(\cF_t)$. Now the invertibility of $D(\cF_t)$ is exactly the desired effect of a taming or generalized taming.
\end{enumerate}

Coming back to our iterated fibre bundle we see that we can use the generalized taming
for sufficiently small $\lambda>0$
like a taming in order to define an $\eta$-form
which we will denote by $\eta(p_!^\lambda\cE_t)$.
To be precise this eta form is associated to the family of operators
 $$A_s(p^\lambda_!\cE)+\chi(s\lambda^{-1})s\lambda^{-1}Q\ ,\quad s\in (0,\infty)\ ,$$ where $\chi$ vanishes near zero and is equal to $1$ on $[1,\infty)$. This means that we switch on the taming at time $s\sim \lambda$, and we rescale it in the same way as the vertical part of the Dirac operator.
 
We can control the behaviour of
$\eta(p^\lambda_!\cE_t)$ in the adiabatic limit $\lambda\to 0$.
\begin{theorem}\label{adia1}
$$\lim_{\lambda\to 0}\eta(p^\lambda_!\cE_t)=\int_{W/B} \hA^c(o)\wedge \eta(\cE_t)\
.$$
\end{theorem}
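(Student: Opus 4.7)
The plan is to follow the strategy of adiabatic limit calculations for $\eta$-forms developed by Bismut-Cheeger and Dai, in the form presented in \cite{MR2072502}. The $\eta$-form is, up to a normalization constant, the integral over $s\in(0,\infty)$ of $\str\bigl(\partial_s \mathbb{A}_s(\lambda)\, e^{-\mathbb{A}_s(\lambda)^2}\bigr)$, where $\mathbb{A}_s(\lambda)$ is the rescaled superconnection
$$\mathbb{A}_s(\lambda)=A_s(p^\lambda_!\cE)+\chi(s\lambda^{-1})s\lambda^{-1}Q.$$
First I would change variables $s=\lambda u$ in the defining integral of $\eta(p^\lambda_!\cE_t)$ in order to separate the contribution of the vertical rescaling from the horizontal part of the Bismut superconnection. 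Under this rescaling, the Dirac operator $D(p^\lambda_!\cE)=\lambda^{-1}D(\cE)+D^H+R(\lambda)$, together with the perturbation, becomes, to leading order in $\lambda$, a product-type superconnection: the vertical piece is $u(D(\cE)+Q)$ on the fibres of $\pi\colon E\to W$, and the horizontal piece is the Bismut superconnection $A_u(\cW)$ attached to the geometric family $\cW$ determined by the smooth $K$-orientation $o$ (cf.\ \ref{origeo}).

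Next I would split the $s$-integral at some fixed $s_0>0$, independent of $\lambda$, into $(0,s_0)$ and $(s_0,\infty)$. On $(s_0,\infty)$ the taming $Q$ gives uniform invertibility of $D(\cE)+Q$, hence after rescaling a uniform spectral gap for the vertical part of $\mathbb{A}_s(\lambda)^2$. This produces exponential decay of $\str\bigl(\partial_s\mathbb{A}_s(\lambda)\,e^{-\mathbb{A}_s(\lambda)^2}\bigr)$ uniform in $\lambda\in(0,1]$, by standard Duhamel/spectral-estimate arguments as in the three-point list preceding the theorem. This is exactly where the hypothesis that $\cE_t$ is a taming (and not only a geometric family) is essential: it replaces the "small eigenvalue" analysis of the classical Bismut-Cheeger adiabatic limit by a soft exponential decay argument, and it is the reason why no $\tau$-form or spectral-flow correction appears on the right hand side. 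On $(0,s_0)$ I would use Getzler rescaling adapted to the iterated fibration $E\to W\to B$ to obtain a uniform pointwise bound and an explicit adiabatic limit of the integrand.

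The limit of the integrand, on both regions, is then identified with the integrand of $\int_{W/B}\hA^c(o)\wedge \eta(\cE_t)$. Concretely, the product structure of the limiting superconnection together with $\int_{E/B}=\int_{W/B}\circ \int_{E/W}$, the identification $\int_{W/B}\hA(\nabla^{T^vp})\wedge e^{c_1(\tilde\nabla)}\wedge(-)=\int_{W/B}\hA^c(o)\wedge(-)$ from Definition \ref{uzu3}, and the fact that the vertical supertrace computes $\partial_u A_u(\cE_t)\,e^{-A_u(\cE_t)^2}$ assemble into
$$\lim_{\lambda\to 0}\str\bigl(\partial_s\mathbb{A}_s(\lambda)\,e^{-\mathbb{A}_s(\lambda)^2}\bigr)=\int_{W/B}\hA^c(o)\wedge \str\bigl(\partial_u A_u(\cE_t)\,e^{-A_u(\cE_t)^2}\bigr).$$
Integrating over $s$ (equivalently $u$) after justifying the interchange with the limit via the uniform bounds above yields the theorem.

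The main obstacle is the uniform estimate on the transition region $s\sim\lambda$ (i.e.\ $u\sim 1$), where the switching function $\chi$ activates the taming and the superconnection is neither close to its small-$s$ Getzler model nor already governed by the invertibility of $D(\cE_t)$. I would handle this by choosing $s_0$ large enough that $\chi(s\lambda^{-1})=1$ on $(s_0,\infty)$ for all $\lambda\le 1$, so that on the region where uniform invertibility is needed the perturbation is fully switched on and the square $\mathbb{A}_s(\lambda)^2$ has positive vertical leading part $u^2(D(\cE)+Q)^2$; the remaining analysis on $(0,s_0)$ is then a standard, albeit technical, Getzler rescaling argument with parameter $\lambda$, for which we can cite the relevant statements from \cite{MR2072502}.
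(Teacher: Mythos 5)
Your proposal is essentially a fleshed-out version of the paper's own argument, which defers entirely to the adiabatic limit analysis of $\eta$-forms in \cite[Section 5]{MR2072502}, with the $L$-form replaced by $\hA^c(o)$ and with the observation that the taming removes small eigenvalues and hence all $\tau$-form corrections. You make the same key moves — citing Bunke-Ma for the rescaling/Duhamel analysis and isolating the role of the taming as what trivializes the large-time and spectral-flow parts of the classical Bismut-Cheeger-Dai argument — so the approach matches.
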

\begin{proof}
To write out a formal proof of this theorem seems too long for the present
paper, without giving fundamental new insights. Instead we point out the
following references. Adiabatic limits of $\eta$-forms of twisted signature
operators were 
studied in \cite[Section 5]{MR2072502}. The same methods apply in the present
case. 
The $L$-form in \cite[Section 5]{MR2072502} is the local index form of the signature operator. In the present case it must be replaced by the form $\hA^c(o)$, the  local index form of the $Spin^c$-Dirac operator.
The absence of small eigenvalues simplifies matters considerably. \end{proof} 

Since the geometric family $p_!^\lambda\cE$ admits a generalized taming  it follows that 
$\ind(p_!^\lambda\cE)=0$. Hence we can also choose a taming $(p_!^\lambda\cE)_t$.
The latter choice together with the generalized taming
induce a generalized boundary taming of the family
$p_!^\lambda\cE\times [0,1]$ over $B$.
The index theorem 
\cite[Theorem 2.2.18]{math.DG/0201112}
can be extended to generalized boundary tamed families
(by copying the proof)
and gives:
\begin{lem}\label{lem33}
The difference of $\eta$-forms 
$\eta((p_!^\lambda\cE)_t)-\eta(p^\lambda_!\cE_t)$ 
is closed. Its de Rham cohomology class satisfies
$$[\eta((p_!^\lambda\cE)_t)-\eta(p^\lambda_!\cE_t)]\in \ch_{dR}(K(B))\ .$$
\end{lem}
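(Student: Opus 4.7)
The plan is to treat this as a direct consequence of the boundary-tamed index theorem applied to a cylinder.

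First, the closedness statement is essentially automatic from the general properties of $\eta$-forms listed in \ref{pap66}. Both the ordinary taming $(p_!^\lambda\cE)_t$ and the generalized taming of $p_!^\lambda\cE$ inherited from $\cE_t$ make the underlying family of Dirac operators invertible (in the strict or generalized sense), and property (1) of that discussion guarantees that the formula $d\eta(\cF_t)=\Omega(\cF)$ holds for both flavors. Hence
\[
d\bigl(\eta((p_!^\lambda\cE)_t)-\eta(p_!^\lambda\cE_t)\bigr)=\Omega(p_!^\lambda\cE)-\Omega(p_!^\lambda\cE)=0 .
\]

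For the second assertion, I would form the cylinder family $\cF:=p_!^\lambda\cE\times [0,1]$ over $B$ with the product geometry. Its two boundary faces are canonically identified with $p_!^\lambda\cE$ and $(p_!^\lambda\cE)^{op}$. Equip the face at $\{0\}$ with the generalized taming coming from $\cE_t$ (so that the boundary taming there is $p_!^\lambda\cE_t$) and the face at $\{1\}$ with the ordinary taming $(p_!^\lambda\cE)_t^{op}$. Because the product family has vanishing local index form, $\Omega(\cF)=0$. I would then invoke the version of the boundary-tamed index theorem \cite[Theorem 2.2.18]{math.DG/0201112} extended to generalized boundary tamings (the extension is by exactly the same proof, since the analytic ingredients listed in \ref{pap66} go through verbatim; this is the only place where some genuine work is hidden, but the key estimates are those already used to define $\eta(p_!^\lambda\cE_t)$). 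This theorem identifies
\[
\ch_{dR}\bigl(\ind(\cF_{bt})\bigr)=\bigl[\eta((p_!^\lambda\cE)_t)-\eta(p_!^\lambda\cE_t)\bigr]
\]
in $H_{dR}(B)$, where $\cF_{bt}$ denotes the generalized boundary-tamed family and the sign arises from the opposite orientation of the $\{1\}$-face.

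Since $\ind(\cF_{bt})\in K(B)$, the right-hand cohomology class lies in $\ch_{dR}(K(B))$, which is the desired conclusion. The main obstacle is not the cylinder construction itself but checking that the proof of \cite[Theorem 2.2.18]{math.DG/0201112} carries over to the generalized-taming setting; this reduces to the observation that the relevant heat-kernel and Duhamel estimates only use the fact that the perturbed Dirac operator is invertible and that the perturbation is given by a smooth fibrewise integral kernel, both of which are built into the notion of generalized taming.
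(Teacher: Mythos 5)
Your argument is correct and follows essentially the same route as the paper: one places the generalized taming on one boundary face of the cylinder $p_!^\lambda\cE\times[0,1]$ and the chosen ordinary taming on the other, and then appeals to the boundary-tamed families index theorem \cite[Theorem 2.2.18]{math.DG/0201112}, extended to generalized boundary tamings by re-running its proof with the analytic inputs from \ref{pap66}. Your explicit derivation of closedness from $d\eta=\Omega$ for both tamings is a small elaboration of what the paper leaves implicit, but the substance of the argument coincides.
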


\subsubsection{}

We now show that
$\hat p_!\colon G(W)\to \hat K(B)$ passes through the equivalence relation $\sim$.
Since $\hat p_!$ is additive it suffices by Lemma \ref{lem3} to show the following assertion.
\begin{lem}\label{pass}
If $(\cE,\rho)$ is paired with $(\tilde \cE,\tilde \rho)$,
then $\hat p_!(\cE,\rho)=\hat p_!(\tilde \cE,\tilde\rho)$.
\end{lem}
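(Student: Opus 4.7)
The plan is to reduce the equality of classes to producing a single taming of $p_!^\lambda(\cE\sqcup_B\tilde\cE^{op})$ whose $\eta$-form matches an explicit correction term. Set $\cC:=\cE\sqcup_B\tilde\cE^{op}$ and let $\cC_t$ denote the taming provided by hypothesis, so $\eta(\cC_t)=\rho-\tilde\rho$ and hence
$$R([\cE,\rho])-R([\tilde\cE,\tilde\rho])=\bigl(\Omega(\cE)-\Omega(\tilde\cE)\bigr)-d(\rho-\tilde\rho)=\Omega(\cC)-d\eta(\cC_t)=0.$$
Therefore the $\sigma(o)$-terms in $\hat p_!^\lambda(\cE,\rho)$ and $\hat p_!^\lambda(\tilde\cE,\tilde\rho)$ agree. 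Using Lemma \ref{lem22}, the identity $p_!^\lambda(\cC)=p_!^\lambda\cE\sqcup_B(p_!^\lambda\tilde\cE)^{op}$, and additivity of $\tilde\Omega(\lambda,\cdot)$ together with $\Omega(\tilde\cE^{op})=-\Omega(\tilde\cE)$, the desired equality reduces to
$$[p_!^\lambda\cC,\Delta_\lambda]=0\in\hat K(B)\qquad\text{with}\qquad\Delta_\lambda:=\int_{W/B}\hA^c(o)\wedge\eta(\cC_t)+\tilde\Omega(\lambda,\cC).$$

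The heart of the proof is to identify $\Delta_\lambda$, modulo $\im(d)$, as the $\eta$-form of a generalized taming of $p_!^\lambda\cC$. Choose $\lambda>0$ small enough that the generalized taming $p_!^\lambda\cC_t$ of \ref{pap66} is defined, and consider the adiabatic family $\cF$ over $(0,\lambda]\times B$ from \ref{pap55}, which carries an induced generalized taming $\cF_t$ built from the same smoothing family that tames $\cC$. Since $d\eta(\cF_t)=\Omega(\cF)$, Stokes' theorem for fibre integration over the interval $(0,\lambda)$ gives
$$\tilde\Omega(\lambda,\cC)=\int_{(0,\lambda)\times B/B}\Omega(\cF)=\eta(p_!^\lambda\cC_t)-\lim_{\mu\to 0}\eta(p_!^\mu\cC_t)-d_B\int_{(0,\lambda)\times B/B}\eta(\cF_t).$$
Theorem \ref{adia1} identifies the boundary contribution at $\mu\to 0$ as $\int_{W/B}\hA^c(o)\wedge\eta(\cC_t)$, and rearranging yields $\eta(p_!^\lambda\cC_t)\equiv\Delta_\lambda\pmod{\im(d)}$.

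To conclude, the existence of the generalized taming forces $\ind(p_!^\lambda\cC)=0$, so a genuine taming $(p_!^\lambda\cC)_{t'}$ exists. By Lemma \ref{lem33} the closed form $\eta((p_!^\lambda\cC)_{t'})-\eta(p_!^\lambda\cC_t)$ has de Rham class in $\ch_{dR}(K(B))$, and applying the flexibility described in \ref{pap3} to the true tamings we can modify $t'$ so that this class vanishes. We then obtain a genuine taming of $p_!^\lambda\cC$ with $\eta((p_!^\lambda\cC)_{t'})\equiv\Delta_\lambda\pmod{\im(d)}$. This precisely exhibits the cycle $(p_!^\lambda\cC,\Delta_\lambda)$ as paired with the zero cycle, so $[p_!^\lambda\cC,\Delta_\lambda]=0$, completing the proof.

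The main obstacle is the Stokes computation in the second paragraph: it depends on the integrability of $\eta(\cF_t)$ down to $\mu=0$ and on controlling the pointwise limit $\lim_{\mu\to 0}\eta(p_!^\mu\cC_t)$, not just the limit of its cohomology class. Both ingredients require the uniform heat-kernel estimates for the Bismut superconnection in the adiabatic regime that underlie the proof of Theorem \ref{adia1}, of the kind developed in \cite[Section 5]{MR2072502}; once those estimates are in hand, the remaining algebra is routine.
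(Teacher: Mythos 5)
Your proof is correct and follows essentially the same route as the paper's: both reduce to showing the class $[p_!^\lambda(\cE\sqcup_B\tilde\cE^{op}),\Delta_\lambda]$ vanishes, using the generalized taming of $p_!^\lambda(\cE\sqcup_B\tilde\cE^{op})$, Theorem \ref{adia1} for the $\mu\to 0$ boundary term, and Lemma \ref{lem33} together with \ref{pap3} to pass between generalized and genuine tamings. The one variation is the final step: where the paper exploits $\lambda$-independence of $\hat p_!^\lambda$ and takes the adiabatic limit at the level of $\hat K(B)$-classes, you establish $\eta(p_!^\lambda\cC_t)\equiv\Delta_\lambda\pmod{\im(d)}$ directly for each fixed $\lambda$ by transgressing $d\eta(\cF_t)=\Omega(\cF)$ over $(0,\lambda]$ — an equivalent argument with the same analytic inputs, arguably slightly more transparent; note that for the exact term you may invoke closedness of $\im(d)$ in the Fr\'echet topology (as the paper itself does in the proof of Proposition \ref{prooow}) instead of convergence of $\int_{(0,\lambda]\times B/B}\eta(\cF_t)$.
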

\begin{proof}
Let $(\cE\sqcup_W \tilde \cE^{op})_t$ be the taming which induces the relation between the two cycles,
i.e. $\rho-\tilde \rho=\eta\left((\cE\sqcup_W \tilde \cE^{op})_t\right)$.
In view of the discussion in \ref{pap66} we can choose a taming
$p^\lambda_!(\cE\sqcup \tilde \cE^{op})_t$.
\begin{eqnarray*}
[p^\lambda_!\cE,0]-[p^\lambda_!\tilde \cE,0]&=&
[p^\lambda_!(\cE\sqcup_W \tilde \cE^{op}),0]\\
&=&a\left(\eta\left(p^\lambda_!(\cE\sqcup_W \tilde \cE^{op})_t \right)\right)\
.\end{eqnarray*}
By Proposition \ref{prop1} and Lemma \ref{lem33} we can replace the taming by the generalized  taming
and still get 
$$[p^\lambda_!\cE,0]-[p^\lambda_!\tilde \cE,0] =a\left(\eta\left(p^\lambda_!(\cE\sqcup_W \tilde \cE^{op})_t\right)\right)\ .$$

For sufficiently small $\lambda>0$ we thus get 
\begin{eqnarray*}
\hat p_!(\cE,\rho)-\hat p_!(\tilde \cE,\tilde\rho)&=&a\left(\eta\left(p^\lambda_!(\cE\sqcup_W \tilde \cE^{op})_t\right)\right)-
\int_{W/B}  \hA^c(o)\wedge (\rho-\tilde \rho)\\&&+ \tilde
\Omega(\lambda,\cE) - \tilde
\Omega(\lambda,\tilde \cE) )
\end{eqnarray*}
We now go to the limit $\lambda\to 0$ and use Theorem \ref{adia1} in order to get 
\begin{eqnarray*}
\hat p_!(\cE,\rho)-\hat p_!(\tilde \cE,\tilde\rho)&=&
a\left(\int_{W/B}\hA^c(o)\wedge \eta\left((\cE\sqcup_W \tilde
    \cE^{op})_t\right)\right)\\
&=&- 
\int_{W/B}  \hA^c(o)\wedge (\rho-\tilde \rho) \\
&=& 0
\end{eqnarray*} 
\end{proof}

We let
$$\hat p_!\colon \hat K(W)\to \hat K(B)$$ denote the map induced
by  the construction (\ref{eq300}). Though not indicated in the notation until now this map may depend on the choice
of the representative of the smooth $K$-orientation $o$ (later in Lemma \ref{pap99} we see that it only depends on the smooth $K$-orientation).

\subsubsection{}

Let $p\colon W\to B$ be a proper submersion with closed fibres with a smooth $K$-orientation represented by $o$.
 We now have constructed a homomorphism
$$\hat p_!\colon \hat K(W)\to \hat K(B)\ .$$
In the present paragraph we study the compatibilty of this construction with
the curvature map $R\colon \hat K\to \Omega_{d=0}$.
\begin{ddd}\label{def313}
We define the integration of forms 
$p_!^o\colon \Omega(W)\to \Omega(B)$
by
$$p_!^o(\omega)=\int_{W/B} (\hA^c(o)-d\sigma(o))\wedge \omega$$
\end{ddd}
Since $  \hA^c(o)-d\sigma(o) $ is closed  we also have a factorization
 $$p_!^o\colon \Omega(W)/\im(d)\to \Omega(B)/\im(d)\ .$$
\begin{lem}\label{lem24}
For $x\in \hat K(W)$ we have
$$R(\hat p_!(x))=p^o_!(R(x))\ .$$
\end{lem}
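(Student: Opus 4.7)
The plan is a direct cycle-level computation starting from the definition (\ref{eq300}) of $\hat p_!$. First I represent $x \in \hat K(W)$ by a cycle $(\cE,\rho)$, so that by definition
$$\hat p_!(x) = [p^\lambda_!\cE,\, \omega]\ , \quad \omega := \int_{W/B}\hA^c(o)\wedge \rho + \tilde\Omega(\lambda,\cE) + \int_{W/B}\sigma(o)\wedge R(x)\ .$$
Applying the curvature map yields $R(\hat p_!(x)) = \Omega(p^\lambda_!\cE) - d\omega$, so the task reduces to computing $d\omega$ term by term and assembling.

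The key inputs are all already available: (i) $\hA^c(o)$ is closed, so $d\int_{W/B}\hA^c(o)\wedge\rho = \int_{W/B}\hA^c(o)\wedge d\rho$; (ii) $R(x) = \Omega(\cE) - d\rho$ is closed, so $d\int_{W/B}\sigma(o)\wedge R(x) = \int_{W/B} d\sigma(o)\wedge R(x)$; and (iii) the crucial adiabatic identity (\ref{eq88}) gives $d\tilde\Omega(\lambda,\cE) = \Omega(p^\lambda_!\cE) - \int_{W/B}\hA^c(o)\wedge\Omega(\cE)$. Plugging these in, the $\Omega(p^\lambda_!\cE)$ terms cancel and the $\hA^c(o)$-terms combine into $\int_{W/B}\hA^c(o)\wedge(\Omega(\cE) - d\rho) = \int_{W/B}\hA^c(o)\wedge R(x)$, so that
$$R(\hat p_!(x)) = \int_{W/B}\bigl(\hA^c(o) - d\sigma(o)\bigr)\wedge R(x) = p^o_!(R(x))$$
by Definition \ref{def313}.

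I do not expect any serious obstacle: all three ingredients are explicitly recorded earlier in the paper, and the computation is purely formal manipulation of the defining formula. The only thing to note is that $R$ was shown to be well-defined on classes (not just cycles), so it suffices to verify the identity for one chosen representative $(\cE,\rho)$ and one chosen value of the adiabatic parameter $\lambda$, which is exactly what the above calculation does.
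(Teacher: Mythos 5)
Your proof is correct and follows essentially the same route as the paper's: both expand $R(\hat p_!(x))$ from the defining formula (\ref{eq300}), apply the Leibniz rule using the closedness of $\hA^c(o)$ and $R(x)$, invoke (\ref{eq88}) for $d\tilde\Omega(\lambda,\cE)$, and observe that the $\Omega(p^\lambda_!\cE)$ terms cancel. The only cosmetic difference is that you isolate the three differentials before assembling, while the paper does it in one chain of equalities.
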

\begin{proof}
Let $x=(\cE,\rho)$. We insert the definitions, $R(x)=\Omega(\cE)-d\rho$, and  (\ref{eq88})  in the marked step.
\begin{eqnarray*}
R(\hat p_!(x))&=&\Omega(p_!^\lambda\cE)-d(\int_{W/B}  \hA^c(o)\wedge \rho + \tilde
\Omega(\lambda,\cE)+\int_{W/B}\sigma(o) \wedge R(x))\\&\stackrel{!}{=}&
\Omega(p_!^\lambda\cE)-\int_{W/B}  \hA^c(o)\wedge d\rho + \int_{W/B}\hA^c(o)\wedge \Omega(\cE)- \Omega(p_!^\lambda\cE)-\int_{W/B}d\sigma(o) \wedge R(x)\\&=&
\int_{W/B} (\hA^c(o)-d\sigma(o))\wedge R(x) \\
&=&p_!^o(R(x))
\end{eqnarray*}
\end{proof} 
\subsubsection{}

Our constructions of the homomorphisms
$$\hat p_!\colon \hat K(W)\to \hat K(B)\ ,\quad p_!^o\colon \Omega(W)\to
\Omega (B)$$
involve an explicit choice of a representative $o=(g^{T^vp},T^hp,\tilde \nabla,\sigma)$
of the smooth $K$-orientation lifting the given topological $K$-orientation of
$p$. 
In this paragraph we show: 

\begin{lem}\label{pap99}
The homomorphisms $\hat p_!\colon \hat K(W)\to \hat K(B)$ and 
$p_!^o\colon \Omega(W)\to \Omega (B)$
only depend on the smooth $K$-orientation represented by $o$.
\end{lem}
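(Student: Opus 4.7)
The statement splits into a nearly trivial form-level claim and the substantive smooth $K$-theoretic claim. For $p_!^o$, I will simply observe that if $o_0\sim o_1$ then by the equivalence relation of Definition \ref{smmmmzuz} and the transgression identity (\ref{eqq7}),
\[
\hA^c(\tilde\nabla_1)-d\sigma_1 \;=\; \hA^c(\tilde\nabla_0)+d\tilde\hA^c(\tilde\nabla_1,\tilde\nabla_0)-d\sigma_1 \;=\; \hA^c(\tilde\nabla_0)-d\sigma_0,
\]
so the integrand in Definition \ref{def313} is unchanged.

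For $\hat p_!$ my plan is a deformation argument on $[0,1]\times W$. Given two equivalent representatives $o_0$ and $o_1$, I will construct an interpolating representative $\bar o=(\bar g,\bar T^h,\bar{\tilde\nabla},\bar\sigma)$ of a smooth $K$-orientation of $\bar p:=\id_{[0,1]}\times p\colon [0,1]\times W\to [0,1]\times B$ with $i_k^*\bar o = o_k$ for $k=0,1$, where $i_k\colon B\hookrightarrow [0,1]\times B$ is the inclusion. The first three components are obtained by any smooth convex interpolation, and for the form component I simply set $\bar\sigma := (1-t)\,\pr_W^*\sigma_0+t\,\pr_W^*\sigma_1$ with $t$ the coordinate on $[0,1]$. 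For a cycle $(\cE,\rho)$ over $W$ I will form $X:=\hat{\bar p}_!(\pr_W^*[\cE,\rho])\in\hat K([0,1]\times B)$. The pull-back compatibility of Lemma \ref{lem100} gives $i_k^*X=\hat p_!^{o_k}[\cE,\rho]$, so the homotopy formula Lemma \ref{lem23} yields
\[
\hat p_!^{o_1}[\cE,\rho]-\hat p_!^{o_0}[\cE,\rho] \;=\; a\!\left(\int_{[0,1]\times B/B}R(X)\right).
\]

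It remains to show that this integrated curvature vanishes in $\Omega(B)/\im(d)$. Using Lemma \ref{lem24} and naturality of $R$ under pull-back, $R(X)=\bar p_!^{\bar o}\bigl(\pr_W^*R([\cE,\rho])\bigr)$. Iterating the fibre integrations and using the projection formula, the displayed expression above becomes (up to sign) $\displaystyle a\!\left(\int_{W/B}\Bigl(\int_{[0,1]\times W/W}(\hA^c(\bar{\tilde\nabla})-d\bar\sigma)\Bigr)\wedge R([\cE,\rho])\right)$. By Definition \ref{uzu4} the first inner contribution is $\tilde\hA^c(\tilde\nabla_1,\tilde\nabla_0)$, and by Stokes the second is $\sigma_1-\sigma_0$; since $o_0\sim o_1$ these two terms cancel modulo $\im(d)$, so the outer integral and hence $a(\cdot)$ vanishes.

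The only non-formal obstacle is the sign and modulo-exact bookkeeping in the iterated fibre integration, together with the Stokes step for $\bar\sigma$; all of the analytic and geometric content is already packaged in Lemmas \ref{lem100}, \ref{lem23}, \ref{lem24} and the transgression identity (\ref{eqq7}), so no new analysis is required.
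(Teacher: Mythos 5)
Your proof is correct and follows the same route as the paper: a direct computation for $p_!^o$ using $(\ref{eqq7})$, and for $\hat p_!$ an interpolating orientation $\bar o$ on $[0,1]\times W$ combined with the homotopy formula (Lemma \ref{lem23}), the curvature formula (Lemma \ref{lem24}), Fubini for the iterated fibre integration, and the transgression/Stokes identities. The only cosmetic difference is that you spell out an explicit choice of $\bar\sigma$ (after first fixing representative forms for the $\sigma_k$, since convex combination is not well defined on $\Omega/\mathrm{im}(d)$), whereas the paper simply asserts the existence of such a $\bar o$.
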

\begin{proof}
Let $o_k:=(g_k^{T^vp},T_k^hp,\tilde \nabla_k,\sigma_k)$, $k=0,1$
be two representatives of a smooth $K$-orientation.
Then we have
$\sigma_1-\sigma_0=\tilde \hA^c(\tilde \nabla_1,\tilde \nabla_0)$.
For the moment we indicate by a superscript $\hat p_!^k$ which representative
of the smooth $K$-orientation is used in the definition. Let $\omega\in
\Omega(W)$.
Then using (\ref{eqq7}) we get 
\begin{eqnarray*}
p_!^{o_1}(\omega)-p_!^{o_0}(\omega)&=&\int_{W/B}(\hA^c(o_1)-\hA^c(o_0)
-d(\sigma_1-\sigma_0))  \wedge
\omega\\&=&\int_{W/B}(\hA^c(\tilde\nabla_1)-\hA^c(\tilde\nabla_0) -d\tilde
\hA^c(\tilde \nabla_1,\tilde \nabla_0))\wedge \omega\\&=&0\ .\end{eqnarray*} 
We now consider the projection $\overline p\colon [0,1]\times W\to [0,1]\times B$ with the induced topological $K$-orientation.
It can be refined to a smooth $K$-orientation $\overline o$ which restricts to 
$o_k$ at $\{k\}\times B$. Let $q\colon [0,1]\times W\to W$ be the projection
and $x\in \hat K(W)$.
Furthermore let
$i_k\colon B\to \{k\}\times B\to [0,1]\times B$ be the embeddings. The
following chain of equalities follows from the homotopy formula Lemma
\ref{lem23}, the curvature formula Lemma \ref{lem24}, Stokes' theorem and the
definition of  $\tilde \hA^c(\tilde \nabla_1,\tilde \nabla_0)$, and finally
from the fact that $o_0\sim o_1$. 
\begin{eqnarray*}
\hat p_!^1(x)-\hat p_!^0(x)&=&i_1^*\hat{\overline p}_! q^*(x)-i_0^*\hat{\overline p}_! q^*(x)\\
&=&a\left(\int_{[0,1]\times B/B} R(\hat{\overline p}_!q^*x)\right)\\
&=&a\left(\int_{[0,1]\times B/B} \overline p_!^{\overline o} R(q^*(x))\right)\\
&=&a\left(\int_{[0,1]\times B/B} \overline p_!^{\overline o} q^*(R(x))\right)\\
&=&a\left(\int_{[0,1]\times B/B}\int_{[0,1]\times W/[0,1]\times B} (\hA^c(\overline o)-d\sigma(\overline o))\wedge q^*R(x)\right)\\
&=&a\left(\int_{W/B}[\int_{[0,1]\times W/W} (\hA^c(\overline
  o)-d\sigma(\overline o))]\wedge R(x)\right)\\
&=&a\left(\int_{W/B}[ \tilde \hA^c(\tilde \nabla_1,\tilde
  \nabla_0)-(\sigma(o_1)-\sigma(o_0))]\wedge R(x)\right)\\
&=&0\ .
\end{eqnarray*}
\end{proof}

\subsubsection{}

Let $p\colon W\to B$ be a proper submersion with closed fibres with a topological $K$-orientation. 
We choose a smooth $K$-orientation which refines
the topological $K$-orientation.  In this case we say that $p$ is smoothly $K$-oriented.
\begin{ddd}\label{ddd1}
We define the push-forward
$\hat p_!\colon \hat K(W)\to \hat K(B)$ to be the map induced by (\ref{eq300})
for some choice of a representative of the smooth $K$-orientation
\end{ddd}
We also have well-defined maps
$$p_!^o\colon \Omega(W)\to \Omega(B)\ ,\quad p^o_!\colon \Omega(W)/\im(d)\to \Omega(B)/\im(d)$$
given by integration of forms along the fibres.
Let us state the result about the compatibility of $\hat p_!$ with the
structure maps of smooth $K$-theory as follows.
\begin{prop}\label{mainprop}
The following diagrams commute:
\begin{equation}\label{uppersq}
%% \xymatrix{K(W)\ar[d]^{p_!}\ar[r]^{\ch_{dR}}&\Omega(W)/\im(d)\ar[d]^{p_!^o}\ar[r]^a&\hat K(W)\ar[d]^{\hat p_!}\ar[r]^I&K(W)\ar[d]^{p_!}\\ 
%% K(B)\ar[r]^{\ch_{dR}}&\Omega(B)/\im(d)\ar[r]^a&\hat K(B)\ar[r]^I&K(B)}
  \begin{CD}
    K(W) @>{\ch_{dR}}>> \Omega(W)/\im(d) @>a>>\hat K(W) @>{I}>> K(W)\\
    @VV{p_!}V     @VV{p^o_!}V     @VV{\hat p_!}V     @VV{p_!}V \\
    K(B) @>{\ch_{dR}}>> \Omega(B)/\im(d) @>a>>\hat K(B) @>{I}>> K(B)\\
  \end{CD}
\end{equation}
\begin{equation}\label{lowersq}
%% \xymatrix{\hat K(W)\ar[d]^{p_!}\ar[r]^R&\Omega_{d=0}(W)\ar[d]^{p_!}\\\hat K(B)\ar[r]^R&\Omega_{d=0}(B)}
  \begin{CD}
    \hat K(W) @>{R}>> \Omega_{d=0}(W)\\
    @VV{\hat p_!}V      @VV{p^o_!}V \\
    \hat K(B) @>{R}>> \Omega_{d=0}(B)
  \end{CD}
\end{equation}

\end{prop}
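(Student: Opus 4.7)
The lower square is already established as Lemma \ref{lem24}, so the work lies in verifying the three squares of the upper diagram. My plan is to tackle them in the order: right square, middle square, left square, since each reduces to progressively deeper inputs.

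For the right square ($I\circ\hat p_!=p_!\circ I$), I would represent an element of $\hat K(W)$ by a cycle $(\cE,\rho)$ and unpack the definitions: $I(\hat p_!([\cE,\rho]))=\ind(p_!^\lambda\cE)$ while $p_!\circ I([\cE,\rho])=p_!(\ind\cE)$. The identity $\ind(p_!\cE)=p_!(\ind\cE)$ in $K(B)$ is the multiplicativity of the analytic index under iterated fibrations, which follows from the fact that the topological push-forward $p_!\colon K(W)\to K(B)$ associated to the $Spin^c$-reduction of $T^vp$ coincides with the analytic one sending $[V]\mapsto\ind(\cW\otimes V)$; for a general geometric family $\cE$ on $W$ the same identification applies to the bundle of Hilbert spaces $H(\cE)\to W$, using that $p_!\cE$ is the fibre-wise composition of the Dirac operators.

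For the middle square ($\hat p_!\circ a=a\circ p_!^o$), I would compute directly. Writing $a(\omega)=[\emptyset,-\omega]$, we have $p_!\emptyset=\emptyset$, $\tilde\Omega(\lambda,\emptyset)=0$, and $R(a(\omega))=d\omega$, so
\begin{equation*}
\hat p_!(a(\omega))=\bigl[\emptyset,-\int_{W/B}\hA^c(o)\wedge\omega+\int_{W/B}\sigma(o)\wedge d\omega\bigr].
\end{equation*}
An integration by parts along the fibre (using that $p$ has closed fibres, so no boundary terms appear and the identity holds in $\Omega(B)/\im(d)$) rewrites the second summand as $-\int_{W/B}d\sigma(o)\wedge\omega$, producing $a(p_!^o(\omega))$ after invoking Definition \ref{def313}. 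The only subtlety is keeping track of signs from $a$ being of odd degree and from the parity grading of the form degrees.

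For the left square ($a\circ\ch_{dR}\circ p_!=a\circ p_!^o\circ\ch_{dR}$, interpreted as an identity of maps $K(W)\to\Omega(B)/\im(d)$), this is the characteristic class version of the Atiyah–Singer family index theorem (Theorem \ref{thm2}). Given $x\in K(W)$, I would realize $x$ by a geometric family $\cV$ on $W$ with zero-dimensional fibres (Lemma \ref{lem:realize_index}), so that $p_!(x)=\ind(p_!\cV)$ and $\ch_{dR}(x)=[\Omega(\cV)]$. Applying Theorem \ref{thm2} to $p_!\cV$ and using the identity $\Omega(p_!\cV)=\int_{W/B}\hA^c(\tilde\nabla)\wedge\Omega(\cV)$ from \ref{uzu3} modulo exact forms gives $\ch_{dR}(p_!x)=\int_{W/B}\hA^c(o)\wedge\ch_{dR}(x)$ in $\Omega(B)/\im(d)$. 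Since $\ch_{dR}(x)$ is closed, the $d\sigma(o)$ term in $p_!^o$ contributes an exact form, so both sides agree modulo $\im(d)$. For odd $x$, either realize $x$ via an odd geometric family (cf.\ \cite[3.1.6.7]{math.DG/0201112}) and apply the same argument, or reduce to the even case by suspension.

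The main obstacle, as in the rest of Section \ref{idwiqdqwwd}, is the right square: while it looks like a routine compatibility, proving $\ind(p_!\cE)=p_!(\ind\cE)$ at the level of topological $K$-theory requires identifying the analytic push-forward for an iterated fibration with the composition of the individual push-forwards. Once this (standard but nontrivial) input is cited, the remaining verifications reduce to careful bookkeeping of the differential forms appearing in the definition (\ref{eq300}) of $\hat p_!$.
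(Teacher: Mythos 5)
Your approach matches the paper's own proof: the lower square is Lemma~\ref{lem24}, the right square is $\ind(p_!\cE)=p_!(\ind\cE)$, the middle square is a direct computation from~(\ref{eq300}), and the left square is the characteristic-class form of the family index theorem (with the $d\sigma(o)$ term dying against a closed form). One small slip in the middle square: since $\sigma(o)$ is odd, $d(\sigma(o)\wedge\omega)=d\sigma(o)\wedge\omega-\sigma(o)\wedge d\omega$, so integrating by parts along the closed fibre turns $\int_{W/B}\sigma(o)\wedge d\omega$ into $+\int_{W/B}d\sigma(o)\wedge\omega$ modulo $\im(d)$, not $-\int_{W/B}d\sigma(o)\wedge\omega$; with that sign the total becomes $-\int_{W/B}(\hA^c(o)-d\sigma(o))\wedge\omega=a(p_!^o(\omega))$ as desired, so your conclusion is right but the stated intermediate step is off by a sign.
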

\begin{proof}
The maps between the topological $K$-groups are the usual push-forward maps defined by the $K$-orientation of $p$.  The other two are defined above.
 The square (\ref{lowersq}) commutes by Lemma \ref{lem24}.
The right square of (\ref{uppersq}) commutes because we have the well-known fact from index theory
$$\ind(p_!(\cE))=p_!(\ind(\cE))\ .$$
Let $\omega\in \Omega(W)/\im(d)$.
Then we have
\begin{eqnarray*}
\hat p_!(a(\omega))&=&[\emptyset,\int_{W/B}\sigma(o)\wedge d\omega-\int_{W/B}\hA^c(o)\wedge \omega]\\
&=&[\emptyset,-\int_{W/B}(\hA^c(o)-d\sigma(o))\wedge \omega]\\&=&
a\left(p_!(\omega)\right)\ .
\end{eqnarray*}
This shows that the middle square in (\ref{uppersq}) commutes.
Finally, the commutativity of the left square in (\ref{uppersq}) is a consequence of the  Chern character version of the family index theorem
$$\ch_{dR}(p_!(x))=\int_{W/B}\hA^c(T^vp)\wedge \ch_{dR}(x)\ ,\quad x\in K(W)\ .$$
\end{proof}

 If $f\colon B^\prime\to B$ is a smooth map then we consider the cartesian diagram
 $$%\xymatrix{W^\prime\ar[d]^{p^\prime}\ar[r]^F&W\ar[d]^p\\B^\prime\ar[r]^f&B}
 \begin{CD}
   W' @>F>> W\\
   @VV{p'}V @VV{p}V\\
   B' @>f>> B
 \end{CD}
\ .
 $$ 
We equip $p^\prime$ with the induced smooth $K$-orientation (see  \ref{pap101}).
\begin{lem}\label{cartcomp}
The following diagram commutes:
$$%% \xymatrix{\hat K(W)\ar[d]^{p_!}\ar[r]^{F^*}&\hat
  %% K(W^\prime)\ar[d]^{p_!^\prime}\\ \hat K(B)\ar[r]^{f^*}&\hat K(B^\prime)}
\begin{CD}
  \hat K(W) @>{F^*}>> \hat K(W^\prime)\\
  @VV{p_!}V  @VV{p_!^\prime}V \\
  \hat K(B) @>{f^*}>> \hat K(B^\prime)
\end{CD}
\ .$$
\end{lem}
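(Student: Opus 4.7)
The plan is to reduce the statement to Lemma \ref{lem100}, which already establishes the analogous commutativity on the level of semigroups $G(W)\to \hat K(B^\prime)$.

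First I would unwind the induced smooth $K$-orientation $o^\prime:=f^*o$ on $p^\prime$ as in \ref{pap101}. The crucial compatibilities are $F^*\hA^c(o)=\hA^c(o^\prime)$, $F^*\sigma(o)=\sigma(o^\prime)$, and the cartesian square identification $F^*(p_!\cE)\cong p^\prime_!(F^*\cE)$ (with the same identifications for the $\lambda$-rescaled families $p^\lambda_!$). Combined with the base-change property for integration over the fibre in a cartesian square and with $F^*\Omega(\cE)=\Omega(F^*\cE)$, these imply that $F^*$ applied to the differential form occurring in the formula (\ref{eq300}) for $\hat p_!^\lambda(\cE,\rho)$ produces exactly the form occurring in the formula for $\hat{p^\prime}_!^\lambda(F^*\cE,F^*\rho)$. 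In particular $F^*\tilde \Omega(\lambda,\cE)=\tilde \Omega(\lambda,F^*\cE)$ by the definition (\ref{dtqdutwqdwqdq}).

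This is precisely the content of Lemma \ref{lem100}: the two homomorphisms
\[
F^*\circ \hat p_!^\lambda,\ \hat{p^\prime}_!^\lambda\circ F^*\colon G(W)\longrightarrow \hat K(B^\prime)
\]
coincide. Since both of these semigroup homomorphisms factor through the quotient $\hat K(W)$ --- the left composition because $\hat p_!$ already descends to $\hat K(W)$ (Lemma \ref{pass}) and $F^*$ is functorial on $\hat K$, the right composition because $F^*\colon \hat K(W)\to \hat K(W^\prime)$ is defined and $\hat{p^\prime}_!$ descends to $\hat K(W^\prime)$ --- the equality on $G(W)$ passes to the desired equality on $\hat K(W)$, which is independent of $\lambda$.

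I do not expect a genuine obstacle here: the entire argument is bookkeeping once the naturality of all the ingredients under the cartesian base change has been recorded. The only point worth stating carefully is the verification that the differential-form summand in (\ref{eq300}) is natural under $F^*$, which in turn reduces to the three items $F^*\hA^c(o)=\hA^c(o^\prime)$, $F^*\sigma(o)=\sigma(o^\prime)$, and $f^*\circ\int_{W/B}=\int_{W^\prime/B^\prime}\circ F^*$ applied to pulled-back forms.
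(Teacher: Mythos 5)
Your proposal is correct and matches the paper's own proof, which simply cites Lemma \ref{lem100}; the extra discussion of why the form-level naturality holds merely re-derives the content of that lemma and of \ref{pap101}. The factoring-through-$\hat K(W)$ step you spell out is the standard descent argument and is implicit in the paper's one-line deduction.
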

\begin{proof}
This follows from Lemma \ref{lem100}. \end{proof}

\subsection{Functoriality}

\subsubsection{}\label{pap200}

We now discuss the functoriality of the push-forward with respect to
iterated fibre bundles.
Let $p\colon W\to B$ be as before together with a representative of a smooth $K$-orientation
$o_p=(g^{T^vp},T^hp,\tilde \nabla_p,\sigma(o_p))$.
Let $r\colon B\rightarrow A$ be another proper submersion with closed fibres
with a topological $K$-orientation which is refined by a smooth $K$-orientation represented by $o_r:=(g^{T^vr},T^hr,\tilde \nabla_r ,\sigma(o_r))$.

We can consider the geometric family
$\cW:=(W\to B,g^{T^vp},T^hp,S^c(T^vp))$ and apply the construction \ref{pap55} 
in order to define the geometric family $r^\lambda_!(\cW)$ over $A$.
The underlying submersion of the family is
$q:=r\circ p\colon W\to A$. Its vertical bundle has a metric $g_\lambda^{T^vq}$ and a horizontal distribution $T^hq$.
The topological $Spin^c$-structures of $T^vp$ and $T^vr$ induce a topological $Spin^c$-structure on
$T^vq=T^vp\oplus p^*T^vr$. The family of Clifford bundles of $p_!\cW$ is the
spinor bundle associated to this $Spin^c$-structure. 

In order to understand how the connection $\tilde \nabla_q^\lambda$ behaves as
$\lambda\to 0$ we choose local spin structures on $T^vp$ and $T^vr$. Then we write
$S^c(T^vp)\cong S(T^vp)\otimes L_p$ and $S^c(T^vr)\cong S(T^vr)\otimes L_r$ for one-dimensional twisting bundles with connection $L_p, L_r$. The two local spin structures induce a local spin structure  on $T^vq\cong T^vp\oplus p^*T^vr$. We get $S^c(T^vq)\cong S(T^vq)\otimes L_q$ with $L_q:=L_p\otimes p^*L_r$.
The connection $\nabla_q^{\lambda,T^vq}$ converges as $\lambda\to 0$. Moreover,
the twisting connection on $L_q$ does not depend on $\lambda$ at all.
Since $\nabla_q^{\lambda,T^vq}$ and $\nabla_q^{L}$ determine $\tilde \nabla^\lambda_q$ (see \ref{origeo})
we conclude that the connection $\tilde \nabla^\lambda_q$ converges as $\lambda\to 0$. We introduce the following notation for this adiabatic limit:
$$\tilde \nabla^{adia}:=\lim_{\lambda\to 0}\tilde \nabla^\lambda_q\ .$$

\subsubsection{}

We keep the situation described in \ref{pap200}.
 \begin{ddd}\label{def100}
We define the composite
$o^\lambda_q:=o_r\circ_\lambda o_p$ of the representatives of smooth $K$-orientations of
$p$ and $r$ 
by
$$o^\lambda_q:=(g_\lambda^{T^vq},T^hq,\tilde \nabla^\lambda_q,\sigma(o^\lambda_q))\ ,$$
where
$$\sigma(o_q^\lambda):=\sigma(o_p)\wedge p^*\hA^c(o_r)+\hA^c(o_p)\wedge
p^*\sigma(o_r)- \tilde\hA^c (\tilde \nabla^{adia},\tilde \nabla^\lambda_q)-d\sigma(o_p)\wedge p^*\sigma(o_r)\ .$$
\end{ddd}
\begin{lem}\label{lem19123}
This composition of representatives of smooth $\hat K$-orientations preserves
equivalence and induces a well-defined composition of smooth $K$-orientations
which is independent of $\lambda$. 
\end{lem}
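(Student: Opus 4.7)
The plan is to verify two separate assertions: ($\lambda$-independence) that the equivalence class of $o_q^\lambda$ is independent of $\lambda$, and (invariance) that it depends only on the equivalence classes of $o_p$ and $o_r$.

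The $\lambda$-independence is immediate from the transgression calculus. Among the four summands in the definition of $\sigma(o_q^\lambda)$, only $\tilde\hA^c(\tilde\nabla^{adia},\tilde\nabla_q^\lambda)$ depends on $\lambda$, since $\tilde\nabla^{adia}$ is $\lambda$-free by construction in \ref{pap200}. An application of the cocycle identity (\ref{eq7}) and the antisymmetry (\ref{eq8}) then gives
$$
\sigma(o_q^{\lambda'}) - \sigma(o_q^\lambda) = -\tilde\hA^c(\tilde\nabla^{adia},\tilde\nabla_q^{\lambda'}) + \tilde\hA^c(\tilde\nabla^{adia},\tilde\nabla_q^\lambda) = \tilde\hA^c(\tilde\nabla_q^{\lambda'},\tilde\nabla_q^\lambda),
$$
which is precisely the equivalence condition for $o_q^{\lambda'} \sim o_q^\lambda$.

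For the equivalence invariance, by symmetry I would argue in two independent steps, varying $o_p$ while fixing $o_r$, and then the other way. Suppose $o_p' \sim o_p$, so $\sigma(o_p') - \sigma(o_p) = \tilde\hA^c(\tilde\nabla_p',\tilde\nabla_p)$ and, by (\ref{eqq7}), $\hA^c(o_p') - \hA^c(o_p) = d\tilde\hA^c(\tilde\nabla_p',\tilde\nabla_p)$. Writing out $\sigma(o_q^\lambda[p',r]) - \sigma(o_q^\lambda[p,r])$ term by term from Definition \ref{def100}, one sees that the variations of the second summand $\hA^c(o_p)\wedge p^*\sigma(o_r)$ and the corrective fourth summand $-d\sigma(o_p)\wedge p^*\sigma(o_r)$ cancel exactly (this is precisely what the correction is designed to achieve). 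A further application of (\ref{eq7}) to the variation of the third summand reduces the whole matter to the single identity
$$
\tilde\hA^c(\tilde\nabla^{adia}[p',r],\tilde\nabla^{adia}[p,r]) = \tilde\hA^c(\tilde\nabla_p',\tilde\nabla_p) \wedge p^*\hA^c(o_r).
$$

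The plan for establishing this splitting identity -- which will be the main technical obstacle -- is to exploit the factorization of the adiabatic connection recalled in \ref{pap200}. Locally choosing spin reductions of $T^vp$ and $T^vr$ one obtains a local spin reduction of $T^vq$ and, in the adiabatic limit, a decomposition $S^c(T^vq) \cong S^c(T^vp)\otimes p^*S^c(T^vr)$ under which $\tilde\nabla^{adia}$ is the associated tensor product connection (this uses the multiplicativity of $\hA$ under direct sums and the additivity of $c_1$ under the tensor product $L_q = L_p \otimes p^*L_r$). Choosing a path of $Spin^c$-connections from $\tilde\nabla_p$ to $\tilde\nabla_p'$ on $T^vp$ and keeping the $r$-data fixed, the induced path of adiabatic connections $\tilde\nabla^{adia}[t]$ on $T^vq$ is a pure tensor product perturbation, and one gets $\hA^c(\tilde\nabla^{adia}[t]) = \hA^c(\tilde\nabla_p[t]) \wedge p^*\hA^c(\tilde\nabla_r)$ throughout. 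Integration over $t \in [0,1]$ then yields the displayed identity by Definition \ref{uzu4}. The case of varying $o_r$ is handled symmetrically, with the corrective term $-d\sigma(o_p)\wedge p^*\sigma(o_r)$ now compensating for the non-closedness of $\sigma(o_p)$.
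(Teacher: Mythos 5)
Your proposal is correct, and the overall architecture matches the paper's: establish $\lambda$-independence via the cocycle and antisymmetry identities (\ref{eq7}), (\ref{eq8}); then vary $o_p$ and $o_r$ separately; observe that the second and fourth summands in Definition \ref{def100} conspire to cancel; and reduce everything to the multiplicativity identity $\hA^c(\tilde\nabla^{adia}_q)=\hA^c(\tilde\nabla_p)\wedge p^*\hA^c(\tilde\nabla_r)$ (this is the paper's (\ref{eq3001})) together with its transgression counterpart $\tilde\hA^c(\tilde\nabla^{adia}_q,\tilde\nabla^{\prime\,adia}_q)=\tilde\hA^c(\tilde\nabla_p,\tilde\nabla'_p)\wedge p^*\hA^c(\tilde\nabla_r)$. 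Your path-integration argument for deriving the transgression counterpart from (\ref{eq3001}) is also what the paper has in mind.

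The one organizational difference is how the third summand $-\tilde\hA^c(\tilde\nabla^{adia},\tilde\nabla^\lambda_q)$ is handled. You keep $\lambda$ fixed and absorb its variation into the target transgression $\tilde\hA^c(\tilde\nabla^{\prime\lambda}_q,\tilde\nabla^\lambda_q)$ via an extra application of the cocycle identity, which I verified works out. The paper instead exploits the already-established $\lambda$-independence to take the adiabatic limit $\lambda\to 0$ \emph{before} computing the difference, so that this summand vanishes identically (it tends to $\tilde\hA^c(\tilde\nabla^{adia},\tilde\nabla^{adia})=0$) and the target transgression itself degenerates to $\tilde\hA^c(\tilde\nabla^{adia}_q,\tilde\nabla^{\prime\,adia}_q)$. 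The paper's route shortens the bookkeeping by one cocycle manipulation but relies on continuity of the transgression in $\lambda$; your route is slightly longer but avoids passage to the limit. Both are sound, and I'd regard them as variants of the same proof rather than genuinely different arguments.
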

\begin{proof}
We first show that $o_q^\lambda$ is independent of $\lambda$. 
In view of \ref{pap201} for $\lambda_0<\lambda_1$ we must show that
$\sigma(o^{\lambda_1}_q)-\sigma(o_q^{\lambda_0})=\tilde \hA^c(\tilde \nabla^{\lambda_1}_q,\tilde \nabla^{\lambda_0}_q)$.
In fact, inserting the definitions and using  (\ref{eq7}) and (\ref{eq8}) we have 
$$\sigma(o^{\lambda_1}_q)-\sigma(o_q^{\lambda_0})= - \tilde\hA^c (\tilde \nabla^{adia},\tilde \nabla^{\lambda_1}_q)+\tilde\hA^c (\tilde \nabla^{adia},\tilde \nabla^{\lambda_0}_q)=\tilde \hA^c(\tilde \nabla^{\lambda_1}_q,\tilde \nabla^{\lambda_0}_q)\ .$$

Let us now take another representative $ o_p^\prime$.
The following equalities hold in the limit $\lambda\to 0$.
\begin{eqnarray*}
\lefteqn{
\sigma(o_q)-\sigma(o_q^\prime)}&&\\&=&
(\sigma(o_p)- \sigma( o_p^\prime))\wedge p^*\hA^c(o_r)+
(\hA^c(o_p)-\hA^c(o_p^\prime))\wedge p^*\sigma(o_r)
 -d(\sigma(o_p)-\sigma(o_p^\prime))\wedge p^*\sigma(o_r)\\
&=&\tilde \hA^c(\tilde \nabla_p,\tilde  \nabla^\prime_p) \wedge
p^*\hA^c( o_r)+(\hA^c(\tilde \nabla_p)-\hA^c(\tilde \nabla^\prime_p)-d\tilde
\hA^c(\tilde \nabla_p,\tilde  \nabla^\prime_p))\wedge p^*\sigma(o_r)\\
 &=&\tilde \hA^c(\tilde \nabla^{adia}_q,\tilde \nabla^{\prime adia}_q)  
\end{eqnarray*}
The last equality uses (\ref{eqq7}) and that in the adiabatic limit
\begin{equation}\label{eq3001} \hA^c(\tilde \nabla^{adia}_q)=\hA^c(\tilde \nabla_p)\wedge p^*\hA^c( \nabla_r)\ ,\end{equation} 
which implies
a corresponding formula for the adiabatic limit of transgressions,
$$\tilde \hA^c(\tilde \nabla^{adia}_q,\tilde \nabla^{\prime adia}_q)=\tilde \hA^c(\tilde
\nabla_p,\tilde \nabla^\prime_p)\wedge p^*\hA^c(\nabla_r)\ .$$  

Next we consider the effect of changing the representative $o_r$ to the equivalent one $ o_r^\prime$. 
We compute in the adiabatic limit
\begin{eqnarray*}
\sigma(o_q)-\sigma(o_q^\prime)&=&\sigma(o_p)\wedge ( p^*\hA^c(o_r)-
p^*\hA^c(o_r^\prime))
+(\hA^c(o_p)-d\sigma(o_p))\wedge p^* (\sigma(o_r)-\sigma(o_r^\prime))\\ 
 &=&\sigma(o_p)\wedge dp^*\tilde \hA^c(\tilde \nabla_r,\tilde \nabla^\prime_r)+(\hA^c(o_p)-d\sigma(o_p))\wedge p^*
\tilde\hA^c(\tilde \nabla_r,\tilde\nabla^\prime_r)\\
&=&\hA^c(o_p)\wedge p^*\tilde \hA^c(\tilde \nabla_r,\tilde \nabla^\prime_r)\\
&=&\tilde \hA^c(\tilde \nabla^{adia}_q,\tilde \nabla^{\prime adia}_q)\ .
 \end{eqnarray*}
In the last equality we have used again (\ref{eq3001}) and the corresponding
equality
$$\tilde \hA^c(\tilde \nabla^{adia}_q,\tilde \nabla^{\prime adia}_q)=\hA^c(o_p)\wedge p^*\tilde \hA^c(\tilde \nabla_r,\tilde\nabla^\prime_r)\ .$$
\end{proof} 

\subsubsection{}\label{tz123}

We consider the composition of proper $K$-oriented submersions
$$\xymatrix{W\ar@/_0.5cm/[rr]_q\ar[r]^p&B\ar[r]^r&A}$$
with representatives of smooth $K$-orientations $o_p$ of $p$ and $o_r$ of $r$.  We let $o_q:=o_r\circ o_p$ be the composition.
These choices define push-forwards $\hat p_!$, $\hat r_!$ and $\hat q_!$ in
smooth $K$-theory.

\begin{theorem}\label{funktt}
We have the equality of homomorphisms $\hat K(W)\rightarrow \hat K(A)$
$$\hat q_!=\hat r_!\circ \hat p_!\ .$$
\end{theorem}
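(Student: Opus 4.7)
The plan is to verify the identity on an arbitrary cycle $(\cE,\rho)\in G(W)$, with underlying submersion $\pi\colon E\to W$. Write out both sides using the formula (\ref{eq300}): $\hat q_!([\cE,\rho])$ involves a single integration over $W/A$ with $\hA^c(o_q)$ and $\sigma(o_q)$, while $\hat r_!\hat p_!([\cE,\rho])$ is obtained by first integrating over $W/B$ and then over $B/A$. Both resulting geometric families $q^\nu_!\cE$ and $r^\mu_!(p^\lambda_!\cE)$ have the same underlying submersion $r\circ p\circ \pi\colon E\to A$ and carry canonically isomorphic $Spin^c$-spinor bundles on the vertical bundle $T^v(r\circ p\circ\pi)=T^v\pi\oplus \pi^*T^vp\oplus \pi^*p^*T^vr$; they differ only in the scaling parameter of the vertical metric and the induced family of Dirac connections. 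Thus the classes $[r^\mu_!(p^\lambda_!\cE),0]$ and $[q^\nu_!\cE,0]$ differ only by the $a$-image of an appropriate transgression form, which can be exhibited via an interpolating geometric family over $[0,1]\times B$ and the homotopy formula Lemma \ref{lem23}.

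The compatibility of the curvature with $p^o_!$ (Lemma \ref{lem24}) together with (\ref{eq3001}) already shows that on forms one has $r^{o_r}_!\circ p^{o_p}_!=q^{o_q}_!$; this is in fact precisely what Definition \ref{def100} is designed to achieve, as a direct expansion using $\hA^c(o_q)=\hA^c(o_p)\wedge p^*\hA^c(o_r)$ and Fubini reveals. Consequently, the "pure form" contribution $\int_{W/A}\hA^c(o_q)\wedge\rho+\int_{W/A}\sigma(o_q)\wedge R([\cE,\rho])$ appearing in $\hat q_!([\cE,\rho])$ equals, modulo exact forms, the sum of the analogous contributions assembled from two iterated applications of (\ref{eq300}), where the cross term $\sigma(o_r)\wedge p_*(\sigma(o_p)\wedge R)$ is killed by the correction term $-d\sigma(o_p)\wedge p^*\sigma(o_r)$ in $\sigma(o_q)$, and the mixed $\sigma(o_r)\wedge p_*(\hA^c(o_p)\wedge\rho)$ and $\hA^c(o_r)\wedge p_*(\sigma(o_p)\wedge\rho)$ terms reassemble correctly.

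The remaining task, and the main obstacle, is to treat the adiabatic contributions $\tilde\Omega(\lambda,\cE)$, $\tilde\Omega(\mu,p^\lambda_!\cE)$ and $\tilde\Omega(\nu,\cE)$, and to show that they cancel against the transgression piece $\tilde\hA^c(\tilde\nabla^{adia},\tilde\nabla^\lambda_q)$ appearing in the definition of $\sigma(o_q^\lambda)$. Here one uses that $\hat p_!$, $\hat r_!$, $\hat q_!$ are independent of the respective parameters, so one may send all of $\lambda,\mu,\nu$ to $0$. In this iterated adiabatic limit the connection on the composed family converges to $\tilde\nabla^{adia}$, and the integrals $\tilde\Omega$ become integrals of local index forms over the corresponding parameter segments, related by (\ref{eqq7}) to transgressions of $\hA^c$. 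An argument analogous to Theorem \ref{adia1}, applied to the twofold adiabatic family on $(0,\infty)^2\times A$ carrying the double-parameter scaling of the vertical metric, shows that the discrepancy between $q^\nu_!\cE$ and $r^\mu_!(p^\lambda_!\cE)$ as smooth $K$-theory classes is exactly $a$ of the transgression form that enters $\sigma(o_q^\lambda)-(\sigma(o_p)\wedge p^*\hA^c(o_r)+\hA^c(o_p)\wedge p^*\sigma(o_r)-d\sigma(o_p)\wedge p^*\sigma(o_r))$.

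Combining these steps gives $\hat q_!([\cE,\rho])=\hat r_!\hat p_!([\cE,\rho])$ in $\hat K(A)$. The hardest part is the adiabatic double limit: one must justify that $\tilde\Omega(\mu,p^\lambda_!\cE)$ behaves compatibly under sending $\lambda\to 0$ first, and that the resulting form can be recognized as the missing transgression. This is essentially an extension of \cite[Section 5]{MR2072502} to the iterated fibration setting, again simplified by the absence of small eigenvalues.
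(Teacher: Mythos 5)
Your proposal reproduces the overall architecture of the paper's argument: expand both sides on cycles via (\ref{eq300}), show that the pure-form contributions reassemble via Definition \ref{def100} and Fubini (your observation that $r^{o_r}_!\circ p^{o_p}_!=q^{o_q}_!$ on forms is exactly what that definition is built to achieve), and then account for the adiabatic $\tilde\Omega$ contributions via a two-parameter family. Two points, though, are imprecise. First, the paper deliberately fixes $\lambda=1$, so that $q_!\cE=r_!(p_!\cE)$ \emph{on the nose} as geometric families; there is nothing to interpolate at the level of cycles, and the whole identity collapses to an equality of forms modulo $\im(d)+\ch_{dR}(K(A))$. Your first paragraph, which speaks of comparing $[q^\nu_!\cE,0]$ with $[r^\mu_!(p^\lambda_!\cE),0]$ via a one-parameter interpolating family and Lemma \ref{lem23}, obscures this. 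Second, and more importantly, the mechanism you cite for the key transgression identity --- ``an argument analogous to Theorem \ref{adia1}'' and an extension of \cite[Section 5]{MR2072502} --- is not what is needed. Theorem \ref{adia1} governs the adiabatic limit of $\eta$-forms of \emph{tamed} families; the identity to be proved here (the paper's (\ref{eq400}), relating $\tilde\Omega(q,1,\cE)$, $\tilde\Omega(r,1,p_!\cE)$, $\int_{B/A}\hA^c(o_r)\wedge\tilde\Omega(p,1,\cE)$ and $\int_{W/A}\tilde\hA^c(\tilde\nabla^{adia},\tilde\nabla_q)\wedge\Omega(\cE)$) involves no taming and no $\eta$-form at all. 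The actual argument is elementary: build the geometric family $\cF$ over $(0,1]^2\times A$ restricting to $r^\lambda_!\circ p^\mu_!(\cE)$, evaluate $\Omega(\cF)$ along the four edges of the unit square using (\ref{eq88}) and (\ref{eq3001}), and invoke that transgression of the closed form $\Omega(\cF)$ is additive under concatenation and exact along a contractible loop. Replacing that simple Chern--Weil observation with heavy adiabatic-limit-of-$\eta$-form analysis would work but misidentifies both the tools and the level of difficulty of the step.
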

\begin{proof}
We calculate the push-forwards and the composition of the $K$-orientations using the parameter $\lambda=1$ (though we do not indicate this in the notation).
We take a class $[\cE,\rho]\in\hat K(W)$.
The following equality holds since $\lambda=1$:
$$q_!\cE=r_!(p_!\cE)\ .$$
So we must show that 
\begin{eqnarray}\lefteqn{
\int_{W/A}  \hA^c(o_q)\wedge \rho + \tilde
\Omega(q,1,\cE) +\int_{W/A}\sigma(o_q) \wedge 
R([\cE,\rho])}&&\label{rr1}\\
&\equiv&\int_{B/A}\hA^c(o_r)\wedge \left[\int_{W/B}  \hA^c(o_p)\wedge \rho + \tilde
\Omega(p,1,\cE)+\int_{W/B}\sigma(o_p) \wedge  R([\cE,\rho])\right]
\nonumber \\&&+\tilde
\Omega(r,1,p_!\cE) +\int_{B/A}\sigma(o_r)\wedge
R(p_![\cE,\rho] )\ .\nonumber
\end{eqnarray}
where $\equiv$ means equality modulo $\im(d)+\ch_{dR}(K(A))$.
The form $\Omega(q,1,\cE)$ is given by (\ref{dtqdutwqdwqdq}). Since in the present
paragraph we consider these transgression forms for various bundles we have included the projection $q$ as an argument.

By Proposition \ref{mainprop} we have
$$
R (\hat p_![\cE,\rho])=\int_{W/B}
(\hA^c(o_p)-d\sigma(o_p))\wedge R([\cE,\rho])\ .
$$
Next we observe that \begin{equation}\label{eq400}
\tilde
\Omega(q,1,\cE)\equiv \tilde
\Omega(r,1,p_!\cE)+\int_{W/A} \tilde
\hA^c(\tilde \nabla^{adia},\tilde \nabla_q)\wedge
\Omega(\cE)+\int_{B/A}\hA^c(o_r)\wedge \tilde \Omega(p,1,\cE)\ ,\end{equation}
(where $\equiv$ means equality up to $\im(d)$).
To see this we consider the two-parameter family
$r^\lambda_!\circ p_!^\mu(\cE)$, $\lambda,\mu>0$, of geometric families. There is a natural geometric family $\cF$ over $(0,1]^2\times A$ which restricts to $r^\lambda_!\circ p_!^\mu(\cE)$ on $\{(\lambda,\mu)\}\times A$ (see \ref{pap55} for the one-parameter case).
Note that the local index form $\Omega(\cF)$ extends by continuity to $[0,1]^2\times A$.
If $P\colon[0,1]\hookrightarrow [0,1]^2$ is a path, then one can form the
integral 
$\int_{P\times A/A} \Omega(\cF_{|P\times A})$, the transgression of the local index form of $r^\lambda_!\circ p_!^\mu(\cE)$ along the path $P$.  The following square indicates four paths in the $(\lambda,\mu)$-plane. The arrows are labeled by the evaluations of $\Omega(\cF)$ (which follow from the adiabatic limit formula \ref{eq88}), and their integrals, the  corresponding transgression forms:
$$\xymatrix{(0,1)\ar[rrrrrr]^{\tilde
\Omega(r,1,p_!\cE)}_{\Omega(r_!^\lambda\circ p_!(\cE))}&&&&&&(1,1)\\&&&&&&\\&&&&&&\\(0,0)\ar[uuu]^{\int_{B/A}\hA^c(o_r)\wedge \Omega(p_!^\mu\cE)}_{\int_{B/A}\hA^c(o_r)\wedge \tilde \Omega(p,1,\cE)}\ar[rrrrrr]^{\int_{W/A}\hA^c(o_r\circ_\lambda o_p)\wedge \Omega(\cE)}_{\int_{W/A} \tilde \hA^c(\tilde \nabla_q,\tilde \nabla^{adia})\wedge
\Omega(\cE)}&&&&&&(1,0)\ar[uuu]_{\tilde
\Omega(q,1,\cE)}^{\Omega(r_!\circ p^\mu_!(\cE))}}.$$
Note the equality  $r_!\circ p^\mu_!(\cE)=q^\mu_!(\cE)$ which is relevant for
the right vertical path. Also note that for the lower horizontal path that ,
as $\mu\to 0$, the fibres of $\cE$ are scaled to zero, whereas the fibres of
$p$ are scaled by $\lambda$. The latter is exactly the effect of the scaled
composition $o_r\circ_\lambda o_p$ of orientations defined in \ref{pap200},
explaining its appearence in the above formula. 
The equation (\ref{eq400}) follows since the transgression
is additive under composition of paths, and since the transgression along a closed contractible path gives an exact form.

We now insert Definition \ref{def100} of $\sigma(o_q)$ in order to get  
\begin{eqnarray}\lefteqn{
\int_{W/A}\sigma(o_q) \wedge  R([\cE,\rho])}&&\nonumber\\
&=&\int_{W/A}\left[\sigma(o_p)\wedge p^*\hA^c(o_r)+\hA^c(o_p)\wedge
p^*\sigma(o_r) -d\sigma(o_p)\wedge p^*\sigma(o_r)-\tilde\hA^c
(\tilde \nabla^{adia},\tilde \nabla_q)\right]\wedge R([\cE,\rho]) \nonumber\\
&=&\int_{W/A}\left[\sigma(o_p)\wedge p^*\hA^c(o_r)+\hA^c(o_p)\wedge
p^*\sigma(o_r) -d\sigma(o_r)\wedge p^*\sigma(o_r) \right]\wedge R([\cE,\rho])\nonumber \\&&-
\int_{W/A}\tilde\hA^c
(\tilde \nabla^{adia},\tilde \nabla_q)\wedge \Omega(\cE) + \int_{W/A}\tilde\hA^c
(\tilde \nabla^{adia},\tilde \nabla_q)\wedge d\rho\nonumber\\
&=&\int_{W/A}\left[\sigma(o_p)\wedge p^*\hA^c(o_r)+\hA^c(o_p)\wedge
p^*\sigma(o_r) -d\sigma(o_p)\wedge p^*\sigma(o_r) \right]\wedge R([\cE,\rho]) \nonumber\\&&-
\int_{W/A}\tilde\hA^c
(\tilde \nabla^{adia},\tilde \nabla_q)\wedge \Omega(\cE)+\int_{W/A}  \left(\hA^c(o_p)\wedge
p^*\hA^c(o_r)-\hA^c(o_q)\right) \wedge
\rho\label{eq355}
\end{eqnarray}
We insert (\ref{eq355}) and  (\ref{eq400}) into the left-hand side of (\ref{rr1}).

\begin{eqnarray*}\lefteqn{\int_{W/A}  \hA^c(o_q)\wedge \rho + \tilde
\Omega(q,1,\cE) +\int_{W/A}\sigma(o_q) \wedge 
R([\cE,\rho])}&&\\
&\equiv& \int_{W/A}\hA^c(o_q)\wedge \rho \\
&& +\tilde
\Omega(r,1,p_!\cE)+\int_{W/A} \tilde
\hA^c(\tilde \nabla^{adia},\tilde \nabla_q)\wedge
\Omega(\cE)+\int_{B/A}\hA^c(o_r)\wedge \tilde \Omega(p,1,\cE)\\
&&+ \int_{W/A}\left[\sigma(o_p)\wedge p^*\hA^c(o_r)+\hA^c(o_p)\wedge
p^*\sigma(o_r) -d\sigma(o_p)\wedge p^*\sigma(o_r) \right]\wedge R([\cE,\rho]) \nonumber\\&&-
\int_{W/A}\tilde\hA^c
(\tilde \nabla^{adia},\tilde \nabla_q)\wedge \Omega(\cE)+\int_{W/A}  \left(\hA^c(o_p)\wedge
p^*\hA^c(o_r)-\hA^c(o_q)\right) \wedge
\rho \\
&=&\tilde
\Omega(r,1,p_!\cE) +\int_{B/A}\hA^c(o_r)\wedge \tilde \Omega(p,1,\cE)
 \\&&+\int_{W/A}\left[\sigma(o_p)\wedge p^*\hA^c(o_r)+\hA^c(o_p)\wedge
p^*\sigma(o_r) -d\sigma(o_p)\wedge p^*\sigma(o_r) \right]\wedge R([\cE,\rho]) \\
&&+\int_{W/A}  \hA^c(o_p)\wedge
p^*\hA^c(o_r) \wedge
\rho\ .
\end{eqnarray*}
An inspection shows that this is exactly the right-hand side of
(\ref{rr1}).
 \end{proof}

\section{The cup product}\label{jkjdkdqwdqdwd}

\subsection{Definition of the product}

\subsubsection{}
In this section we define and study the cup product 
$$\cup\colon \hat K(B)\otimes \hat K(B)\rightarrow \hat K(B)\ .$$
It turns smooth $K$-theory into a functor on manifolds with 
values in $\Z/2\Z$-graded rings and into a multiplicative extension of the
pair $(K,\ch_\R)$ in the sense of Definition \ref{multdef1}.

\subsubsection{}\label{wefwefwefwefwf}

Let $\cE$ and $\cF$ be geometric families over $B$.
The formula for the product involves the product $\cE\times_B\cF$ of geometric families over $B$.
The detailed description of the product is easy to guess, but let us employ the following trick in order 
to give an alternative definition.

Let $p\colon F\to B$ be the proper submersion with closed fibres underlying $\cF$. Let us for the moment assume that the vertical
metric, the horizontal distribution, and the orientation of $p$ are complemented by a topological
$Spin^c$-structure together with a $Spin^c$-connection $\tilde \nabla$ as in \ref{pap8}.
The Dirac bundle $\cV$ of $\cF$ has the form
$\cV\cong W\otimes S^c(T^vp) $ for a twisting bundle $W$ with a hermitean metric and unitary connection 
(and $\Z/2\Z$-grading in the even case), which is uniquely determined up to isomorphism.
Let $ p^*\cE\otimes W$ denote the geometric family which is obtained from $p^*\cE$ by twisting its
Dirac bundle with $\delta^*W$, where $\delta\colon E\times_BF\to F$ denotes the underlying proper submersion with closed fibres of $p^*\cE$. Then we have
$$\cE\times_B \cF\cong p_!(p^*\cE\otimes W)\ .$$
This description may help to understand the meaning of the adiabatic
deformation which blows up $\cF$, which in this notation is given by
$p^\lambda_!( p^*\cE\otimes W)$.

In the description of the product of geometric families we could interchange the roles of $\cE$ and $\cF$.

If the vertical bundle of $\cE$ does not have a global $Spin^c$-structure, then it has at least a local one. In this case the description above again gives a complete description of the local geometry of $\cE\times_B\cF$.

\subsubsection{}

We now proceed to the definition of the product in terms of cycles.
In order to write down the formula we assume that the cycles
$(\cE,\rho)$ and $(\cF,\theta)$ are homogeneous of degree
$e$ and $f$, respectively.

\begin{ddd}\label{proddefin}
We define
$$(\cE,\rho)\cup (\cF,\theta):=[\cE\times_B\cF,(-1)^e\Omega(\cE)\wedge
\theta +\rho\wedge \Omega(\cF)-(-1)^ed\rho\wedge \theta]\ .$$
\end{ddd}
\begin{prop}\label{prooow}
The product is well-defined. It turns $B\mapsto \hat K(B)$ into a functor from smooth manifolds
to unital graded-commutative rings.
\end{prop}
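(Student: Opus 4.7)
By Lemma~\ref{lem3} and the evident bilinearity of the cycle-level formula under $\sqcup_B$, well-definedness reduces to showing that if $(\cE,\rho)$ is paired with $(\cE',\rho')$ via a taming $\tilde\cE_t$ of $\tilde\cE:=\cE\sqcup_B\cE'^{op}$ with $\rho-\rho'=\eta(\tilde\cE_t)$, then $(\cE,\rho)\cup(\cF,\theta)\sim(\cE',\rho')\cup(\cF,\theta)$, and analogously in the second argument. The main analytic input will be a multiplicativity formula for $\eta$-forms: for any tamed geometric family $\cG_t$ and any geometric family $\cF$ over $B$, the smoothing perturbation of the taming, lifted by the identity on the Dirac bundle of $\cF$, defines a generalized taming (in the sense of~\ref{pap66}) $(\cG\times_B\cF)_t$ of the product family, and
$$\eta\bigl((\cG\times_B\cF)_t\bigr)\equiv\eta(\cG_t)\wedge\Omega(\cF)\pmod{\im(d)}\ .$$
This is proved along the lines of Theorem~\ref{adia1}: the Bismut superconnection of the product family decomposes, up to lower-order curvature terms, into a sum of pieces involving only $\cG$ and only $\cF$ respectively, and the invertibility provided by $D(\cG)+Q$ controls the analysis exactly as in the adiabatic setting of Section~\ref{pap66} (cf.\ also \cite[Section 5]{MR2072502}). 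An analogous formula holds with the roles of $\cG$ and $\cF$ interchanged, with signs governed by parities.

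\textbf{Verifying well-definedness.} Granting this formula, we use $\cE^{op}\times_B\cF\cong(\cE\times_B\cF)^{op}$ and the distributivity of $\times_B$ over $\sqcup_B$ to identify a representative of the difference $(\cE,\rho)\cup(\cF,\theta)-(\cE',\rho')\cup(\cF,\theta)$ whose underlying family is $\tilde\cE\times_B\cF$ and whose form part equals
$$(-1)^e\bigl(\Omega(\cE)-\Omega(\cE')\bigr)\wedge\theta+(\rho-\rho')\wedge\Omega(\cF)-(-1)^e d(\rho-\rho')\wedge\theta\ .$$
Substituting $\Omega(\cE)-\Omega(\cE')=\Omega(\tilde\cE)=d\eta(\tilde\cE_t)=d(\rho-\rho')$ from (\ref{detad}), the first and third terms cancel, leaving $\eta(\tilde\cE_t)\wedge\Omega(\cF)$, which by the multiplicativity formula is exactly the $\eta$-form (modulo $\im(d)$) of the induced generalized taming of $\tilde\cE\times_B\cF$. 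Hence the two product cycles are paired. The case of the second argument is analogous; the only subtlety is that the contribution $\rho\wedge(\Omega(\cF)-\Omega(\cF'))=\rho\wedge d(\theta-\theta')$ must be rewritten using $d(\rho\wedge(\theta-\theta'))=d\rho\wedge(\theta-\theta')+(-1)^{\deg\rho}\rho\wedge d(\theta-\theta')$ so that the remaining terms combine correctly.

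\textbf{Ring axioms, functoriality, and the main obstacle.} The unit is $[\cV_0,0]$, where $\cV_0$ is the trivial rank-one geometric bundle of~\ref{zerofibre}; since $\cV_0\times_B\cF\cong\cF$ and $\Omega(\cV_0)=1$, Definition~\ref{proddefin} reduces $[\cV_0,0]\cup[\cF,\theta]$ to $[\cF,\theta]$ directly. Associativity follows from the natural isomorphism $(\cE\times_B\cF)\times_B\cG\cong\cE\times_B(\cF\times_B\cG)$ of geometric families together with a symmetric computation on differential forms, which is essentially forced by the identity $R(x\cup y)=R(x)\wedge R(y)$ (readily verified from the definition of $R$). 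Graded commutativity reduces to the natural graded-symmetric isomorphism of Dirac bundles for $\cE\times_B\cF$ and $\cF\times_B\cE$ together with the corresponding symmetry of the differential-form formula. Functoriality in $B$ is immediate from the compatibility of $\times_B$, $\Omega$, and $d$ with pull-back. The principal obstacle in this plan is the multiplicativity formula for $\eta$-forms: it is the one step requiring genuinely new analytic work beyond the tools already assembled in Section~\ref{tztewqze}, and everything else is then forced by the desired compatibility with curvature.
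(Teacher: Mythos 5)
Your proposal is correct and follows essentially the same route as the paper's own proof: reduce well-definedness to preservation of the "paired" relation via biadditivity, observe that the form-part difference collapses to $\eta(\tilde\cE_t)\wedge\Omega(\cF)$, and invoke a multiplicativity/adiabatic-limit statement for $\eta$-forms (the paper derives this from Theorem~\ref{adia1} together with a transgression argument showing the $\lambda$-dependence is exact), then handle the second argument by proving graded commutativity and check associativity, the unit, and functoriality by direct computation. The only cosmetic difference is that the paper reduces to the case where $\cE$ itself is tamed rather than working directly with the sum $\tilde\cE=\cE\sqcup_B\cE'^{op}$, and it does not spell out the second argument separately since graded commutativity makes it redundant — a shortcut you also note but then partially duplicate.
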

\begin{proof}
We first show that this product is bilinear and compatible with the  equivalence relation $\sim$ (\ref{uuu1}).
The product is obviously biadditive and natural with respect to pull-backs along maps $B^\prime\to B$. We now show that the product preserves the equivalence relation in the first argument.
Assume that $\cE$ admits  a taming $\cE_t$.
Then we have
$(\cE,\rho)\sim (\emptyset,\rho-\eta(\cE_t))$. Using the latter representative 
we get
\begin{eqnarray*}
(\emptyset,\rho-\eta(\cE_t))\cup (\cF,\theta) &=&[\emptyset,(\rho-\eta(\cE_t))\wedge \Omega(\cF)
-(-1)^ed\rho\wedge \theta+(-1)^ed\eta(\cE_t)\wedge \theta]\\
&=&[\emptyset,\rho\wedge\Omega(\cF)+(-1)^e\Omega(\cE)\wedge \theta 
-(-1)^ed\rho\wedge \theta -\eta(\cE_t)\wedge \Omega(\cF)]\ .
\end{eqnarray*}
On the other hand, similar to in \ref{pap66}, the taming $\cE_t$ induces a generalized taming 
$(\cE\times_B\cF)_t$. Using Lemma \ref{lem33} and argueing as in the proof of Lemma \ref{pass}
we get
\begin{eqnarray*}
\lefteqn{[\cE\times_B\cF,(-1)^e\Omega(\cE)\wedge
\theta +\rho\wedge \Omega(\cF)-(-1)^ed\rho\wedge \sigma]}&&\\&=&[\emptyset,(-1)^e\Omega(\cE)\wedge
\theta +\rho\wedge \Omega(\cF)-(-1)^ed\rho\wedge
\sigma-\eta((\cE\times_B\cF)_t)] \ .\end{eqnarray*}
It suffices to show that
\begin{equation}\label{iuqhduiwqdqdqwwqw}
\eta(\cE_t)\wedge \Omega(\cF)- \eta((\cE\times_B\cF)_t)\in\im(\ch_{dR})\ .
\end{equation}
We will actually show that this difference is exact.

% We first compute the derivative using (\ref{detad}):
% \begin{equation}\label{gwqduzgdwqud}
% d[\eta(\cE_t)\wedge \Omega(\cF)-
% \eta((\cE\times_B\cF)_t)]=\Omega(\cE)\wedge
% \Omega(\cF)-\Omega(\cE\times_B\cF)=0\ .
% \end{equation}
We first consider the adiabatic limit in which we blow up the metric of
$\cF$. We get from  Theorem \ref{adia1} 
\begin{equation}\label{uzddwqdddededdeded}
\lim_{adia} \eta((\cE\times_B\cF)_t)=\eta(\cE_t)\wedge \Omega(\cF)\ .
\end{equation}
In order to see this we use that 
$\cE\times_B\cF\cong p_!(p^*\cE\otimes W)$
(see \ref{wefwefwefwefwf}), where $p\colon F\to B$ and $W\to F$ is the twisting bundle of this family. The taming $\cE_t$ induces a taming $p^*\cE_t$, and hence a  taming $( p^*\cE\otimes W)_t$. It follows from standard properties of the induced superconnection on a tensor product bundle (alternatively  one can use the special case of Theorem \ref{adia1} where the second fibration has zero-dimensional fibres) that
$\eta(p^*\cE\otimes W)_t=p^*\eta(\cE_t)\wedge\ch(\nabla^W)$. From Theorem
\ref{adia1} we get ($\tilde \nabla$ is associated to $p$) 
\begin{eqnarray*}
\lim_{adia}\eta((\cE\times_B\cF)_t)&=&\lim_{\lambda\to 0}\eta(p_!^\lambda(p^*\cE\otimes W)_t)\\
&=&\eta(\cE_t)\wedge \left(\int_{F/B} \hA^c(\tilde \nabla)\wedge \ch(\nabla^W)\right)\\&=&\eta(\cE_t)\wedge \Omega(\cF)
\end{eqnarray*}
 As in \ref{pap55} we now let $\cG_t$ be the tamed family over $(0,\infty)\times B$ with underlying projection $r\colon (0,\infty)\times E\times_BF\to (0,\infty)\times B$ which
restricts to $p_!^\lambda(p^*\cE\otimes W)_t$ on $\{\lambda\}\times B$.
Then we have $d\eta(\cG_t)=\Omega(\cG)$. Using the formulas for $\nabla^{T^vr}$ given in
\cite[Prop. 10.2]{bgv} we observe that
$i_{\partial^H_\lambda}R^{\nabla^{T^vr}}=0$, where $\partial_\lambda^H$ is a horizontal lift of $\partial_\lambda$. This implies that $i_{\partial_\lambda}d\eta(\cG_t)=i_{\partial_\lambda}\Omega(\cG)=0$.  
We get
$$\eta(p_!^\lambda(p^*\cE\otimes W)_t)-\eta(p_!^1(p^*\cE\otimes W)_t)=d\int_{[\lambda,1]\times B/B} \eta(\cG^t)\ .$$
The exactness of the difference (\ref{iuqhduiwqdqdqwwqw}) now follows by taking the limit $\lambda\to 0$ and the fact that the range of $d$ is closed since
$\lim_{\lambda\to 0}\eta(p_!^\lambda(p^*\cE\otimes W)_t)=\eta(\cE_t)\wedge \Omega(\cF)$ by (\ref{uzddwqdddededdeded}) and
$\eta(p_!^1(p^*\cE\otimes W)_t)=\eta((\cE\times_B\cF)_t)$ by construction.

In order to avoid repeating this argument for the second argument 
we show that the product is graded commutative.
  Note that
$\cE\times_B\cF\cong \cF\times_B\cE$ except if both families are odd, in which case
$\cE\times_B\cF\cong (\cF\times_B\cE)^{op}$
\begin{eqnarray*}
[\cE,\rho]\cup [\cF,\theta]&=&[\cE\times_B\cF,(-1)^e\Omega(\cE)\wedge
\theta +\rho\wedge \Omega(\cF)-(-1)^ed\rho\wedge \theta]\\
&=&[(-1)^{ef}\cF\times_B\cE,(-1)^{e+e(f-1)}\theta \wedge\Omega(\cE)
+(-1)^{f(e-1)} \Omega(\cF)\wedge \rho-\rho\wedge d \theta]\\
&=&[(-1)^{ef}\cF\times_B\cE,(-1)^{ef}\theta \wedge\Omega(\cE)
+(-1)^{ef} (-1)^f \Omega(\cF)\wedge \rho-(-1)^{ef}(-1)^f d\theta\wedge \rho]\\
&=&(-1)^{ef} [\cF,\theta]\cup [\cE,\rho]\ .
\end{eqnarray*}

\subsubsection{}
We now have a well-defined $\Z/2\Z$-graded commutative product
$$\cup\colon \hat K(B)\otimes \hat K(B)\to \hat K(B)\ .$$
We show next that it is associative. First of all observe that the fibre product of geometric families is associative.
Let $e,f,g$ be the parities of the homogeneous classes
$[\cE,\rho]$, $[\cF,\theta]$, and $[\cG,\kappa]$.
\begin{eqnarray*}
\lefteqn{([\cE,\rho]\cup [\cF,\theta])\cup [\cG,\kappa]}&&\\
&=&
[\cE\times_B\cF,(-1)^e\Omega(\cE)\wedge
\theta +\rho\wedge \Omega(\cF)-(-1)^ed\rho\wedge \theta]\cup [\cG,\kappa]\\
&=&[\cE\times_B\cF\times_B\cG,((-1)^e\Omega(\cE)\wedge
\theta +\rho\wedge \Omega(\cF)-(-1)^ed\rho\wedge \theta)\wedge \Omega(\cG)\\&&+
(-1)^{e+f}\Omega(\cE\times_B\cF)\wedge \kappa - (-1)^{e+f}d((-1)^e\Omega(\cE)\wedge
\theta +\rho\wedge \Omega(\cF)-(-1)^ed\rho\wedge \theta)\wedge \kappa]\\
&=&[\cE\times_B\cF\times_B\cG,(-1)^e\Omega(\cE)\wedge \theta\wedge
\Omega(\cG)+\rho\wedge
\Omega(\cF)\wedge\Omega(\cG)\\&&-(-1)^ed\rho\wedge\theta\wedge\Omega(\cG)+(-1)^{e+f}\Omega(\cE)\wedge
\Omega(\cF)\wedge \kappa-(-1)^{e+f}\Omega(\cE)\wedge d\theta
\wedge \kappa\\&&-(-1)^{e+f}d\rho\wedge\Omega(\cF)\wedge
\kappa+(-1)^{e+f}d\rho\wedge d\theta \wedge \kappa]\\
\end{eqnarray*}
On the other hand
\begin{eqnarray*}\lefteqn{
[\cE,\rho]\times ([\cF,\theta]\times [\cG,\kappa])}&&\\&=&
[\cE,\rho]\times[\cF\times_B \cG,(-1)^f\Omega(\cF)\wedge \kappa +
\theta\wedge \Omega(\cG)-(-1)^fd\theta\wedge \kappa]\\
&=&[\cE\times_B\wedge \cF\times_B\cG,(-1)^{e}\Omega(\cE)\wedge ((-1)^f\Omega(\cF)\wedge \kappa +
\theta\wedge \Omega(\cG)-(-1)^fd\theta\wedge \kappa)\\&&+\rho\wedge
\Omega(\cF\times_B\cG)-(-1)^ed\rho\wedge  ((-1)^f\Omega(\cF)\wedge \kappa +
\theta\wedge \Omega(\cG)-(-1)^fd\theta\wedge \kappa)]\\
&=&[\cE\times_B
\cF\times_B\cG,(-1)^{e+f}\Omega(\cE)\wedge\Omega(\cF)\wedge\kappa+(-1)^e\Omega(\cE)\wedge
\theta\wedge\Omega(\cG)\\
&&-(-1)^{ e+f}\Omega(\cE)\wedge d
\theta\wedge\kappa+\rho\wedge\Omega(\cF)\wedge\Omega(\cG)-(-1)^{e+f}d\rho\wedge\Omega(\cF)\wedge\kappa\\&&
-(-1)^ed\rho\wedge\theta\wedge\Omega(\cG)+(-1)^{e+f}d\rho\wedge d\theta\wedge\kappa]
\end{eqnarray*}
By an inspection we see that the two right-hand sides agree.

\subsubsection{}
Let us observe that the unit $1\in \hat K(B)$ is simply given by
$(B\times \C,0)$, i.e.~the trivial $0$-dimensional family with fibre
the graded vector space $\C$ concentrated in even degree, and with
curvature form $1$. The
definition shows that this is actually a unit on the level of cycles.
This finishes the proof of Proposition \ref{prooow}.
% \subsubsection{}
% Finally we show that the product is graded commutative.
% Note that
% $\cE\times_B\cF\cong \cF\times_B\cE$ except if both families are odd, in which case
% $\cE\times_B\cF\cong (\cF\times_B\cE)^{op}$
% \begin{eqnarray*}
% [\cE,\rho]\cup [\cF,\theta]&=&[\cE\times_B\cF,(-1)^e\Omega(\cE)\wedge
% \theta +\rho\wedge \Omega(\cF)-(-1)^ed\rho\wedge \theta]\\
% &=&[(-1)^{ef}\cF\times_B\cE,(-1)^{e+e(f-1)}\theta \wedge\Omega(\cE)
% +(-1)^{f(e-1)} \Omega(\cF)\wedge \rho-\rho\wedge d \theta]\\
% &=&[(-1)^{ef}\cF\times_B\cE,(-1)^{ef}\theta \wedge\Omega(\cE)
% +(-1)^{ef} (-1)^f \Omega(\cF)\wedge \rho-(-1)^{ef}(-1)^f d\theta\wedge \rho]\\
% &=&(-1)^{ef} [\cF,\theta]\cup [\cE,\rho]\ .
% \end{eqnarray*}
\end{proof}  
 
\subsubsection{}

In this paragraph we study the compatibility of the cup product in smooth $K$-theory with the cup product in topological K-theory and the wedge product of differential forms.
\begin{lem}\label{rring}
For $x,y\in \hat K(B)$ we have 
$$R(x\cup y)=R(x)\wedge R(y)\ ,\quad I(x\cup y)=I(x)\cup I(y)\ .$$
Furthermore, for $\alpha\in \Omega(B)/\im(d)$ we have
$$a(\alpha)\cup x=a(\alpha\wedge R(x))\ .$$
\end{lem}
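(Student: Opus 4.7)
The plan is to represent $x = [\cE,\rho]$ and $y = [\cF,\theta]$ by homogeneous cycles of parities $e$ and $f$ and then verify each of the three identities by direct calculation on these representatives, using the formula in Definition \ref{proddefin}. The only nontrivial inputs beyond manipulation of signs and exterior derivatives will be two multiplicativity properties for geometric families:
\begin{equation*}
\Omega(\cE\times_B\cF)=\Omega(\cE)\wedge\Omega(\cF),\qquad \ind(\cE\times_B\cF)=\ind(\cE)\cup\ind(\cF).
\end{equation*}
The first follows from the description of $\cE\times_B\cF\cong p_!(p^*\cE\otimes W)$ in \ref{wefwefwefwefwf} together with the multiplicativity of $\hA$ and $\ch$ under tensor product of twisting bundles and the projection formula for fibre integration; equivalently it comes out of the standard formula for the Bismut superconnection on a fibre product. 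The second is the multiplicativity of the families index under external product of Dirac operators, a standard fact in $K$-theoretic index theory.

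For $R(x\cup y)=R(x)\wedge R(y)$ I will simply compute. Using $|\Omega(\cE)|\equiv e$ and $|\rho|\equiv e-1\pmod 2$, the differential of the correction form in Definition \ref{proddefin} is
\begin{equation*}
d\bigl[(-1)^e\Omega(\cE)\wedge\theta+\rho\wedge\Omega(\cF)-(-1)^e d\rho\wedge\theta\bigr]=\Omega(\cE)\wedge d\theta+d\rho\wedge\Omega(\cF)-d\rho\wedge d\theta,
\end{equation*}
since $\Omega(\cE)$ and $\Omega(\cF)$ are closed. Subtracting this from $\Omega(\cE\times_B\cF)=\Omega(\cE)\wedge\Omega(\cF)$ factors as $(\Omega(\cE)-d\rho)\wedge(\Omega(\cF)-d\theta)=R(x)\wedge R(y)$, as desired. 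For $I(x\cup y)=I(x)\cup I(y)$, the correction form plays no role and the identity is just the multiplicativity of the families index applied to $\cE\times_B\cF$.

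For the last identity, write $a(\alpha)=[\emptyset,-\alpha]$, which is a cycle of parity $e'=|\alpha|+1$. Since $\Omega(\emptyset)=0$ and $\emptyset\times_B\cE\cong\emptyset$, Definition \ref{proddefin} gives
\begin{equation*}
a(\alpha)\cup x=[\emptyset,-\alpha\wedge\Omega(\cE)+(-1)^{e'}d\alpha\wedge\rho].
\end{equation*}
Modulo $\im(d)$ we have $d\alpha\wedge\rho\equiv -(-1)^{|\alpha|}\alpha\wedge d\rho$, and with $e'=|\alpha|+1$ the sign $(-1)^{e'}\cdot(-(-1)^{|\alpha|})$ equals $+1$. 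Hence the bracket reduces to $-\alpha\wedge(\Omega(\cE)-d\rho)=-\alpha\wedge R(x)$, so $a(\alpha)\cup x=a(\alpha\wedge R(x))$.

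The main obstacle, conceptually, is the first step: justifying $\Omega(\cE\times_B\cF)=\Omega(\cE)\wedge\Omega(\cF)$ cleanly. The quickest route is to pass through the description $\cE\times_B\cF\cong p_!(p^*\cE\otimes W)$, where $W$ is the twisting bundle of $\cF$, so that the local index form of the product equals $\int_{F/B}\hA^c(\tilde\nabla)\wedge\ch(\nabla^W)\wedge p^*\Omega(\cE)=\Omega(\cF)\wedge\Omega(\cE)$, with the sign absorbed since $\Omega(\cE)$ is of pure parity $e$. Everything else is bookkeeping of signs and an appeal to multiplicativity of the index.
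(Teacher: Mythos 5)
Your proposal is correct and is exactly the computation the paper elides with ``Straight forward calculation using the definitions.'' The sign bookkeeping (using $|\Omega(\cE)|\equiv e$, $|\rho|\equiv e-1$, $e'=|\alpha|+1\pmod 2$) checks out in all three identities, and the reduction of the $a$-identity modulo $\im(d)$ is handled cleanly.

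One remark on the key input $\Omega(\cE\times_B\cF)=\Omega(\cE)\wedge\Omega(\cF)$: you offer two justifications, and they are not equally tight. Deriving it from $\cE\times_B\cF\cong p_!(p^*\cE\otimes W)$ via the push-forward formula is a priori only an equality modulo $\im(d)$ --- equation (\ref{eq88}) says $\Omega(p^\lambda_!\cE')=\int_{W/B}\hA^c(o)\wedge\Omega(\cE')+d\tilde\Omega(\lambda,\cE')$ for $\lambda=1$, with the error exact but not zero --- whereas the curvature identity $R(x\cup y)=R(x)\wedge R(y)$ is an equality of closed forms in $\Omega_{d=0}(B)$, not mod $\im(d)$, so an exact-form discrepancy would not be acceptable. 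The correct route is the one you mention in passing: for a \emph{fibre product} (as opposed to a general iterated fibration) the Bismut connection on $T^vq\cong T^v\pi_\cE\oplus T^v\pi_\cF$ decomposes as a direct sum and the twisting bundle is the tensor product, so $\hA(\nabla^{T^vq})=\hA(\nabla^{T^v\pi_\cE})\wedge\hA(\nabla^{T^v\pi_\cF})$ and $\ch(\nabla^{W})=\ch(\nabla^{W_\cE})\wedge\ch(\nabla^{W_\cF})$ on the nose; the adiabatic transgression vanishes. You should make this the primary justification rather than the $p_!$ detour. With that emphasis swapped, the argument is complete.
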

\begin{proof}
Straight forward calculation using the definitions.
\end{proof}

\begin{kor}
With the $\cup$-product smooth $K$-theory $\hat K$ is a multiplicative extension of the pair $(K,\ch_\R)$. 
\end{kor}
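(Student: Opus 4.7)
The proof is essentially an assembly of results already established; the corollary is a direct synthesis of Proposition \ref{prooow} and Lemma \ref{rring} together with the fact (proved earlier in Section \ref{tztewqze}, particularly via the transformations $R$, $I$, $a$ and Proposition \ref{prop1}) that $\hat K$ is a smooth extension of $(K, \ch_\R)$ in the sense of Definition \ref{ddd556}.

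My plan is to simply check the four conditions of Definition \ref{multdef1} one by one. First, $\hat K$ with the structure maps $R$, $I$, $a$ has already been verified to be a smooth extension of $(K, \ch_\R)$ (the required exact sequence is Proposition \ref{prop1}, the compatibility diagram and $R \circ a = d$ follow from Lemma \ref{tzw}, and $a$ is parity-reversing by construction). Second, Proposition \ref{prooow} asserts that $B \mapsto \hat K(B)$ is a functor from smooth manifolds to $\Z/2\Z$-graded unital rings via $\cup$. Third, the multiplicativity of $R$ and $I$ is exactly the first two identities of Lemma \ref{rring}. Fourth, the module-type identity $a(\omega) \cup x = a(\omega \wedge R(x))$ for $\omega \in \Omega(B)/\im(d)$ and $x \in \hat K(B)$ is precisely the last identity of Lemma \ref{rring}.

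Since no step requires any new verification, the proof reduces to writing down these citations in order. There is no genuine obstacle here; all the hard work was already absorbed into Proposition \ref{prooow} (well-definedness, associativity, graded commutativity of $\cup$, with the adiabatic limit argument (\ref{iuqhduiwqdqdqwwqw}) being the main technical input) and into Lemma \ref{rring} (which is a direct cycle-level calculation using the definitions of $R$, $I$, $a$, and $\cup$). Thus the proof is essentially one sentence: combine Proposition \ref{prooow} and Lemma \ref{rring} with the smooth extension structure established in Section \ref{tztewqze}. \hB
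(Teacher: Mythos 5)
Your proposal is correct and takes essentially the same route the paper does: the corollary is stated without a separate proof precisely because it is a direct synthesis of Proposition \ref{prooow} (ring functor structure) and Lemma \ref{rring} ($R$, $I$ multiplicative and the $a$-module identity) with the smooth-extension structure already established via Proposition \ref{prop1} and Lemma \ref{tzw}. Your enumeration of the four conditions of Definition \ref{multdef1} with citations to where each is verified is exactly the intended argument.
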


\subsection{Projection formula}

\subsubsection{}

Let $p\colon W\rightarrow B$ be a proper submersion with closed fibres with a smooth $K$-orientation represented by $o$.  In this case  we have a well-defined  push-forward
$\hat p_!\colon \hat K(W)\to \hat K(B)$. The explicit formula in terms of cycles is (\ref{eq300}).
The projection formula states the compatibility of the push-forward with the $\cup$-product.
\begin{prop} \label{projcl}
Let $x\in \hat K(W)$ and $y\in \hat K(B)$. 
Then
$$\hat p_!(p^*y\cup x)=y\cup \hat p_!(x)\ .$$
\end{prop}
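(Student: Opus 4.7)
The plan is to prove the projection formula by a direct cycle-level calculation. I would represent $x = [\cE,\rho]$ and $y = [\cF,\theta]$ with $\cE,\cF$ of homogeneous parities $e,f$, and then compare the two sides after identifying their underlying geometric families.

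The geometric core of the argument is the natural isomorphism of geometric families over $B$
$$p_!^\lambda\bigl(p^*\cF\times_W\cE\bigr)\;\cong\;\cF\times_B p_!^\lambda\cE,\qquad\lambda\in(0,\infty),$$
which I would verify by matching the defining data of Definition \ref{ddd7771} and \ref{pap55}: both families sit over the composition $F\times_BE\to B$, have vertical bundle $T^v\pi_\cE\oplus \pi_\cE^*T^vp\oplus(\pi_\cE\circ p)^*T^v\pi_\cF$ with the same rescaling (only the $T^v\pi_\cE$-summand is scaled by $\lambda$), compatible horizontal distributions, and the same Dirac bundle using the local description $S(T^vq)\otimes\pi^*L\otimes Z$ from \ref{pap8}, which is symmetric in the three factors. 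Under this identification, the geometric family underlying $\hat p_!(p^*y\cup x)$ coincides with the one underlying $y\cup\hat p_!(x)$.

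Having matched the families, it remains to match the form parts in $\Omega(B)/\im(d)$. Expanding both sides using Definition \ref{proddefin} and (\ref{eq300}), the needed identities are: the projection formula for fibre integration $\int_{W/B}p^*\alpha\wedge\beta=\alpha\wedge\int_{W/B}\beta$ (with $\hA^c(o)$ and $\sigma(o)$ having well-defined parities); multiplicativity of the local index form $\Omega(p^*\cF\times_W\cE)=p^*\Omega(\cF)\wedge\Omega(\cE)$; multiplicativity of the curvature $R(p^*y\cup x)=p^*R(y)\wedge R(x)$ from Lemma \ref{rring}; and the adiabatic limit relation $\Omega(p_!\cE)-\int_{W/B}\hA^c(o)\wedge\Omega(\cE)=d\tilde\Omega(1,\cE)$ from (\ref{eq88}). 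One further ingredient is the factorization of the transgression form
$$\tilde\Omega\bigl(1,p^*\cF\times_W\cE\bigr)\;\equiv\;\Omega(\cF)\wedge\tilde\Omega(1,\cE)\pmod{\im(d)},$$
which follows by applying the fibrewise-product identification to the adiabatic family over $(0,1]\times B$ that defines $\tilde\Omega$, and then using the projection formula for integration.

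The main obstacle is the careful bookkeeping of parities and of the exact form contributions generated by the terms involving $d\sigma(o)$, $d\theta$, and the transgression $\tilde\Omega$. The two sides produce, in principle, different collections of exact forms, and the point is to show that these differences are exact, so that the classes agree in $\Omega(B)/\im(d)$ (and hence in $\hat K(B)$). If sign management becomes too painful, one can shorten the argument by invoking the exact sequence (\ref{exax}): the difference of the two sides has vanishing topological part (by the classical projection formula for $p_!\colon K(W)\to K(B)$, using Proposition \ref{mainprop} and Lemma \ref{rring}) and vanishing curvature (by Lemma \ref{lem24} and the classical projection formula for differential forms), so it lies in $a(\Omega(B,N)/\im(d))$; the remaining verification that this $a$-term vanishes then reduces to the identities already listed above, applied only to the form-level data.
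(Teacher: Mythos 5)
Your approach is the same as the paper's: a direct cycle-level computation resting on the geometric isomorphism $p_!(p^*\cF\times_W\cE)\cong\cF\times_Bp_!\cE$, multiplicativity of the local index form, the factorization of the transgression form, and bookkeeping modulo exact forms. Two points in your sketch need to be repaired, however. First, the isomorphism $p_!^\lambda(p^*\cF\times_W\cE)\cong\cF\times_Bp_!^\lambda\cE$ fails for $\lambda\ne 1$: by the definition in \ref{pap55}, $p_!^\lambda$ rescales the \emph{entire} vertical metric of the family being pushed forward by $\lambda^2$, so on the left both the $T^v\pi_\cE$-summand and the pulled-back $T^v\pi_\cF$-summand are scaled, while on the right only $T^v\pi_\cE$ is; these are different metrics and hence different geometric families. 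The paper asserts the isomorphism only at $\lambda=1$ and then separately derives the index-form identity $\Omega(p_!^\lambda(p^*\cF\times_W\cE))=\Omega(\cF)\wedge\Omega(p_!^\lambda\cE)$ for general $\lambda$ (which is all that actually enters the transgression integral).

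Second, your transgression factorization $\tilde\Omega(\lambda,p^*\cF\times_W\cE)\equiv\Omega(\cF)\wedge\tilde\Omega(\lambda,\cE)$ is missing the sign $(-1)^f$, $f$ the parity of $\cF$, which comes from commuting the integration over the adiabatic parameter past $\Omega(\cF)$; the paper records this as $\tilde\Omega(\lambda,p^*\cE\times_W\cF)=(-1)^e\Omega(\cE)\wedge\tilde\Omega(\lambda,\cF)$ in its notation. This sign is needed for the final cancellation (the analogue of the manipulation around (\ref{eq788})) when $\cF$ is odd, so it cannot be deferred to "careful bookkeeping" without actually being supplied. The exact-sequence shortcut does not save you from this, as you yourself observe: showing that $I$ and $R$ of the difference vanish only places it in the image of $a$, and identifying it as zero still requires the same form-level computation with its signs.
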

\begin{proof} Let $x=[\cF,\sigma]$ and $y=[\cE,\rho]$. 
By an inspection of the constructions we observe that the projection formula holds true on the level of
geometric families $$p_!( p^*\cE\times_W \cF)\cong \cE\times_B p_!\cF\ .$$
This implies  $$\Omega(p^\lambda_!(p^*\cE\times_W
\cF))=\Omega(\cE) \wedge \Omega(p^\lambda_!(\cF))\ .$$ Consequently we have
$\tilde \Omega(\lambda,p^*\cE\times_W \cF)=(-1)^e \Omega(\cE)\wedge \tilde \Omega(\lambda,\cF)$.
 Inserting the definitions of the product and the push-forward we get up to
 exact forms
\begin{eqnarray}
\lefteqn{\hat p_!(p^*y\cup
  x)}&&\nonumber\\&=&\hat p_!([p^*\cE\times_W\cF,(-1)^ep^*\Omega(\cE)\wedge
\sigma+p^*\rho\wedge \Omega(\cF)-(-1)^ep^*d\rho \wedge
\sigma])\nonumber\\
&=&[p_!( p^*\cE\times_W \cF),
\int_{W/B}\hA^c(o)\wedge \left[(-1)^ep^*\Omega(\cE)\wedge
  \sigma+p^*\rho\wedge \Omega(\cF)-(-1)^ep^*d\rho\wedge \sigma\right]\nonumber\\&&+ 
\int_{W/B}\sigma(o) \wedge R(p^*y\cup x)+\tilde
\Omega(1,p^*\cE\times_W \cF)]\nonumber\\
&=&[\cE\times_B p_!\cF, \rho\wedge
\int_{W/B}\hA^c(o)\wedge \Omega(\cF) +
(-1)^e\Omega(\cE)\wedge \int_{W/B}\hA^c(o)\wedge
  \sigma\nonumber\\
&&+(-1)^e \Omega(\cE)\wedge \tilde
\Omega(1,\cF)\nonumber\\
&&- \rho\wedge \int_{W/B}\hA^c(o) \wedge d\sigma+ (-1)^e
R(y)\wedge \int_{W/B} \sigma(o)\wedge R (x)] \label{eq788}\ .
\end{eqnarray}
Up to exact forms we have
\begin{eqnarray*}
&&\rho\wedge
\int_{W/B}\hA^c(o)\wedge \Omega(\cF) +
(-1)^e\Omega(\cE)\wedge \int_{W/B}\hA^c(o)\wedge
  \sigma\\
&&+(-1)^e \Omega(\cE)\wedge \tilde
\Omega(1,\cF)\\
&&- \rho\wedge \int_{W/B}\hA^c(o) \wedge d\sigma+ (-1)^e
R(y)\wedge \int_{W/B} \sigma(o)\wedge R(x)\\
&=&(-1)^e \Omega(\cE)\wedge \left(  \int_{W/B}\hA^c(o)\wedge
  \sigma+ \tilde
\Omega(1,\cF) + \int_{W/B} \sigma(o)\wedge R (x)
\right)\\
&&+\rho\wedge  \int_{W/B}\hA^c(o)\wedge 
  \left(\Omega(\cF)-d\sigma\right))-(-1)^e d\rho\wedge \int_{W/B} \sigma(o)\wedge R (x)
\\
&=&(-1)^e \Omega(\cE)\wedge \left(  \int_{W/B}\hA^c(o)\wedge
  \sigma+ \tilde
\Omega(1,\cF) + \int_{W/B} \sigma(o)\wedge R (x)
\right)\\
&&+\rho\wedge  \int_{W/B}(\hA^c(o)-d\sigma(o))\wedge 
  R(x) 
\\&=&(-1)^e \Omega(\cE)\wedge \left(  \int_{W/B}\hA^c(o)\wedge
  \sigma+ \tilde
\Omega(1,\cF) + \int_{W/B} \sigma(o)\wedge R (x)
\right)\\
&&+\rho\wedge  R(\hat p_!x)\ .
\end{eqnarray*}
Thus the form component of (\ref{eq788}) is exactly the one needed for the product
$y\cup p_!(x)$. \end{proof}

\subsection{Suspension}

\subsubsection{}

We consider the projection $\pr_2\colon S^1\times B\to B$. 
The goal of this subsection is to verify the relation
$$(\hat\pr_2)_!\circ \pr_2^*=0$$
which is an important ingredient in the uniqueness result Theorem \ref{wieth}.

\subsubsection{}\label{pullpush1}

The projection $\pr_2$  fits into the cartesian diagram
$$\xymatrix{S^1\times B\ar[r]^{\pr_1}\ar[d]^{\pr_2}&S^1\ar[d]^p\\B\ar[r]^r&{*}}\ .$$
We choose the metric $g^{TS^1}$ of unit volume and the bounding spin structure on $TS^1$.
This spin structure induces a $Spin^c$ structure on $TS^1$ together with the connection $\tilde \nabla$.
In this way we get a representative $o$ of a smooth $K$-orientation of $p$. 
By pull-back we get the representative $r^*o$ of a smooth $K$-orientation of $\pr_2$
which is used to define $(\hat\pr_2)_!$.

\subsubsection{}\label{uiwfweqfewfwfef}

Using the projection formula Proposition \ref{projcl} we get for $x\in \hat
K(B)$ 
$$(\hat\pr_2)_!(\pr_2^*(x))=(\hat\pr_2)_!(\pr_2^*(x)\cup 1)=x\cup
(\hat\pr_2)_!1\ .$$
Using the compatibility of the push-forward with cartesian diagrams Lemma \ref{cartcomp}
we get
$$(\hat\pr_2)_!1=(\hat\pr_2)_!(\pr_1^*(1))=r^*\hat p_!(1)\ .$$

We let $\cS^1$ denote the geometric family over $*$ given by $p\colon S^1\to *$ with the geometry described above.
Since $S^1$ has the bounding $Spin$-structure the Dirac operator is invertible and has a symmetric spectrum.
The family $\cS^1$ therefore has a canonical taming $\cS^1_t$ by the zero smoothing operator, and we have 
$\eta(\cS^1_t)=0$. This implies
$$\hat p_!(1)=[\cS^1,0]=[\emptyset,\eta(\cS_t^1)]=[\emptyset,0]=0\ .$$
\begin{kor}\label{pullpush0}
We have $(\hat\pr_2)_!\circ \pr_2^*=0$.
\end{kor}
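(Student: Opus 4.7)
The plan is to deduce this corollary entirely from the structural results already established, without returning to the cycle-level definitions. Given $x\in\hat K(B)$, I would first apply the projection formula (Proposition \ref{projcl}) to the equation $\pr_2^*(x)=\pr_2^*(x)\cup 1$, obtaining
$$(\hat\pr_2)_!(\pr_2^*x)=(\hat\pr_2)_!(\pr_2^*x\cup 1)=x\cup(\hat\pr_2)_!(1).$$
Thus everything reduces to showing that $(\hat\pr_2)_!(1)=0$ in $\hat K(B)$, where $1$ denotes the unit class in $\hat K(S^1\times B)$.

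Next, I would exploit the cartesian diagram in \ref{pullpush1} and the fact that the smooth $K$-orientation of $\pr_2$ is by construction the pull-back of the orientation $o$ of $p\colon S^1\to *$ along $r\colon B\to *$. Writing $1=\pr_1^*(1)$ on $S^1\times B$ and invoking the base-change compatibility (Lemma \ref{cartcomp}), we get
$$(\hat\pr_2)_!(1)=(\hat\pr_2)_!(\pr_1^*1)=r^*\hat p_!(1).$$
Since $r^*$ is a ring homomorphism (Lemma \ref{rring}) and in particular respects the zero element, it suffices to prove that $\hat p_!(1)=0$ in $\hat K(\mathrm{pt})$.

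To handle this final step, I would use the cycle-level formula (\ref{eq300}). The unit $1\in\hat K(S^1)$ is represented by the trivial $0$-dimensional family $(S^1\times\C,0)$, so the push-forward $p_!$ of its underlying geometric family is the geometric family $\cS^1$ consisting of $p\colon S^1\to *$ with the bounding spin structure and unit-length metric. The crucial geometric input is that with the bounding spin structure the Dirac operator on $S^1$ is invertible with spectrum symmetric about $0$; hence the zero operator is a canonical taming $\cS^1_t$, and its associated $\eta$-form vanishes identically. Therefore $(\cS^1,0)$ is paired with $(\emptyset,0)$ via $\cS^1_t$, which gives $\hat p_!(1)=[\cS^1,0]=0$.

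The only real content of the argument is the well-chosen geometric input at the last step — everything else is formal manipulation using results already proved. Once one realizes that the push-forward of the unit is the class of a canonically tamed family with vanishing $\eta$-form, the corollary is immediate. There is no serious obstacle to expect.
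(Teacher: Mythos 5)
Your proof is correct and follows exactly the same route as the paper: projection formula to reduce to $(\hat\pr_2)_!(1)$, then base-change (Lemma \ref{cartcomp}) to reduce to $\hat p_!(1)$, then the observation that the bounding spin structure makes the circle Dirac operator invertible with symmetric spectrum, so the zero taming has vanishing $\eta$-form and $\hat p_!(1)=[\cS^1,0]=0$. The only cosmetic addition is the explicit appeal to $r^*$ being a homomorphism, which is immediate.
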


\section{Constructions of natural smooth $K$-theory classes}\label{sec5}

\subsection{Calculations}

\subsubsection{}

\begin{lem}\label{calpunk}
We have 
$$\hat K^*(*)\cong \left\{\begin{array}{cc} \Z & *  = 0\\
\R/\Z & *=1\end{array}\right.\ .$$
\end{lem}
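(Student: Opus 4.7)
The plan is to apply the exact sequence from Proposition \ref{prop1} to the base manifold $B = *$. The key observation is that for a point all of the auxiliary groups are easy to evaluate: $K^0(*) \cong \Z$, $K^1(*) = 0$, $\Omega^{ev}(*)/\im(d) \cong \R$, and $\Omega^{odd}(*)/\im(d) = 0$.

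For the even degree, the $\Z/2\Z$-graded version of the sequence reads
$$0 \;\stackrel{\ch_{dR}}{\longrightarrow}\; 0 \;\stackrel{a}{\longrightarrow}\; \hat K^0(*) \;\stackrel{I}{\longrightarrow}\; \Z \;\longrightarrow\; 0,$$
since $\Omega^{odd}(*)/\im(d) = 0$ and $K^1(*) = 0$. Thus $I$ is an isomorphism, giving $\hat K^0(*) \cong \Z$. (Concretely an inverse sends $n\in\Z$ to the class of the geometric family $\cV$ associated to the rank-$n$ trivial bundle with trivial connection as in \ref{zerofibre}.)

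For the odd degree, the sequence reads
$$\Z \;\stackrel{\ch_{dR}}{\longrightarrow}\; \R \;\stackrel{a}{\longrightarrow}\; \hat K^1(*) \;\longrightarrow\; 0.$$
Here the only non-routine input is the identification of $\ch_{dR}\colon K^0(*) \to \Omega^{ev}(*)/\im(d)$ with the standard inclusion $\Z \hookrightarrow \R$; this follows because on a point the Chern character of a vector bundle equals its (virtual) rank. Hence $a$ induces an isomorphism $\R/\Z \stackrel{\sim}{\to} \hat K^1(*)$.

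I expect no real obstacle: once Proposition \ref{prop1} is in hand, the computation is a direct evaluation, with the only subtlety being the verification that $\ch_{dR}$ on a point is the dimension map, which is immediate from the normalization of the Chern character.
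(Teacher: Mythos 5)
Your proposal is correct and follows exactly the paper's approach: apply the exact sequence of Proposition \ref{prop1} at $B=*$, noting $\Omega^{odd}(*)=0$ and $K^1(*)=0$ to get $\hat K^0(*)\cong K^0(*)\cong\Z$, and $\hat K^1(*)\cong\Omega^{ev}(*)/\ch_{dR}(K^0(*))\cong\R/\Z$. The only addition is the (correct) remark that $\ch_{dR}$ on a point is the rank map, which the paper leaves implicit.
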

\begin{proof}
We use the exact sequence given by  Proposition \ref{prop1}.
The  assertion follows from the obvious identities
$$\hat K^0(*)\cong K^0(*)\cong \Z\ ,\quad \hat K^1(*)\cong \Omega^{ev}(*)/\ch_{dR}(K^0(*))\cong \R/\Z\ .$$
 \end{proof}

\subsubsection{}

\begin{lem}\label{lemma6777}
There are exact sequences
\begin{eqnarray*}
0\rightarrow \R/\Z\rightarrow &\hat K^0(S^1)&\rightarrow
\Z\rightarrow 0\\
0\rightarrow C^\infty(S^1)/\Z\rightarrow &\hat K^1(S^1)&\rightarrow
\Z\rightarrow 0\ .
\end{eqnarray*}
\end{lem}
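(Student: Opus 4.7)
My plan is to deduce both short exact sequences directly from the fundamental exact sequence of Proposition \ref{prop1}, applied separately in each $\Z/2\Z$-degree, by computing the relevant groups on $B=S^1$.

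First, recall that $K^0(S^1)\cong \Z$ (generated by the trivial line bundle) and $K^1(S^1)\cong \Z$ (generated by the class of the identity map $S^1 \to U(1)$). By Lemma \ref{lem:realize_index} the map $I\colon \hat K^*(S^1) \to K^*(S^1)$ is surjective in both degrees, so it remains only to identify the kernel of $I$ in each degree as the image of $a$, and this image equals the cokernel of $\ch_{dR}$ by exactness.

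For $\hat K^0(S^1)$ the relevant sequence is
$$K^1(S^1)\stackrel{\ch_{dR}}{\to} \Omega^{odd}(S^1)/\im(d)\stackrel{a}{\to} \hat K^0(S^1)\stackrel{I}{\to} K^0(S^1)\to 0\ .$$
Since $S^1$ is one-dimensional, $\Omega^{odd}(S^1)=\Omega^1(S^1)$ consists entirely of closed forms, and the only exact odd forms are $d$ of functions; hence $\Omega^{odd}(S^1)/\im(d)\cong H^1_{dR}(S^1)\cong \R$. The Chern character of the standard generator of $K^1(S^1)$ is (up to sign) the integral generator of $H^1(S^1,\Z)\subset H^1(S^1,\R)$, so $\ch_{dR}(K^1(S^1))$ is identified with $\Z\subset \R$. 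Thus the image of $a$ is $\R/\Z$, giving the first short exact sequence.

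For $\hat K^1(S^1)$ the relevant sequence is
$$K^0(S^1)\stackrel{\ch_{dR}}{\to} \Omega^{ev}(S^1)/\im(d)\stackrel{a}{\to} \hat K^1(S^1)\stackrel{I}{\to} K^1(S^1)\to 0\ .$$
Here $\Omega^{ev}(S^1)=\Omega^0(S^1)=C^\infty(S^1)$, and there is no room for exact even forms on a one-dimensional manifold (they would have to come from $\Omega^{-1}$), so $\Omega^{ev}(S^1)/\im(d)=C^\infty(S^1)$. On $K^0(S^1)\cong \Z$ the Chern character sends a bundle to its rank (the higher components live in odd cohomology), so $\ch_{dR}(K^0(S^1))$ is the subgroup $\Z\subset C^\infty(S^1)$ of integer constant functions. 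The cokernel is $C^\infty(S^1)/\Z$, giving the second short exact sequence.

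The only mildly substantive points are (i) the identification of $\ch_{dR}$ on the generator of $K^1(S^1)$ with the integral generator of $H^1(S^1,\R)$, which is standard, and (ii) the computation that $\Omega^{ev}(S^1)/\im(d)=C^\infty(S^1)$, which is immediate from dimension reasons. The rest is a direct unwinding of Proposition \ref{prop1}.
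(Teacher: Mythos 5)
Your proof is correct and follows essentially the same route as the paper: apply the exact sequence of Proposition \ref{prop1} in each $\Z/2\Z$-degree, compute $\Omega^{odd}(S^1)/\im(d)\cong\R$ and $\Omega^{ev}(S^1)/\im(d)\cong C^\infty(S^1)$, and identify the image of $\ch_{dR}$ as $\Z$ in each case. The paper states these identifications without elaboration; you have merely spelled out the same computation.
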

\begin{proof} 
These assertions again follow from Proposition \ref{prop1} and the identifications
$$K^0(S^1)\cong \Z\ ,\quad K^1(S^1)\cong \Z \ , \quad 
\Omega^{ev}(S^1)/\ch_{dR}(K^0(S^1))\cong C^\infty(S^1)/\Z\ .$$
\end{proof} 

\subsubsection{}

Let $\bV:=(V,h^V,\nabla^V,z)$ be a geometric $\Z/2\Z$-graded bundle over $S^1$
such that $\dim(V^+)=\dim(V^-)$. Let $\cV$ denote the corresponding geometric family.
By Lemma \ref{lemma6777} the class
$[\cV,0]\in \hat K^0(S^1)$ satisfies $I([\cV,0])=0$ and hence corresponds to an element
of $\R/\Z$. This element is calculated in the following lemma.
Let $\phi^\pm\in U(n)/conj$ denote the holonomies
of $V^\pm$ (well defined modulo conjugation in the group $U(n)$).

\begin{lem}\label{dersa1}
We have
$$[\cV,0]=a\left(\frac{1}{2\pi i}\log\frac{\det(\phi^+)}{\det(\phi^-)}\right)\ .$$
\end{lem}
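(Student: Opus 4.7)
Since $\dim V^+ = \dim V^-$, the index $I[\cV,0] = [V^+]-[V^-]$ vanishes in $K^0(S^1) = \Z$. By Proposition \ref{prop1} and Lemma \ref{lemma6777}, the class $[\cV,0]$ therefore lies in the subgroup $\R/\Z \subset \hat K^0(S^1)$, where $\R/\Z$ is identified with $\Omega^{odd}(S^1)/(\im(d) + \ch_{dR}(K^1(S^1)))$ via $\int_{S^1}$ (using that $\ch_{dR}(K^1(S^1)) = \Z$ inside $\Omega^1(S^1)/\im(d) \cong \R$). It suffices therefore to exhibit a representative $\rho \in \Omega^1(S^1)$ with $[\cV,0] = a([\rho])$ and compute $\int_{S^1}\rho \pmod{\Z}$.

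Trivialize $V^\pm \cong S^1 \times \C^n$ (possible since all complex bundles on $S^1$ are trivializable) and write $\nabla^{V^\pm} = d + A^\pm$ with $A^\pm \in \Omega^1(S^1, u(n))$. Over $[0,1]_s \times S^1_t$, let $\cW$ be the geometric family associated with the trivial $\Z/2\Z$-graded bundle $\C^n \oplus \C^n$ equipped with the connection $d + s\,\pi^*A^\pm$ on the $\pm$-summand, where $\pi\colon [0,1]\times S^1 \to S^1$. For the endpoint inclusions $i_k\colon S^1 \to \{k\}\times S^1$, one has $i_1^*\cW\cong \cV$ and $i_0^*\cW\cong \cV_0$, the trivially-connected trivial graded family. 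The homotopy formula Lemma \ref{lem23} applied to $[\cW,0]$ yields
$$[\cV,0] - [\cV_0,0] = a\left(\int_{[0,1]\times S^1/S^1}\Omega(\cW)\right).$$

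For $[\cV_0,0]=0$, tame $\cV_0$ by the automorphism $Q = \begin{pmatrix} 0 & I \\ I & 0 \end{pmatrix}$, which commutes with the trivial connection. In the 0-dimensional case the tamed superconnection reduces to $A_s = sQ + d$ with $A_s^2 = s^2 I$, and since $Q$ is off-diagonal $\text{Str}(\partial_s A_s \, e^{-A_s^2}) = e^{-s^2}\text{Str}(Q) = 0$; this gives $\eta(\cV_{0,t}) = 0$ and therefore $[\cV_0,0] = a(0) = 0$. For the integrated local index form, $\Omega(\cW) = \ch(\nabla^{W^+}) - \ch(\nabla^{W^-})$; since $dA^\pm = 0$ and $A^\pm \wedge A^\pm = 0$ on $S^1$ for dimensional reasons, the curvature of $d + s\pi^*A^\pm$ reduces to $ds \wedge \pi^*A^\pm$, and a direct Chern--Weil calculation then gives
$$\int_{[0,1]\times S^1/S^1}\Omega(\cW) = -\frac{1}{2\pi i}\bigl(\Tr(A^+) - \Tr(A^-)\bigr) \in \Omega^1(S^1)/\im(d).$$

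Integrating this representative over $S^1$ and using the holonomy formula $\det\phi^\pm \equiv \exp(-\int_{S^1}\Tr(A^\pm)) \pmod{2\pi i \Z}$ yields $-\frac{1}{2\pi i}\int_{S^1}(\Tr(A^+) - \Tr(A^-)) \equiv \frac{1}{2\pi i}\log\frac{\det\phi^+}{\det\phi^-} \pmod{\Z}$, establishing the claim. The main obstacle is the verification of the $\eta$-form vanishing for the trivial family: although the symmetry argument above is morally transparent, carrying it out rigorously requires unpacking the adopted regularization of the Bismut superconnection at small $s$ in the 0-dimensional case following \cite{math.DG/0201112}. Everything else is standard Chern--Weil theory combined with the homotopy formula.
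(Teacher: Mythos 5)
Your proof is correct, but it takes a genuinely different route from the paper's. The paper pushes forward to a point via $\hat q_!\colon\hat K^0(S^1)\to\hat K^1(*)$, observes that $q\colon S^1\to *$ bounds $p\colon D^2\to *$, extends the (topologically trivial) bundle $\bV$ to a geometric bundle $\bW$ over $D^2$, and then invokes the bordism formula Proposition~\ref{bordin} to express $\hat q_![\cV,0]$ as $\int_{D^2}\Omega^2(\cW)$, which is evaluated by Stokes' theorem in terms of the holonomies. Your proof instead stays on $S^1$: trivialize $V^\pm$, linearly interpolate $\nabla^V$ to the trivial connection over $[0,1]\times S^1$, and apply the homotopy formula Lemma~\ref{lem23} together with $[\cV_0,0]=0$. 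This is a genuine improvement in the logical order of the paper, since it avoids the forward reference to Proposition~\ref{bordin} (which the paper explicitly flags), and the transgression integral is an elementary Chern--Weil computation (using $\dim S^1 = 1$ to kill $dA^\pm$ and $A^\pm\wedge A^\pm$). The concern you raise at the end about the $\eta$-form regularization is unfounded: with the constant taming $Q$ the integrand $\str(\partial_sA_s\,e^{-A_s^2})$ is at each $s$ a scalar multiple of $\str(Q)=\Tr(zQ)=0$, so $\eta(\cV_{0,t})=0$ holds for any regularization. Even more simply, $\cV_0$ is pulled back along $q\colon S^1\to *$ from the corresponding family over the point, and since $\hat K^0(*)\cong\Z$ via the index (Lemma~\ref{calpunk}) and $\ind(\cV_0^{pt})=n-n=0$, one gets $[\cV_0,0]=q^*[\cV_0^{pt},0]=0$ by naturality without touching $\eta$-forms at all. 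With that observation in place your argument is complete.
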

\begin{proof}
We consider the map $q\colon S^1\to *$ with the canonical $K$-orientation \ref{pullpush1}.
By Proposition \ref{mainprop} we have a commutative diagram
$$%% \xymatrix{\R/\Z \ar[r]^{\hspace{-1cm}\sim}\ar@{=}[d]&
%% \Omega^1(S^1)/\im(d)+\im(\ch_{dR})\ar[d]^{p_!}\ar[r]^a&\hat K^1(S^1)\ar[d]^{p_!}\\
%% \R/\Z\ar[r]^{\hspace{-1cm}\sim}&\Omega^0(*)/\im(\ch_{dR})\ar[r]^a&\hat
%% K^0(*)}
\begin{CD}
  \R/\Z @>{\sim}>> \Omega^1(S^1)/(\im(d)+\im(\ch_{dR})) @>a>> \hat K^1(S^1)\\
  @VV{=}V @VV{q^o_!}V @VV{\hat q_!}V\\
  \R/\Z @>{\sim}>> \Omega^0(*)/\im(\ch_{dR}) @>a>> \hat K^0(*)
\end{CD}
\ .$$
In order to determine $[\cV,0]$ it therefore suffices to calculate
$\hat q_!([\cV,0])$.
Now observe that $q\colon S^1\to *$ is the boundary of $p\colon D^2\to *$.
Since the underlying topological $K$-orientation of $q$ is given by the bounding $Spin$-structure
we can choose a smooth $K$-orientation of $p$ with product structure which restricts to the smooth $K$-orientation of $q$. 
The bundle
$\bV$ is topologically trivial. Therefore we can find a geometric bundle
$\bW=(W,h^W,\nabla^W,z)$, again with product structure, on $D^2$ which restricts to $\bV$ on the boundary.
Let $\cW$ denote the corresponding geometric family over
$D^2$. Later we prove the bordism formula Proposition \ref{bordin}. It gives
$$\hat q_!([\cV,0])=[\emptyset,p_!R([\cW,0])]=-a\left(\int_{D^2/*} \Omega^2(\cW)\right)\ .$$
Note that $$\Omega^2(\cW)=\ch_2(\nabla^W)=\ch_2(\nabla^{\det(W^+)})-\ch_2(\nabla^{\det(W^-)})=\frac{-1}{2\pi i}\left[  R^{\nabla^{\det W^+}}-
 R^{\det \nabla^{W^-}}\right]\ .$$
The holonomy $\det(\phi^\pm)\in U(1)$ of $\det(\bV^\pm)$ is equal to the integral  of the curvature of $\det \bW^\pm$:
$$\log\det(\phi^\pm)=\int_{D^2} R^{\nabla^{\det(W^\pm)}}\ .$$
It follows that
$$\hat q_!([\cV,0])=a\left(\frac{1}{2\pi i}\log\frac{\det
    (\phi^+)}{\det(\phi^-)} \right)\ .
$$
\end{proof}

\subsection{The smooth $K$-theory class of a mapping torus}

\subsubsection{}

Let $\cE$ be a geometric family over a point and consider an automorphism $\phi$  
of $\cE$. Then we can form the mapping torus
$T(\cE,\phi):=(\R\times\cE)/\Z$, where $n\in \Z$ acts on
$\R$ 
by $x\mapsto x+n$, and by $\phi^n$ on $\cE$. 
The product $\R\times \cE$ is a $\Z$-equivariant geometric family
over $\R$ (the pull-back of $\cE$ by the projection $\R\to *$).
The geometric structures descend to the quotient and 
turn the  mapping torus $T(\cE,\phi)$ into a geometric family
over $S^1=\R/\Z$. In the present subsection we study the class
$$[T(\cE,\phi),0]\in \hat K(S^1)\ .$$
In the following we will assume that
the parity of $\cE$ is even, and that $\ind(\cE)=0$.

\subsubsection{} 
 
Let $\dim\colon K^0(S^1)\to \Z$ be the dimension homomorphism, which in this case is an isomorphism.
Since $\dim I([T(\cE,\phi),0])=\dim(\ind(\cE))=0$ we have in fact
$[T(\cE,\phi),0]\in\R/\Z\subset \hat K^0(S^1)$, where we consider $\R/\Z$ as a subgroup of $\hat K^0(S^1)$ according to Lemma \ref{lemma6777}.

Let $V:=\ker(D(\cE))$. This graded vector space is preserved by the action of $\phi$. We use the same symbol in order to denote the induced action on $V$.

 We form the zero-dimensional family
$\cV:=(\R\times V)/\Z$ over $S^1$. This bundle is isomorphic to the
kernel bundle of $T(\cE,\phi)$. The bundle of Hilbert spaces of the family
$T(\cE,\phi)\sqcup_{S^1}\cV^{op}$ has a canonical subbundle of the form $\cV\oplus \cV^{op}$.
We choose the taming $(T(\cE,\phi)\sqcup_{S^1}\cV^{op})_t$
which is induced by the isomorphism
$$\left(\begin{array}{cc}0&1\\1&0\end{array}\right)$$ on this subbundle.
Note that
$[T(\cE,\phi),0]=[\cV,\eta((T(\cE,\phi)\sqcup_{S^1}\cV^{op})_t)]$.
Since the pull-back of $(T(\cE,\phi)\sqcup_{S^1}\cV^{op})_t$ under $\R\to \R/\Z$ is isomorphic to a tamed family pulled back under $\R\to *$
  we see that the one-form $\eta((T(\cE,\phi)\sqcup_{S^1}\cV^{op})_t)=0$.

\subsubsection{}
 
Thus it remains to evaluate $[T(\cE,\phi),0]=[\cV,0]\in \R/\Z$.
By Lemma \ref{dersa1} this number can be expressed in terms of the  holonomy of the determinant
bundle $\det(\cV)$. Let $\phi^\pm\in \Aut(V^\pm)$ be the induced transformations. 
\begin{prop}
We have $[T(\cE,\phi),0]=[\frac{1}{2\pi i} \log(\frac{\det \phi^+}{\det
  \phi^-})]_{\R/\Z}$.
In particular, if $D(\cE)$ is invertible, then $[T(\cE,\phi),0]=0$.
\end{prop}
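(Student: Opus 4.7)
The plan is to reduce this to Lemma \ref{dersa1} by exploiting the identifications already built up in the paragraphs preceding the proposition.

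First, I would invoke the identification $[T(\cE,\phi),0]=[\cV,0]$ obtained above. That identification uses the canonical taming of $T(\cE,\phi)\sqcup_{S^1}\cV^{op}$ coming from the isomorphism flipping the two copies of the kernel bundle, together with the observation that the pull-back of this tamed family under the covering $\R\to \R/\Z$ factors through a tamed family pulled back along $\R\to *$, so its $\eta$-form vanishes. So the problem reduces to computing $[\cV,0]\in \hat K^0(S^1)$, where $\cV=(\R\times V)/\Z$ is the zero-dimensional geometric family associated to the $\Z/2\Z$-graded geometric bundle $\bV=(\R\times V)/\Z$.

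Next I would verify that $\bV$ satisfies the hypothesis of Lemma \ref{dersa1}. The identity $\ind(\cE)=\dim V^+-\dim V^-=0$ gives $\dim V^+=\dim V^-$, so Lemma \ref{dersa1} applies. The remaining geometric point is that the holonomy of $\bV^\pm=(\R\times V^\pm)/\Z$ is precisely $\phi^\pm$: the pull-back of $\bV^\pm$ to $\R$ is canonically trivialized with the flat product connection, and the deck transformation of the $\Z$-covering $\R\to S^1$ identifies the fibre at $0$ with the fibre at $1$ via $\phi^\pm$. Thus $\det\phi^\pm$ is exactly the determinant of the holonomy of $\det(\bV^\pm)$.

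Applying Lemma \ref{dersa1} to $[\cV,0]$ then yields
$$[T(\cE,\phi),0]=[\cV,0]=a\!\left(\frac{1}{2\pi i}\log\frac{\det\phi^+}{\det\phi^-}\right),$$
which under the inclusion $\R/\Z\hookrightarrow \hat K^0(S^1)$ of Lemma \ref{lemma6777} is the claimed equality. The ``in particular'' statement follows at once: if $D(\cE)$ is invertible then $V=\ker D(\cE)=0$, so $\phi^\pm$ act on the zero space, $\det\phi^\pm=1$, and the logarithm vanishes.

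I do not anticipate a substantive obstacle here, since essentially all the hard analysis (the adiabatic/taming arguments, the bordism formula feeding into Lemma \ref{dersa1}) has already been done. The one point that requires a little care is the identification of the holonomy of $\bV^\pm$ with $\phi^\pm$ rather than $(\phi^\pm)^{-1}$; this is a sign/orientation convention that must be checked against the convention used in Lemma \ref{dersa1}, but is not a genuine difficulty.
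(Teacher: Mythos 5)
Your proposal is correct and follows essentially the same route as the paper: reduce to $[\cV,0]$ via the canonical taming whose $\eta$-form vanishes, apply Lemma~\ref{dersa1}, and identify the holonomy of $\bV^\pm=(\R\times V^\pm)/\Z$ with $\phi^\pm$. The ``in particular'' clause is also handled correctly, since $V=\ker D(\cE)=0$ forces $\det\phi^\pm=1$.
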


\subsection{The smooth $K$-theory class of a geometric family with kernel bundle}\label{shskajd}

\subsubsection{}\label{ghghghr}
Let $\cE$ be an even-dimensional  geometric family over the base $B$. By $(D_b)_{b\in B}$ we denote the associated family of Dirac operators on the  family of Hilbert spaces
$(H_b)_{b\in B}$. 
The geometry of $\cE$ induces a connection $\nabla^H$ on this family (the connection part of the Bismut superconnection \cite[Prop. 10.15]{bgv}).
We assume that $\dim(\ker(D_b))$ is constant. 
In this case we can form a vector bundle $K:=\ker(D)$. The projection of $\nabla^H$ to $K$ gives
a connection $\nabla^{K}$. Hence we get a geometric bundle $\bK:=(K,h^K,\nabla^K)$ and an associated geometric family $\cK$ (see \ref{zerofibre}).

\subsubsection{}

The sum $\cE\sqcup_B \cK^{op}$ has a natural taming $(\cE\sqcup_B \cK^{op})_t$ which is given by
$$\left(\begin{array}{cc}
0&u\\u^*&0
\end{array}\right)\in \End(H_b\oplus K^{op}_b)\ ,$$
where $u\colon K_b\to H_b$ is the  embedding.
We thus have the following equality in $\hat K(B)$:
$$[\cE,0]=[\cK,\eta((\cE\sqcup_B \cK^{op})_t)]\ .$$

\subsubsection{}\label{etabcnot}

Under the standing assumption that $\dim(\ker(D_b))$ is constant we also have the $\eta$-form of Bismut-Cheeger $\eta^{BC}(\cE)\in \Omega(B)$ (see
\cite{MR1173033}, \cite{MR1052337}, \cite{MR1042214}).
Since other authors use  $\eta^{BC}(\cE)$, in the following two paragraphs we shall analyse the relation
between this and $\eta((\cE\sqcup_B \cK^{op})_t)$.

We form the geometric family
$[0,1]\times (\cE\sqcup_B \cK^{op})$ over $B$.
The taming $(\cE\sqcup_B \cK^{op})_t$ induces a boundary taming at
$\{0\}\times (\cE\sqcup_B \cK^{op})$.  In index theory the boundary taming is used to construct a perturbation of the  Dirac operator which is invertible 
at $-\infty$ of $(-\infty,1]\times (\cE\sqcup_B \cK^{op})$ (see \cite{math.DG/0201112} for details). 
On the other side $\{1\}\times (\cE\sqcup_B \cK^{op})$
we consider APS-boundary conditions. 
We thus get a family of perurbed Dirac operators on $(-\infty ,1]\times (\cE\sqcup_B \cK^{op})$. The $L^2$-boundary condition at $\{-\infty\}\times (\cE\sqcup_B \cK^{op})$ and the APS-boundary condition at $\{1\}\times (\cE\sqcup_B \cK^{op})$
together imply the Fredholm property (which can be checked locally for the various boundary components or ends).
In this way the family of Dirac operators
on $[0,1]\times (\cE\sqcup_B \cK^{op})$ gives rise to a family of Fredholm operators. 
We will denote this structure by 
$([0,1]\times (\cE\sqcup_B \cK^{op}))_{bt, APS}$.

The Chern character of its index $\ind(([0,1]\times (\cE\sqcup_B \cK^{op}))_{bt,APS})\in K(B)$ can be calculated using the methods of local index theory.

\subsubsection{}

Using \ref{pap3} we can choose a possibly different taming $(\cE\sqcup_B
\cK^{op})_{t^\prime}$
such that the corresponding index $\ind(([0,1]\times (\cE\sqcup_B \cK^{op}))_{bt',APS}) \in K(B)$ vanishes. In this case we can extend the boundary taming to a taming
$\ind(([0,1]\times (\cE\sqcup_B \cK^{op}))_{t',APS})$. 

We set up the method of local index theory as usual by forming the family of rescaled Bismut superconnections
$A_s:=A_s(([0,1]\times (\cE\sqcup_B \cK^{op}))_{t',APS})$ which take
the tamings and boundary tamings into account as explained in \cite[2.2.4.3]{math.DG/0201112}, see also  \ref{pap66}.
Invertibility of $D(([0,1]\times (\cE\sqcup_B \cK^{op}))_{t',APS})$
ensures exponential vanishing of the integral kernel of $\ee^{-A_s^2}$ for $s\to \infty$.
The usual transgression integral expresses the local index form $\Omega([0,1]\times (\cE\sqcup_B \cK^{op}))$ as 
a sum of  contributions of the boundary components or ends (see \cite[proof of Lemma 2.2.15 ]{math.DG/0201112}). These contributions can be calculated separately for each part.

Because of the product structure we have
$\Omega([0,1]\times (\cE\sqcup_B \cK^{op}))=0$.  The contribution of the boundary
$\{1\}\times (\cE\sqcup_B \cK^{op})$ is given by the proof of the APS-index theorem of 
\cite{MR1173033}, \cite{MR1052337}, \cite{MR1042214},
and it is equal to $\eta^{BC}(\cE\sqcup_B\cK^{op})=\eta^{BC}(\cE)$. The second equality holds true, since the Dirac operator for $\cK^{op}$ is trivial. The contribution of the boundary 
$\{0\}\times (\cE\sqcup_B \cK^{op})$ is calculated in the proof 
 of \cite[Lemma 2.2.15]{math.DG/0201112} and equal to $-\eta((\cE\sqcup_B \cK^{op})_{t^\prime})$.
Therefore we have
$\eta^{BC}(\cE)=\eta((\cE\sqcup_B \cK^{op})_{t^\prime})$ (note that we
calculate modulo exact forms).
We now use 
 \ref{pap3} and a relative index theorem (compare
 (\ref{wiufhewfwefewfewfwfwefwef})) in order to see that 
$$\eta((\cE\sqcup_B \cK^{op})_{t^\prime})-\eta((\cE\sqcup_B \cK^{op})_t)=
\ch_{dR}(\ind(([0,1]\times (\cE\sqcup_B \cK^{op}))_{bt,APS}))\in
\ch_{dR}(K(B))\ .$$
Using Proposition \ref{prop1} we get:
\begin{kor}\label{bcverg}
We have $[\cE,0]=[\cK,\eta^{BC}(\cE)]$.
\end{kor}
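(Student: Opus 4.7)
The plan is to combine three ingredients, essentially all assembled in the paragraphs directly preceding the corollary, and then conclude via the exactness of (\ref{eq9000}) from Proposition~\ref{prop1}. The structure is: first represent $[\cE,0]$ as a $\cK$-based cycle using the canonical taming, then identify the associated $\eta$-form with $\eta^{BC}(\cE)$ up to an element annihilated by the transformation $a$.

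First I would invoke the canonical taming $(\cE\sqcup_B\cK^{op})_t$ built from the off-diagonal isomorphism on the subbundle $\cK\oplus\cK^{op}\subset H(\cE)\oplus H(\cE)^{op}$. By Definition~\ref{uuu1} this pairs the cycles $(\cE,0)$ and $(\cK,\eta((\cE\sqcup_B\cK^{op})_t))$, giving
$$[\cE,0]=[\cK,\eta((\cE\sqcup_B\cK^{op})_t)]\in\hat K(B).$$

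Second, using \ref{pap3} I would pass to a possibly different taming $(\cE\sqcup_B\cK^{op})_{t'}$ chosen so that the Fredholm index of $([0,1]\times(\cE\sqcup_B\cK^{op}))_{bt',\mathrm{APS}}$ vanishes, and run the local index argument on the half-cylinder with the taming at $\{0\}$ and APS boundary conditions at $\{1\}$. The vertical product structure forces $\Omega([0,1]\times(\cE\sqcup_B\cK^{op}))=0$, so the two boundary transgressions cancel modulo exact forms. The $\{0\}$-contribution is $-\eta((\cE\sqcup_B\cK^{op})_{t'})$ by the calculation of \cite[proof of Lemma 2.2.15]{math.DG/0201112}, and the $\{1\}$-contribution is $\eta^{BC}(\cE\sqcup_B\cK^{op})=\eta^{BC}(\cE)$ by the APS family index machinery of \cite{MR1173033,MR1052337,MR1042214}, where the second equality uses that the Dirac operator of $\cK^{op}$ is zero. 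This yields
$$\eta^{BC}(\cE)\equiv\eta((\cE\sqcup_B\cK^{op})_{t'})\pmod{\im(d)}.$$
This step is the only genuine analytic content and is where I expect the main obstacle; everything else is formal assembly.

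Third, the relative index theorem comparing the tamings $t$ and $t'$ (via the interpolating family on $[0,1]\times B$ discussed in \ref{pap3}) gives
$$\eta((\cE\sqcup_B\cK^{op})_{t'})-\eta((\cE\sqcup_B\cK^{op})_t)\in\ch_{dR}(K(B))+\im(d).$$
Combining the two congruences, $\eta((\cE\sqcup_B\cK^{op})_t)-\eta^{BC}(\cE)$ lies in $\ch_{dR}(K(B))+\im(d)$. The exactness of (\ref{eq9000}) in Proposition~\ref{prop1} then shows that $a$ annihilates this difference, so
$$[\cK,\eta((\cE\sqcup_B\cK^{op})_t)]=[\cK,\eta^{BC}(\cE)]\in\hat K(B),$$
which together with the first displayed equality yields the claim.
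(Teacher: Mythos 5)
Your proof is correct and follows essentially the same route as the paper: the canonical taming on $\cE\sqcup_B\cK^{op}$, the APS-versus-taming comparison on the product $[0,1]\times(\cE\sqcup_B\cK^{op})$ identifying $\eta^{BC}(\cE)$ with $\eta((\cE\sqcup_B\cK^{op})_{t'})$ modulo exact forms, and the relative index theorem plus exactness of (\ref{eq9000}) to dispose of the $\ch_{dR}(K(B))$ ambiguity. No substantive differences.
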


\subsubsection{}

Let $p\colon W\to B$ be a proper submersion with closed fibres with a smooth $K$-orientation represented by $o$.
Let $\bV$ be a geometric vector bundle over $W$, and let $\cV$ denote the associated geometric family.
Then we can form the geometric family $\cE:=p_!\cV$ (see Definition \ref{ddd7771}).
Assume that the kernel of the family of Dirac operators $(D(\cE_b))_{b\in B}$
has constant dimension, forming thus the kernel bundle $\cK$.
Since $\cV$ has zero-dimensional fibres we have $\tilde \Omega(1,\cV)=0$. From (\ref{eq300}) we get
\begin{eqnarray*}
\hat p_![\cV,\rho]&=&[p_!\cV,\int_{W/B} \hA^c(o)\wedge \rho+\int_{W/B}\sigma(o)\wedge (\Omega(\cV)-d\rho)]\\
&=&[\cE,\int_{W/B}\hA^c(o)\wedge \rho +\int_{W/B}\sigma(o)\wedge (\Omega(\cV)-d\rho)]\\
&=&[\cK,\eta^{BC}(\cE)+\int_{W/B}\hA^c(o)\wedge \rho+ \int_{W/B}\sigma(o)\wedge (\Omega(\cV)-d\rho)]\ .
\end{eqnarray*}

\subsection{A canonical $\hat K^1$-class on $S^1$}

\subsubsection{}

We construct in a natural way an element $x_{S^1}\in \hat K^1(S^1)$
coming from the Poincar{\'e} bundle over $S^1\times S^1$.
Let us identify $S^1\cong \R/\Z$.
We consider the complex line bundle  $L:=(\R\times \R/\Z\times \C)/\Z$ over $\R/\Z\times \R/\Z$, where the $\Z$-action is given by $n(s,t,z)=(s+n,t,\exp(-2\pi i n t)z)$.
On $\R\times \R/\Z\times \C\rightarrow \R\times\R/\Z$
we have the $\Z$-equivariant connection
$\nabla:=d+2\pi i s dt$ with curvature $R^\nabla=2\pi ids\wedge dt$.
This connection descends to a connection $\nabla^L$ on $L$.
The unitary line bundle with connection $\bL:=(L,h^L,\nabla^L)$ gives a geometric family
$\cL$ over $\R/\Z\times\R/\Z$. 
It represents $v:=[\cL,0]\in \hat K^0( \R/\Z\times  \R/\Z)$. Note that $R(v)=1+ds\wedge dt$. We now consider the projection
$p\colon \R/\Z\times \R/Z\rightarrow \R/\Z$ on the second factor.
This fibre bundle has a natural smooth $\hat K$-orientation
$(g^{T^vp},T^hp,\tilde \nabla,0)$. 
The vertical metric and the horizontal distribution  come from the metric of $S^1$ and the product structure.
Moreover, $T^vp$ is trivialized by the $S^1$-action. Hence it has a preferred orientation. We take the bounding  $Spin$-structure on the fibres which induces the  $Spin^c$-structure and the connection $\tilde \nabla$.
\begin{ddd}\label{dddxs}
We define $x_{S^1}:=\hat p_! v\in\hat K^1(S^1)$.
\end{ddd}
\subsubsection{}

We have $R(x_{S^1})=dt$.
Let $t\in S^1$. Then we compute $t^*x_{S^1}\in\hat K^1(*)\cong \R/\Z$ (identification again as in Lemma \ref{lemma6777}).
Note that $0^*x_{S^1}$ is represented by the trivial line bundle over
$S^1$. Since we choose the bounding spin structure, the corresponding
Dirac operator is invertible. 
Its spectrum is symmetric and its
$\eta$-invariant vanishes (compare \ref{uiwfweqfewfwfef}). Therefore we have $0^*x_{S^1}=0$.
It now follows by the homotopy formula (or by an explicit computation
of $\eta$-invariants), that 
\begin{equation}\label{shiftt}
t^*x_{S^1}=-t\ .
\end{equation}

\subsubsection{}

Let $f\colon B\rightarrow S^1$ be given. Then we define
\begin{ddd}
$<f>:=f^*x_{S^1}\in \hat K^1(B)$.
\end{ddd}
Assume now that we have two such maps
$f,g\colon B\rightarrow S^1$. As an interesting illustration we  characterize
$$<f>\cup <g>\in \hat K^0(B)\ .$$
It suffices to consider the universal example $B=T^2=S^1\times S^1$.
We consider the projections $\pr_i\colon S^1\times S^1\rightarrow S^1$,
$i=1,2$. Let $x:=\hat\pr_1^*x_{S^1}$ and $y:=\hat\pr_2^*x_{S^1}$.
Then we must compute $x\cup y\in \hat K^0(T^2)$.
We identify $T^2=\R/\Z\times\R/\Z$ with coordinates $s,t$.
First note that
$R(x\cup y)=R(x)\cup R(y)=ds\wedge dt$.
Thus the class
$x\cup  y -v+1$ is flat, i.e. 
$$x\cup  y-v+1\in K^0_{flat}(T^2)\ .$$
In fact, since $K^0(T^2)$ is torsion-free, we have
$$K^0_{flat}(T^2)\cong H^{odd}(T^2)/\im(\ch_{dR})=\R^2/\Z^2\ .$$
 In order to determine this element we must
compute its holonomies along the circles
$S^1\times 0$ and $0\times S^1$.
The holonomy of $v$ along these circles is trivial.
Since $0^*x=0$ and $0^*y=0$ we see that $x\times y$
also has trivial  holonomies along these circles. Therefore
we conclude
\begin{prop}
$x\cup  y=v-1$ 
\end{prop}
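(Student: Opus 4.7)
The plan is to verify the identity inside the flat subgroup, detecting the difference by restriction to the two coordinate circles. Since $R(x \cup y) = R(x) \wedge R(y) = ds \wedge dt$ by Lemma \ref{rring} and $R(v) = 1 + ds \wedge dt$, the element $\alpha := x \cup y - v + 1$ has $R(\alpha) = 0$, so $\alpha \in \hat K^0_{flat}(T^2)$. Because $K^{odd}(T^2) \cong \Z^2$ is torsion-free, the composition $I$ in the exact sequence of Proposition \ref{prop1} applied to a flat class has Chern character zero and hence vanishes; this gives
\[ \hat K^0_{flat}(T^2) \;\cong\; H^{odd}_{dR}(T^2)/\ch_{dR}(K^{odd}(T^2)) \;\cong\; \R^2/\Z^2 , \]
as already asserted in the excerpt.

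Next I will use the two inclusions $i_1 \colon S^1 \to T^2$, $s \mapsto (s,0)$, and $i_2 \colon S^1 \to T^2$, $t \mapsto (0,t)$. A direct computation on de Rham cohomology shows that $(i_1^*, i_2^*) \colon H^{odd}_{dR}(T^2) \to H^{odd}_{dR}(S^1)^{\oplus 2}$ sends $\alpha\, ds + \beta\, dt$ to the pair of one-forms with coefficients $\alpha$ and $\beta$, so it is an isomorphism and carries the lattice $\ch_{dR}(K^{odd}(T^2))$ onto $\ch_{dR}(K^{odd}(S^1))^{\oplus 2}$. On flat subgroups this becomes the identity isomorphism $\R^2/\Z^2 \stackrel{\sim}{\to} (\R/\Z)^2$, so it will suffice to show $i_k^* \alpha = 0$ for $k = 1, 2$.

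For the cup product, I note that $\pr_2 \circ i_1$ and $\pr_1 \circ i_2$ are both the constant map at $0 \in S^1$, so naturality of pullback together with the formula $0^* x_{S^1} = 0$ from \ref{uiwfweqfewfwfef} gives $i_1^* y = 0$ and $i_2^* x = 0$; multiplicativity of pullback (Proposition \ref{prooow}) then yields $i_k^*(x \cup y) = 0$. For the Poincar\'e bundle, the plan is to show that the geometric bundle $\bL$ restricts as a \emph{geometric} family (bundle together with its connection) to the trivial one on each of the two generating circles. At $t = 0$ the twist factor $\exp(-2\pi i n t)$ in the $\Z$-action is trivial, so $L|_{S^1 \times \{0\}}$ is canonically $S^1 \times \C$, and the connection form $2\pi i s\, dt$ vanishes when pulled back to $S^1 \times \{0\}$ since $dt$ restricts to zero there; analogously at $s = 0$ the connection form vanishes identically and the bundle is visibly trivial. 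Therefore $i_k^* v = 1$ in $\hat K^0(S^1)$ and $i_k^*(v - 1) = 0$.

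Combining these computations gives $(i_1^*, i_2^*)(\alpha) = (0,0)$, and by injectivity of the restriction map on the flat subgroup we conclude $\alpha = 0$, i.e.\ $x \cup y = v - 1$. The only point requiring care will be the last geometric computation: one must verify that the restriction of $\bL$ is trivial as a \emph{geometric} bundle (so that the class in $\hat K^0$, not merely in $K^0$, is $1$), which is why we have to check vanishing of the connection form on both circles rather than just triviality of the underlying line bundle.
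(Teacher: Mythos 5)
Your proof is correct and follows the paper's own strategy exactly: observe that $x\cup y - v + 1$ has vanishing curvature, identify $\hat K^0_{flat}(T^2)\cong H^{odd}(T^2)/\im(\ch_{dR})\cong\R^2/\Z^2$, and detect the element by restriction (``holonomy'') to the two coordinate circles, checking that each restriction vanishes for $x\cup y$ via naturality and for $v-1$ by geometric triviality of the restricted Poincar\'e bundle with connection. One small slip: the torsion-freeness needed to conclude $I(\alpha)=0$ from $\ch_{dR}(I(\alpha))=0$ is that of $K^0(T^2)$, where $I(\alpha)$ lives, rather than of $K^{odd}(T^2)$ as you wrote --- both groups happen to be torsion-free for $T^2$, so your conclusion is unaffected.
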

We can now solve our original problem.
The two maps $f,g$ induce a map
$f\times g\colon B\rightarrow T^2$.
\begin{kor}
We have $<f>\cup  <g>=(f\times g)^*v-1$.
\end{kor}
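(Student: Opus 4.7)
The plan is to derive the corollary from the preceding proposition by pure naturality, using that the cup product and the smooth $K$-theory pullback are both functorial. No new geometric input should be required.

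First I would note that the two projections satisfy $\pr_1 \circ (f\times g) = f$ and $\pr_2 \circ (f\times g) = g$ tautologically. Functoriality of the contravariant functor $\hat K$ (established just after Lemma \ref{isodeff} and the paragraph following it) then gives
\begin{equation*}
(f\times g)^* x \;=\; (f\times g)^* \hat \pr_1^* x_{S^1} \;=\; f^* x_{S^1} \;=\; <f>,
\end{equation*}
and similarly $(f\times g)^* y = <g>$.

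Next I would invoke the multiplicativity of pullback for the cup product, which is part of Proposition \ref{prooow} (the statement that $\hat K$ is a functor to graded-commutative rings includes naturality of the product under smooth maps). Applying this to the map $f\times g\colon B\to T^2$ yields
\begin{equation*}
<f>\cup <g> \;=\; (f\times g)^*(x)\cup (f\times g)^*(y) \;=\; (f\times g)^*(x\cup y).
\end{equation*}

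Finally, substituting the preceding Proposition $x\cup y = v-1$ and using that $(f\times g)^*$ is a ring homomorphism (in particular sends $1$ to $1$) finishes the argument:
\begin{equation*}
<f>\cup <g> \;=\; (f\times g)^*(v-1) \;=\; (f\times g)^* v - 1.
\end{equation*}
There is no real obstacle here; the only subtle point is to make sure one is entitled to cite naturality of $\cup$ under pullback, which is precisely the content of Proposition \ref{prooow}. The main work was already done in identifying $x\cup y$ in the universal example $T^2$, and the corollary is simply the universal example pulled back along $f\times g$.
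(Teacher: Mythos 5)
Your proof is correct and is exactly the naturality argument the paper leaves implicit after the preceding Proposition: pull back $x\cup y = v-1$ along $f\times g$ using $\pr_i\circ(f\times g)$ and the ring-homomorphism property of $(f\times g)^*$ from Proposition \ref{prooow}. Nothing is missing.
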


\subsection{The product of $S^1$-valued maps  
and line-bundles}

\subsubsection{}\label{blcons}
Let $f\colon B\rightarrow S^1$ be a smooth map and
$\bL:=(L,\nabla^L,h^L)$ be a hermitean line bundle with 
connection over $B$. It gives rise to a geometric family
$\cL$ (see \ref{zerofibre}).
We consider the smooth $K$-theory classes
$<f>$ and $<\bL>:=[\cL,0]-1$. It is again interesting  to determine
the class $$<f>\cup <\bL>\in \hat K^1(B)\ .$$ 
An explicit answer is  only known in special cases.

First we compute the curvature:
$$R(<f>\cup<\bL>)=R(<f>)\wedge R(<\bL>)=  df \wedge (e^{c_1(\nabla^{L})}-1) \ ,$$
where $df:=f^*dt$ and $c_1(\nabla^{L}):=-\frac{1}{2\pi i} R^{\nabla^L}$.
  
\subsubsection{}

Note that the degree-one component of the odd form $R(<f>\cup<\bL>)$ vanishes.
Let now $q\colon \Sigma\rightarrow B$ be a smooth map from an oriented
closed  surface. Then
$R(q^*(<f>\cup <\bL>))=q^*R((<f>\cup <\bL>))=0$.
Therefore $$q^*(<f>\cup <\bL>)\in \hat K^1_{flat}(\Sigma)\cong
H^{ev}(\Sigma,\R)/\im(\ch)\cong \R/\Z\oplus \R/\Z\ ,$$
where the first component corresponds to $H^0(\Sigma,\R)$ and the
second to $H^2(\Sigma,\R)$.
In order to evaluate the first component we
restrict to a point. Since the restriction of $<\bL>$ to a point
vanishes, the first component of $q^*(<f>\cup <\bL>)$ vanishes.
Therefore it remains to determine the second component.

\subsubsection{}

Let us assume that $q^*L$ is trivial. We choose a
trivialization. Then we can define the transgression Chern form
$\tilde c_1(\nabla^{q^*L},\nabla^{triv})\in \Omega^1(\Sigma)$ such that
$d\tilde c_1(\nabla^{q^*L},\nabla^{triv})=q^* c_1(\nabla^L)$.
By the homotopy formula we have
$$q^*<\bL>=[\emptyset,-\tilde c_1(\nabla^{q^*L},\nabla^{triv})]\ .$$
In this special case we can compute
\begin{eqnarray*}q^*(<f>\cup <\bL>)&=&
q^*<f>\cup\:\: q^* <\bL>\\&=&<q^*f>\cup \:\:q^*<\bL>\\&=&
[\emptyset,q^*df\wedge \tilde
c_1(\nabla^{q^*L},\nabla^{triv})]\ .\end{eqnarray*}
We see that the second component is
$$\left[\int_{\Sigma} q^*df\wedge \tilde
c_1(\nabla^{q^*L},\nabla^{triv})\right]_{\R/\Z}\ .$$

We do not know a good answer in the general case where $q^*L$ is non-trivial.

\subsection{A  bi-invariant $\hat K^1$- class on $SU(2)$}

\subsubsection{}

Let $G$ be a group acting on the manifold $M$.
\begin{ddd}
A class  $x\in \hat K(M)$ is called invariant, if $g^*x=x$ for all $x\in G$.
\end{ddd}

\subsubsection{}
For example, the class $x_{S^1}\in \hat K^1(S^1)$ defined in \ref{dddxs} is not invariant under the action
$L_t$, $t\in S^1$, of $S^1$ on itself. Note that $R(x_{S^1})=dt$ is invariant. 
Therefore $L_t^*x_{S^1}-x_{S^1}\in \R/\Z$. In fact by (\ref{shiftt}) we have 
$$L_t^*x_{S^1}-x_{S^1}=-t\ .$$  Since $dt$ is the only invariant form with
integral one we see that the only way to produce an invariant smooth refinement of the generator
of $H^1(S^1,\Z)\cong \Z$ would be to perturb $x_{S^1}$ by a class $b\in H^0(S^1,\R/\Z)$. 
But $b$ is of course homotopy invariant, hence $L_t^*b=b$.
We conclude that the generator
of $H^1(S^1,\Z)$ (and also every non-trivial multiple) does not admit any invariant lift.

\subsubsection{}
The situation is different for simply-connected groups. Let us consider the following example.
The group $G:=SU(2)\times SU(2)$ acts on $SU(2)$ by
$(g_1,g_2)h:=g_1hg_2^{-1}$. Let $\vol_{SU(2)}\in \Omega^3(SU(2))$ denote the  normalized  volume form.
Furthermore we let $i\colon *\to SU(2)$ denote the embedding of the identity.
\begin{prop}
For $k\in \Z$ there exists a unique class $x_{SU(2)}(k)\in \hat K^1(SU(2))$ such that
$R(x_{SU(2)})=k\vol_{SU(2)}$ and $i^*x=0$.
This element is $SU(2)\times SU(2)$-invariant 
\end{prop}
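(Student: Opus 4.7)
The plan is to construct $x_{SU(2)}(k)$ by adjusting an arbitrary lift of $k$ times a generator of $K^1(SU(2))\cong\Z$ using the action of $a$, to prove uniqueness by a diagram chase through the exact sequence (\ref{exax}), and then to deduce invariance as a formal consequence of uniqueness.

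For existence, I invoke the surjectivity of $I\colon\hat K^1(SU(2))\to K^1(SU(2))$ (Lemma \ref{lem:realize_index}) to pick $\tilde x_0\in\hat K^1(SU(2))$ with $I(\tilde x_0)=k\xi$, where $\xi$ generates $K^1(SU(2))\cong\Z$ normalized so that $\ch_{dR}(\xi)=[\vol_{SU(2)}]$. Commutativity of the square in Definition \ref{ddd556} gives $[R(\tilde x_0)]=k[\vol_{SU(2)}]$, so there exists $\omega\in\Omega^{ev}(SU(2))$ with $d\omega=R(\tilde x_0)-k\vol_{SU(2)}$, and $\tilde x_1:=\tilde x_0-a(\omega)$ satisfies $R(\tilde x_1)=k\vol_{SU(2)}$ by (\ref{drgl}). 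Since $i^*\tilde x_1\in\hat K^1(*)\cong\R/\Z$ by Lemma \ref{calpunk}, subtracting $a(c\cdot 1_{SU(2)})$ for a lift $c\in\R$ of $i^*\tilde x_1$ produces the desired class: constants are closed so the curvature is unaffected, and $i^*x_{SU(2)}(k)=0$ by construction.

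For uniqueness, let $y$ be the difference of two candidates, so $R(y)=0$ and $i^*y=0$. Torsion-freeness of $K^1(SU(2))\cong\Z$ together with $\ch_{dR}(I(y))=[R(y)]=0$ gives $I(y)=0$, since $\ch_{dR}$ is rationally injective. Exactness of (\ref{exax}) writes $y=a(\omega)$ with $d\omega=R(y)=0$. Because $H^{ev}_{dR}(SU(2))=H^0_{dR}(SU(2))=\R$ and $\ch_{dR}(K^0(SU(2)))=\Z\subset\R$, the closed form $\omega$ is a constant modulo $\Z$, and the condition $a(\omega(e))=i^*y=0$ in $\R/\Z$ forces $\omega\in\Z$, hence $y=a(\omega)=0$.

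For invariance, let $g\in SU(2)\times SU(2)$. Bi-invariance of $\vol_{SU(2)}$ yields $R(g^*x_{SU(2)}(k))=k\vol_{SU(2)}$. To verify the vanishing of $i^*g^*x_{SU(2)}(k)$, I show that $j_h^*x_{SU(2)}(k)$ is independent of $h\in SU(2)$, where $j_h\colon *\to SU(2)$ is the inclusion of $h$: picking a smooth path $\gamma\colon[0,1]\to SU(2)$ from $e$ to $h$ and applying Lemma \ref{lem23} to $\gamma^*x_{SU(2)}(k)\in\hat K^1([0,1])$ gives
$$
j_h^*x_{SU(2)}(k)-i^*x_{SU(2)}(k)=a\left(k\int_{[0,1]/*}\gamma^*\vol_{SU(2)}\right)=0,
$$
because $\gamma^*\vol_{SU(2)}\in\Omega^3([0,1])=0$ for dimensional reasons. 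Thus $g^*x_{SU(2)}(k)$ satisfies both defining properties, and uniqueness yields $g^*x_{SU(2)}(k)=x_{SU(2)}(k)$. The only mildly subtle point in the whole argument is pinning down the normalization $\ch_{dR}(\xi)=[\vol_{SU(2)}]$, which follows from the standard computation of the Chern character on the Bott generator of $\tilde K^0(S^2)$ combined with the suspension isomorphism $K^1(S^3)\cong\tilde K^0(S^2)$.
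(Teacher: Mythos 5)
Your proof is correct, and it takes a genuinely different route from the one in the paper. The paper proves uniqueness first (by a quick computation $\hat K^1_{flat}(SU(2))\cong \R/\Z$, concluding from $i^*x=i^*y=0$ that $x=y$), but then handles existence and invariance together by an abstract cocycle argument: choosing any $y$ with $I(y)=Y$, the map $t\mapsto t^*y-y$ is a $1$-cocycle with values in $\Omega^{ev}(SU(2))/\im(\ch_{dR})$, and the paper shows $H^1_c(G,\Omega^{ev}(SU(2))/\im(\ch_{dR}))=0$ (using averaging plus the connectedness and simple-connectedness of $G$) in order to trivialize that cocycle and produce an \emph{a priori invariant} lift, with the curvature condition $R=k\vol_{SU(2)}$ then following because $k\vol_{SU(2)}$ is the unique bi-invariant representative of the cohomology class. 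You instead construct the class directly (first fixing the curvature via the action of $a$, then fixing the value at $e$ via a constant), prove uniqueness by an explicit diagram chase through (\ref{exax}), and deduce invariance as a formal consequence: bi-invariance of $\vol_{SU(2)}$ handles the curvature condition, and the homotopy formula (Lemma \ref{lem23}) applied along a path $\gamma\colon[0,1]\to SU(2)$ shows $j_h^*x_{SU(2)}(k)$ is independent of $h$ because $\gamma^*\vol_{SU(2)}\in\Omega^3([0,1])=0$, so $g^*x_{SU(2)}(k)$ satisfies both defining properties and equals $x_{SU(2)}(k)$ by uniqueness. Your argument is more elementary and avoids any continuous group cohomology; the paper's cocycle machinery, by contrast, is reused in the subsequent subsection on general homogeneous spaces $G/H$, where there is no curvature-normalization-plus-base-point characterization available to make uniqueness do the work. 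One small stylistic remark: the phrase ``the closed form $\omega$ is a constant modulo $\Z$'' should be tightened --- what you mean is that modulo $\im(d)$ the class $\omega$ is represented by a constant $c\in\R\cong H^0_{dR}(SU(2))$, that $\ker(a)=\ch_{dR}(K^0(SU(2)))=\Z\cdot 1$, and that $i^*a(\omega)=a(c)\in\R/\Z$ detects $c$ modulo $\Z$; the conclusion $y=0$ then follows as you say.
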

\begin{proof}
Assume, that $x,y\in \hat K^1(SU(2))$ satisfy 
$R(x)=R(y)$.  Then we have
$x-y\in \hat K_{flat}^1(SU(2))\cong K_{flat}^1(S^3)\cong \R/\Z$.
Since  $i^*x=i^*y=0$ we have in fact that $x=y$. 
Therefore, if the class $x_{SU(2)}(k)$ exists, then it is unique. 

We show the existence of an invariant class in an abstract manner. Note that
$k\vol_{SU(2)}$ represents a class $\ch(Y)$ for some $Y\in
K^1(S^3)$. In terms of classifying maps, $Y$ for $k=1$ is given by the embedding
$SU(2)\rightarrow U(2)\rightarrow U(\infty)\cong K^1$.
We have the exact sequence
$$0\rightarrow \Omega^{ev}(SU(2))/\im(\ch_{dR})\stackrel{a}{\rightarrow} \hat
K^1(SU(2))\stackrel{I}{\rightarrow} K^1(SU(2))\rightarrow 0 \ .$$ 
Therefore we can choose any class $y\in \hat K^1(SU(2))$ such that
$I(y)=Y$.  
Then the continuous group cocycle 
$G\ni t\rightarrow c(t)=t^*y-y\in  \Omega^{ev}(SU(2))/\im(\ch_{dR})$
represents an element 
$[c]\in H_c^1(G,\Omega^{ev}(SU(2))/\im(\ch_{dR}))$.

We claim that this cohomology group is trivial.
Note that
$\Omega^{ev}(SU(2))/\im(\ch_{dR})\cong \Omega^0(SU(2))/\Z\oplus
\Omega^2(SU(2))/\im(d)$.
Since $ \Omega^2(SU(2))/\im(d)$ is a real topological vector space with a
continuous action  of the compact group $G$ we immediately
conclude that $H_c^1(G,\Omega^2(SU(2))/\im(d))=0$ by the usual
averaging argument.
We consider the exact sequence of $G$-spaces
$$0\rightarrow \Z\rightarrow \Omega^0(SU(2))\rightarrow
\Omega^0(SU(2))/\Z\rightarrow 0\ .$$
Since $G$ is simply-connected we see that taking continuous functions from
$G\times\dots \times G$ with values in these spaces, we obtain again
exact sequences of $\Z$-modules.
It follows that we have a long exact sequence in continuous
cohomology. The relevant part reads
$$H^1_c(G,\Z)\rightarrow H^1_c(G, \Omega^0(SU(2)))\rightarrow 
H^1_c(G,\Omega^0(SU(2))/\Z)\rightarrow H^2_c(G,\Z)\ .$$
Since $\Z$ is discrete and $G$ is connected we see that
$H^i_c(G,\Z)=0$ for $i\ge 1$. 
Therefore,
$$H^1_c(G, \Omega^0(SU(2)))\cong 
H^1_c(G,\Omega^0(SU(2))/\Z)\ .$$
But $\Omega^0(SU(2))$ is again a continuous representation
of $G$ on a real vector space so that
$H^1_c(G, \Omega^0(SU(2)))=0$. The claim follows.

We now can choose $w\in \Omega^{ev}(SU(2))/\im(\ch_{dR})$ such that
$t^*w-w=t^*y-y$ for all $t\in G$. We can further assume that
$i^*w=i^*y$ by adding a constant.
Then we set
$x_{SU(2)}(k)=y-w\in \hat K^1(SU(2))$. This element has the required
properties. \end{proof}

It is an interesting problem to write down an invariant cycle which represents  the class $x_{SU(2)}$.

\subsubsection{}

Note that
$x_{SU(2)}(k)=kx_{SU(2)}(1)$.
Let $\Sigma\subset SU(2)$ be an embedded oriented hypersurface.
Then $R (x_{SU(2)}(1))_{|\Sigma}=0$ so that
$(x_{SU(2)})_{|\Sigma}\in \hat K_{flat}^1(\Sigma)$. Since $x_{SU(2)}(1)$ evaluates
trivially on points we have in fact
$$(x_{SU(2)}(1))_{|\Sigma}\in \ker\left(\hat K_{flat}^1(\Sigma)\to \hat K^1_{flat}(*)\right)\cong \R/\Z\ .$$
% For $u\in  \ker\left(\hat K_{flat}^1(\Sigma)\to \hat K^1_{flat}(*)\right)$ we let $u_{\R/\Z}\in \R/\Z$ denote the corresponding element.
This number can be determined by integration over $\Sigma$.
Formally, let $p\colon \Sigma\rightarrow \{*\}$ be the projection. If we choose some smooth $K$-orientation, 
then we can ask for $\hat p_!(x_{SU(2)}(1))_{|\Sigma}\in \hat K_{flat}^1(*)\cong
\R/\Z$. 
The hypersurface  $\Sigma$ decomposes  $SU(2)$ in two parts $SU(2)_{\Sigma}^\pm$. 
Let $SU(2)_{\Sigma}^+$ be the part such that $\partial
SU(2)_{\Sigma}^+$ has the orientation given by $\Sigma$.
We choose a $K$-orientation  $o$ of  the projection $q\colon SU(2)_{\Sigma}^+\to *$
which has a product structure such that $\sigma(o)=0$ and $\hA^c(o)=1$.
In order to get the latter equality we choose a $Spin^c$-structure coming from a spin structure. The smooth $K$-orientation of $q$   induces a smooth $K$-orientation of $p$.
Then $q\colon SU(2)_{\Sigma}^+\to *$  provides a zero-bordism of $\Sigma$, and
of $(x_{SU(2)}(1))_{|\Sigma}$. Therefore, we have by Proposition \ref{bordin} 
$$\hat p_!(x_{SU(2)}(1))_{|\Sigma} =\left[\emptyset, \int_{SU(2)_{\Sigma}^+}
  R(x_{SU(2)}(1))\right]=
-[\vol(SU(2)_{\Sigma}^+)]_{\R/\Z}\ ,$$
where $[\lambda]_{\R/\Z}$ denotes the class of $\lambda\in \R$. 
Note that the identification $\hat K_{flat}^1(*)\cong
\R/\Z$ is induced by $a\colon \R\cong \Omega^{odd}(*)/\im(d)\to K_{flat}^1(*)$ given by $\lambda\mapsto [\emptyset,-\lambda]$. This explains the minus sign in the second equality above.

\subsection{Invariant classes on homogeneous spaces}

\subsubsection{}

Some of the arguments from the $SU(2)$-case generalize.
Let $G$ be a compact connected and simply-connected Lie group and
$G/H$ be a homogenous space.

Given $Y\in K(G/H)$ we can find a lift $y\in\hat K(G/H)$.
We form the cocycle $G\ni g\mapsto c(g):=g^*y-y\in
\Omega(G/H)/\im(\ch_{dR})$. Since
$\Omega(G/H)/\im(\ch_{dR})$ is the quotient of a vector space by a
lattice and $G$ is connected and simply-connected we can use the
arguments as in the $SU(2)$-case in order to conclude that
$H^1_c(G,\Omega(G/H)/\im(\ch_{dR}))=0$. Therefore we can
choose the lift $y$ such that $g^*y=y$ for all $g\in G$. In
particular, $R(y)\in \Omega(G/H)$ is now an invariant form
representing $\ch(Y)$. Note that an invariant form is in general not determined by this condition.

\subsubsection{}

If we specialize to the case that $G/H$ is symmetric, then
invariant forms exactly represent the cohomology.
In this case we see that two choices of invariant lifts $y_0,y_1$ of
$Y$ have the same curvature so that $y_1-y_0\in \hat K_{flat}(G/H)$.
Since the $y_i$ also have the same index, we indeed have
$y_1-y_0\in H(G/H,\R)/\im(\ch_{dR})$. We have thus shown the following lemma.
\begin{lem}
Assume that $G/H$ is a symmetric space with $G$ connected and simply
connected. Then every $Y\in K(G/H)$ has an invariant lift
$y\in \hat K(G/H)$ which is uniquely determined up to
$ H(G/H,\R)/\im(\ch_{dR})$.
\end{lem}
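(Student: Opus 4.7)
The existence half has essentially been carried out in the discussion immediately preceding the lemma: for connected simply connected $G$ the vanishing of $H^1_c(G,\Omega(G/H)/\im(\ch_{dR}))$, established there by averaging and the usual long exact sequence, guarantees that any lift $y$ of $Y$ can be adjusted by a coboundary $w\in \Omega(G/H)/\im(\ch_{dR})$ so that $y-w$ is $G$-invariant. So the real content to verify is the uniqueness clause, i.e.\ that the set of invariant lifts of a fixed $Y$ is a torsor over $H(G/H,\R)/\im(\ch_{dR})$.

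For that, I would start from two $G$-invariant lifts $y_0,y_1$ of $Y$ and first show $R(y_0)=R(y_1)$ on the nose. By naturality of $R$ (Definition \ref{ddd556}) both forms $R(y_i)\in \Omega_{d=0}(G/H)$ are $G$-invariant; by the commuting square in Definition \ref{ddd556} they both represent $\ch_{dR}(Y)\in H(G/H,\R)$. The key symmetric-space input is the classical Cartan theorem: on a compact symmetric space the inclusion $\Omega^*(G/H)^G\hookrightarrow \Omega^*(G/H)$ induces an isomorphism onto $H^*(G/H,\R)$ (invariant forms are harmonic for the invariant metric, and every cohomology class has a unique invariant harmonic representative). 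Hence two cohomologous $G$-invariant closed forms coincide, giving $R(y_0)=R(y_1)$.

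Having this, $\xi:=y_1-y_0$ satisfies $R(\xi)=0$ and $I(\xi)=Y-Y=0$. By the exact sequence of Proposition \ref{prop1} there is $\omega\in \Omega(G/H)/\im(d)$ with $\xi=a(\omega)$, well-defined modulo $\ch_{dR}(K(G/H))$. The relation $R\circ a=d$ of Lemma \ref{tzw} together with $R(\xi)=0$ forces $d\omega=0$, so $\omega$ lies in
\begin{equation*}
\Omega_{d=0}(G/H)/(\im(d)+\ch_{dR}(K(G/H)))\;\cong\;H(G/H,\R)/\im(\ch_{dR}).
\end{equation*}
Conversely every such class lifts through $a$ to a flat element whose addition to $y_0$ preserves both $I$ and $G$-invariance (since $a$ commutes with the $G$-action and invariance of the cohomology class of $\omega$ together with the symmetric-space uniqueness above lets one choose a $G$-invariant representative of $\omega$). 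This is precisely the torsor statement.

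The only non-formal step is the symmetric-space input that invariant closed forms inject into de~Rham cohomology; the remainder is a diagram chase in the defining exact sequence of $\hat K$. I expect no real obstacle, since the harder existence step was already handled in the paragraph before the lemma and is not specific to the symmetric case.
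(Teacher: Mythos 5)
Your proof is correct and follows essentially the same route as the paper: existence comes from the vanishing of $H^1_c(G,\Omega(G/H)/\im(\ch_{dR}))$ established in the preceding paragraph, and uniqueness uses the Cartan fact that on a compact symmetric space invariant closed forms represent cohomology classes uniquely, so two invariant lifts of $Y$ have equal curvature, hence differ by a flat class with trivial index, which lives in $H(G/H,\R)/\im(\ch_{dR})$ by the exact sequence of Proposition~\ref{prop1}. Your extra remark that every such class actually occurs (by choosing an invariant representative form) is a sensible completion of the torsor statement and is consistent with the paper.
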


\subsubsection{}
We can apply this in certain cases.
First we write $S^{2n+1}\cong Spin(2n+2)/Spin(2n+1)$, $n\ge 1$.
Note that $ K^{1}(S^{2n+1})\cong \Z$.
Since $H^{ev}(S^{2n+1},\R)/\im(\ch_{dR})=\R/\Z$
is concentrated in degree zero we have the following result.
\begin{kor} Let $n\ge 1$.
For each $k\in\Z$ there is a unique $x_{S^{2n+1}}(k)\in \hat K^{1}(S^{2n+1})$
which is invariant, has index $k\in \Z\cong  K^{1}(S^{2n+1})$, and
evaluates trivially on points.
\end{kor}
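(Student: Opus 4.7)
The plan is to directly combine the preceding Lemma with an evaluation-at-a-point normalization. Since $Spin(2n+2)$ is connected and simply-connected and $S^{2n+1}\cong Spin(2n+2)/Spin(2n+1)$ is a symmetric space, the preceding Lemma applies: any $Y\in K^1(S^{2n+1})$ admits an invariant lift $y\in \hat K^1(S^{2n+1})$, and any two such invariant lifts differ by an element of $H^{ev}(S^{2n+1},\R)/\im(\ch_{dR})$, embedded in $\hat K^1$ via $a$.

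For existence, I would fix $Y\in K^1(S^{2n+1})\cong\Z$ corresponding to $k$, choose an invariant lift $y\in\hat K^1(S^{2n+1})$ by the Lemma, and then normalize at a point. Let $i\colon\ast\to S^{2n+1}$ be the inclusion of a basepoint. By Lemma \ref{calpunk}, $\hat K^1(\ast)\cong \R/\Z$, so $i^*y$ corresponds to some class $c\in\R/\Z$. Pick a lift $\tilde c\in\R$ and view it as a constant (hence $G$-invariant) function on $S^{2n+1}$; then $x_{S^{2n+1}}(k):=y-a(\tilde c)$ is still an invariant lift of $Y$ (because constant functions are $G$-invariant and $a$ is natural), has index $I(x_{S^{2n+1}}(k))=I(y)=k$, and satisfies $i^*x_{S^{2n+1}}(k)=c-c=0$ in $\R/\Z\cong\hat K^1(\ast)$.

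For uniqueness, suppose $x_1,x_2\in\hat K^1(S^{2n+1})$ are both invariant, both of index $k$, and both pull back to $0$ at a point. Then $x_1-x_2$ is an invariant flat lift of $0\in K^1(S^{2n+1})$, hence by the Lemma lies in $H^{ev}(S^{2n+1},\R)/\im(\ch_{dR})$. Since $n\ge 1$, this group reduces to $H^0(S^{2n+1},\R)/\im(\ch_{dR})\cong\R/\Z$ (the degree-zero Chern character of $K^0(S^{2n+1})\cong\Z$ is the inclusion $\Z\hookrightarrow\R$). An element here is represented as $a(\tilde\gamma)$ for a constant $\tilde\gamma\in\R$; pulling back to $\ast$ via $i$ identifies this with the class $[\tilde\gamma]_{\R/\Z}$. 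The assumption $i^*(x_1-x_2)=0$ thus forces $x_1=x_2$.

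The construction is essentially routine once the preceding Lemma is in hand; there is no real technical obstacle. The one point to verify carefully is that the evaluation-at-a-point map $i^*$ restricted to the ambiguity subgroup $H^{ev}(S^{2n+1},\R)/\im(\ch_{dR})$ is an isomorphism onto $\hat K^1(\ast)\cong\R/\Z$. This boils down to the degree computation above, using that $H^{ev}(S^{2n+1},\R)$ is concentrated in degree zero (so everything sits in constant-function degree, which is preserved by $i^*$) and that the Chern character $K^0(S^{2n+1})\to H^{ev}(S^{2n+1},\R)$ is the standard inclusion $\Z\hookrightarrow\R$ in degree zero.
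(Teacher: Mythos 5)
Your proof is correct and follows exactly the argument the paper has in mind: the paper simply observes that the ambiguity group $H^{ev}(S^{2n+1},\R)/\im(\ch_{dR})\cong\R/\Z$ is concentrated in degree zero and hence is detected by evaluation at a point, which is precisely what you carry out in detail for both existence (normalizing by a constant form) and uniqueness.
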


\subsubsection{}

In the even-dimensional case we write $S^{2n}\cong Spin(2n+1)/Spin(2n)$, $n\ge 1$.
Note that $ K^{0}(S^{2n})\cong \Z\oplus \Z$ and $H^{odd}(S^{2n},\R)/\im(\ch_{dR})=0$.
 \begin{kor}
For each $k\in\Z$ there is a unique $x_{S^{2n}}(k)\in \hat K^{0}(S^{2n})$
which is invariant and  has index $k\in \Z\cong \tilde  K^{0}(S^{2n})$, and evaluates trivially on points  
\end{kor}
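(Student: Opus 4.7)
The plan is to apply the preceding lemma on invariant lifts to the symmetric presentation $S^{2n}\cong Spin(2n+1)/Spin(2n)$, noting that $Spin(2n+1)$ is compact, connected, and simply connected, so all hypotheses are met. I would feed in the class $Y:=k\sigma\in K^0(S^{2n})$, where $\sigma$ is a generator of $\tilde K^0(S^{2n})\cong \Z$, to produce an invariant lift $x\in \hat K^0(S^{2n})$ of $Y$. Setting $x_{S^{2n}}(k):=x$ is then the obvious candidate.

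For uniqueness, I would observe that the lemma guarantees the invariant lift is determined modulo $H(S^{2n},\R)/\im(\ch_{dR})$. A parity check via the exact sequence (\ref{exax}) shows that the relevant graded piece sitting inside $\ker\bigl(I\colon \hat K^0(S^{2n})\to K^0(S^{2n})\bigr)$ is $H^{odd}(S^{2n},\R)/\im(\ch_{dR})$, since $a$ has degree $1$ and the flat classes in $\hat K^0$ correspond to closed odd forms modulo the image of $\ch_{dR}$. But $H^{odd}(S^{2n},\R)=0$, so the invariant lift is already unique.

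For the point-evaluation property, I would argue by naturality of $I$: for any inclusion $i\colon *\hookrightarrow S^{2n}$ we have $I(i^*x)=i^*I(x)=i^*(k\sigma)=0$ because $\sigma$ is reduced. Now $\hat K^0(*)\cong \Z$ by Lemma \ref{calpunk}, and the kernel of $I\colon \hat K^0(*)\to K^0(*)$ is the image of $a\colon \Omega^{odd}(*)/\im(d)\to \hat K^0(*)$, which is trivially zero since $\Omega^{odd}(*)=0$. Hence $I$ is an isomorphism at the point and $i^*x=0$.

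The reason the proof is so short, and the principal contrast with the odd-sphere case in the previous corollary, is that there $H^{ev}(S^{2n+1},\R)/\im(\ch_{dR})\cong \R/\Z$ provided a genuine $\R/\Z$-ambiguity in the invariant lift that had to be pinned down by imposing the point-evaluation condition; here the vanishing of $H^{odd}(S^{2n},\R)$ makes uniqueness automatic, and triviality on points becomes a consequence rather than an extra constraint. No new analytic input is required; the only care needed is the parity bookkeeping identifying the ambiguity in the symmetric-space lemma with odd (rather than even) cohomology of $S^{2n}$.
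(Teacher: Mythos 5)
Your proof is correct and matches the paper's (implicit) argument: apply the symmetric-space lemma to $S^{2n}\cong Spin(2n+1)/Spin(2n)$, observe that the parity-matched ambiguity group $H^{odd}(S^{2n},\R)/\im(\ch_{dR})$ vanishes (exactly the computation the paper records just before the corollary), and note that triviality on points follows from the index lying in $\tilde K^0(S^{2n})$ together with $I$ being an isomorphism on a point. Your closing remark comparing the even- and odd-sphere cases correctly identifies why the point-evaluation condition is a genuine constraint for $S^{2n+1}$ but a byproduct for $S^{2n}$.
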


\subsubsection{}
We write $\C\P^n:=SU(n+1)/S(U(1)\times U(n))$.
Then $H^{odd}(\C\P^n,\R)/\im(\ch_{dR})=0$. Therefore we conclude:
\begin{lem}
For each $Y\in K^0(\C\P^n)$ there is a unique $SU(n+1)$-invariant
class $y_{\C\P^n}(Y)\in \hat K^0(\C\P^n)$ such that $I(y_{\C\P^n}(Y))=Y$.
\end{lem}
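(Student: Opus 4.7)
The plan is to specialize the general symmetric space argument used just above for the spheres $S^{2n}$ and $S^{2n+1}$, exploiting the particularly clean cohomological structure of $\C\P^n$. Concretely, set $G:=SU(n+1)$; note that $G$ is compact, connected, simply-connected, and that $\C\P^n=G/S(U(1)\times U(n))$ is a compact Hermitian symmetric space with $H^{odd}(\C\P^n,\R)=0$ and $K^1(\C\P^n)=0$.

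For existence, I would start by picking any lift $y\in\hat K^0(\C\P^n)$ with $I(y)=Y$, which is possible by surjectivity of $I$ (Lemma \ref{lem:realize_index}). Form the continuous $1$-cocycle
$$c\colon G\to \Omega^{odd}(\C\P^n)/(\im(d)+\im(\ch_{dR})),\qquad c(g)=g^*y-y,$$
which takes values in $\Omega^{odd}/\im(d)$ because $K^1(\C\P^n)=0$, so $\im(\ch_{dR})=0$ there. Since $H^{odd}(\C\P^n,\R)=0$, the quotient $\Omega^{odd}(\C\P^n)/\im(d)$ is isomorphic via $d$ to a subspace of $\Omega^{ev}(\C\P^n)$, hence is a topological real vector space on which the compact group $G$ acts continuously. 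A standard averaging argument then gives $H^1_c(G,\Omega^{odd}/\im(d))=0$, so $c$ is a coboundary: there exists $w\in\Omega^{odd}/\im(d)$ with $g^*w-w=c(g)$ for all $g\in G$. The class $y_{\C\P^n}(Y):=y-a(w)$ is then an invariant lift of $Y$.

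For uniqueness, suppose $y_0,y_1\in\hat K^0(\C\P^n)$ are two invariant lifts of $Y$. Then $I(y_1-y_0)=0$, so by exactness of Proposition \ref{prop1} we can write $y_1-y_0=a(\omega)$ for a class $\omega\in\Omega^{odd}(\C\P^n)/\im(d)$, and $\omega$ is uniquely determined since $\im(\ch_{dR})=0$. Invariance of both $y_i$ gives $a(g^*\omega-\omega)=0$ for every $g\in G$, hence $g^*\omega=\omega$ by the same uniqueness, i.e.\ $\omega$ is $G$-invariant in $\Omega^{odd}/\im(d)$. Averaging any representative over $G$ produces a genuinely $G$-invariant odd form representing $\omega$. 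Because $\C\P^n$ is a compact symmetric space for $G$, $G$-invariant forms are harmonic for the symmetric metric, and since $H^{odd}(\C\P^n,\R)=0$ the only invariant odd form is zero. Thus $\omega=0$ and $y_0=y_1$.

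The only step that needs care is the vanishing of the continuous cohomology $H^1_c(G,\Omega^{odd}(\C\P^n)/\im(d))$; the decisive input is that this coefficient module is a Hausdorff topological $\R$-vector space with continuous $G$-action (which uses $H^{odd}(\C\P^n,\R)=0$ to eliminate a potentially problematic $\R^k/\Z^k$-summand, as occurred in the $SU(2)$-case above), after which the averaging argument is routine. I do not anticipate any further technical obstacle, since the identification of invariant forms with harmonic forms on compact symmetric spaces is classical and $H^{odd}(\C\P^n,\R)=0$ is elementary.
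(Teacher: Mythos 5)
Your proof is correct and follows essentially the same strategy as the paper: existence via vanishing of continuous group cohomology of $SU(n+1)$ with coefficients in a space of forms, and uniqueness via the fact that on a compact symmetric space invariant forms are harmonic combined with $H^{odd}(\C\P^n,\R)=0$. The paper packages the uniqueness step as a general lemma for symmetric spaces (uniqueness up to $H(G/H,\R)/\im(\ch_{dR})$, which vanishes here) and you prove it directly, also using $K^1(\C\P^n)=0$ to slightly streamline the coefficient module — but these are cosmetic differences.
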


\subsubsection{}

Let $G$ be a connected and simply-connected Lie group.
Let $T\subset G$ be a maximal torus.
Then we have a $G$-map $P\colon G/T\times T\rightarrow G$, $P([g],t):=gtg^{-1}$,
where $G$ acts on the left-hand side by $g([h],t):=([gh],t)$, and by
conjugation on the right-hand side.
Let $x\in \hat K^*(G)$ be an invariant element. It is an interesting question how
$P^*x$ looks like.

Let us consider the special case $G=SU(2)$ and $x_{SU(2)}=x_{SU(2)}(1)\in \hat K^1(SU(2))$.
In this case we have $T=S^1$ and  $G/T\cong \C \P^1$.
First we compute the curvature of
$P^*x_{SU(2)}$. For this we must compute
$P^*\vol_{SU(2)}$ which is given by Weyl's integration formula.
We have
$$P^*\vol_{SU(2)}=\vol_{\C \P^1}\wedge 4 \sin^2(2\pi t) dt\ .$$
There is a unique class $z\in\hat K^1(S^1)$ with curvature
$4 \sin^2(2\pi t)dt$ such that $0^*z=0$. Furthermore, there is a unique
class $<\bL>\in\hat K^0(\C \P^1)$ with curvature $\vol_{\C\P^1}$ which is
in fact the class $<\bL>$ considered in \ref{blcons} associated to the canonical line bundle $\bL$
on $\C\P^1$.

The product $<\bL>\cup z$ has now the same curvature as
$P^*x_{SU(2)}$.
We conclude that
$$P^*x_{SU(2)}-<\bL>\cup z\in H^{ev}(\C \P^1\times S^1,\R)/\im(\ch_{dR})\ .$$
Now note that 
\begin{eqnarray*}\lefteqn{H^{ev}(\C \P^1\times S^1,\R)/\im(\ch_{dR})}&&\\&\cong&
\left(H^0(\C \P^1,\R)\otimes H^0(S^1,\R)\oplus H^2(\C \P^1,\R)\otimes
H^0(S^1,\R)\right)/\im(\ch_{dR})\\&\cong& \R/\Z\oplus \R/\Z\ .\end{eqnarray*}
The first component can be determined by evaluating
the difference $P^*x_{SU(2)}-<\bL>\cup z$ at a point. Since $x_{SU(2)}$ is trivial on points,
this first component vanishes. The second component can be determined
by evaluating $P^*x_{SU(2)}-<\bL>\cup z$ at $\C \P^1\times \{0\}$.
Note that $P_{\C \P^1\times \{0\}}^*x_{SU(2)}=0$, since
$P_{|\C\P^1\times \{0\}}$ is constant. Furthermore, $0^*z=0$ implies
that $<\bL>\cup z_{|\C \P^1\times \{0\}}=0$.
Thus we have shown  (using $S^2\cong \C\P^1$):
\begin{lem}
$P^*x_{SU(2)}=x_{S^2}(1)\cup z$
\end{lem}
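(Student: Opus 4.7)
The plan is to verify equality of the two classes via the general strategy that their difference must lie in the flat part of smooth $K$-theory, which in this case is a small discrete group that can be detected by pullback to two natural submanifolds.

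First I would check that $R(P^*x_{SU(2)})=R(x_{S^2}(1)\cup z)$. By naturality of the curvature and $R(x_{SU(2)})=\vol_{SU(2)}$, Weyl's integration formula gives $P^*\vol_{SU(2)}=\vol_{\C\P^1}\wedge 4\sin^2(2\pi t)\,dt$, as already recorded in the text. On the other side Lemma \ref{rring} yields $R(x_{S^2}(1)\cup z)=R(x_{S^2}(1))\wedge R(z)=\vol_{\C\P^1}\wedge 4\sin^2(2\pi t)\,dt$ by the defining properties of $x_{S^2}(1)$ and $z$. Hence the difference $D:=P^*x_{SU(2)}-x_{S^2}(1)\cup z$ lies in $\hat K^1_{\text{flat}}(\C\P^1\times S^1)$.

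Next I would identify this flat group. By Proposition \ref{prop1} applied to $\C\P^1\times S^1$, together with torsion-freeness of $K^*(\C\P^1\times S^1)$, the group $\hat K^1_{\text{flat}}(\C\P^1\times S^1)$ is isomorphic to $H^{ev}(\C\P^1\times S^1,\R)/\im(\ch_{dR})$, which in turn splits as $\R/\Z\oplus\R/\Z$ with summands coming from $H^0(\C\P^1)\otimes H^0(S^1)$ and $H^2(\C\P^1)\otimes H^0(S^1)$. The first summand is detected by pullback along the inclusion of a point $\{*\}\times\{0\}\hookrightarrow \C\P^1\times S^1$, and the second by pullback along $\C\P^1\times\{0\}\hookrightarrow \C\P^1\times S^1$.

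Finally I would verify that $D$ restricts to zero under both of these maps. For the first: $x_{SU(2)}$ restricts to zero at the identity of $SU(2)$ by construction, and $x_{S^2}(1)$ restricts to zero at the basepoint of $S^2$, while $z$ restricts to zero at $0\in S^1$; so both summands of $D$ vanish on $\{*\}\times\{0\}$. For the second: $P$ sends $\C\P^1\times\{0\}$ to the identity element of $SU(2)$ (a constant map), so $P^*x_{SU(2)}$ restricts trivially, while on the other side $0^*z=0$ together with the projection formula (Lemma \ref{rring}) force $x_{S^2}(1)\cup z$ to restrict to zero on $\C\P^1\times\{0\}$ as well. Thus $D=0$.

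The main obstacle is the middle step: justifying the identification of $\hat K^1_{\text{flat}}(\C\P^1\times S^1)$ with $\R/\Z\oplus\R/\Z$ and checking that the two chosen restrictions jointly detect both summands. Since $K^*(\C\P^1)$ and $K^*(S^1)$ are both torsion-free and concentrated in the appropriate degrees, this follows from the computation of $H^{ev}(\C\P^1\times S^1,\R)/\im(\ch_{dR})$ already sketched in the paragraph preceding the lemma, and no further analytic input is needed beyond the exact sequence of Proposition \ref{prop1} and the multiplicativity statements of Lemma \ref{rring}.
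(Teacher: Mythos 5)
Your proposal is correct and follows essentially the same route as the paper: compare curvatures via Weyl's integration formula, observe the difference is flat, identify $\hat K^1_{\rm flat}(\C\P^1\times S^1)\cong\R/\Z\oplus\R/\Z$, and kill both components by restricting to a point and to $\C\P^1\times\{0\}$. The only tiny imprecision is that the vanishing of $x_{S^2}(1)\cup z$ on $\C\P^1\times\{0\}$ uses naturality of $\cup$ under pullback (Proposition \ref{prooow}) rather than the projection-type statements of Lemma \ref{rring}, but the argument is unchanged.
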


\subsection{Bordism}

\subsubsection{}

A zero bordism of a geometric family $\cE$ over $B$ is a
geometric family $\cW$ over $B$ with boundary such that $\cE=\partial
\cW$. The notion of a geometric family with boundary is explained in
\cite{math.DG/0201112}. It is important to note that in our set-up a geometric family with boundary  always has a product structure.

\begin{prop}\label{kop1}
If $\cE$ admits a zero bordism $\cW$, then
in $\hat K^*(B)$ we have the identity
$$[\cE,0]=[\emptyset,\Omega(\cW)] .$$
\end{prop}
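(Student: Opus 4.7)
The strategy is to show that $[\cE,0]-[\emptyset,\Omega(\cW)]$ lies in the image of $a\circ \ch_{dR}$, hence vanishes by exactness of the sequence in Proposition \ref{prop1}. First, since $\cE=\partial\cW$, cobordism invariance of the family index yields $\ind(\cE)=0$ in $K(B)$. After possibly stabilizing $\cE$ by a piece of the form $\cV\sqcup_B\cV^{op}$ for a suitable geometric bundle $\cV$ --- an operation that leaves $[\cE,0]$ unchanged by Lemma \ref{lem22} and preserves the bounding property via the trivial bordism $\cV\times[0,1]$ --- we may select a taming $\cE_t$ of $\cE$ in the sense of \ref{twdzqdqwdqwdqw}.

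Used as a boundary taming of $\cW$, in the formalism of geometric families with boundary of \cite{math.DG/0201112}, the taming $\cE_t$ upgrades $\cW$ to a boundary-tamed family $\cW_{bt}$ with a $K$-theoretic index $\ind(\cW_{bt})\in K(B)$. The local index theorem for boundary-tamed families \cite[Thm.~2.2.18]{math.DG/0201112} gives
\[
\ch_{dR}(\ind(\cW_{bt}))=[\Omega(\cW)+\eta(\cE_t)]\in H_{dR}(B),
\]
with a sign convention matching the transgression calculation carried out for $\cW=\cE\times[0,1]$ in the proof of exactness in Proposition \ref{prop1} (where the two boundary contributions combine via $\eta(\cE^{op}_{t'})=-\eta(\cE_{t'})$ to produce $\eta(\cE_t)-\eta(\cE_{t'})$).

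On the cycle side, the taming $\cE_t$ directly witnesses that $(\cE,0)$ is paired with $(\emptyset,-\eta(\cE_t))$, in the sense of Definition \ref{uuu1} (since $0-(-\eta(\cE_t))=\eta(\cE_t)$), so $[\cE,0]=[\emptyset,-\eta(\cE_t)]=a(\eta(\cE_t))$ by the definition $a(\rho)=[\emptyset,-\rho]$. Combining all this,
\[
[\cE,0]-[\emptyset,\Omega(\cW)]=a(\eta(\cE_t))-a(-\Omega(\cW))=a\bigl(\eta(\cE_t)+\Omega(\cW)\bigr)=a\bigl(\ch_{dR}(\ind(\cW_{bt}))\bigr)=0,
\]
the last equality by exactness of the sequence in Proposition \ref{prop1} at $\Omega(B)/\im(d)$.

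The main obstacle is pinning down the precise boundary-tamed local index formula with its sign; this is a careful bookkeeping task in the formalism of \cite{math.DG/0201112} (tracking the orientation of $\partial\cW$ and the sign conventions for $\eta$-forms of opposite families) rather than a conceptual difficulty, and it is essentially already carried out in the paper's proof of exactness. A secondary technical point is the possible stabilization of $\cE$ to ensure a taming exists, which is handled exactly as in \ref{twdzqdqwdqwdqw}.
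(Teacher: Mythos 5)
Your argument is correct, and it takes a genuinely different (though closely related) route from the paper's. The paper first \emph{adjusts} the taming $\cE_t$ using the machinery of \ref{pap3} (varying the taming to realize any class in $K(B)$, plus the relative index theorem (\ref{wiufhewfwefewfewfwfwefwef})) so that the obstruction $\ind(\cW_{bt})$ actually vanishes; then the boundary taming extends to a full taming $\cW_t$ and one quotes the \emph{form-level} identity $\Omega(\cW)=d\eta(\cW_t)-\eta(\cE_t)$ from \cite[Thm.~2.2.13]{math.DG/0201112}, which directly witnesses that $(\cE,0)$ is \emph{paired} with $(\emptyset,\Omega(\cW))$. You instead leave $\cE_t$ arbitrary, invoke the boundary-tamed index theorem \cite[Thm.~2.2.18]{math.DG/0201112} to identify $[\Omega(\cW)+\eta(\cE_t)]$ with $\ch_{dR}(\ind(\cW_{bt}))$, and then kill this class via $a$ by exactness at $\Omega(B)/\im(d)$ in Proposition \ref{prop1}. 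What each approach buys: the paper's route avoids invoking exactness (a downstream theorem) and yields a finer conclusion --- an honest pairing of cycles, not just equality in $\hat K(B)$ --- at the cost of the taming-adjustment step; your route is shorter since it needs only the cohomological index theorem for an arbitrary boundary taming, but it leans on Proposition \ref{prop1} and lands only on equality in $\hat K(B)$. One small imprecision: when you stabilize, the bordism witnessing that the stabilized $\cE$ still bounds is $(\cV\sqcup_B\cV^{op})\times[0,1]$, not $\cV\times[0,1]$; and one should note that this product piece has vanishing local index form (product structure), so $\Omega(\cW)$ is unchanged by the stabilization, which is needed to make the reduction legitimate. With that noted, the proof goes through.
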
 
\begin{proof}
Since $\cE$ admits a zero bordism we have $\ind(\cE)=0$ so that $\cE$
admits a taming $\cE_t$. This taming induces a boundary taming 
$\cW_{bt}$. The obstruction against extending the boundary taming to a taming
of $\cW$ is $\ind(\cW_{bt})\in K(B)$ \cite[Lemma 2.2.6]{math.DG/0201112}.

Let us assume for simplicity that $\cE$ is not zero-dimensional. Otherwise we
may have to stabilize in the following assertion. Using \ref{pap3} we can adjust the taming
$\cE_t$ such that $\ind(\cW_{bt})=0$.
At this point we employ a version of the relative index theorem \cite{Bun95}
\begin{equation}\label{wiufhewfwefewfewfwfwefwef}
\ind(\cW_{bt^\prime})=\ind(\cW_{bt})+\ind((\cE\times [0,1])_{bt})\ ,
\end{equation}
where $\cE_t$ and $\cE_{t^\prime}$ define the boundary taming $(\cE\times [0,1])_{bt}$.

If $\ind(\cW_{bt})=0$,
then we can extend the boundary taming $\cW_{bt}$ to
a taming $\cW_t$. We now apply the identity
\cite[Thm. 2.2.13]{math.DG/0201112}:
$$\Omega(\cW)=d\eta(\cW_t)-\eta(\cE_t)\ .$$
Note that this equality is more precise than needed since it holds on the level of forms without factoring by $\im(d)$.
We see that $(\cE,0)$ is paired with $(\emptyset,\Omega(\cW))$. This implies the assertion.
\end{proof}

\subsubsection{}

Let $p\colon W\to B$ be a proper submersion from a manifold with boundary $W$ which restricts to a submersion
$q:=p_{|\partial W}\colon V:=\partial W\to B$. We assume that $p$ has a topological $K$-orientation and a smooth $K$-orientation represented by $o_p$ which refines the topological $K$-orientation. 
We assume that the geometric data of $o_p$ has a product structure near $V$
(see \cite[Section 2.1]{math.DG/0201112} for a detailed discussion of such
product structures). 
Recall $o_p=(g^{T^vp},T^hp,\tilde \nabla_p,\sigma_p)$. By the assumption of a product structure
we have a quadruple $(g^{T^vq},T^hq,\tilde \nabla_q,\sigma_q)$ and an
isomorphism of a neighbourhood of $p_{|\partial W}\colon \partial W\to B$ with
the bundle
$\cE\times [0,1)\stackrel{\pr_\cE}{\to} \cE\stackrel{p}{\to} B$ such that the geometric
data are related as follows.
\begin{enumerate}
\item $T^vp_{|\cE\times [0,1)}\cong \pr_\cE^*T^vq\oplus \pr^*_{[0,1)}T[0,1)$ and 
$g^{T^vp}_{|\cE\times [0,1)}=\pr_\cE^*g^{T^vq}+ \pr_{[0,1)}^*dr^2$, where $r\in [0,1)$ is the coordinate.
\item $T^hp_{|\cE\times [0,1)}=\pr_\cE^* T^hq$.
\item $(\sigma_p)_{|\cE\times [0,1)}=\pr_\cE^*\sigma_q$.
\item The $Spin^c$-structure on $T^vq$  and the canonical $Spin^c$-structure
on $T[0,1)$ induce a $Spin^c$-structure on the vertical bundle $T^v\cong \pr_\cE T^v\cE\oplus \pr_{[0,1)}^*T[0,1)$ of  $\cE\times [0,1)$ in a canonical way so that the associated spinor bundle is
$S(T^v)=\pr_\cE^*S^c(T^vq)$ or $\pr_\cE^* S^c(T^vq)\otimes \C^2$ depending on the dimension of $T^vq$. In particular, the connection $\tilde \nabla_q$ gives rise to a connection $\tilde \nabla_{prod}$. The product structure
identifies the restricted $Spin^c$-structure of
$T^vp_{|\cE\times [0,1)}$ with this product $Spin^c$-structure such that $\tilde \nabla_{|\cE\times [0,1)}$ becomes   $\tilde \nabla_{prod}$.
\end{enumerate}
{}From this description we deduce that
$$\hA^c(\tilde \nabla)_{|\cE\times [0,1)}=\pr_\cE^* \hA^c(\tilde \nabla_q)\ ,\quad \hA^c(o_p)_{|\cE\times [0,1)}=\pr_\cE^* \hA^c(o_q)\ .$$
It is now  easy to see that the restriction of representatives (with product structure)
preserves equivalence and gives a well-defined restriction of smooth $K$-orientations.
We have the following version of bordism invariance of the push-forward in smooth $K$-theory.
\begin{prop}\label{bordin} For $y\in\hat K(W)$ we set $x:=y_{|V}\in \hat K(V)$. Then 
we have
$$\hat q_!(x)=[\emptyset, p^o_!R(y)]\ . $$
\end{prop}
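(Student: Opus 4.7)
The plan is to unwind both sides, reduce to the case of a cycle of the form $(\cE,0)$, and then combine the bordism formula of Proposition~\ref{kop1} with a Stokes computation on the fibered manifold with boundary $W\to B$.

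Writing $y=[\cE,\rho]$, one has $x=[\cE|_V,\rho|_V]$ and $R(y)=\Omega(\cE)-d\rho$. First I would peel off the form contribution. Using $[\cE|_V,\rho|_V]=[\cE|_V,0]+a(-\rho|_V)$ together with the compatibility $\hat q_!\circ a=a\circ q^{o_q}_!$ from Proposition~\ref{mainprop}, the left-hand side decomposes as $\hat q_!(x)=\hat q_!([\cE|_V,0])+a(-q^{o_q}_!\rho|_V)$. On the right, since $\hA^c(o_p)-d\sigma(o_p)$ is closed and the product structure gives $\sigma(o_p)|_V=\sigma(o_q)$, a Stokes computation for integration along the fiber yields
$$p^o_!\,d\rho\;\equiv\;\pm\,q^{o_q}_!(\rho|_V)\pmod{\im(d)},$$
with the sign dictated by the degree of $\rho$ and $\dim T^vp$; a sign check matches this with the $a(-q^{o_q}_!\rho|_V)$ term on the left. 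So it suffices to prove the statement when $\rho=0$, namely $\hat q_!([\cE|_V,0])=[\emptyset,\,p^o_!\Omega(\cE)]$.

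For this reduced case, I would apply the bordism formula. Since $\pi\colon E\to W$ has closed fibers while $p\colon W\to B$ has boundary $V$, the push-forward family $p_!\cE$ over $B$ constructed in Section~\ref{pap8} is a geometric family with boundary $q_!(\cE|_V)$, and the product-structure hypothesis on $o_p$ near $V$ ensures it is a valid geometric family with boundary in the sense of \cite{math.DG/0201112}. Proposition~\ref{kop1} then gives $[q_!(\cE|_V),0]=[\emptyset,\Omega(p_!\cE)]$, and combining this with the definition (\ref{eq300}) at $\lambda=1$ produces
$$\hat q_!([\cE|_V,0])=\bigl[\emptyset,\;\Omega(p_!\cE)+\tilde\Omega(1,\cE|_V)+\textstyle\int_{V/B}\sigma(o_q)\wedge\Omega(\cE|_V)\bigr].$$
It thus remains to show that
$$\Omega(p_!\cE)+\tilde\Omega(1,\cE|_V)+\int_{V/B}\sigma(o_q)\wedge\Omega(\cE|_V)\;\equiv\;\int_{W/B}\bigl(\hA^c(o_p)-d\sigma(o_p)\bigr)\wedge\Omega(\cE)\pmod{\im(d)}.$$

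The $\sigma$-contribution is immediate from Stokes together with $\sigma(o_p)|_V=\sigma(o_q)$. The principal remaining identity is a boundary version of (\ref{eq88}),
$$\Omega(p_!\cE)-\int_{W/B}\hA^c(o_p)\wedge\Omega(\cE)\;\equiv\;-\tilde\Omega(1,\cE|_V)\pmod{\im(d)},$$
which I would obtain by applying Stokes' theorem on $(0,1]\times W\to (0,1]\times B$ to the transgression integral $\tilde\Omega(1,\cE)=\int_{(0,1]\times B/B}\Omega(\cF)$ of (\ref{dtqdutwqdwqdq}): the extra boundary contribution that was absent in the closed-fiber case is precisely the transgression $\tilde\Omega(1,\cE|_V)$ for the boundary family, thanks to the product structure of $o_p$ near $V$. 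The hard part will be this last Stokes computation — carefully tracking the signs picked up when commuting $d$ with integration along fibers with boundary, and verifying that the extra boundary term of $\cF$ assembles exactly into $\tilde\Omega(1,\cE|_V)$ for $q_!\cE|_V$. Once this identity is in hand, all three pieces combine to give the required congruence in $\Omega(B)/\im(d)$, and hence the asserted equality in $\hat K(B)$.
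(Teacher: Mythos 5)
Your proposal is essentially correct but takes a genuinely different route from the paper at one key point, and this is worth noting.

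The paper's proof does not split off the form contribution (your step peeling off $a(-\rho|_V)$); instead it works with the full cycle $(\cE,\rho)$ throughout. More substantively, the paper does not derive the boundary-corrected transgression identity
$$\Omega(p^{1}_!\cE)-\int_{W/B}\hA^c(o_p)\wedge\Omega(\cE)\equiv -\tilde\Omega(1,\cE|_V)\pmod{\im(d)}$$
that your approach requires. Rather, after applying Proposition~\ref{kop1}, the paper keeps the scaling parameter $\lambda$ free and takes the adiabatic limit $\lambda\to 0$. In that limit the term $\tilde\Omega(\lambda,\cE|_V)$ vanishes outright (it is an integral over the shrinking domain $(0,\lambda)\times B$), and $\Omega(p^\lambda_!\cE)$ converges to $\int_{W/B}\hA^c(o_p)\wedge\Omega(\cE)$ by the (boundary extension of the) adiabatic formula~(\ref{eq88}), so no correction term ever appears. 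Your approach works at $\lambda=1$ and therefore must account for the transgression of the boundary family explicitly. This identity is indeed true and derivable: one combines Stokes on $(0,1]\times B\to B$ with the fact that $\Omega(\cF)$ fails to be closed for the bundle-with-boundary $(0,1]\times E\to(0,1]\times B$, and that failure $d\Omega(\cF)$ is precisely (up to sign) $\Omega(\partial\cF)$, whose transgression integral is $\tilde\Omega(1,\cE|_V)$. But your phrasing --- ``Stokes' theorem on $(0,1]\times W\to(0,1]\times B$'' --- names the wrong bundle; the relevant one is $(0,1]\times E\to(0,1]\times B$, and there are in fact two nested applications of Stokes. The paper's route buys a cleaner argument because it uses only the convergence statements already recorded in~\ref{pap55}, while your route trades the limit for an explicit transgression identity that you flag, accurately, as the hard remaining step. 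Both are valid, but you should carry out that computation rather than leave it as a sketch, since it is the only non-formal content of the reduced statement.
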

\begin{proof}
Let $y=[\cE,\rho]$.
We compute using (\ref{eq300}), Proposition \ref{kop1}, Stokes' theorem, Definition \ref{def313}, and the adiabatic limit $\lambda\to 0$
at the marked equality
\begin{eqnarray*}
\hat q_!(x)&=&[q^\lambda_!\cE_{|V},\int_{V/B}\hA^c(o_q)\wedge \rho +\tilde
\Omega(\lambda,\cE_{|V})+\int_{V/B} \sigma(o_q)\wedge R(x)]\\
&=&[\emptyset, \Omega(p^\lambda_!\cE)+\int_{V/B}\hA^c(o_q)\wedge \rho +\tilde
\Omega(\lambda,\cE_{|V})+\int_{V/B} \sigma(o_q)\wedge R(x)]\\
&\stackrel{!}{=}&[\emptyset,\int_{W/B} \left(\hA^c(o_p)\wedge \Omega(\cE) -
\hA^c(o_p)\wedge d\rho - d\sigma(o_p) \wedge R(y)
\right)]\\
&=&[\emptyset,\int_{W/B} (\hA^c(o_p)-d\sigma(o_p))\wedge R(y)]\\ 
&=&[\emptyset,p^o_! R(y)]
\end{eqnarray*}
\end{proof}

\subsection{$\Z/k\Z$-invariants}

\subsubsection{}

Here we associate to a  family of $\Z/k\Z$-manifolds over $B$ a
class in $\hat K_{flat}(B)$. 
\begin{ddd} A geometric family of  $\Z/k\Z$-manifolds is a
triple $(\cW,\cE,\phi)$, where
$\cW$ is a geometric family with boundary, $\cE$ is a geometric family without boundary, and
$\phi\colon \partial\cW\stackrel{\sim}{\to} k\cE$ is an isomorphism of the boundary of $\cW$ with $k$ copies of $\cE$. \end{ddd}
 We define $u(\cW,\cE,\phi):=[\cE,-\frac{1}{k}\Omega(\cW)]\in\hat K(B)$.
\begin{lem}
We have
$u(\cW,\cE,\phi)\in \hat K_{flat}(B)$. This class
is a $k$-torsion class.
It only depends on the underlying differential-topological data.
\end{lem}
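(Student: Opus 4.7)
The plan is to establish the three assertions in sequence, using Proposition \ref{kop1} (the bordism formula) and the homotopy formula Lemma \ref{lem23} as the main tools.

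First, to show $u(\cW,\cE,\phi)\in \hat K_{flat}(B)$, I would compute the curvature directly from the formula $R([\cF,\rho])=\Omega(\cF)-d\rho$, obtaining $R(u(\cW,\cE,\phi))=\Omega(\cE)+\frac{1}{k}d\Omega(\cW)$. The key differential-form identity is $d\Omega(\cW)=-\Omega(\partial \cW)=-k\,\Omega(\cE)$, which comes from Stokes' theorem applied to fibre integration against the closed form $\hat A(\nabla^{T^v\pi})\wedge \ch(\nabla^W)$; this sign convention is exactly the one forced by the curvature consistency in Proposition \ref{kop1} (indeed $[\cE,0]=[\emptyset,\Omega(\cW)]$ together with the formula for $R$ already forces $d\Omega(\cW)=-\Omega(\cE)$ in the zero-bordism case). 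Substituting gives $R(u)=0$.

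For $k$-torsion, by additivity of cycles under disjoint union and addition of forms, $k\cdot u(\cW,\cE,\phi)=[k\cE,-\Omega(\cW)]$. Since $\cW$ is a zero-bordism of $\partial \cW\cong k\cE$ (and since $\ind(k\cE)=0$ so Proposition \ref{kop1} applies), we have $[k\cE,0]=[\emptyset,\Omega(\cW)]$. Adding $[\emptyset,-\Omega(\cW)]$ on both sides yields $k\cdot u=[\emptyset,\Omega(\cW)]+[\emptyset,-\Omega(\cW)]=[\emptyset,0]=0$.

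For independence of the geometric enhancement, I would take two geometric enhancements $(\cW_i,\cE_i,\phi_i)$, $i=0,1$, of the same underlying differential-topological $\Z/k\Z$-manifold family and construct an interpolating geometric $\Z/k\Z$-manifold family $(\overline\cW,\overline\cE,\overline\phi)$ over $[0,1]\times B$ on the pullback bundles $W\times[0,1]\to B\times[0,1]$ and $E\times[0,1]\to B\times[0,1]$. Concretely, I take convex combinations of the two vertical metrics, horizontal distributions, hermitian metrics, and Clifford connections, and use $\phi_0=\phi_1$ (at the topological level) extended trivially in the $[0,1]$-direction as $\overline\phi$. Because both endpoints have compatible product structure near the boundary, so does the interpolation. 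By the first claim applied to this family, $\overline u:=u(\overline\cW,\overline\cE,\overline\phi)\in \hat K_{flat}([0,1]\times B)$, and Lemma \ref{lem23} then gives
$$u(\cW_1,\cE_1,\phi_1)-u(\cW_0,\cE_0,\phi_0)=i_1^*\overline u-i_0^*\overline u=a\!\left(\int_{[0,1]\times B/B}R(\overline u)\right)=a(0)=0.$$

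The only technical point worth flagging is in the third step, where one must verify that the convex-combination interpolation yields a bona fide geometric family with boundary in the sense of \cite{math.DG/0201112} (i.e.~preserving the required product structure near $\partial W\times[0,1]$ and the fibrewise Dirac-bundle compatibility), but this is routine because all the defining conditions are convex/affine. Steps 1 and 2 are essentially direct consequences of Proposition \ref{kop1}.
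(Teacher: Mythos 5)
Your proposal is correct, and the torsion and homotopy arguments are the same as in the paper. The one place where you take a genuinely different route is the flatness claim: you compute $R(u)$ directly via the Stokes identity $d\Omega(\cW)=-k\,\Omega(\cE)$ for the local index form of a family with boundary, whereas the paper proves $ku=0$ \emph{first} (exactly as you do in your second step) and then observes that $kR(u)=R(ku)=0$ forces $R(u)=0$, since $\Omega_{d=0}(B)$ is a real vector space and hence torsion-free. The paper's ordering is slightly more economical --- flatness comes for free once torsion is established, so one does not need to invoke the boundary Stokes formula independently --- but your direct curvature computation is also valid and arguably more transparent about \emph{why} the $\frac{1}{k}\Omega(\cW)$ correction term is exactly what is needed. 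One small caveat in your homotopy step: the phrase ``all the defining conditions are convex/affine'' oversimplifies matters for the Clifford multiplication, since the Clifford relation $c(\xi)c(\eta)+c(\eta)c(\xi)=-2g^{T^v\pi}(\xi,\eta)$ ties $c$ to the vertical metric, so a naive convex combination of $(g_0,c_0)$ and $(g_1,c_1)$ is not a Clifford structure for $g_t$. One instead connects the geometric structures by a path using the contractibility of the relevant moduli space (e.g.\ rescaling $c_t$ by the positive square root of $g_0^{-1}g_t$), and one must also check that the isomorphism $\phi$ and the product structure near the boundary are maintained along the path. This is standard, and the paper itself only gestures at ``a homotopy argument,'' but it is worth being precise that the interpolation is not literally affine in all the data.
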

\begin{proof}
We first compute by \ref{kop1}
\begin{eqnarray*}
ku(\cW,\cE,\phi)&=&
k[\cE,-\frac{1}{k}\Omega(\cW)]\\&=&[k\cE,-\Omega(\cW)]\\
&=&[ \emptyset,0]\\
&=&0
\end{eqnarray*}
This implies that $R(u(\cW,\cE,\phi))=0$ so
that $u(\cW,\cE,\phi)\in \hat K_{flat}(B)$.
Independence of the geometric data is now shown by a homotopy argument.
\end{proof}

\subsubsection{}
We now explain the relation of this construction to the
$\Z/k\Z$-index of Freed-Melrose \cite{MR1144425}.
\begin{lem}
Let $B=*$ and $\dim(\cW)$ be even.
Then $u(\cW,\cE,\phi)\in \hat K_{flat}^1(*)\cong \R/\Z$.
Let $i_k\colon \Z/k\Z\rightarrow \R/\Z$ the embedding which sends $1+k\Z$ to
$\frac{1}{k}$.
Then
$$i_k(\ind_a(\bar W))=u(\cW,\cE,\phi)\ , $$
where  $i_k(\ind_a(\bar W))\in\Z/k\Z$ is the index of the
$\Z/k\Z$-manifold $\bar W$ (the notation of \cite{MR1144425}).
\end{lem}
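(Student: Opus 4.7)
\smallskip
\noindent\textbf{Proof plan.} My approach is to extract both $u(\cW,\cE,\phi)$ and the Freed--Melrose $\Z/k\Z$-index from one and the same boundary-tamed family $\cW_{bt}$ built from a chosen taming of $\cE$, and then to match them using the local index theorem for boundary-tamed families.

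First I note that since $\cW$ is even and has boundary, $\cE$ is odd-dimensional, so $\ind(\cE)\in K^1(\ast)=0$; by \ref{twdzqdqwdqwdqw} we may choose a taming $\cE_t$. The isomorphism $\phi\colon\partial\cW\stackrel{\sim}{\to}k\cE$ transports $\cE_t$ to a boundary taming $(\partial\cW)_t$ whose restriction to each of the $k$ components of $\partial\cW$ is a copy of $\cE_t$. This produces a boundary-tamed family $\cW_{bt}$ with a well-defined integer index $\ind(\cW_{bt})\in K^0(\ast)=\Z$, and with
\[
\eta\bigl((\partial\cW)_t\bigr)\ =\ k\,\eta(\cE_t).
\]

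Second, the index theorem for boundary-tamed families \cite[Thm.~2.2.18]{math.DG/0201112}, applied over $B=\ast$, is a numerical identity
\[
\ind(\cW_{bt})\ =\ \Omega(\cW)\ -\ k\,\eta(\cE_t)\ \in\ \Z.
\]
On the $\hat K$-side, the taming $\cE_t$ pairs $(\cE,0)$ with an empty-family cycle in the sense of Definition \ref{uuu1}, yielding $[\cE,0]=[\emptyset,\eta(\cE_t)]$ in $\hat K(\ast)$ (up to the sign convention). Consequently
\[
u(\cW,\cE,\phi)\ =\ [\cE,\,-\tfrac{1}{k}\Omega(\cW)]\ =\ [\emptyset,\,\eta(\cE_t)-\tfrac{1}{k}\Omega(\cW)]\ =\ [\emptyset,\,-\tfrac{1}{k}\ind(\cW_{bt})].
\]
Under the identification $\hat K^1_{flat}(\ast)\cong\R/\Z$ induced by $a$ (as recalled before Proposition \ref{bordin}), this class corresponds to $\tfrac{1}{k}\ind(\cW_{bt})\ \mathrm{mod}\ \Z$.

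Finally I need to identify $\ind(\cW_{bt})\bmod k$ with $\ind_a(\bar W)\in\Z/k\Z$. The well-definedness mod $k$ is the relative index theorem used in \ref{pap3}: replacing $\cE_t$ by another taming $\cE_{t'}$ changes $\ind(\cW_{bt})$ by $\ind\bigl((\cE\times[0,1])_{bt}\bigr)$ contributed \emph{identically} by each of the $k$ boundary components, hence by an element of $k\Z$. The main work then is the comparison with Freed--Melrose \cite{MR1144425}: in their setup $\ind_a(\bar W)$ is defined by choosing an invertible perturbation of the fibre-wise Dirac operator on $E$, which plays exactly the role of our taming of $\cE$, and by computing the resulting Fredholm index of the Dirac operator on $\bar W=W/\!\sim$ reduced modulo $k$. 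Translating conventions, their perturbed operator on $\bar W$ is precisely the operator $D(\cW)$ with boundary taming induced by $\cE_t$, so $\ind_a(\bar W)=\ind(\cW_{bt})\bmod k$. Combining the three displays gives $u(\cW,\cE,\phi)=i_k(\ind_a(\bar W))$.

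The main obstacle is the final comparison step: reconciling the precise definition of $\ind_a(\bar W)$ in \cite{MR1144425} with the boundary-tamed index used here (including signs and orientation conventions for $\partial\cW$). The other steps are straightforward consequences of the local index theorem and the algebraic structure of $\hat K$.
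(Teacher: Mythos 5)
Your proof follows the paper's argument step-for-step: choose a taming $\cE_t$ of $\cE$ (possible since $\cE$ is odd over a point so $\ind(\cE)\in K^1(*)=0$), use it to build the boundary taming $\cW_{bt}$, recall Freed--Melrose's $\ind_a(\bar W)=\ind(\cW_{bt})\bmod k$, and compare with $u(\cW,\cE,\phi)$ via the boundary-tamed index theorem. One small point of care: with Definition \ref{uuu1} the pairing actually gives $[\cE,0]=[\emptyset,-\eta(\cE_t)]$ (opposite to what you wrote), and correspondingly the paper's form of the boundary-tamed index theorem over a point reads $\ind(\cW_{bt})=\Omega(\cW)+k\,\eta(\cE_t)$, so your two sign deviations compensate and you arrive at the same $[\emptyset,-\tfrac1k\ind(\cW_{bt})]$ and hence the correct conclusion.
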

\begin{proof}
We recall the definition of $\ind_a(\bar W)$. In our language is can
be stated as follows. Since $\ind(\cE)=0$ we can choose a taming $\cE_t$.
We let $k$ copies of $\cE_t$  induce the  boundary taming $\cW_{bt}$. We have
 $$\ind_a(\bar W)=\ind(\cW_{bt})+k\Z\ .$$
In fact it is easy to see that a change of the taming $\cE_t$
leads to change of the index $\ind(\cW_{bt})$ by a multiple of $k$.
We can now prove the Lemma using  \cite[Thm. 2.2.18]{math.DG/0201112}.
\begin{eqnarray*}
u(\cW,\cE,\phi)&=&[\cE,-\frac{1}{k}\Omega(\cW)]\\
&{=}&[\emptyset,-\eta(\cE_t)-\frac{1}{k}\Omega(\cW)]\\
&=&[\emptyset,-\frac{1}{k}\ind(\cW_{bt})]\\
&=& a\left(\frac{1}{k}\ind(\cW_{bt})\right)\\
&=&i_k(\ind_a(\bar W)) \in \R/\Z.
\end{eqnarray*}
\end{proof}

\subsection{$Spin^c$-bordism invariants}\label{rhoho}

\subsubsection{}
Let $\pi$ be a finite group.
We construct a transformation
$$ \phi\colon \Omega^{Spin^c}(BU(n)\times  B\pi)\rightarrow \hat K_{flat}(*)\ .$$
Let $f\colon M\rightarrow BU(n)\times B\pi$ represent $[M,f]\in  \Omega^{Spin^c}(BU(n)\times B
\pi)$. This map determines a covering $p\colon \tilde M\rightarrow M$ and an
$n$-dimensional complex vector bundle $V\rightarrow M$.
We choose a Riemannian metric  $g^{TM}$ and a $Spin^c$-extension $\tilde \nabla$ of the Levi-Civita connection $\nabla^{TM}$. 
These structures determine a smooth $K$-orientation of $t\colon M\to *$. We further fix a metric $h^V$ and a connection $\nabla^V$ in order to define a geometric bundle $\bV:=(V,h^V,\nabla^V)$ and the associated geometric family $\cV$ (see \ref{zerofibre}).
The pull-back of $g^{TM}$ and $\tilde \nabla$ via $\tilde M\to M$ fixes a 
smooth $K$-orientation of $\tilde t\colon \tilde M\to *$. 

We define the geometric families $\cM:=t_!\cV$ and $\tilde \cM:=\tilde t_!(p^*\cV)$ over $*$.
Then we set
$$\phi([M,f]):=[\tilde \cM\sqcup_* |\pi|\cM^{op},0]\in \hat K_{flat}(*)\ .$$
By a homotopy argument we see that this class is independent of the
choice of geometry.
We now argue that it only depends on the bordism class
of $[M,f]$.

The construction is additive.
Let now $[M,f]$ be zero-bordant by $[W,F]$.
Then we have a zero bordism $\tilde W$ of $\tilde M$
over $W$. Note that the bundles also extend over the bordism.
The local index form of
$\tilde \cW\sqcup_B |\pi|\cW$ vanishes. We conclude by \ref{kop1}, that
$[\tilde \cM\sqcup_B |\pi|\cdot \cM^{op},0]=0$.

In this construction we can replace $E\pi\rightarrow B\pi$ by any 
finite covering.
\subsubsection{}

This construction allows the following modification. Let
$\rho\in Rep(\pi)_0$ be a virtual zero-dimensional representation of
$\pi$.
It defines a flat vector bundle $F_\rho\rightarrow B\pi$.
To $[M,f]$ we associate the geometric family
$\cM_\rho:=t_!(\cL)$, where $\cL$ is the geometric family associated to the geometric bundle $\bV\otimes (\pr_2\circ f)^*F_\rho$. 
We define
$$\phi_\rho\colon \Omega^{Spin^c}_*(BU(n)\times B\pi)\rightarrow \hat K_{flat}(*)$$
such that $\phi_{\rho}[M,f]:=[\cM_\rho,0]$.
Here we need  not to assume that $\pi$ is finite.
This is the construction of $\rho$-invariants in the smooth $K$-theory
picture.

The first construction is a special case of the second with the
representation $\rho=\C(\pi)\oplus (\C^{|\pi|})^{op}$.
\subsubsection{}
We now discuss a parametrized version. Let $B$ be some compact manifold and $X$ be some topological space.
Then we can define the parametrized bordism group
$\Omega^{Spin^c}_*(X/B)$. Its cycles are pairs $(p\colon W\to B,f\colon W\to X)$ of a proper topologically $K$-oriented submersion
$p$ and a continuous map $f$. The bordism relation is defined correspondingly. 
% If $MSpin^c$ denotes the Thom spectrum for the $Spin^c$-bordism theory, then we have in the stable homotopy theory notation
% $$\Omega^{Spin^c}_*(X/B)\cong [\Sigma B_+,X_+\wedge MSpin^c]_*\ ,$$
% where $\Sigma B_+$ denotes the suspension spectrum of $B_+$, and $X_+$ denotes the union of the space $X$ and a disjoint base point. 

There is a natural transformation
$$\phi\colon \Omega^{Spin^c}_*((BU(n)\times B\pi)/B)\rightarrow \hat K_{flat}^*(B)\ .$$
It associates to
$x=(p\colon W\to B,f\colon W\to BU(n)\times B\pi)$ the class $[\tilde
\cW\sqcup_B |\pi|\cdot \cW^{op},0]$. In this formula
$p\colon \tilde W\to W$ is again the $\pi$-covering classified by $\pr_2\circ f$. We define the geometric family $\cW$
using some choice of geometric structures and the twisting bundle $V$, where $V$ is classified by the first component of $f$. The family $\tilde \cW$  is obtained from $\tilde W$ and $p^*V$ using the lifted geometric structures. 
Again, the class $\phi(x)$ is flat and independent of the choices of geometry. Using \ref{kop1} one checks that  
 $\phi$ passes through the bordism relation.

Again there is the following modification.
For $\rho\in\Rep(\pi)_0$ we can define
$$\phi_\rho\colon \Omega^{Spin^c}_*((BU(n)\times B\pi)/B)\rightarrow \hat K_{flat}^*(B)\ .$$
It associates to $x=(p\colon W\to B,f\colon W\to BU(n)\times B\pi)$ the class $[\cW_\rho]$ of the geometric manifold $\cW$
with twisting bundle $V\otimes (\pr_2\circ f)^*F_\rho$.
These classes are $K$-theoretic  higher $\rho$-invariants. It seems promising
to use this picture to draw geometric consequences using these invariants.

\subsection{The $e$-invariant}

\subsubsection{}

A framed $n$-manifold $M$ is a manifold with a trivialization $TM\cong M\times \R^n$.
More general, a bundle of framed $n$-manifolds over $B$  is a fibre bundle $\pi\colon E\to B$ with a trivialization
$T^v\pi\cong E\times \R^n$.

\begin{prop}
A bundle of framed $n$-manifolds $\pi\colon E\to B$ has a canonical smooth $K$-orientation which only depends on the homotopy class of the framing.
\end{prop}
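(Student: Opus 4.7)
The plan is to manufacture a canonical representative in $\cO$ from the framing and then verify independence of the remaining geometric choices as well as homotopy invariance in the framing. The framing $T^v\pi\cong E\times\R^n$ produces the following canonical data: the Euclidean vertical metric $g^{T^v\pi}$, the standard orientation, and a global trivialization of the frame bundle $SO(T^v\pi)$ together with its canonical $Spin^c$-reduction $Q\cong E\times Spin^c(n)$, which defines the underlying topological $K$-orientation of \ref{topkor}. The trivialization of $Q$ distinguishes one specific $Spin^c$-connection $\tilde\nabla_{fr}$ on $Q$, namely the flat connection in the global product trivialization. Crucially, $\tilde\nabla_{fr}$ is determined by the framing alone and does not depend on any further choice.

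To produce a representative in $\cO$ I would pick an arbitrary horizontal distribution $T^h\pi$, build $\nabla^{T^v\pi}$ as in \ref{ldefr}, and lift it to the unique $Spin^c$-reduction $\tilde\nabla$ on $Q$ whose $L^2$-component, in the framing trivialization, is trivial. Then set
$$\sigma := \tilde\hA^c(\tilde\nabla,\tilde\nabla_{fr})\in\Omega^{odd}(E)/\im(d),$$
where the transgression form makes sense between any two $Spin^c$-connections on the same bundle $Q$ via the path-integral formula of \ref{uzu4}. For two different horizontal distributions $T^h_0\pi,T^h_1\pi$ producing data $\tilde\nabla_0,\tilde\nabla_1,\sigma_0,\sigma_1$, the cocycle identity (\ref{eq7}) delivers
$$\sigma_1-\sigma_0 = \tilde\hA^c(\tilde\nabla_1,\tilde\nabla_{fr}) - \tilde\hA^c(\tilde\nabla_0,\tilde\nabla_{fr}) = \tilde\hA^c(\tilde\nabla_1,\tilde\nabla_0),$$
which is precisely the defining relation for equivalence of representatives in Definition \ref{smmmmzuz}. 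Hence $[o]\in\cO/\sim$ is independent of $T^h\pi$ and is canonically associated to the framing.

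For homotopy invariance, given a smooth family $F_s$ of framings of $T^v\pi$ joining $F_0$ to $F_1$, I would regard $F_s$ as a framing of the vertical bundle of the product submersion $\overline\pi\colon [0,1]\times E\to[0,1]\times B$. Applying the construction above to $\overline\pi$ yields a representative $\overline o$ of a smooth $K$-orientation of $\overline\pi$. Pulling $\overline o$ back along the inclusions $i_k\colon B\hookrightarrow\{k\}\times B$ preserves the equivalence relation by Lemma \ref{djdgejwdewd}. Since each ingredient of the construction (metric, trivialization of $Q$, horizontal lift, and reference connection $\tilde\nabla_{fr}$) is manufactured naturally from the framing, $i_k^*\overline o$ coincides with the canonical representative built directly from $F_k$, and the cylinder exhibits the two canonical smooth $K$-orientations as equal.

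The main delicate point will be justifying the use of $\tilde\nabla_{fr}$ in the definition of $\sigma$, since $\tilde\nabla_{fr}$ is not itself the $Spin^c$-lift of any Levi-Civita-type $\nabla^{T^v\pi}$ and hence does not appear as the $\tilde\nabla$-entry of any tuple in $\cO$; this is legitimate because the transgression form of \ref{uzu4} is defined for arbitrary pairs of $Spin^c$-connections on the same principal bundle, independently of whether they are lifts of distinguished $SO$-connections. Once this subtlety is cleared, both the well-definedness and the homotopy invariance reduce formally to the cocycle property (\ref{eq7}) of the transgression form together with the naturality of the entire construction with respect to pull-back encoded in Lemma \ref{djdgejwdewd}.
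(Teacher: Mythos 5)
Your construction of the canonical representative — the flat reference $Spin^c$-connection $\tilde\nabla_{fr}$ supplied by the framing, $\sigma$ defined as the transgression $\tilde\hA^c(\tilde\nabla,\tilde\nabla_{fr})$, and the cocycle identity~(\ref{eq7}) settling independence of $T^h\pi$ — is the same as the paper's (the paper writes $\tilde\nabla^{triv}$ for your $\tilde\nabla_{fr}$). You also correctly flag a subtlety the paper passes over silently, namely that $\tilde\nabla_{fr}$ is not itself of the form arising from a horizontal distribution as in~\ref{ldefr}, so one must regard~\ref{uzu4} as defining the transgression form for arbitrary pairs of $Spin^c$-connections on $Q$.

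The homotopy-invariance step, however, has a genuine gap. You assert that ``the cylinder exhibits the two canonical smooth $K$-orientations as equal,'' but the mere existence of a representative $\overline o$ on $[0,1]\times B$ restricting to $o_0$ and $o_1$ does not imply $o_0\sim o_1$: the relation of Definition~\ref{smmmmzuz} is the transgression condition $\sigma_1-\sigma_0=\tilde\hA^c(\tilde\nabla_1,\tilde\nabla_0)$, and one can build cylinder orientations whose two ends fail it (take a product geometry with $\overline\sigma$ interpolating between two different constant shifts). Lemma~\ref{djdgejwdewd} is also not the relevant tool here — it says pull-back preserves the equivalence of already-equivalent representatives, not that the two end-restrictions of a single cylinder representative coincide. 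To close the gap one must verify the transgression condition directly: unwinding $\sigma_k=\tilde\hA^c(\tilde\nabla_k,\tilde\nabla_{fr,k})$ and applying~(\ref{eq7}) shows that $\sigma_1-\sigma_0-\tilde\hA^c(\tilde\nabla_1,\tilde\nabla_0)=\tilde\hA^c(\tilde\nabla_{fr,0},\tilde\nabla_{fr,1})$, and what remains is precisely the paper's key observation — the homotopy of framings furnishes a flat $Spin^c$-connection on $[0,1]\times E$, so $\hA^c$ of the interpolation equals $1$, its fibre integral vanishes, and hence $\tilde\hA^c(\tilde\nabla_{fr,0},\tilde\nabla_{fr,1})=0$. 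Equivalently, with your cylinder data one must note that $\hA^c(\overline{\tilde\nabla})-d\overline\sigma=\hA^c(\overline{\tilde\nabla}_{fr})=1$, a basic degree-$0$ form, whose fibre integral over $[0,1]$ vanishes. Without this observation the homotopy invariance is asserted but not proved.
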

\begin{proof}
The framing $T^v\pi\cong E\times \R^n$ induces a vertical Riemannian metric $g^{T^v\pi}$
and an isomorphism $SO(T^v\pi)\cong E\times SO(n)$. Hence we get an induced vertical orientation
and a $Spin$-structure which determines a $Spin^c$-structure, and thus  a $K$-orientation of $\pi$. We choose a horizontal distribution $T^h\pi$ which gives rise to a connection $\nabla^{T^v\pi}$.
Since our $Spin^c$-structure comes from a $Spin$-structure, this connection extends naturally to a $Spin^c$-connection $\tilde \nabla$ of trivial central curvature.

The trivial connection $\nabla^{triv}$ on $T^v\pi$ induced by the framing also lifts naturally to the trivial $Spin^c$-connection $\tilde \nabla^{triv}$.
The quadruple
$$o:=(g^{T^v\pi},T^h\pi,\tilde \nabla,\tilde \hA^c(\tilde \nabla,\tilde \nabla^{triv}))$$
defines a smooth $K$-orientation of $\pi$ which refines the given underlying  topological $K$-orientation.

We claim that this orientation is independent of the choice of the vertical distribution $T^h\pi$.  Indeed, if $T^h\pi$ is a second horizontal distribution with associated $Spin^c$-connection $\tilde \nabla^\prime$, then we set $$o^\prime:=(g^{T^v\pi},T^h\pi^\prime,\tilde \nabla^\prime,\hA^c(\tilde \nabla^\prime,\tilde \nabla^{triv}))\ .$$
Since
$$\tilde \hA^c(\tilde \nabla^\prime,\tilde \nabla^{triv})-\tilde \hA^c(\tilde \nabla,\tilde \nabla^{triv})=\tilde \hA^c(\tilde \nabla^\prime,\tilde \nabla)$$
we have $o\sim o^\prime$ in view of the Definition
\ref{pap201}.

Let us now consider a second  framing of $T^v\pi$ which is homotopic to the first. In induces a second trivial connection
 $\tilde\nabla^{ \prime triv}$ and a metric $g^{\prime T^v\pi}$.
We therefore get a connection $\tilde \nabla^\prime$ and 
 and a second
representative of a smooth $K$-orientation
$o^\prime:=(g^{\prime T^v\pi},T^h\pi,\tilde \nabla^\prime,\tilde \hA^c(\tilde \nabla^\prime,\tilde \nabla^{\prime triv}))$. In fact, the homotopy between the framings provides a connection
$\tilde \nabla^{h,triv}$ on $I\times E$. Since this connection is flat we see that
$\tilde \hA^c(\tilde\nabla^{ \prime triv},\tilde\nabla^{triv})=0$.
From
$$\tilde \hA^c(\tilde \nabla^\prime,\tilde \nabla^{\prime triv})=
\tilde \hA^c(\tilde \nabla^\prime,\tilde \nabla)+ \tilde \hA^c(\tilde \nabla,\tilde \nabla^{triv})+ \tilde \hA^c(\tilde \nabla^{triv},\tilde \nabla^{\prime triv})$$
we get
$$\tilde \hA^c(\tilde \nabla^\prime,\tilde \nabla^{\prime triv})-\tilde \hA^c(\tilde \nabla,\tilde \nabla^{ triv})=\tilde \hA^c(\tilde \nabla^\prime,\tilde \nabla)$$
and thus
$o\sim o^\prime$.
\end{proof}

Since $\tilde \nabla^{triv}$ is flat we have
$$\hA^c(o)-d\sigma(o)=\hA(\tilde \nabla)-d\tilde \hA(\tilde\nabla,\tilde \nabla^{triv})=1\ .$$   
Assume that the fibre dimension $n$ satisfies $n\ge 1$.
According to Lemma  \ref{lem24} the curvature of $\hat \pi_!(1)$ is given by
$$R(\hat\pi_!(1))=\int_{E/B}(\hA^c(o)-d\sigma(o))\wedge 1=\int_{E/B}1\wedge 1=0$$ 

\begin{ddd}
If $\pi\colon E\to B$ is a bundle of framed manifolds of fibre dimension $n\ge 1$, then we define a differential topological invariant
$$e(E\to B):=-\hat\pi_!(1)\in \hat K^{-n}_{flat}(B)\ .$$
\end{ddd}

In the following 
we will explain in some detail that this is a higher generalization of the Adams $e$-invariant. The stable homotopy groups of the sphere
$\pi_n:=\pi_n^s(S^0)$ have a decreasing  filtration $$\dots\subseteq \pi_n^2\subseteq \pi_n^1\subseteq \pi_n^0=\pi_n$$ related to the MSpin-based Adams Novikov spectral sequence.
The $e$-invariant is a homomorphism
$$e\colon \pi_{4n-1}^1/\pi_{4n-1}^2\to \R/\Z\ .$$
A closed framed $4n-1$-dimensional manifold $M$ represents a class
$[M]\in \pi_{4n-1}$ under the Pontrjagin-Thom identification of framed bordism 
with stable homotopy. In the indicated dimension $\pi_{4n-1}=\pi_{4n-1}^1$ so that $[M]$ is actually a boundary of a compact $4n$-dimensional $Spin$-manifold $N$.
As explained in \cite{MR0397797} (see also \cite{MR1660325})
the $e$-invariant $e[M]$ can be calculated as follows. One chooses a connection $\nabla^{TN}$ on $TN$ 
which restricts to the trivial connection $\nabla^{triv}$ on $TM$ given by the framing.
Then
$$e([M])=\left[\int_N \hA(\nabla)\right]_{\R/\Z}\ .$$ 
We now consider $q\colon M\to *$ as a bundle of framed manifolds over the point and identify
$\R/\Z\stackrel{\sim}{\to }\hat K^{-4n+1}_{flat}(*)$ by 
$[u]\mapsto a(u)= [\emptyset,-u]$, $u\in \R$.
\begin {lem}
Under these identifications we have
$e(M\to *)=e([M])$.
\end {lem}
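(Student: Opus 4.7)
The plan is to use the bordism formula (Proposition \ref{bordin}) together with the construction of the smooth $K$-orientation induced by the framing.

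Since $[M] \in \pi_{4n-1} = \pi_{4n-1}^1$ is a $Spin$-bordism boundary, choose a compact $Spin$-manifold $N$ with $\partial N = M$. I will lift $p \colon N \to \ast$ to a smooth $K$-orientation $o_N$ with product structure near $\partial N$ that restricts to the smooth $K$-orientation $o_M$ defined on $M \to \ast$ by the framing. Concretely, pick a Riemannian metric $g^{TN}$ which is a product near the boundary and extends $g^{TM}$, use the induced $Spin^c$-structure from the $Spin$-structure of $N$ (whose associated line bundle is canonically trivial, with trivial connection), take $\tilde{\nabla}_N$ to be the Levi-Civita-induced $Spin^c$-connection, and extend the form $\sigma(o_M) = \tilde{\hA}^c(\tilde{\nabla}, \tilde{\nabla}^{triv})$ from a collar of $\partial N$ to a form $\sigma(o_N) \in \Omega^{odd}(N)$ (this uses that near $\partial N$ the trivialization given by the framing extends into the collar).

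Applying Proposition \ref{bordin} with $y = 1 \in \hat{K}(N)$ gives
\[
\hat q_!(1) = [\emptyset,\, p_!^{o_N} R(1)] = \Bigl[\emptyset,\, \int_N \bigl(\hA^c(o_N) - d\sigma(o_N)\bigr)\Bigr].
\]
Since the line bundle associated to the $Spin^c$-structure coming from the $Spin$-structure is trivial, $\hA^c(o_N) = \hA(\tilde{\nabla}_N)$. Stokes' theorem and the boundary restriction give
\[
\int_N d\sigma(o_N) = \int_M \tilde{\hA}^c(\tilde{\nabla}_M, \tilde{\nabla}^{triv}) = \int_M \tilde{\hA}(\tilde{\nabla}_M, \nabla^{triv}).
\]
Now choose an extension $\nabla^{TN}$ of $\nabla^{triv}$ on $TM$ to a connection on $TN$ (which is possible because near $\partial N$ the outward normal direction is canonically trivialized). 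Using the transgression identity $\hA(\nabla^{TN}) - \hA(\tilde{\nabla}_N) = d\tilde{\hA}(\nabla^{TN}, \tilde{\nabla}_N)$ and Stokes' theorem again gives
\[
\int_N \hA(\nabla^{TN}) = \int_N \hA(\tilde{\nabla}_N) + \int_M \tilde{\hA}(\nabla^{triv}, \tilde{\nabla}_M) = \int_N \hA(\tilde{\nabla}_N) - \int_M \tilde{\hA}(\tilde{\nabla}_M, \nabla^{triv}).
\]
Combining these two displays yields $\hat q_!(1) = [\emptyset, \int_N \hA(\nabla^{TN})]$, and therefore by Lemma \ref{lem22} and the definition of $a$,
\[
e(M\to \ast) = -\hat q_!(1) = \Bigl[\emptyset,\, -\int_N \hA(\nabla^{TN})\Bigr] = a\Bigl(\int_N \hA(\nabla^{TN})\Bigr).
\]
Under the chosen identification $[u]_{\R/\Z} \mapsto a(u)$, the right-hand side corresponds to $[\int_N \hA(\nabla^{TN})]_{\R/\Z} = e([M])$, finishing the proof.

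The main obstacle is step two: arranging $o_N$ to have genuine product structure near $\partial N$ so that its restriction is exactly $o_M$ as representatives (not merely equivalent). The restriction that $\sigma(o_N)$ equal $\tilde{\hA}^c(\tilde{\nabla}, \tilde{\nabla}^{triv})$ in a collar must be reconciled with the fact that $\nabla^{triv}$ only exists globally on $TM$, not on $TN$; one must either argue that the representative depends only on the equivalence class (so that a local extension suffices for the bordism formula), or choose the extension $\nabla^{TN}$ and the form $\sigma(o_N)$ in a coordinated way near the boundary. Once this bookkeeping is correctly set up, the remaining computation is a matter of applying Stokes' theorem and the additivity of transgression forms.
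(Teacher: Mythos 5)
Your proof is correct and follows essentially the same route as the paper: pick a $Spin$-filling $N$ of $M$, extend the framing-induced smooth $K$-orientation over a collar with product structure, apply the bordism formula (Proposition \ref{bordin}) with $y=1$, and then use Stokes' theorem together with the transgression identity to relate $\int_N \hA(\tilde\nabla_N) - \int_M \tilde\hA(\tilde\nabla_M,\nabla^{triv})$ to $\int_N \hA(\nabla^{TN})$. The only cosmetic difference is that the paper avoids the bookkeeping worry you raise by choosing the datum $\sigma(o^N):=\tilde\hA^c(\tilde\nabla^N,\tilde\nabla^{TN})$ globally (where $\tilde\nabla^{TN}$ is the $Spin^c$-lift of an APS-style extension $\nabla^{TN}$ of $\nabla^{triv}$); with that choice the transgression identity $\hA^c(\tilde\nabla^N)-d\tilde\hA^c(\tilde\nabla^N,\tilde\nabla^{TN})=\hA^c(\tilde\nabla^{TN})$ yields the answer in one step, rather than via two separate applications of Stokes as in your write-up. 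Your concern in the final paragraph is in fact benign: the bordism formula requires only that $\sigma(o_N)$ be defined with product structure near the boundary, and since $\int_N d\sigma(o_N)=\int_M \sigma(o_N)|_M=\int_M\sigma(o_M)$ depends only on the boundary value, an arbitrary cut-off extension of $\pr^*\sigma(o_M)$ from a collar suffices, exactly as you suggest.
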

\begin{proof}
We choose a metric $g^{TM}$ on $M$ which induces the representative
$$o:=(g^{TM},0,\tilde \nabla,\tilde \hA^c(\tilde \nabla,\nabla^{triv}))$$ of the smooth $K$-orientation on $q$.  The $Spin$-structure of $N$ induces a $Spin^c$-structure.
We choose a Riemannian metric $g^{TN}$ on $N$ with a product structure near the boundary  which extends 
$g^{TM}$ and induces the $Spin$- and $Spin^c$-connections  $\nabla^N$ and $\tilde \nabla^{N}$.
Note that
$\tilde \hA^c(\tilde \nabla^{N},\tilde \nabla^{TN})$ extends
$\tilde  \hA^c(\tilde \nabla,\tilde \nabla^{triv})$.
Therefore
$o^N:=(g^{TN},0,\tilde \nabla^N,\tilde \hA^c(\tilde \nabla^{N},\tilde \nabla^{TN}))$ represents a smooth
$K$-orientation of $p\colon N\to *$ which extends the orientation $o$ of
$q\colon M\to *$.
We can now apply the bordism formula Proposition \ref{bordin} in the marked step
and get
\begin{eqnarray*}
e(M\to *)&=&-\hat q_!(1)\\&\stackrel{!}{=}&a(p_!(R(1)))\\&=&
\left[\int_{N/*} (\hA^c(o^N)-d\sigma(o^N))\wedge 1\right]_{\R/\Z}\\
&=&\left[\int_{N/*} \hA^c(\tilde \nabla^N)-d\tilde \hA(\tilde \nabla^{N},\tilde \nabla^{TN})\right]_{\R/\Z}\\
&=&\left[\int_{N/*}  \hA^c(\tilde \nabla^{TN}) \right]_{\R/\Z}\\
&=&\left[\int_{N/*}  \hA( \nabla^{TN}) \right]_{\R/\Z}\\
&=&e([M])\ .
\end{eqnarray*}
\end{proof} 
Using the method of  Subsection \ref{shskajd} or the APS index theorem 
it is now easy to reproduce the result of \cite{MR0397797}
$$e([M])=\left[\eta^0(M)-\int_{M} \hA(\tilde \nabla,\tilde \nabla^{triv})\right]_{\R/\Z}\ .$$

\section{The Chern character and a smooth Grothendieck-Riemann-Roch theorem}\label{sqsjhss}

\subsection{Smooth rational cohomology}
% 
% \subsubsection{}
% 
% Smooth rational cohomology  $B\to \hat H(B,\Q)$ is the smooth version of  rational cohomology
% $B\to H(B,Q)$. It comes with natural transformations
% \begin{enumerate}
% \item  $R\colon \hat H(B,\Q)\to \Omega_{d=0}(B)$
% \item $I\colon  \hat H(B,\Q)\to H(B,\Q)$
% \item $a\colon \Omega(B)/\im(d)\to \hat H(B,\Q)$
% \end{enumerate}
% such that
% $$R\circ a=d\ ,\quad [\dots]\circ R=can\circ I\ ,$$
% where $can\colon H(B,\Q)\to H_{dR}(B)$ is the canonical map.
% Furthermore, the sequences
% \begin{equation}\label{eq9000}
% 0\to  H(B,\Q)\to \Omega(B)/\im(d)\stackrel{a}{\to} \hat H(B,\Q) \stackrel{I}{\to}H(B,\Q)\to 0
% \end{equation}
% is exact.
% 
% The first construction of smooth rational cohomology has been given (under the name Cheeger-Simons cocycles)
% in \cite{MR827262}. A homotopy theoretic construction was presented in   \cite{MR2192936}, while
% a sheaf theoretic construction  due to Deligne  was discussed at length in \cite{MR1197353}.
% If we replace the subgroup $\Q\subset \R$ by $\Z\subset \R$, then  we get smooth integral cohomology
% $\hat H(B,\Z)$ with similar structures.
% Smooth integral or rational cohomology has a cup product with satisfying the following identities
% (formulated for the rational case):
% \begin{enumerate}
% \item $\hat H^*(B,\Q)$ is a graded commutative ring.
% \item $R$ and $I$ are natural transformations of ring valued functors.
% \item $x\cup a(\omega)=a(R(x)\wedge \omega)$ for $\omega\in \Omega(B)/\im(d)$, $x\in \hat H(B,\Q)$.
% \end{enumerate}
% (compare with Proposition \ref{prooow} and Lemma \ref{rring}).

\subsubsection{}\label{ratdel}

Let $Z_{k-1}(B)$ be the group of smooth singular cycles on $B$. 
The picture of $\hat H(B,\Q)$ as Cheeger-Simons differential characters
$$\hat H^k(B,\Q)\subset \Hom(Z_{k-1}(B),\R/\Q)$$
is most appropriate to define the integration map. By definition  (see \cite{MR827262})
a homomorphism $\phi \in \Hom(Z_{k-1}(B),\R/\Q)$ is a differential character
if and only if there exists a form $R(\phi)\in \Omega_{d=0}^k(B)$ such that
\begin{equation}\label{uduiwqdqwdwqdqdwqdwdc}
\phi(\partial c)=\left[\int_c R(\phi)\right]_{\R/\Q}
\end{equation}
for all smooth $k$-chains $c\in C_k(B)$.
It is shown in \cite{MR827262}  that $R(\phi)$ is uniquely determined by $\phi$. In fact, the map
$R\colon \hat H^k(B,\Q) \to  \Omega_{d=0}^k(B)$ is the curvature transformation in the sense of Definition \ref{ddd556}.

Assume that $T$ is a closed oriented manifold of dimension $n$ with a triangulation. Then we have a map
$\tau\colon Z^{k-1}(B)\to Z^{k-1+n}(T\times B)$. If
$\sigma\colon \Delta^{k-1}\to B$ is a smooth singular simplex, then the triangulation of 
$T\times \Delta^{k-1}$ gives rise to a $k-1+n$ chain $\tau(\sigma)\colon =\id\times \sigma\colon T\times \Delta\to T\times B$. The integration
$$(\hat\pr_2)_!\colon \hat H(T\times B,\Q)\to \hat H(B,\Q)$$
is  now induced by
$$\tau^*\colon \Hom(Z^{k-1+n}(T\times B),\R/\Q)\to \Hom(Z^{k-1}(B),\R/\Q)\ .$$
Alternative definitions of the integration (for proper oriented submersions)
are given in \cite{MR2192936}, \cite{MR1772521}. Another construction of the
integration has been given in \cite{MR2179587}, where also a projection
formula (the analog of \ref{projcl} for smooth cohomology) is proved. This
picture is used in \cite{Koehler} in particular to establish functoriality.

We will also need the following bordism formula which we prove using yet
another characterization of the push-forward. 
We consider a proper oriented submersion
$q\colon W\to B$ such that $\dim(T^vq)=n$.
Let $x\in \hat H^r(W,\Q)$ and $f\colon \Sigma\to B$ be a smooth map from a closed oriented manifold
of dimension $r-n-1$. We get a pull-back diagram
$$%\xymatrix{U\ar[d]\ar[r]^F&W\ar[d]^p\\\Sigma\ar[r]^f&B}\ 
\begin{CD}
  U @>F>> W\\
  @VVV @VVqV\\
  \Sigma @>f>> B
\end{CD}
.$$
The orientations of $\Sigma$ and $T^vq$ induce an orientation of $U$.
Note that $f^*\hat q_!(x)$ and $F^*x$ are flat classes for dimension reasons.
Therefore
$F^*x\in H^{r-1}(U,\R/\Q)$ and $f^*\hat q_!(x)\in H^{r-n-1}(\Sigma,\R/\Q)$.
The compatibility of the push-forward with cartesian diagrams implies
the following relation in $\R/\Q$:
$$<f^*\hat q_!(x),[\Sigma]>=<F^*x,[U]>\ .$$
If we let $f\colon \Sigma\to B$ vary, then these numbers completely
characterize the push-forward $\hat p_!(x)\in \hat H^{r-n}(B,\Q)$. We will use this
fact in the argument below. 

\subsubsection{}

Let now $p\colon V\to B$ be a proper oriented submersion
from a manifold with boundary such that $\partial V\cong W$ and $p_{|W}=q$.
Assume that $x\in   \hat  H(V,\Q)$.
\begin{lem}\label{bbetwzew}

In $\hat H(B,\Q)$ we have the equality
$$\hat q_!(x_{|W})=-a\left(\int_{V/B} R(x)\right)\ .$$
\end{lem}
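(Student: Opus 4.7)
I will work in the differential character model of $\hat H(B,\Q)$ recalled in \ref{ratdel}. Writing $r=\deg(x)$ and $n=\dim T^vq$, both sides of the claim lie in $\hat H^{r-n}(B,\Q)$, and by the very definition of a differential character they agree if and only if they take the same value on every smooth singular cycle $z\in Z_{r-n-1}(B)$. The plan is therefore to evaluate both sides on an arbitrary such $z$.

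The push-forward $\hat q_!$ is induced, character-theoretically, by a chain-level fiber integration $\tau_q$ obtained by triangulating the pullback bundles $W\times_B\Delta\to\Delta$ over smooth simplices $\Delta\to B$. The key geometric step is to extend $\tau_q$ to a chain operator $\tau_p$ for the submersion with boundary $p\colon V\to B$, by triangulating $V\times_B\Delta$ so that the induced boundary triangulation restricts to the one defining $\tau_q$; this produces a boundary identity of the form
$$\partial\tau_p(\sigma)\;=\;\tau_p(\partial\sigma)+\epsilon\,\iota_*\tau_q(\sigma),\qquad\iota\colon W\hookrightarrow V,$$
for a sign $\epsilon=\pm 1$ fixed by orientation conventions. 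On a cycle $z$, for which $\tau_p(\partial z)=0$, this gives $\iota_*\tau_q(z)=\epsilon\,\partial\tau_p(z)$. Combining this with the defining property (\ref{uduiwqdqwdwqdqdwqdwdc}) of the differential character $x$ on $V$ and the form/chain-level compatibility $\int_{\tau_p(z)}\omega=\int_z\int_{V/B}\omega$ yields
$$\hat q_!(x|_W)(z)\;=\;x(\iota_*\tau_q(z))\;=\;\epsilon\,x(\partial\tau_p(z))\;=\;\epsilon\Bigl[\int_z\int_{V/B} R(x)\Bigr]_{\R/\Q}.$$
Since $a(\omega)$ evaluates on $z$ as $[\int_z\omega]_{\R/\Q}$, the right-hand side of the claim evaluates to $-[\int_z\int_{V/B}R(x)]_{\R/\Q}$, so the formula will follow once $\epsilon=-1$ is verified under the orientation conventions in force.

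The hard part will be the sign bookkeeping: the orientation induced on $\partial V=W$ and the one induced on the boundary of the triangulated pullback $V\times_B\Delta$ must be coordinated so that the sign $\epsilon$ matches the sign appearing in Stokes' formula for form-level fiber integration. I expect to calibrate this on the model case $V=[0,1]\times W$ with $p=q\circ\mathrm{pr}_2$ (so $\int_{V/B}R(x)$ becomes an ordinary $[0,1]$-integral of $R(x)$), in which the claim reduces to the homotopy formula of Lemma \ref{lem23} applied to $R(x)$; once $\epsilon=-1$ is confirmed there, the general case follows by the same local calculation.
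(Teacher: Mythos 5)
Your strategy -- evaluate both sides as differential characters against cycles, push the cycle into the bordism $V$, and invoke the defining property (\ref{uduiwqdqwdwqdqdwqdwdc}) of $x$ on a boundary -- is the same core idea the paper uses, but the mechanism is different and your version has a gap that the paper's choice of mechanism avoids. The paper does \emph{not} build a chain-level fiber-integration operator $\tau_q$ for a general proper oriented submersion: in \ref{ratdel} the chain map $\tau$ is only constructed for a trivial bundle $T\times B\to B$, and for the general case other references are cited. Instead, the paper uses the characterization stated at the end of \ref{ratdel}: for $f\colon\Sigma\to B$ with $\Sigma$ closed and oriented of dimension $r-n-1$, the class $\hat q_!(x)$ is determined by the numbers $\langle f^*\hat q_!(x),[\Sigma]\rangle=\langle F^*x,[U]\rangle$ where $U=W\times_B\Sigma$. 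In the proof the fibered pullback $Z=V\times_B\Sigma$ is then a compact manifold with $\partial Z\cong U$, so $(z^*x)([U])$ is exactly an instance of (\ref{uduiwqdqwdwqdqdwqdwdc}) applied to the fundamental chain of $Z$, and $\int_Z R(z^*x)=\int_\Sigma f^*\int_{V/B}R(x)$ finishes it. The geometry supplies the bordism and the boundary relation for free; no chain-level $\tau_p$ is needed.

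Your proposal instead requires (a) a chain-level operator $\tau_q$ realizing $\hat q_!$ for the nontrivial bundle $q\colon W\to B$, (b) an extension $\tau_p$ for the bundle-with-boundary $p\colon V\to B$ satisfying $\partial\tau_p=\tau_p\partial+\epsilon\,\iota_*\tau_q$, and (c) the compatibility $\int_{\tau_p(z)}\omega=\int_z\int_{V/B}\omega$. None of these are set up in the paper, and (a) and (b) in particular need a genuine triangulation argument (making the fiberwise triangulations of $V\times_B\Delta$ compatible across faces of $\Delta$ and with the chosen triangulation of $W\times_B\Delta$). This can surely be done, but it is substantially more work than the route the paper takes, and as it stands your argument is an outline for a longer proof rather than a proof. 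Also note a sign-convention point: in this paper's differential-character model $a(\omega)$ pairs with a cycle to give $-[\int\omega]_{\R/\Q}$ (consistent with $a(\rho)=[\emptyset,-\rho]$ in the $K$-theory model and with the last line of the paper's proof); you wrote $+[\int_z\omega]_{\R/\Q}$, which then forces your undetermined sign $\epsilon$ to absorb it. Your proposed calibration via the cylinder $V=[0,1]\times W$ and Lemma \ref{lem23} is a reasonable way to pin this down, but the calibration has not actually been carried out. In short: right idea, but your chain-level route opens technical debts that the paper's cartesian-square characterization was specifically introduced to avoid.
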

\begin{proof}
Assume that $x\in \hat H^r(V,\Q)$.
Let $f\colon \Sigma\to B$ be as above and form the cartesian diagram
$$%\xymatrix{Z\ar[d]\ar[r]^z&V\ar[d]^q\\\Sigma\ar[r]^f&B}\ .
\begin{CD}
  Z @>z>> V\\
  @VVV @VVpV\\
  \Sigma @>f>> B.
\end{CD}
$$
The oriented manifold $Z$ has the boundary $\partial Z\cong U$.
Using (\ref{uduiwqdqwdwqdqdwqdwdc}) at the marked equality we calculate
\begin{eqnarray*}
<f^*\hat q_!(x_{|W}),[\Sigma]>&=&<F^*x_{|W},[U]>\\
&=&<(z^*x)_{|U},[U]>\\
&\stackrel{!}{=}&\left[\int_{Z} R(z^*x)\right]_{\R/\Q}\\
&=&\left[\int_\Sigma\int_{Z/\Sigma} R(z^*x)\right]_{\R/\Q}\\
&=&\left[\int_\Sigma f^*\int_{V/B} R(x)\right]_{\R/\Q}\\
&=&-<f^*a\left(\int_{V/B} R(x)\right),[\Sigma]>\ .
\end{eqnarray*}
This implies the assertion. \end{proof}

 \subsection{Construction of the Chern character }

\subsubsection{}\label{cqdef}

We start by recalling the classical smooth characteristic classes of
Cheeger-Simons. 
A complex vector bundle $V\to B$ has Chern classes $c_i\in H^{2i}(B,\Z)$, $i\ge 1$.
If we add the geometric data of a hermitean metric and a metric connection, then we get the geometric bundle
$\bV=(V,h^V,\nabla^V)$. In  \cite{MR827262} the Chern classes have been refined to  smooth integral cohomology-valued Chern classes
$$\hat c_i(\bV)\in \hat H^{2i}(B,\Z)$$
(see \ref{cher} for an introduction to smooth ordinary cohomology).
  In particular, the class
$\hat c_1( \bV)\in \hat H^2(B,\Z)$ classifies
isomorphism classes of hermitean line bundles with connection.  

The embedding
$\Z\hookrightarrow \Q$ induces a natural map
$\hat H(B,\Z)\to \hat H(B,\Q)$, and we let $\hat c_\Q(\bV)\in  \hat H^2(B,\Q)$ denote the image of
$\hat c_1(\bV)\in \hat H^2(B,\Z)$ under this map.

% A real vector bundle $V_\R\to B$ with metric connection $\nabla^{V_\R}$ gives rise to Pontrjagin
% classes $$\hat p_j(\bV_\R)\in \hat H^{4j}(B,\Z)\ , \quad j\ge 1\ ,$$ where $\bV_\R:=(V_\R,g^{V_\R},\nabla^{V_\R})$. In terms of the complexification
% $\bV:=\bV_\R\otimes_\R\C$ we have
% $\hat p_j(\bV_\R)=(-1)^j\hat c_{2j}(\bV)$.

\subsubsection{}

The smooth Chern character $\hat \ch$ which we will construct is a natural transformation 
$$\hat \ch\colon  \hat K(B)\rightarrow \hat H(B,\Q)$$
of smooth cohomology theories. In particular, this means that the following diagrams commute (compare Definition \ref{natdef12})
\begin{equation}\label{eq5001}
\xymatrix{\Omega(B)/\im(d)\ar[r]^a\ar@{=}[d]&\hat K(B)\ar[r]^I\ar[d]^{\hat \ch}&K(B)\ar[d]^{\ch}\\\Omega(B)/\im(d)\ar[r]^a&\hat H(B,\Q)\ar[r]^I&H(B,\Q)}\ ,\quad \xymatrix{\hat K(B)\ar[r]^R\ar[d]^{\hat \ch}&\Omega_{d=0}(B)\ar@{=}[d]\\\hat H(B,\Q)\ar[r]^R&\Omega_{d=0}(B)}\ .
\end{equation}

In addition we require that the even and odd Chern characters are related by suspension, which in the smooth case amounts to
the commutativity of the following diagram
\begin{equation}\label{eq5000}
\xymatrix{\hat K^0(S^1\times B)\ar[d]^{(\hat\pr_2)_!}\ar[r]^{\hat \ch}&\hat H^{ev}(S^1\times B,\Q)\ar[d]^{(\hat\pr_2)_!}\\\hat K^1(B)\ar[r]^{\hat \ch}&\hat H^{odd}(B,\Q)}\ .
\end{equation}
The smooth $K$-orientation of $\pr_2\colon S^1\times B\to B$ is as in \ref{pullpush1}.

\begin{theorem}\label{mmmain}
There exists a unique natural transformation $\hat \ch\colon \hat K(B)\to \hat H(B,\Q)$
such that (\ref{eq5001}) and (\ref{eq5000}) commute.
\end{theorem}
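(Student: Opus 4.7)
My plan is to construct $\hat\ch$ at the cycle level and then deduce uniqueness from the rigidity encoded in (\ref{eq5000}). Since $[\cE,\rho]=[\cE,0]+a(-\rho)$ and the compatibility $\hat\ch\circ a=a$ is forced by the left square of (\ref{eq5001}), the problem reduces to defining $\hat\ch$ on classes $[\cE,0]$; I will treat even parity first and handle odd parity by the analogous construction with odd $\eta$-forms (or, equivalently, by exploiting (\ref{eq5000}) and the suspension isomorphism). For the base case where $\cE=\cV$ comes from a geometric vector bundle $\bV$ as in \ref{zerofibre}, set
$$\hat\ch([\cV,0]):=\hat\ch(\bV)\in\hat H(B,\Q),$$
the Cheeger--Simons smooth Chern character assembled from the refined Chern classes of \ref{cqdef}, which manifestly respects $R$, $I$ and $a$. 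For a general even geometric family $\cE$, invoke Lemma \ref{lem:realize_index} to pick geometric bundles $\bV_1,\bV_2$ over $B$ with $[V_1]-[V_2]=\ind(\cE)$ in $K^0(B)$. The family $\cE':=\cE\sqcup_B\cV_2\sqcup_B\cV_1^{op}$ has vanishing index and, after a standard stabilisation by $\cW\sqcup_B\cW^{op}$ if its zero-dimensional part is not large enough (\ref{twdzqdqwdqwdqw}), admits a taming $t$. Since $(\cE',0)$ is then paired with $(\emptyset,-\eta(\cE'_t))$, define
$$\hat\ch([\cE,0]):=\hat\ch(\bV_1)-\hat\ch(\bV_2)+a\bigl(\eta(\cE'_t)\bigr).$$

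The main obstacle is well-definedness, which splits into three aspects. Independence of the choice of taming: two tamings of $\cE'$ produce $\eta$-forms which by \ref{pap3} and Lemma \ref{lem33} differ, modulo exact forms, by an element of $\ch_{dR}(K(B))$, and this discrepancy vanishes after applying $a$ thanks to the exactness of (\ref{exax}). Independence of the chosen bundles $\bV_1,\bV_2$: any two choices become isomorphic after adding a common trivial summand, and both terms of the formula shift compatibly by the additivity of the Cheeger--Simons character on bundles and of $\eta$-forms on direct sums. Invariance under the paired relation of Definition \ref{uuu1}: following the template of Lemma \ref{pass}, a paired relation between $(\cE,\rho)$ and $(\tilde\cE,\tilde\rho)$ supplies an explicit taming whose $\eta$-form equals $\rho-\tilde\rho$, and the additivity of $\hat\ch$ and of $\eta$ then make the two sides of the defining formula coincide. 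Naturality under pull-back is inherited from the Cheeger--Simons characters and the pull-back formula (\ref{eq3}) for $\eta$-forms; the axioms (\ref{eq5001}) follow directly from the construction; and the suspension compatibility (\ref{eq5000}) reduces, via the projection formula (Proposition \ref{projcl}) and the evident adiabatic-limit behaviour of $\eta$ (Theorem \ref{adia1}), to a calculation entirely within the even case.

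For uniqueness, suppose $\hat\ch_1$ and $\hat\ch_2$ both satisfy the hypotheses of the theorem. Their difference $\delta:=\hat\ch_1-\hat\ch_2$ vanishes on the image of $a$ by (\ref{eq5001}) and has zero curvature, so by the exact sequence (\ref{exax}) it factors as $\bar\delta\circ I$ for a natural transformation
$$\bar\delta\colon K(B)\longrightarrow\hat H_{flat}(B,\Q)\cong H(B,\R/\Q)[-1],$$
the identification being recalled in \ref{ratdel}. The suspension compatibility (\ref{eq5000}), together with the analogous compatibility for $\hat H$-integration, forces $\bar\delta$ to be a stable natural transformation of $\Z/2\Z$-graded cohomology theories. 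Its evaluation on coefficients vanishes ($K^0(\ast)=\Z$ maps to $H^{-1}(\ast,\R/\Q)=0$ and $K^1(\ast)=0$), so $\bar\delta=0$ by Brown representability, giving $\hat\ch_1=\hat\ch_2$.
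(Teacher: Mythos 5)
There is a genuine gap at the heart of your existence argument. You claim independence of the choice of $\bV_1,\bV_2$ follows because ``any two choices become isomorphic after adding a common trivial summand'', plus additivity of the Cheeger--Simons characters and of $\eta$-forms. But that stabilization argument only controls the \emph{topological} isomorphism type. The geometric bundles carry metrics and connections, and under a change of connection $\hat\ch(\bV)$ shifts by a Chern--Weil transgression form, while $\eta(\cE'_t)$ shifts by a Bismut-type $\eta$-form transgression. Matching these two \emph{analytically different} transgressions is precisely what has to be proved, and additivity says nothing about it. In fact the paper's own footnote (inside the proof of the existence proposition) discusses exactly the Ansatz you propose -- namely $\hat\ch(x)=\hat\ch(\bV)+a(\eta((\cE\sqcup_B\cV^{op})_t))$ -- and notes that its well-definedness requires the identity
$$\hat\ch(\bV)-\hat\ch(\bV')=a\bigl(\eta\bigl((\cV^{op}\sqcup_B\cV')_t\bigr)\bigr)\,,$$
for which the authors ``do not know a \emph{simple} direct proof.'' It is essentially an index-theoretic statement in its own right, not a formal consequence of the axioms. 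This is why the paper instead pulls back to the flag bundle $F(V)$, reduces to line bundles where the two sides can be controlled, descends by injectivity of $p^*$ (Lemma \ref{injjj}), and patches with a Mayer--Vietoris argument. Your construction cannot be completed as stated without supplying the missing transgression comparison.

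The uniqueness step also contains an error of reasoning. You argue that $\bar\delta$ ``evaluates to zero on coefficients, so $\bar\delta=0$ by Brown representability.'' This is not a valid inference: a natural transformation of cohomology theories is classified by cohomology of the representing spectrum, not by its effect on homotopy groups (the Bockstein $H\Z/p\to\Sigma H\Z$ is zero on coefficients but nonzero). Moreover, $\bar\delta$ is only defined on compact manifolds, so before any representability argument can be applied one has to extend it to the classifying space $\Z\times BU$; this is the content of Lemma \ref{hteh}, which involves a $\lim^1$ argument. The correct conclusion then follows because $H^{odd}(\Z\times BU,\R/\Q)=0$, not because the transformation vanishes on coefficients. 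Your final answer is right, but the step as written is a gap, and the passage through Lemma \ref{hteh} is indispensable.
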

Note that naturality means that $\hat \ch\circ f^*=f^*\circ \hat \ch$ for every smooth map $f\colon B^\prime\to B$.
The proof of this theorem occupies the remainder of the present subsection.

\subsubsection{}

\begin{prop}\label{unique}
If the smooth Chern character $\hat \ch$ exists, then  it is unique.
\end{prop}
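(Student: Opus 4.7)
The natural strategy is to consider the difference $D := \hat\ch_1 - \hat\ch_2$ of two hypothetical natural transformations satisfying (\ref{eq5001}) and (\ref{eq5000}), and to show that $D$ must vanish.

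First, I would extract the structural constraints on $D$ imposed by (\ref{eq5001}). From $\hat\ch_i \circ a = a$ we get $D \circ a = 0$, so by exactness of (\ref{exax}) the transformation $D$ factors through $I$; there is a natural transformation $\tilde D \colon K^*(B) \to \hat H^*(B,\Q)$ with $D = \tilde D \circ I$. From $R \circ \hat\ch_i = R$ we obtain $R \circ D = 0$, so $\tilde D$ lands in the flat subfunctor $\hat H_{flat}(-,\Q) \cong H^{\bullet-1}(-,\R/\Q)$. The relation $I \circ \hat\ch_i = \ch \circ I$ gives $I \circ D = 0$; under the standard identification of $I$ on the flat part with the Bockstein $H^{k-1}(-,\R/\Q) \to H^k(-,\Q)$, this forces the image of $\tilde D$ to lie in the image of $H^{\bullet-1}(-,\R)$, a homotopy-invariant functor of $B$.

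The next step is to invoke the suspension compatibility (\ref{eq5000}) together with Theorem \ref{wieth}, which characterizes $\hat K$ as the unique smooth extension for which the push-forward $(\hat\pr_2)_!$ behaves in the prescribed way. Combined with the analogous (classical) compatibility of $(\hat\pr_2)_!$ on $\hat H(-,\Q)$, this propagates the single relation (\ref{eq5000}) into the statement that $\tilde D$ commutes with the suspension isomorphisms on both $K^*$ and $H^{*-1}(-,\R/\Q)$. Hence $\tilde D$ is a stable natural transformation of cohomology theories on the homotopy category of compact smooth manifolds.

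Finally, I would evaluate $\tilde D$ on a point: $K^0(*) = \Z$ maps into $H^{-1}(*,\R/\Q) = 0$, and $K^1(*) = 0$, so $\tilde D$ vanishes on coefficients. Since every compact manifold has the homotopy type of a finite CW complex, a stable natural transformation of cohomology theories vanishing on coefficients is itself zero---either by cellular induction using the Atiyah--Hirzebruch spectral sequence, or equivalently because such transformations are classified by the group $[K,\Sigma^{-1}H\R/\Q] \cong H^{-1}(\Z\times BU,\R/\Q)$, which vanishes since $H^{\mathrm{odd}}(BU,\R/\Q)=0$. Thus $\tilde D = 0$ and hence $\hat\ch_1 = \hat\ch_2$. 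The main technical obstacle is the penultimate step: leveraging the single compatibility with $(\hat\pr_2)_!$ and naturality into a full stable-transformation structure, which is where Theorem \ref{wieth} plays its crucial role.
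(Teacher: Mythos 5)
Your opening reduction is correct and matches the paper: from $\hat\ch_i\circ a=a$ and $R\circ\hat\ch_i=R$ one gets $\Delta\circ a=0$ and $R\circ\Delta=0$, so by exactness of (\ref{exax}) the difference factors as $\bar\Delta\circ I$ for an odd natural transformation $\bar\Delta\colon K(B)\to H(B,\R/\Q)$. Your even-degree argument is also essentially the paper's: natural transformations $K^0(B)\to H^{\mathrm{odd}}(B,\R/\Q)$ on compact manifolds are induced by a class in $H^{\mathrm{odd}}(\Z\times BU,\R/\Q)$ (the paper's Lemma~\ref{hteh}, which takes care of the $\lim^1$ subtlety), and this group vanishes.

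The odd-degree step, however, has a real gap, and the repair is not cosmetic. First, the appeal to Theorem~\ref{wieth} is a red herring: that theorem characterizes the \emph{theory} $\hat K$ up to isomorphism and plays no role in the paper's proof of uniqueness of $\hat\ch$. Second, and more seriously, your claim that (\ref{eq5000}) turns $\bar\Delta$ into a \emph{stable} natural transformation classified by $[K,\Sigma^{-1}H\R/\Q]\cong H^{-1}(\Z\times BU,\R/\Q)$ conflates spectrum cohomology $H^*(K;\R/\Q)$ with space cohomology $H^*(\Z\times BU;\R/\Q)$; these are different groups. Moreover (\ref{eq5000}) expresses compatibility with the \emph{push-forward} $(\hat\pr_2)_!$, not with a suspension isomorphism, so the passage to a spectrum-level statement is not established. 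This matters because the na\"{\i}ve representability argument fails in odd degree: unstable transformations $K^1(B)\to H^{\mathrm{ev}}(B,\R/\Q)$ are governed by $H^{\mathrm{ev}}(U,\R/\Q)$, which is \emph{nonzero} (e.g.\ $x_1x_3$ in $H^*(U,\Q)\cong\Lambda(x_1,x_3,\ldots)$), so you cannot conclude $\bar\Delta|_{K^1}=0$ from representability alone. What the paper actually does is establish the identity $(\hat\pr_2)_!(\pr_1^*x_{S^1}\cup\pr_2^*x)=x$ (via the projection formula \ref{projcl} and $\hat p_!(x_{S^1})=1$), hence surjectivity of $(\hat\pr_2)_!\colon\hat K^0(S^1\times B)\to\hat K^1(B)$; then (\ref{eq5000}) gives $\Delta\circ(\hat\pr_2)_!=(\hat\pr_2)_!\circ\Delta|_{\hat K^0}$, and the already-proved vanishing of $\Delta$ in even degree kills $\Delta$ in odd degree. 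That surjectivity argument is the genuine content that your ``stability'' heuristic was reaching for; it needs to be spelled out, and Theorem~\ref{wieth} does not substitute for it.
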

\begin{proof}
Assume that
$\hat \ch$ and $\hat \ch^\prime$ are two smooth Chern characters.
Consider the difference $\Delta:=\hat \ch-\hat \ch^\prime$.
It follows from the diagrams above that $\Delta$ factors through an odd
natural transformation $$\bar\Delta \colon K(B)\to H(B,\R/\Q)\ .$$

Indeed, the left   diagram of  (\ref{eq5001}) gives a factorization
$$K(B)\to (\im\colon \Omega(B)/\im(d)\to \hat H(B,\Q))\ ,$$ and the right square in (\ref{eq5001}) refines it to
$\bar \Delta$.

\subsubsection{}
We now use the following topological fact. Let $P$ be a space of the homotopy type of a countable $CW$-complex. It represents a contravariant set-valued functor $W\mapsto P(W):=[W,P]$ on the category of compact manifolds.
We further consider some abelian group $V$.

\begin{lem}\label{hteh}
A natural transformation
of functors $N\colon P(B)\to H^j(B,V)$ on the category of compact manifolds is necessarily induced by a class $N\in H^{j}(P,V)$. 
\end{lem}
\begin{proof}
There exists a countable directed diagram $\cM$ of compact manifolds such that
$\hocolim \cM\cong P$ in the homotopy category.
Hence we have a short exact sequence
$$0\to \lim^1 H(\cM,V)\to H(P,V)\to \lim H(\cM,V)\to 0\ .$$
If $x\in P(P)$ is the tautological class, then the pull-back of $N(x)$ to the system
$\cM$ gives an element in  $\lim H(\cM,V)$. A preimage in $H(P,V)$ induces the
natural transformation.
\end{proof} 

In our application, $P=\Z\times BU$, and  the  relevant cohomology 
$H^{odd}(\Z\times BU,\R/\Q)$ is trivial. Therefore  $\bar \Delta\colon K^0(B)\to 
H^{odd}(B,\R/\Q)$ vanishes 

\subsubsection{}

Next we observe that
$(\hat\pr_2)_!\colon \hat K(S^1\times B)\to \hat K(B)$ is surjective.
In fact, we have
\begin{equation}\label{eq5500}(\hat\pr_2)_!(\pr_1^* x_{S^1}\cup \pr_2^*(x))=x\end{equation} by the projection formula \ref{projcl}
and $\hat p_!(x_{S^1})=1$ for $p\colon S^1\to *$, where $x_S^1\in \hat K(S^1)$ was defined in \ref{dddxs}.
Hence (\ref{eq5000}) implies that
$\bar \Delta\colon K^1(B)\to 
H^{ev}(B,\R/\Q)$
vanishes, too. \end{proof} 

\subsubsection{}\label{einss}

In view of Proposition \ref{unique} it remains to show the existence of the smooth Chern character.
We first construct the even part
$$\hat \ch\colon  \hat K^0(B)\rightarrow \hat H^{ev}(B,\Q)$$
using the splitting principle.
We will define $\hat \ch$ as a natural transformation of functors
such that the following conditions hold.
\begin{enumerate}
\item\label{item:o} $\hat \ch[\cL,0]=e^{\hat c_\Q(\bL)}\in \hat H^{ev}(B,\Q)$,
where $\cL$ is the geometric family given by a hermitean line bundle
with connection $\bL$, and $\hat c_\Q(\bL)\in\hat H^2(B,\Q)$ is derived from the Cheeger-Simons
Chern class which classifies the isomorphism class of $\bL$ (\ref{cqdef}).
\item\label{item:t}  $R\circ \hat \ch= R$
\item\label{item:th} $\hat \ch\circ a=a$
\end{enumerate}

Once this is done, the resulting $\hat \ch$ automatically satisfies 
(\ref{eq5001}). For this it suffices to show that $\ch\circ I= I\circ
\hat\ch$. We consider the following diagram
$$\xymatrix{\hat K(B)\ar@/^0.6cm/[rr]^{R}\ar[r]^{\hat \ch}\ar[d]^I&\hat H(B,\Q)\ar[d]^I\ar[r]^R&\Omega_{d=0}(B)\ar[d] \\K(B)\ar[r]^{\ch}&H(B,\Q)\ar[r]^i&H(B,\R)}
$$
The outer square and the right square commute. It follows from
\ref{item:t}. that the upper triange commutes. Since $i$ is injective
we conclude that the left square commutes, too.

% \begin{lem}\label{ghghgh}
% Assume $\hat\ch$ is a transformation as above satisfying \ref{item:o},
% \ref{item:t}, and \ref{item:th}. Then $\hat\ch$ satisfies \eqref{eq5001}.
% \end{lem}
% \begin{proof}

\subsubsection{}

In the construction of the Chern character $\hat \ch$ we will use the splitting principle.
If $x\in \hat K^0(B)$, then there exists a $\Z/2\Z$-graded hermitean vector bundle with connection
$\bV=(V,h^V,\nabla^V)$ such that
$x=[\cV,\rho]$ for some $\rho\in \Omega^{odd}(B)/\im(d)$,  where $\cV$ is the zero-dimensional geometric family with underlying Dirac bundle $\bV$.
We will call $V$ the splitting bundle for $x$. Let
$F(V^\pm)\to B$ be the bundle of full flags on $V^\pm$ and $p\colon F(V):=F(V^+)\times_BF(V^-)\to B$.
Then we have a decomposition
$p^*V^{\pm}\cong \oplus_{L\in I^{\pm}} L$ for some ordered finite sets $I^\pm$ of line bundles
over $F(V)$. For $L\in I^{\pm}$ let $\bL$ denote the bundle with the induced metric and connection, and let 
$\cL$ be the corresponding zero-dimensional geometric family.
Then we have $p^*x=\sum_{L\in I^+} [\cL,0]-\sum_{L\in I^-}[\cL,0]+a(\sigma)$ for
some $\sigma\in \Omega^{odd}(F(V))/\im(d)$. The properties above thus uniquely determine
$p^*\hat \ch(x)$.
% and we have
% $$p^*\circ I\circ \hat \ch(x)=I\circ p^*\circ \hat \ch(x)=I\circ \hat \ch\circ p^*(x)=\sum_{L\in I^+}\ch(L)-\sum_{L\in I^-}\ch(L)
% =p^*\circ \ch\circ I(x)\ .$$
% The Lemma \ref{ghghgh} now follows from the following Lemma.

\begin{lem}\label{injjj}
The  following  pull-back operations are injective:
\begin{enumerate}
\item
$p^*\colon H^*(B,\Q)\rightarrow H^*(F(V),\Q)$,
\item
$p^*\colon H^*(B,\R)\rightarrow H^*(F(V),\R)$
\item
$p^*\colon H^*(B,\R/\Q)\rightarrow H^*(F(V),\R/\Q)$
\item
$p^*\colon \hat H^*(B,\Q)\rightarrow \hat H^*(F(V),\Q)$
\item
$p^*\colon \Omega(B)\rightarrow \Omega(F(V))$.
\end{enumerate}
\end{lem}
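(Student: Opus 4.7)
The projection $p$ is the fibre product of two complex full flag bundles $p^\pm\colon F(V^\pm)\to B$. Each such bundle carries tautological line bundles whose first Chern classes, combined in suitable monomials, restrict on a fibre to the Schubert basis of the integer cohomology of that fibre (a product of complex flag varieties). Consequently, by the Leray--Hirsch theorem,
\[
 H^*(F(V),A)\cong H^*(B,A)\otimes_\Z H^*(F,\Z)
\]
as $H^*(B,A)$-modules for any coefficient group $A$, with the class $1$ appearing as one of the basis elements. In particular $p^*$ is split-injective with an explicit natural $H^*(B,A)$-linear retraction (the ``$1$-coefficient''), compatible with change of coefficient group. Applying this to $A=\Q$, $\R$, and $\R/\Q$ immediately yields items (1), (2), (3).

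Item (5) is straightforward: $p$ is a surjective submersion, so $dp$ is fibrewise surjective on tangent spaces; this forces $p^*$ to be injective on differential forms.

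For item (4) my plan is to use the exact sequence (\ref{exax}) for $\hat H(B,\Q)$ together with items (5) and (1). Given $x\in \hat H(B,\Q)$ with $p^*x=0$, first $I(x)\in H(B,\Q)$ pulls back to zero, hence vanishes by (1). By exactness of (\ref{exax}), $x=a(\omega)$ for some $\omega\in \Omega(B)/\im(d)$. Then $a(p^*\omega)=p^*a(\omega)=0$, so $p^*\omega\in\ker a=\im c$, i.e.\ $p^*\omega$ is represented by a closed form whose de Rham class lies in $\im(H^*(F(V),\Q)\to H^*(F(V),\R))$. Choosing a representative $\tilde\omega$ of $\omega$, one applies (\ref{drgl}) to $a(p^*\omega)=0$ to conclude that $p^*d\tilde\omega=0$, so by (5) $\tilde\omega$ is closed. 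Thus the de Rham class $[\tilde\omega]\in H^*(B,\R)$ is defined and satisfies $p^*[\tilde\omega]\in\im(H^*(F(V),\Q)\to H^*(F(V),\R))$.

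The main obstacle is then to upgrade this to rationality of $[\tilde\omega]$ itself, and this is where the Leray--Hirsch splitting from the first step is essential. The basis of $H^*(F(V),\R)$ over $H^*(B,\R)$ is the base change of a rational Leray--Hirsch basis, so the ``$1$-coefficient'' retraction is natural in the coefficients and commutes with the inclusion $\Q\hookrightarrow\R$. Since $p^*[\tilde\omega]=[\tilde\omega]\cdot 1$ has all higher coefficients zero and $1$-coefficient $[\tilde\omega]$, rationality of $p^*[\tilde\omega]$ forces rationality of $[\tilde\omega]$. Therefore $\omega\in\im c=\ker a$, and $x=a(\omega)=0$, completing the proof of (4).
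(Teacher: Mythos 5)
Your proof is correct, and items (1)–(3) and (5) are argued as in the paper (Leray–Hirsch, respectively surjectivity of the submersion). For item (4), however, you take a genuinely different and longer route. The paper's argument is: if $p^*x=0$, then $R(x)=0$ by naturality of $R$ together with item (5), so $x$ lies in the flat part $\hat H_{flat}(B,\Q)\cong H(B,\R/\Q)[-1]$, where the pull-back coincides with ordinary cohomological pull-back; then item (3) immediately gives $x=0$. You instead first kill $I(x)$ via item (1), invoke exactness of (\ref{exax}) to write $x=a(\omega)$, use (\ref{drgl}) and item (5) to show $\omega$ is represented by a closed form, and then need an extra rationality argument (the compatibility of the Leray--Hirsch splitting with the inclusion $\Q\hookrightarrow\R$) to conclude that $[\tilde\omega]$ is rational and hence $\omega\in\ker a$. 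This works, and the Leray--Hirsch retraction argument is sound because the basis classes are integral (Chern classes of tautological line bundles), but it is considerably more intricate than passing through the flat theory: the paper's observation that $p^*x=0$ already forces $R(x)=0$ short-circuits the entire exactness/rationality step. Your approach buys you independence from the identification of $\hat H_{flat}$ with $H(\cdot,\R/\Q)[-1]$, but at the cost of having to carry the explicit module splitting through the coefficient change.
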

\begin{proof}
The assertion is a classical consequence of the Leray-Hirsch theorem
in the cases 1., 2., and 3. In case 5., it follows from the fact that
$p$ is surjective and a submersion.
It remains to discuss the case 4.  
Let $x\in  \hat H^*(B,\Q)$. Assume that $p^*x=0$. Then in particular
$p^*R(x)=R(p^*x)=0$ so that from 5. also $R(x)=0$.
Thus $x\in H(B,\R/\Q)$. We now apply 3. and see that
$p^*x=0$ implies $x=0$.
\end{proof}
In view of Proposition \ref{unique} we see that a natural transformation
$\hat \ch\colon  \hat K^0(B)\to \hat H^{ev}(B,\Q)$ is uniquely determined by the conditions \ref{item:o}., \ref{item:t}., and \ref{item:th}. formulated in \ref{einss}.

\subsubsection{}

\begin{prop}
There exists a natural transformation
$\hat \ch\colon \hat K^0(B)\to \hat H^{ev}(B,\Q)$ which satisfies the conditions  1. to 3. formulated in \ref{einss}.
\end{prop}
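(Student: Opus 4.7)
The plan is to construct $\hat{\ch}$ via the splitting principle, using Lemma \ref{injjj} to descend the definition from a flag bundle back to $B$. First I would fix a representative $x = [\cV,\rho]$ with splitting bundle $V = V^+ \oplus V^-$, let $p\colon F(V) \to B$ be the associated full flag bundle, and write $p^*V^{\pm} \cong \bigoplus_{L \in I^\pm} L$ as an ordered sum of hermitean line bundles with induced connections. Then $p^*x = \sum_{L \in I^+}[\cL,0] - \sum_{L \in I^-}[\cL,0] + a(\sigma)$ for some $\sigma \in \Omega^{odd}(F(V))/\im(d)$, and I would \emph{define} a candidate
\[
y(x,V,\rho) := \sum_{L \in I^+} e^{\hat c_\Q(\bL)} - \sum_{L \in I^-} e^{\hat c_\Q(\bL)} + a(\sigma) \in \hat H^{ev}(F(V),\Q),
\]
forced by conditions \ref{item:o}.--\ref{item:th}. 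The heart of the argument is then to prove that $y(x,V,\rho)$ lies in the image of the injective map $p^*\colon \hat H^{ev}(B,\Q) \to \hat H^{ev}(F(V),\Q)$, so that there is a unique $\hat \ch(x) \in \hat H^{ev}(B,\Q)$ with $p^*\hat \ch(x) = y(x,V,\rho)$.

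To prove the descent, I would use the classical fact that the ordinary Chern character $\ch\colon K(B) \to H^{ev}(B,\Q)$ is well-defined via exactly this splitting formula, so $I(y(x,V,\rho)) = p^*\ch(I(x))$ is in $\im(p^*)$ on topological $K$-theory. Similarly $R(y(x,V,\rho)) = p^*R(x)$ lies in $p^*\Omega_{d=0}(B)$ by compatibility of $\hat c_\Q$ with curvature and the fact that $R(x)$ is globally defined. Using the exact sequence (\ref{exax}) applied to both $\hat H(F(V),\Q)$ and $\hat H(B,\Q)$, together with the injectivity of $p^*$ on cohomology, differential forms, and smooth cohomology (Lemma \ref{injjj}), a diagram chase shows $y(x,V,\rho) \in \im(p^*)$. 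The main obstacle here is making sure the $\R/\Q$-ambiguity that arises when writing a smooth cohomology class as a lift of its topological and curvature data is controlled; concretely one reduces to showing an element of $H^{odd}(F(V),\R/\Q) \cap \ker(\text{restriction to relevant fibres})$ actually comes from $B$, which follows from the Leray--Hirsch description of $F(V) \to B$.

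Once $\hat \ch(x) \in \hat H^{ev}(B,\Q)$ is defined, I would verify independence of the many choices by repeating the construction on a double flag bundle. Specifically, if $(V',\rho')$ is a second presentation of the same class, set $V'' := V \oplus V'^{op}$ and observe that on $F(V \oplus V')$ both constructions pull back to the same element by a direct computation using $\hat c_\Q$ of a sum and the fact that $[\cV,\rho]=[\cV',\rho']$ forces $\rho - \rho' - \ch_{dR}$-correction terms to match modulo exact forms and topological Chern characters; injectivity of $p^*$ in Lemma \ref{injjj} then transports the equality down to $B$. Naturality in $B$ is automatic because the flag bundle construction is natural and $\hat c_\Q$ is natural. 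Properties \ref{item:o}.--\ref{item:th}. hold by construction: \ref{item:o}. is the definition on line bundles (where the flag bundle is trivial), \ref{item:t}. is checked by computing $R$ of the right-hand side on $F(V)$ and using that $p^*$ is injective on forms, and \ref{item:th}. uses that $a(\rho)$ has splitting bundle $0$ so $p^*\hat\ch(a(\rho)) = a(p^*\rho)$.

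The hard part will be the well-definedness in item (b): keeping careful track of how the differential form correction $\sigma$ depends on the choice of splitting and trivialization, and exploiting the exactness of (\ref{exax}) together with Lemma \ref{injjj} to eliminate the ambiguity. Everything else is a formal consequence of the splitting principle as encoded in Lemma \ref{injjj} and the already established properties of the Cheeger--Simons classes $\hat c_\Q(\bL)$.
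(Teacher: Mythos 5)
Your outline of the construction on $F(V)$ is the same as the paper's: you force the value of $p^*\hat\ch(x)$ using 1.--3., and you need to show it descends. The well-definedness issues you flag (dependence on representative, splitting bundle, tamings) are indeed handled as you sketch, via Lemma \ref{injjj} and differences of $\eta$-forms representing $\im(\ch_{dR})$; this matches Proposition \ref{zzuzu}.

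However, the existence step (showing your class $y(x,V,\rho)$ actually lies in $\im(p^*)$) has a real gap. You reduce to: $I(y)$ and $R(y)$ come from $B$, then claim a diagram chase plus Leray--Hirsch finishes it. This does not work. Pick $z_0\in\hat H^{ev}(B,\Q)$ with $I(z_0)=\ch(I(x))$ and $R(z_0)=R(x)$; then $y-p^*z_0$ is flat with vanishing $I$, so it lives in $H^{odd}(F(V),\R)/\im(\ch_{dR})$. Injectivity of $p^*$ (Lemma \ref{injjj}) gives you \emph{uniqueness} of a preimage, never surjectivity, and the Leray--Hirsch module structure does not force an arbitrary element of $H^{odd}(F(V),\R)$ to lie in $p^*H^{odd}(B,\R)$. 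Worse, your proposed criterion --- vanishing on fibres --- is vacuous: the fibre is a flag manifold with $H^{odd}=0$, so \emph{every} odd class restricts to zero on fibres, while most do not come from the base (take $B=S^1$, $F(V)\to S^1$: the Leray--Hirsch component $H^1(S^1)\otimes H^{ev>0}(\mathrm{flag})$ is odd, restricts to zero on fibres, and is not pulled back from $B$). So the "diagram chase" cannot close.

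The paper handles this by a Mayer--Vietoris argument on $B$: cover $B$ by contractible open sets, on each of which the splitting bundle trivializes and $\hat\ch$ is forced to be $\dim(W)-a(\theta)$ by 1.~and 3.; then use the Mayer--Vietoris sequence for $\hat H(\,\cdot\,,\Q)$ and the already-established uniqueness (Proposition \ref{zzuzu} plus Lemma \ref{injjj}) to glue these local classes into a global $z\in\hat H^{ev}(B,\Q)$ with $p^*z=y(x,V,\rho)$. That local-to-global step is the essential content you are missing; without it the descent is an unproved assertion.
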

We give the proof of this Proposition in the next couple of subsections.
%We again employ the splitting principle and use the notation introduced above.
Let  $x:=[\cE,\rho]\in \hat K^0(B)$, and $V\to B$ be a splitting bundle for $x$ with bundle of flags $p\colon F(V)\to B$.
We choose a geometry $\bV:=(V,h^V,\nabla^V)$ and let $\cV$ denote the associated geometric family\footnote{It was suggested by the referee that one should use the Chern character $\hat \ch(V)\in \hat H^{ev}(B,\Q)$ constructed in \cite{MR827262}. The Ansatz would be
$$\hat \ch(x):=\hat \ch(\bV)+\eta((\cE\sqcup_B\cV^{op})_t)\ .$$
In order to show that this is independent of the choice of $\bV$ one would need to show
an equation like
$$\hat \ch(\bV)-\hat \ch(\bV^\prime)=a(\eta((\cV^{op}\sqcup \cV^{\prime})_t)) \ .$$
Since after all we know that the Chern character exists this equation is true,
but we do not know a {\em simple} direct proof. Therefore we opted for the
variant to give a complete and independent proof.}.
In order to avoid stabilizations we can and will always assume that $\cE$ has a non-zero dimensional component. 
Then we have
$$p^*I(x)=\sum_{\epsilon\in \{\pm 1\},L\in I^{\epsilon}}\epsilon I([\cL,0])\ .$$
We define $\cF:=\bigsqcup_{B,\epsilon\in \{\pm 1\},L\in I^{\epsilon}} \cL^{\epsilon}$.
Then we can find a taming $(p^*\cE\sqcup_{F(V)} \cF^{op})_t$,   and
$$p^*x=\sum_{\epsilon\in \{\pm 1\},L\in I^{\epsilon}}\epsilon ([\cL,0]) -a(p^*\rho-\eta((p^*\cE\sqcup_{F(V)}\cF^{op})_t))\ .$$
We now set
$$p^*\hat \ch(x)=\hat \ch(p^*x):=\sum_{\epsilon\in \{\pm 1\},L\in I^{\epsilon}}\epsilon   \exp(\hat c_\Q(\bL))
 +a(\eta((p^*\cE\sqcup_{F(V)}
\cF^{op})_t))-a(p^*\rho)\ .$$
This construction a priori depends on the choices of the representative of $x$, the splitting bundle $V\to B$, and the taming  $(\cE \sqcup_{F(V)}\cF^{op})_t$.

\subsubsection{}

In this paragraph we show that this construction is independent of the choices.
\begin{prop}\label{zzuzu}
Assume that there exists a class $z\in \hat H^{ev}(B,\Q)$
such that $$p^*z=\sum_{\epsilon\in \{\pm 1\},L\in I^{\epsilon}}\epsilon   \exp(\hat c_\Q(\bL))
 +a(\eta((p^*\cE\sqcup_{F(V)}
\cF^{op})_t))-a(p^*\rho)$$
for one set of choices. Then $z$ is determined by $x\in \hat K^0(B)$.
\end{prop}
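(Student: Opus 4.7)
The plan is to argue that the right-hand side of the defining equation, viewed as an element of $\hat H^{ev}(F(V),\Q)$, depends only on $x$ and not on the auxiliary choices (the taming, the hermitian metric and connection on the splitting bundle, the representative $(\cE,\rho)$ of $x$, and the splitting bundle $V$ itself). Once this independence is established, Lemma \ref{injjj}.4 --- the injectivity of $p^*\colon \hat H^{ev}(B,\Q)\to \hat H^{ev}(F(V),\Q)$ --- determines $z$ uniquely from $x$.

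The first and main step will be independence from the taming. Given two tamings $t,t'$ of $p^*\cE\sqcup_{F(V)}\cF^{op}$, the discussion in \ref{pap3} (adapted to the present situation by the relative index theorem \eqref{wiufhewfwefewfewfwfwefwef}) shows that the difference $\eta((p^*\cE\sqcup_{F(V)}\cF^{op})_t)-\eta((p^*\cE\sqcup_{F(V)}\cF^{op})_{t'})$ is a closed form whose de Rham class lies in the image of $\ch_{dR}\colon K(F(V))\to H_{dR}(F(V))$. On applying the transformation $a$, this contribution vanishes in $\hat H^{ev}(F(V),\Q)$: the exact sequence \eqref{exax} for the pair $(H\Q,\Q\hookrightarrow\R)$ yields $a\circ c=0$, where $c\colon H(\cdot,\Q)\to \Omega(\cdot)/\im(d)$ is the natural map, and $\ch_{dR}$ factors through this $c$ via the rational Chern character.

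Next I would treat independence from the representative of $x$ and from the geometry on $\bV$. By Lemma \ref{lem333}, any two representatives of $x$ (arranged to have a non-zero-dimensional component, which was assumed) are paired; combining the pairing taming with the individual tamings exhibits two tamings of the same family, and the comparison reduces to the taming step above. A change of hermitian metric and connection on $V$ alters each Cheeger--Simons class $\hat c_\Q(\bL)$ by the corresponding transgression form, and this change is exactly matched by the variation of $\eta((p^*\cE\sqcup_{F(V)}\cF^{op})_t)$ along an interpolating family of metrics and connections, the residual discrepancy again lying in $\ch_{dR}(K(F(V)))$ and hence being killed by $a$. For two splitting bundles $V,V'$, pass to the common refinement $F(V\oplus V')$, which projects to both $F(V)$ and $F(V')$, and iterate the previous arguments after further splitting on this refinement; the final equality in $\hat H^{ev}(B,\Q)$ then follows from another application of Lemma \ref{injjj}.4.

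The main obstacle I anticipate is the taming step: one must control the ambiguity of tamings using the $KK$-theoretic classification from \ref{pap3} and the relative index theorem, combined with the (straightforward but essential) observation that $a\circ \ch_{dR}=0$ on $\hat H(\cdot,\Q)$. The remaining independence statements reduce to the taming case via standard homotopy and bordism techniques already established in the paper, and the final uniqueness of $z$ is a direct appeal to the injectivity of $p^*$.
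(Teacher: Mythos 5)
Your overall strategy coincides with the paper's: show that the right-hand side is independent of all auxiliary choices and then invoke the injectivity of $p^*$ from Lemma \ref{injjj}. The treatment of the taming and of the representative $(\cE,\rho)$ of $x$ is essentially the paper's argument (it uses Lemma \ref{lem333} and the observation that the relevant difference of $\eta$-forms lies in $\ch_{dR}(K(F(V)))$, hence is killed by $a$ in $\hat H(F(V),\Q)$).

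The step where you vary the metric and connection on the splitting bundle $V$ is where a genuine gap appears. You assert that the transgression of $\sum\epsilon\exp(\hat c_\Q(\bL))$ along an interpolating family of geometries is ``exactly matched'' by the corresponding variation of $\eta((p^*\cE\sqcup_{F(V)}\cF^{op})_t)$, up to a residual in $\ch_{dR}(K(F(V)))$. That is a nontrivial local-index-theoretic identity which you neither prove nor reduce to an established result, and it would require its own adiabatic/transgression analysis. The paper takes a cleaner and shorter route at precisely this point: it first reduces to the case where $\bV$ and $\bV'$ differ only by connection (via a separate stabilization step $\bV' = \bV\oplus B\times(\C^m\oplus(\C^m)^{op})$ with an induced embedding $F(\bV)\hookrightarrow F(\bV')$), then observes by an explicit computation that $R(p^*(c'-c))=0$ --- the curvature contributions $\sum\epsilon\exp(c_1(\nabla^{L}))$ and $d\eta$ combine in both cases to the same local index form $\Omega(p^*\cE)$ --- so the difference $c'-c$ is a flat class in $H^{odd}(B,\R/\Q)$, and a homotopy argument along a path of connections then forces it to vanish. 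Your proposal of passing to a common refinement $F(V\oplus V')$ would also need more care: the flag variety $F(V\oplus V')$ does not restrict to $F(V)$ in a way that simply preserves the line-bundle decomposition, so the paper's two-step reduction (stabilize by trivial summands, then change connection) is not a mere reorganization of your argument. To repair your proof you should replace the ``exactly matched'' claim by the paper's flatness-plus-homotopy argument, and replace the common-refinement step by stabilization.
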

\begin{proof}
If $(\cE^\prime,\rho^\prime)$ is another representative of $x$,
then we have $\ind(\cE)=\ind(\cE^\prime)$. Therefore we can take the
same splitting bundle for $\cE^\prime$. The following Lemma (together with Lemma \ref{injjj}) shows that
$z$ does not depend on the choice of the representative of $x$.
\begin{lem}
We have
$$
a(\eta((p^*\cE\sqcup_{F(V)}
\cF^{op})_t)- p^*\rho)=a(\eta((p^*\cE^\prime\sqcup_{F(V)}
\cF^{op})_t)-p^*\rho^\prime)
$$
\end{lem}
\begin{proof}
In fact, by Lemma \ref{lem333} there is a taming $(\cE^\prime\cup \cE^{op})_t$ such that
$\rho^\prime-\rho=\eta\left((\cE^\prime\cup \cE^{op})_t\right)$.
Therefore the assertion is equivalent to
$$a\left[\eta\left((p^*\cE\sqcup_{F(V)}
\cF^{op})_t\right)-\eta\left((p^*\cE^\prime\sqcup_{F(V)}
\cF^{op})_t\right)+p^*  \eta\left((\cE^\prime\sqcup_{F(V)}
  \cE^{op})_t\right)\right]=0\ .$$
But this is true since this sum of $\eta$-forms represents a rational
cohomology class of the form $\ch_{dR}(\xi)$. 
This follows from \ref{pap3} and the fact 
$$p^*\cE\sqcup_{F(V)}\cF^{op}\sqcup_{F(V)} p^*\cE^{\prime op} \sqcup_{F(V)} \cF\sqcup_{F(V)}p^*\cE^{\prime}\sqcup_{F(V)} p^*\cE^{op}$$
admits another taming with vanishing $\eta$-form (as in the proof of Lemma \ref{lem1}).
\end{proof}

\subsubsection{}

Next we discuss what happens if we vary the splitting bundle.
Thus let $V^\prime\to B$ be another $\Z/2\Z$-graded bundle which represents
$\ind(\cE)$. Let $p^\prime \colon F(V^\prime)\rightarrow B$ be the associated splitting bundle.
\begin{lem}
Assume that we have classes $c,c^\prime\in \hat H(B,\Q)$ such that
$$p^*c=\sum_{\epsilon\in \{\pm 1\},L\in I^\epsilon} \epsilon \exp(\hat
c_\Q(\bL)) 
 +a\left(\eta\left((p^*\cE\sqcup_{F(V)}
\cF^{op})_t\right)- p^*\rho\right)$$
and $$p^{\prime *}c^\prime=\sum_{\epsilon\in \{\pm 1\},L\in I^{\prime \epsilon}} \epsilon \exp(\hat c_\Q(\bL^\prime))
 +a\left(\eta\left((p^{\prime *}\cE\sqcup_{F(V^\prime)}
\cF^{\prime op})_t\right)- p^{\prime *}\rho\right)\ .$$
 Then we  have
$c=c^\prime$.
\end{lem}
\begin{proof}
Note that the right-hand sides depend on the geometric bundles $\bV,\bV^\prime$ since they depend on the induced connections on the line bundle summands.
We first discuss a special case, namely that $\bV^\prime$ is obtained
from
$\bV$ by stabilization, i.e.\ $\bV^\prime=\bV\oplus B\times (\C^m\oplus
(\C^m)^{op})$. In this case there is a natural embedding
$i\colon F(\bV)\hookrightarrow F(\bV^\prime)$ which is induced by
extension of the flags in $V$ by the standard flag in $\C^m$.
We can factor $p=p^\prime\circ i$.
Furthermore, there exists subsets $S^{\epsilon}\subset I^{\prime\epsilon}$ of line bundles (the last $m$ line bundles in the natural order)  and a natural  bijection  $I^{\prime\epsilon}\cong I^\epsilon\sqcup S^{\epsilon}$.
If $L\in S^\epsilon$, then $i^*L$ is trivial with the trivial connection.
We thus have
$$
p^{*}(c^\prime-c)=a\left[i^*\eta\left((p^{\prime *}\cE\cup
\cF^{\prime op })_t\right)-\eta\left((p^*\cE\cup
\cF^{op})_t\right)\right]$$
It is again easy to see that this difference of $\eta$-forms
represents a rational cohomology class in the image of $\ch_{dR}$.
Therefore,
$p^*(c^\prime-c)=0$ and hence $c=c^\prime$ by Lemma \ref{injjj}. 

Since the bundle $V$ represents the index of $\cE$, two choices are
always stably isomorphic as hermitean bundles.
Using the special case above we can reduce to the case where $\bV$ and
$\bV^\prime$ only differ by the connection.

We argue as follows.
We have $p^*R(c^\prime-c)=R(p^*(c^\prime-c))=0$ by an explicit
computation. Therefore $c^\prime-c\in H^{odd}(B,\R/\Q)$.
Since any two connections on $V$ can be connected by a family
we conclude that $p^*(c^\prime-c)=0$ by a homotopy argument.
The assertion now follows. 
\end{proof}

This finishes the proof of Proposition \ref{zzuzu}. \end{proof}

\subsubsection{}

In order to finish the construction of the Chern character in the even case
 it remains to verify the existence clause in Proposition  \ref{zzuzu}.
 Let $x:=[\cE,\rho]\in \hat K(B)$  be such that $\cE$ has a non-zero dimensional component. 
Let $V\rightarrow B$ be a splitting bundle and $p\colon F(V)\to B$ be as above.
\begin{lem}
We have
$$z:=\sum_{\epsilon\in \{\pm 1\},L\in I^{ \epsilon}}  \epsilon \exp(\hat c_\Q(\bL))
 +a\left[\eta\left((p^*\cE\cup
\cF^{op})_t\right)-p^*\rho\right]\in\im(p^*)\ .$$
\end{lem}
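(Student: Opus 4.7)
The plan is to exhibit an explicit preimage of $z$ under $p^*$, built from the Cheeger--Simons smooth Chern character of the $\Z/2\Z$-graded geometric bundle $\bV=(V,h^V,\nabla^V)$ plus an $\eta$-form correction constructed directly on $B$. Since $\ind(\cE)=[V^+]-[V^-]=\ind(\cV)$ in $K^0(B)$, the family $\cE\sqcup_B\cV^{op}$ has vanishing index, and using the non-zero-dimensional assumption on $\cE$ it admits a taming $(\cE\sqcup_B\cV^{op})_s$ by \ref{twdzqdqwdqwdqw}. Using the Cheeger--Simons smooth Chern character from \cite{MR827262} for hermitian vector bundles, extended additively to the graded case by $\hat \ch(\bV):=\hat \ch(\bV^+)-\hat \ch(\bV^-)\in\hat H^{ev}(B,\Q)$, I propose the candidate preimage
$$w\ :=\ \hat \ch(\bV)\ +\ a\bigl(\eta((\cE\sqcup_B\cV^{op})_s)-\rho\bigr)\ \in\ \hat H^{ev}(B,\Q).$$

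To verify $p^*w=z$, I first invoke the splitting principle for the Cheeger--Simons Chern character: since $p^*\bV$ splits orthogonally as the sum of line bundles indexed by $I^+\sqcup I^-$, and $\hat \ch(\bL)=\exp(\hat c_\Q(\bL))$ for hermitian line bundles with connection, multiplicativity of the Cheeger--Simons refinement of Chern--Weil theory yields $p^*\hat \ch(\bV)=\sum_{\epsilon,L\in I^\epsilon}\epsilon\exp(\hat c_\Q(\bL))$. For the $\eta$-form term, naturality of $\eta$-forms under pullback (equation \eqref{eq3}) gives $p^*\eta((\cE\sqcup_B\cV^{op})_s)=\eta(p^*(\cE\sqcup_B\cV^{op})_s)$, where $p^*(\cE\sqcup_B\cV^{op})_s$ is a taming of $p^*\cE\sqcup_{F(V)}p^*\cV^{op}$. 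Crucially, the orthogonal decomposition $p^*\bV\cong\bigoplus_{\epsilon,L\in I^\epsilon}\bL$ as hermitian bundles with connection identifies $p^*\cV$ with $\cF=\bigsqcup_{\epsilon,L\in I^\epsilon}\cL^\epsilon$ as geometric families, so this pulled-back taming is in fact a taming of $p^*\cE\sqcup_{F(V)}\cF^{op}$.

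The last step is to handle the potential discrepancy between the pulled-back taming $p^*(\cE\sqcup_B\cV^{op})_s$ and the given taming $(p^*\cE\sqcup_{F(V)}\cF^{op})_t$ of the same family. By the discussion in \ref{pap3}, the difference of the corresponding $\eta$-forms is a closed form whose de Rham cohomology class lies in $\ch_{dR}(K(F(V)))$, and in particular factors through $H^{odd}(F(V),\Q)$ via the map $c\colon H(\cdot,\Q)\to\Omega(\cdot)/\im(d)$. By exactness of \eqref{exax} applied to the pair $(H(\cdot,\Q),i)$, the composition $a\circ c$ is zero in $\hat H(F(V),\Q)$; therefore $a$ applied to this difference of $\eta$-forms vanishes, and $a(p^*\eta((\cE\sqcup_B\cV^{op})_s))=a(\eta((p^*\cE\sqcup_{F(V)}\cF^{op})_t))$ in $\hat H(F(V),\Q)$. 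Combining these computations yields $p^*w=z$. The main obstacle in this plan is the taming-independence in the last step; once that is in place the remainder is a direct application of the Cheeger--Simons splitting principle together with naturality of $\eta$-forms.
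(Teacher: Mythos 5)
Your approach is essentially the referee's Ansatz that the authors discuss (and decline to use) in a footnote just above this lemma: they propose the same candidate $\hat\ch(\bV)+\eta((\cE\sqcup_B\cV^{op})_t)$ but say they could not find a simple direct proof that it is well-defined, and therefore opt for a Mayer--Vietoris argument over a finite cover of $B$ by contractibles. So your proof is genuinely different in method from the paper's.

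There is, however, a concrete gap in your argument: the claim that the orthogonal decomposition $p^*\bV\cong\bigoplus_{\epsilon,L\in I^{\epsilon}}\bL$ identifies $p^*\cV$ with $\cF$ \emph{as geometric families}. On the flag bundle the metric $p^*h^V$ does split orthogonally along the tautological filtration, but the connection $p^*\nabla^V$ does \emph{not} — the induced connection $\nabla^L$ on each $L$ is the compression of $p^*\nabla^V$ to that line subbundle, and $\bigoplus_L\nabla^L$ is only the block-diagonal part of $p^*\nabla^V$, which differs from $p^*\nabla^V$ by off-diagonal second-fundamental-form terms. Consequently $p^*\cV\neq\cF$ as geometric families, and the two sides of your desired identity $p^*\hat\ch(\bV)=\sum_{\epsilon,L}\epsilon\exp(\hat c_\Q(\bL))$ have different curvatures modulo exact forms only after a transgression correction, not on the nose; the Cheeger--Simons Whitney sum formula does not apply here without an additional interpolation argument. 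The same mismatch reappears on the $\eta$-form side: $\eta((p^*\cE\sqcup_{F(V)}p^*\cV^{op})_{p^*s})$ and $\eta((p^*\cE\sqcup_{F(V)}\cF^{op})_t)$ are $\eta$-forms of \emph{different} tamed families (the twisting connections differ), so \ref{pap3} does not directly apply to control their difference modulo $\ch_{dR}(K(F(V)))$. One would need to show that the two transgression corrections cancel under $a$, and that is exactly the kind of identity the authors say they do not know how to prove simply. The Mayer--Vietoris argument in the paper bypasses this by exploiting that over a contractible set the splitting bundle $V$ can be taken trivial, so no connection mismatch appears locally, and then gluing using injectivity of $p^*$ (Lemma \ref{injjj}) and the smooth cohomology Mayer--Vietoris sequence.
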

\begin{proof}
We use a Mayer-Vietoris sequence argument. Let us first recall the Mayer-Vietoris sequence for smooth rational cohomology. Let $B=U\cup V$ be an open  covering of $B$.  Then we have the exact sequence
$$\dots\to H(U\cap V,\R/\Q)\to \hat H(B,\Q)\to \hat H(U,\Q)\oplus \hat H(V,\Q)\to \hat H(U\cap V,\Q)\to H(B,\Q)\to \dots$$
which continues to the left and right by the Mayer-Vietoris sequences of
$H(\dots,\R/\Q)$ and $H(\dots,\Q)$.

We choose a finite covering of $B$ by contractible subsets. Let $U$ be one of these.
Note that $\ind(\cE)_{|U}\in\Z$. Thus
$x_{|U}=[U\times W,\theta]$ for some form $\theta$ and $\Z/2\Z$-graded vector space $W$.
Then we have by 1.~and 3.~that $c_U\colon =\hat \ch(x_{|U})=\dim(W)-a(\theta)$.
This can be seen using the splitting bundle $F(B\times \C^n)$. 
Moreover, $p^*c_U=p^*[\dim(W)-a(\theta)]=z_{|p^{-1}U}$ by Proposition
\ref{zzuzu}.

Assume now that we have already constructed $c_V\in \hat H(V,\Q)$ such
that $p^*c_V=z_{|p^{-1}V}$, where $V$ is
a union $V$ of these subsets. Let $U$ be the next one in the list.

We show that we can extend $c_V$ to $c_{V\cup U}$.  
We have
$(c_U)_{|U\cap V}=(c_V)_{|U\cap V}$ by the injectivity of the pull-back $p^*\colon \hat H(U\cap V,\Q)\to \hat H(p^{-1}(U\cap V),\Q)$, Lemma \ref{injjj}.
The Mayer-Vietoris sequence implies that we can extend $c_V$ by $c_U$ to $U\cup V$.
\end{proof}

\subsubsection{}

We now construct the odd part of the Chern character. In fact, by (\ref{eq5000}) and (\ref{eq5500})
we are forced to define $$\hat \ch\colon \hat K^1(B)\to \hat H^{odd}(B,\Q)$$ by 
$$\hat \ch(x):=(\hat\pr_2)_!(\hat\ch(x_{S^1}\cup x))\ .$$
\begin{lem}
The diagrams (\ref{eq5001}) and (\ref{eq5000}) commute.
\end{lem}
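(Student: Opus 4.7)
The plan is to first verify the three compatibilities in \eqref{eq5001} for the odd Chern character by direct computation using the defining formula and then to prove the suspension compatibility \eqref{eq5000} by a splitting argument on the level of $\hat K^0(S^1\times B)$.

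For \eqref{eq5001} I would argue as follows. For the curvature, apply Proposition \ref{mainprop} for the push-forward in $\hat H$ (where, for the canonical smooth $K$-orientation of $\pr_2$, one has $\hA^c(o)=1$ and $\sigma(o)=0$), together with the established property $R\circ\hat\ch^{ev}=R$ and $R(x_{S^1})=dt$, to obtain
$$R(\hat\ch^{odd}(x))=\int_{S^1\times B/B}\pr_1^*dt\wedge \pr_2^*R(x)=R(x).$$
For $I$, combine Proposition \ref{mainprop} in topological $K$-theory, the even compatibility $I\circ\hat\ch^{ev}=\ch\circ I$, the projection formula, and the relation $\hat p_!(x_{S^1})=1\in\hat K^0(*)$ from \ref{uiwfweqfewfwfef} (whose topological reduction says $I(x_{S^1})$ is a generator of $K^1(S^1)$ with $(\pr_2)_!\pr_1^*I(x_{S^1})=1$), to conclude $I(\hat\ch^{odd}(x))=\ch(I(x))$. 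For $a$, use $\pr_1^*x_{S^1}\cup \pr_2^*a(\omega)=a(\pr_1^*dt\wedge\pr_2^*\omega)$ from Lemma \ref{rring}, apply $\hat\ch^{ev}\circ a=a$, and then the middle square of \eqref{uppersq} to push the resulting form forward along $\pr_2$.

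For \eqref{eq5000}, set $D(y):=(\hat\pr_2)_!\hat\ch^{ev}(y)-\hat\ch^{odd}((\hat\pr_2)_!y)$ for $y\in\hat K^0(S^1\times B)$, and show $D\equiv 0$. The maps on both sides are additive, so $D$ is additive. Applying $R$ and $I$ to $D(y)$ and using the compatibilities already checked (for $\hat\ch^{ev}$, $\hat\ch^{odd}$, $\hat p_!$ with $R$, $I$, and the topological Grothendieck-Riemann-Roch for $\pr_2$) yields $R(D(y))=0$ and $I(D(y))=0$, so $D(y)$ is a flat class. Next I would check that $D$ vanishes on three classes of elements: (i)~on $\pr_2^*\hat K^0(B)$, since by Corollary \ref{pullpush0} both terms vanish after using naturality of $\hat\ch^{ev}$ and the definition of $\hat\ch^{odd}$; (ii)~on the image of $s(\hat v):=\pr_1^*x_{S^1}\cup \pr_2^*\hat v$ for $\hat v\in \hat K^1(B)$, by \eqref{eq5500} and the very definition of $\hat\ch^{odd}$; (iii)~on $a(\Omega^{odd}(S^1\times B)/\im d)$, by the compatibility of $\hat\ch^{ev},\hat\ch^{odd}$ with $a$ and of $\hat p_!$ with $a$.

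Finally I would use that every $y\in\hat K^0(S^1\times B)$ can be written as $\pr_2^*\hat u+s(\hat v)+a(\omega)$: the Künneth isomorphism $K^0(S^1\times B)\cong K^0(B)\oplus K^1(B)$, together with the fact that $I(x_{S^1})$ generates $K^1(S^1)$, decomposes $I(y)=\pr_2^*u+\pr_1^*I(x_{S^1})\cdot\pr_2^*v$; lifting $u,v$ to $\hat u\in\hat K^0(B),\hat v\in\hat K^1(B)$ via Lemma \ref{lem:realize_index} (and its odd analogue) leaves $y-\pr_2^*\hat u-s(\hat v)$ in the kernel of $I$, hence in the image of $a$ by Proposition \ref{prop1}. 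Combined with additivity and the three vanishing clauses, this gives $D\equiv 0$. The main technical point will be step~(iii) on forms, and verifying that the Künneth decomposition of $I(y)$ really matches the splitting $\pr_2^*\oplus s$ at the level of generators; both reduce to bookkeeping using Proposition \ref{mainprop} and the explicit description of $x_{S^1}$ in \ref{dddxs}.
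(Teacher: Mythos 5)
Your proof is correct, and for \eqref{eq5000} it supplies a genuine argument that the paper's terse ``commutes by construction'' leaves implicit. The definition $\hat\ch^{odd}(x)=(\hat\pr_2)_!\hat\ch^{ev}(\pr_1^*x_{S^1}\cup\pr_2^*x)$ makes the square \eqref{eq5000} commute tautologically only for classes of the special form $y=\pr_1^*x_{S^1}\cup\pr_2^*x$; for general $y\in\hat K^0(S^1\times B)$ one still has to check that $(\hat\pr_2)_!\hat\ch^{ev}$ annihilates $\ker\bigl((\hat\pr_2)_!\colon\hat K^0(S^1\times B)\to\hat K^1(B)\bigr)$. Your decomposition $y=\pr_2^*\hat u+\pr_1^*x_{S^1}\cup\pr_2^*\hat v+a(\omega)$, obtained from the K\"unneth splitting of $I(y)$, surjectivity of $I$ (Lemma~\ref{lem:realize_index}) and exactness of \eqref{exax}, together with Corollary~\ref{pullpush0} and its $\hat H(\cdot,\Q)$-analogue for part~(i), the definition for part~(ii), and the $a$-compatibility of $\hat\ch^{odd}$ for part~(iii), closes exactly this gap. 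For the odd case of \eqref{eq5001} your direct verification of $R$-, $I$- and $a$-compatibility using Proposition~\ref{mainprop} and Lemma~\ref{rring} is a legitimate unwinding of what the paper compresses into ``follows from the Projection formula~\ref{projcl} and the even case''. One small caveat: step~(i) invokes Corollary~\ref{pullpush0}, which is stated for $\hat K$; you also need the corresponding vanishing $(\hat\pr_2)_!\circ\pr_2^*=0$ in $\hat H(\cdot,\Q)$, which follows from the projection formula in smooth ordinary cohomology (cf.~\ref{ratdel}) and the degree count $(\hat\pr_2)_!(1)=0$, but it is worth making this explicit since it is not literally the cited corollary.
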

\begin{proof}
The even case of (\ref{eq5001}) has been checked already.
 The diagram (\ref{eq5000}) commutes by construction.
The odd case of (\ref{eq5001}) follows from the Projection formula \ref{projcl} and the even case.
\end{proof} 
This finishes the proof of Theorem \ref{mmmain}

\subsection{The Chern character is a rational isomorphism and multiplicative}

\subsubsection{}

Note that $\hat H(B,\Q)$ is a $\Q$-vector space, and that the sequence (\ref{eq9000}) is an exact sequence of $\Q$-vector spaces. The Chern character extends to  a rational version
$$\hat \ch_\Q\colon \hat K_\Q(B)\to \hat H(B,\Q)\ ,$$ where
$\hat K_\Q(B):=\hat K(B)\otimes_\Z \Q$.
\begin{prop}\label{qiso}
$\hat \ch_\Q\colon \hat K_\Q(B)\to \hat H(B,\Q)$ is an isomorphism.
\end{prop}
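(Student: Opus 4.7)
My plan is to deduce the proposition from the axiomatic exact sequence \eqref{exax} applied to both $\hat K$ and $\hat H$, combined with the classical fact that on a compact manifold the ordinary Chern character $\ch_\Q\colon K(B)\otimes\Q\to H(B,\Q)$ is an isomorphism. Since $B$ has the homotopy type of a finite CW complex, this classical statement is standard.

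First, I would tensor the sequence from Proposition \ref{prop1} with $\Q$ (which is flat, so exactness is preserved, and note that $\Omega(B)/\im(d)$ is already a $\Q$-vector space) to obtain the upper row of the diagram
$$
\begin{CD}
K(B)_\Q @>{\ch_{dR}}>> \Omega(B)/\im(d) @>a>> \hat K_\Q(B) @>I>> K(B)_\Q @>>> 0\\
@VV{\ch_\Q}V @VV{=}V @VV{\hat\ch_\Q}V @VV{\ch_\Q}V\\
H(B,\Q) @>{\iota}>> \Omega(B)/\im(d) @>a>> \hat H(B,\Q) @>I>> H(B,\Q) @>>> 0
\end{CD}
$$
whose lower row is the exact sequence \eqref{eq9000} with $\tR=\Q$. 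Here $\iota$ denotes the de Rham inclusion. The middle two squares commute by the defining property \eqref{eq5001} of $\hat\ch$. The left square commutes because $\ch_{dR}$ is by construction the composition $K(B)\xrightarrow{\ch} H(B,\Q)\xrightarrow{\iota}\Omega(B)/\im(d)$.

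Next, I would observe that the classical isomorphism $\ch_\Q$ implies $\ch_{dR}(K(B)_\Q)=\iota(H(B,\Q))$ inside $\Omega(B)/\im(d)$. Hence both upper and lower rows induce short exact sequences with identical left term:
$$
\begin{CD}
0 @>>> \bigl(\Omega(B)/\im(d)\bigr)/\iota(H(B,\Q)) @>a>> \hat K_\Q(B) @>I>> K(B)_\Q @>>> 0\\
@. @VV{=}V @VV{\hat\ch_\Q}V @VV{\ch_\Q}V @.\\
0 @>>> \bigl(\Omega(B)/\im(d)\bigr)/\iota(H(B,\Q)) @>a>> \hat H(B,\Q) @>I>> H(B,\Q) @>>> 0
\end{CD}
$$
The left vertical map is the identity and the right vertical map $\ch_\Q$ is an isomorphism, so the five-lemma forces $\hat\ch_\Q$ to be an isomorphism as well.

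There is no genuine obstacle in this argument; everything substantive has already been established — namely, the existence of $\hat\ch$ as a natural transformation of smooth extensions (Theorem \ref{mmmain}) and the exactness of \eqref{exax} in both theories. The only small check required is the identification $\ch_{dR}(K(B)_\Q)=\iota(H(B,\Q))$, which is immediate from the surjectivity of the classical rational Chern character on a compact manifold. The proof is therefore essentially a diagram chase made possible by the fact that the transgression/curvature data for $\hat K$ and $\hat H$ take values in the same group $\Omega(B)/\im(d)$ and are intertwined by $\hat\ch$ via the identity.
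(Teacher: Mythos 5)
Your proposal is correct and coincides with the paper's own argument: both set up the same commutative diagram of fundamental exact sequences for $\hat K_\Q$ and $\hat H(\cdot,\Q)$, with $\hat\ch_\Q$, $\ch_\Q$, and the identity on $\Omega(B)/\im(d)$ as vertical maps, and then invoke the five lemma together with the classical rational isomorphism $\ch_\Q\colon K(B)_\Q\to H(B,\Q)$. Your extra reduction to a short exact sequence via the identification $\ch_{dR}(K(B)_\Q)=\iota(H(B,\Q))$ is a harmless elaboration of the same five-lemma step.
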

\begin{proof}
By (\ref{eq5001}) we have the commutative diagram
$$\xymatrix{K_\Q(B)\ar[d]^{\ch_\Q}\ar[r]^{\ch_{dR}}&\Omega(B)/\im(d)\ar[r]^a\ar@{=}[d]&\hat K_\Q(B)\ar[d]^{\hat \ch_\Q}\ar[r]^I&K_\Q(B)\ar[d]^{\ch_\Q}\ar[r]&0\\
H(B,\Q)\ar[r]&\Omega(B)/\im(d)\ar[r]&\hat H(B,\Q)\ar[r]^I&H(B,\Q)\ar[r]&0}\ ,$$
whose horizontal sequences are exact.
Since $\ch_\Q\colon K_\Q(B)\to H(B,\Q)$ is an isomorphism we conclude that $\hat \ch_\Q$ is an isomorphism by the Five Lemma. \end{proof}

\subsubsection{}

We can extend $\hat K_\Q$ to a smooth cohomology theory if we define the structure maps as follows:
\begin{enumerate}
\item $R\colon \hat K_\Q(B)\to \Omega_{d=0}(B)$ is the rational extension of $R\colon \hat K(B)\to \Omega_{d=0}(B)$.
\item $I\colon \hat K_\Q(B)\stackrel{I\otimes \id_\Q}{\to} K(B)_\Q\stackrel{\ch_\Q}{\to} H(B,\Q)$,
\item $a\colon \Omega(B)/\im(d)\stackrel{a}{\to} \hat K(B)\stackrel{\dots\otimes 1}{\to }\hat K_\Q(B)$.
\end{enumerate}
The commutative diagrams (\ref{eq5001}) now imply:
\begin{kor}
The rational Chern character induces an isomorphism  of smooth cohomology
theories refining the isomorphism 
$\ch_\Q\colon K_\Q\to H\Q$ (in the sense of Definition \ref{natdef12}).
 \end{kor}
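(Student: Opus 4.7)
The plan is to deduce the corollary from Proposition \ref{qiso} (already giving bijectivity) together with the compatibility diagrams (\ref{eq5001}) which encode that $\hat\ch$ is a morphism of smooth extensions. The only genuine work is to check that with the redefined structure maps $(R,I,a)$ listed in the corollary, $\hat K_\Q$ really is a smooth extension of the pair $(H(-,\Q), i\colon H(-,\Q)\to H(-,\R))$, and that the diagrams of Definition \ref{natdef12} hold with $u=\id_{H\Q}$.

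First I would verify that $\hat K_\Q$ satisfies the axioms of Definition \ref{ddd556} with the redefined transformations. Tensoring the exact sequence of Proposition \ref{prop1} with the flat $\Z$-module $\Q$ preserves exactness and replaces $K$ by $K_\Q$ everywhere. Composing the rightmost arrow with the isomorphism $\ch_\Q\colon K_\Q\stackrel{\sim}{\to} H(-,\Q)$ and using $\ch_\Q\circ\ch_{dR}^{K}=\ch_{dR}^{H\Q}$ (Chern character is compatible with the de Rham map) yields the exactness of
\begin{equation*}
H(B,\Q)\xrightarrow{\ch_{dR}}\Omega(B)/\im(d)\xrightarrow{a}\hat K_\Q(B)\xrightarrow{I} H(B,\Q)\to 0.
\end{equation*}
The curvature square commutes because $R\circ a=d$ rationally is inherited from $\hat K$, and the compatibility of $R$ with the new $I$ follows from the old compatibility $\ch_{dR}^{K}\circ I=[\,\cdot\,]\circ R$ combined with $\ch_\Q\circ[\,\cdot\,]=[\,\cdot\,]\circ\ch_{dR}^{H\Q}$ (i.e.\ the classical fact that the de Rham representative of $\ch_\Q$ is $\ch_{dR}$).

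Next I would verify the three compatibilities of Definition \ref{natdef12} for $\hat u:=\hat\ch_\Q$ with $u=\id_{H\Q}$. The equality $\hat\ch_\Q\circ a=a$ and the equality $R\circ\hat\ch_\Q=R$ are immediate from tensoring the two squares of (\ref{eq5001}) with $\Q$. The remaining square, $I_{\hat H(-,\Q)}\circ\hat\ch_\Q=I_{\hat K_\Q}=\ch_\Q\circ(I\otimes\id_\Q)$, is just the rationalization of the left square of (\ref{eq5001}), which was established in \S\ref{einss} using that $i\colon H(B,\Q)\to H(B,\R)$ is injective.

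Once these checks are done, $\hat\ch_\Q$ is a natural transformation of smooth extensions in the sense of Definition \ref{natdef12} refining $\id_{H\Q}$, and it is bijective on every $B$ by Proposition \ref{qiso}. The potential obstacle is purely bookkeeping — making sure the identification of the target pair $(h_1,c_1)=(H(-,\Q),i)$ agrees on both sides, so that ``refining $\ch_\Q\colon K_\Q\to H\Q$'' really means $u=\id_{H\Q}$ after the source topological theory $K_\Q$ has been absorbed into the definition of the $I$-map on $\hat K_\Q$. No new analysis is required beyond what is already contained in (\ref{eq5001}) and Proposition \ref{qiso}.
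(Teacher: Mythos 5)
Your proposal is correct and follows essentially the same route as the paper: the paper disposes of the corollary in one line ("The commutative diagrams (\ref{eq5001}) now imply..."), relying on the bijectivity from Proposition \ref{qiso} and the compatibilities already established; you merely spell out the routine verifications (exactness of the rationalized sequence, the axioms of Definition \ref{ddd556} for $\hat K_\Q$, and the three squares of Definition \ref{natdef12}) that the paper leaves implicit. One small slip: the equation you wrote as $\ch_\Q\circ\ch_{dR}^{K}=\ch_{dR}^{H\Q}$ has mismatched source and target; what you mean (and use) is $\ch_{dR}^{H\Q}\circ\ch_\Q=\ch_{dR}^{K}$, i.e.\ the de Rham representative of the rational Chern character of a $K$-class is the Chern--Weil form.
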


\subsubsection{}

\begin{prop}\label{rrr}
The smooth Chern character
$$\hat \ch\colon \hat K(B)\to \hat H(B,\Q)$$ is a ring homomorphism.
\end{prop}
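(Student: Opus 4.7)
The strategy is to first establish multiplicativity on the even part $\hat K^0$ using the splitting principle, and then to reduce the mixed even–odd and odd–odd cases to the even case via the suspension formula defining $\hat\ch$ on odd classes together with the projection formula.

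For the even–even case, given $x=[\cE,\rho]$ and $y=[\cF,\theta]$ in $\hat K^0(B)$, choose splitting bundles for $\ind(\cE)$ and $\ind(\cF)$ and let $p\colon F\to B$ denote the simultaneous flag bundle. Then on $F$ we can write $p^*x=\sum_i\epsilon_i[\cL_i,0]+a(\sigma_x)$ and $p^*y=\sum_j\epsilon_j'[\cM_j,0]+a(\sigma_y)$ by the construction of $\hat\ch$. Expanding the product $p^*(x\cup y)=p^*x\cup p^*y$ bilinearly and applying Lemma \ref{rring} (which gives $a(\omega)\cup z=a(\omega\wedge R(z))$) reduces the verification to the identity $\hat\ch([\cL_i,0]\cup[\cM_j,0])=\hat\ch([\cL_i,0])\cup\hat\ch([\cM_j,0])$ for individual line bundles. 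Since $\cL_i\times_B\cM_j$ is the zero-dimensional geometric family of the tensor product $\bL_i\otimes\bM_j$, this reduces in turn to the additivity of the Cheeger–Simons class $\hat c_\Q$ on tensor products of hermitean line bundles, combined with $\exp(u+v)=\exp(u)\cup\exp(v)$ in the commutative ring $\hat H(B,\Q)$. The corresponding identity in $\hat H(B,\Q)$ is verified analogously on $F$ using the multiplicativity of $\hat H(B,\Q)$ and $R\circ\hat\ch=R$. By the injectivity of $p^*\colon\hat H(B,\Q)\to\hat H(F,\Q)$ (Lemma \ref{injjj}), the desired equality descends to $B$.

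For the odd cases, one uses the defining formula $\hat\ch(x)=(\hat\pr_2)_!(\hat\ch(x_{S^1}\cup\pr_2^*x))$ for $x\in\hat K^1(B)$. If $x\in\hat K^1(B)$ and $y\in\hat K^0(B)$, then the projection formula (Proposition \ref{projcl}) yields $x\cup y=(\hat\pr_2)_!(x_{S^1}\cup\pr_2^*x\cup\pr_2^*y)$, and applying (\ref{eq5000}), the already established even-even multiplicativity on $S^1\times B$, and the projection formula for $\hat H(\cdot,\Q)$ (see \ref{ratdel}) gives
\[
\hat\ch(x\cup y)=(\hat\pr_2)_!\bigl(\hat\ch(x_{S^1}\cup\pr_2^*x)\cup\pr_2^*\hat\ch(y)\bigr)=\hat\ch(x)\cup\hat\ch(y).
\]
The odd–odd case is handled in the same manner by writing one of the factors as a push-forward from $S^1\times B$ and reducing the integrand to an even-dimensional smooth $K$-theory class where the even multiplicativity applies.

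The main technical obstacle lies in the even–even step, specifically in the careful bookkeeping of the form components: one must check that the contributions $a(\sigma_x\wedge\Omega(\cM_j))$, $a(\sigma_y\wedge\Omega(\cL_i))$, and $a(\sigma_x\wedge d\sigma_y)$ on the left and right sides agree term by term, which ultimately follows from $R\circ\hat\ch=R$ and the axioms of Definition \ref{multdef1}. Once the line-bundle identity is in hand, everything else is a routine combinatorial expansion; the deeper input is the classical multiplicativity of $\hat c_\Q$ from \cite{MR827262}, which is what powers the whole reduction via the splitting principle.
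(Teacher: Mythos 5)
Your approach is genuinely different from the paper's. The paper argues abstractly: since $\hat H(B,\Q)$ is a $\Q$-vector space it suffices to show $\hat\ch_\Q\colon\hat K_\Q(B)\to\hat H(B,\Q)$ is a ring homomorphism; since $\hat\ch_\Q$ is a natural isomorphism of smooth extensions it transports the product of $\hat K_\Q$ to a second product $\cup_K$ on $\hat H(B,\Q)$, and one then invokes a uniqueness-of-product lemma (Lemma \ref{lem12000}) for multiplicative smooth extensions of rational cohomology, proved by representability of natural bilinear transformations (Lemma \ref{hteh}) and a degree/bilinearity analysis. Your route instead verifies multiplicativity directly: first on $\hat K^0$ via the splitting principle and the additivity of $\hat c_\Q$ under tensor products, then on $\hat K^1\times\hat K^0$ via the suspension definition and the projection formula.

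Your even--even case and your odd--even case are correct. The even--even step works exactly as you describe: expanding $p^*x\cup p^*y$ on the simultaneous flag bundle, using $a(\omega)\cup z=a(\omega\wedge R(z))$, $R\circ\hat\ch=R$, $\hat\ch\circ a=a$, and the tensor-additivity $\hat c_\Q(\bL\otimes\bM)=\hat c_\Q(\bL)+\hat c_\Q(\bM)$ reduces everything to line bundles, and then injectivity of $p^*$ (Lemma \ref{injjj}) gives the conclusion on $B$. The odd--even case then follows cleanly from $(\ref{eq5000})$, the projection formulas in $\hat K$ and $\hat H$, and the even--even case on $S^1\times B$.

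However, there is a real gap in the odd--odd case. For $x,y\in\hat K^1(B)$, writing $y=(\hat\pr_2)_!(z)$ with $z\in\hat K^0(S^1\times B)$ and applying the projection formula gives $x\cup y=(\hat\pr_2)_!(\pr_2^*x\cup z)$ with $\pr_2^*x\cup z\in\hat K^1(S^1\times B)$ --- an \emph{odd} class, not an even one. The diagram $(\ref{eq5000})$, which defines $\hat\ch$ on $\hat K^1$, only asserts the commutation of $\hat\ch$ with $(\hat\pr_2)_!$ for classes in $\hat K^0(S^1\times B)$; the commutation on $\hat K^1(S^1\times B)$ that you would need here is not available from the definition. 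That commutation is precisely a special case of the smooth Grothendieck--Riemann--Roch theorem \ref{main} for $\pr_2$, which is proved later in the paper \emph{using} the multiplicativity you are trying to establish. Trying to fix this by pushing forward from a torus $T^2\times B\to B$ runs into the same obstruction at one of the two circle stages. To close the gap without circularity you would have to prove the odd analogue of $(\ref{eq5000})$ independently, e.g.\ by the same representability strategy the paper uses for Lemma \ref{lem12000} (the difference of the two sides factors through a natural transformation $K^1(S^1\times B)\to H^{odd}(B,\R/\Q)$, which one then shows vanishes), at which point the paper's more abstract argument is arguably no longer longer than yours.
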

\begin{proof}
Since the target of $\hat \ch$ is a $\Q$-vector space it suffices to show that
$\hat \ch_\Q\colon \hat K_\Q(B)\to \hat H(B,\Q)$ is a ring homomorphism.
Using that  $\hat \ch_\Q$ is an isomorphism of smooth extensions of rational
cohomology we can use the rational Chern character in order to transport the
product on $\hat K_\Q(B)$
to a second product $\cup_K$ on $\hat H(B,\Q)$. It remains to show that $\cup$ and $\cup_K$ coincide.
Hence the following Lemma finishes the proof of Proposition \ref{rrr}.

\subsubsection{}
\begin{lem}\label{lem12000}
There is a unique product on smooth rational cohomology.
\end{lem}
\begin{proof}
Assume that we have two products $\cup_k$, $k=0,1$.
We consider the bilinear transformation
$\bB\colon \hat H(B,\Q)\times \hat H(B,\Q)\to \hat H(B,\Q)$ given by 
$$(x,y)\mapsto \bB(x,y):=x\cup_1 y -x\cup_0 y\ .$$
We first consider the curvature. Since a product is compatible with the curvature (\ref{multdef1}, 2.) we get 
$$R(\bB(x,y))=R(x\cup_1 y)-R(x\cup_0y)=R(x)\wedge R(y)-R(x)\wedge R(y)=0\ .$$
Therefore, by (\ref{eq9000}) the bilinear form factors over an odd transformation
$$\bB\colon \hat H(B,\Q)\times \hat H(B,\Q)\to H(B,\R/\Q)\ .$$ 
Furthermore, for $\omega\in \Omega(B)/\im(d)$ we have by \ref{multdef1}, 2.
$$\bB(a(\omega),y)=a(\omega)\cup_1y-a(\omega)\cup_0y= a(\omega\wedge R(y))- a(\omega\wedge R(y))=0\ .$$
Similarly, $\bB(x,a(\omega))=0$.
Again by (\ref{eq9000}) $B$ has a factorization over a natural bilinear transformation
$$\bar \bB\colon H(B,\Q)\times H(B,\Q)\to H(B,\R/\Q)\ .$$
We consider the restriction
$\bar \bB^{p,q}$ of $\bar \bB$ to $H^p(B,\Q)\times H^q(B,\Q)$.

The functor from finite $CW$-complexes to sets
$$W\to  H^p(W,\Q)\times H^q(W,\Q)$$ is represented by a product of Eilenberg MacLane spaces
$$P^{p,q}:= H\Q^p\times  H\Q^q\ .$$
The spaces $H\Q^p$, and hence $P$ has the homotopy type of countable $CW$-complexes.
Therefore we can apply Lemma \ref{hteh} and conclude that
$\bar \bB^{p,q}$ is induced by a cohomology class $b\in H(P^{p,q},\R/\Q)$.
We finish the proof of Lemma \ref{lem12000} by showing that $b=0$. To this end we analyse the candidates for $b$ and show that they vanish either for degree reasons, or using the fact that $\bar \bB^{p,q}$ is bilinear.

Consider a homomorphism of $\Q$-vector spaces $w\colon \R/\Q\to \Q$. It induces a transformation
$w_*\colon H(B,\R/\Q)\to H(B,\Q)$. In particular we can consider $w_*b\in H(P^{p,q},\Q)$.

\begin{enumerate}\item
First of all if $p,q$ are both even, then  
$w_*b\in H^{odd}(P^{p,q,},\Q)$ vanishes since
$P^{p,q}$ does not have odd-degree rational cohomology at all.
 \item
Assume now that $p,q$ are both odd.
The odd rational cohomology of $P^{p,q}$ is additively generated by the classes
$1\times x_q$ and $x_p\times 1$, where $x_p\in H^p(H\Q^p,\Q)$ and $x_q\in H^q(H\Q^q,\Q)$.
It follows that
$$w_*b=  c\cdot x_p\times 1 +  d \cdot 1\times x_q$$
for some rational constants $c,d$.
Consider odd classes $u_p\in H^p(B,\Q)$ and $v_q\in H^q(B,\Q)$.
 The form of $b$ implies that 
$$w_*\circ \bar \bB^{p,q}(u_p,v_q)=c\cdot  u_p\times 1 + d\cdot  1\times v_q\ .$$
This can only be bilinear if all $c$ and $d$ vanish. Hence $b=0$.
\item
Finally we consider the case that $p$ is even and $q$ is odd (or vice versa, $q$ is even and $p$ is odd). In this case $b$ is an even class.  
The even cohomology of
$P^{p,q}$ is additively generated by the classes
$x_p^n\times 1$, $n\ge 0$.  Therefore $w_*b= \sum_{n\ge 0}c_n  x^n_p\times 1$
for some rational constants $c_n$, $n\ge 0$.
Let $u_p\in H^{p}(B,\Q)$ and $v_q\in H^{q}(B,\Q)$.
Then we have $$w_*\circ \bar \bB^{p,q}(u_p,v_q)= \sum_{n\ge 0} c_n\  u^n_p\ .$$
This is only bilinear if $c_n=0$ for all $n\ge 0$, hence $w_*b=0$.  
\end{enumerate}
Since we can choose $w_*\colon \R/\Q\to \Q$ arbitrary we conclude that $b=0$.
\end{proof}

This also finishes the proof of the Proposition \ref{rrr}.
\end{proof}

\subsection{Riemann Roch theorem}

\subsubsection{}
 
Let $p\colon W\to B$ be a proper submersion with a smooth $K$-orientation $o$.
The Riemann Roch theorem asserts the commutativity of a diagram
$$%\xymatrix{\hat K(W)\ar[d]^{p_!}\ar[r]^{\hat \ch}&\hat H(W,\Q)\ar[d]^{p^K_!
  %}\\\hat K(B)\ar[r]^{\hat \ch}&\hat H(B,\Q)}\ 
\begin{CD}
  \hat K(W) @>{\hat\ch}>> \hat H(W,\Q)\\
  @VV{p_!}V @VV{\hat p_!^A}V\\
  \hat K(B) @>{\hat\ch}>> \hat H(B,\Q)
\end{CD}
.$$
Here $\hat p_!^A$ is the composition  of the cup product with a  smooth
rational cohomology class $\hat \hA^c(o)$ and the push-forward in smooth
rational cohomology.
The Riemann Roch theorem refines the characteristic class version of the ordinary index theorem for families.

We will first give the details of the definition of the push-forward $\hat
p^A_!$.
In order to show the Riemann Roch theorem we then show that the difference
$$\Delta:=\hat \ch\circ \hat p_!-\hat p_!^A\circ \hat \ch$$
vanishes.

This is proved in several steps. First we use the compatibilites of the push-forward
with the transformations $a,I,R$ in order to show that
$\Delta$ factors over a map
$$\bar \Delta\colon  K(W)\to H(B,\R/\Q)\ .$$
In the next step we  show that
$\Delta$ is natural with respect to the pull-back of fibre bundles, and that it does neither depend on the smooth nor on the topological $K$-orientations of $p$.

We then show that $\Delta$ vanishes in the special case that $B=*$.
The argument is based on the bordism invariance Proposition  \ref{bordin} and some calculation of 
rational $Spin^c$-bordism groups.

Finally we use the functoriality of the push-forward Proposition \ref{funktt}
in order to reduce the case of a general $B$ to the special case of a point.

\subsubsection{}\label{pap88771}

We consider a proper submersion   $p\colon W\rightarrow B$  with closed fibres with a smooth $K$-orientation  represented by $o=(g^{T^vp},T^hp,\tilde \nabla,\sigma)$. In the following we define a refinement $\hat \hA(o)\in \hat H^{ev}(W,\Q)$ of the form $\hA^c(o)\in \Omega^{ev}(W)$.
The geometric data of $o$ determines a connection $\nabla^{T^vp}$ (see \ref{uzu1}, \ref{ldefr}) and hence a geometric bundle $\mathbf{T^vp}:=(T^vp,g^{T^vp},\nabla^{T^vp})$.  According to \cite{MR827262} we can define Pontrjagin classes $$\hat p_{i}(\mathbf{T^vp})\in \hat H^{4i}(W,\Z)\ ,\quad i\ge 1\ .$$
The $Spin^c$-structure gives rise to a hermitean line bundle $L^2\to W$ with connection $ \nabla^{L^2}$ (see \ref{l2def}).
A choice of a local spin structure amounts to a choice of a local square root $L$ of $L^2$ (this bundle was considered already in  \ref{ldefr}) such that $S^c(T^vp)\cong S(T^vp)\otimes L$ as hermitean bundles with connections.
We set $\bL^2:=(L^2,h^{L^2},\nabla^{L^2})$.
In particular, we have
$$\frac{1}{2\pi i }R^{\tilde \nabla^{L^2}}=2c_1(\tilde \nabla)\ .$$
Again using \cite{MR827262}  we get a class
$$\hat c_1(\bL^2)\in \hat H^2(W,\Z)$$
with curvature
$R(\hat c_1(\bL^2))=2c_1(\tilde \nabla)$.

\subsubsection{}

Inserting the  classes $\hat p_{i}(\mathbf{T^vp})$ into that $\hA$-series $\hA(p_1,p_2,\dots)\in \Q[[p_1,p_2\dots]]$ we can define
\begin{equation}\label{hahaha}\hat\hA(\mathbf{T^vp}):=\hA(\hat p_1(\mathbf{T^vp}),\hat p_2(\mathbf{T^vp}),\dots)\in \hat H^{ev}(W,\Q)\ .\end{equation}
Let $\hat c_\Q(\bL^2)\in \hat  H^2(W,\Q)$ denote the image of $\hat c_1(\bL^2)$ under the natural map $\hat  H^2(W,\Z)\to \hat  H^2(W,\Q)$.
\begin{ddd}
We define
$$\hat \hA^c(o):=\hat\hA(\mathbf{T^vp})\wedge e^{\frac{1}{2}\hat c_\Q(\bL^2)}\in \hat H^{ev}(W,\Q)\ .$$
\end{ddd}
Note that
$R(\hat \hA^c(o))=\hA^c(o)$.

\begin{lem}\label{weldioe}
The class\footnote{This class  is denoted by $A(p)$ in the abstract and \ref{uiefwefwef}.}
$$\hat \hA^c(o)-a(\sigma(o))\in \hat H^{ev}(W,\Q)$$
only depends on the smooth $K$-orientation represented by $o$.
\end{lem}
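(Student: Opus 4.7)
The plan is to reduce the claim to the homotopy formula Lemma \ref{lem23} for the smooth extension $\hat H(\dots,\Q)$ together with the standard naturality of the Cheeger-Simons characteristic classes. Concretely, it suffices to show that for two representatives $o_0=(g_0^{T^vp},T_0^hp,\tilde\nabla_0,\sigma_0)$ and $o_1=(g_1^{T^vp},T_1^hp,\tilde\nabla_1,\sigma_1)$ of the same smooth $K$-orientation we have
$$\hat\hA^c(o_1)-\hat\hA^c(o_0)=a\bigl(\tilde\hA^c(\tilde\nabla_1,\tilde\nabla_0)\bigr)\ ,$$
since then the defining equality $\sigma_1-\sigma_0=\tilde\hA^c(\tilde\nabla_1,\tilde\nabla_0)$ of the equivalence relation \ref{pap201} immediately yields $\hat\hA^c(o_1)-a(\sigma_1)=\hat\hA^c(o_0)-a(\sigma_0)$.

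First I would interpolate the geometric data. Consider the projection $\bar p=\id_{[0,1]}\times p\colon[0,1]\times W\to[0,1]\times B$ with its induced topological $K$-orientation. Choose a smooth family $(\bar g^{T^v\bar p},\bar T^h\bar p,\overline{\tilde\nabla})$ restricting at $\{k\}\times W$ to $(g_k^{T^vp},T_k^hp,\tilde\nabla_k)$ for $k=0,1$; this is possible since the respective spaces of choices are convex, and the choice of $\overline{\tilde\nabla}$ uses \ref{origeo} together with an interpolation of the connection on $L^2$. This produces a geometric vertical bundle $\overline{\mathbf{T^v\bar p}}$ and a geometric line bundle $\overline{\bL^2}$ on $[0,1]\times W$ restricting to $\mathbf{T^vp}$ and $\bL^2$ over $\{k\}\times W$ for the data $o_k$.

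Next, by the naturality of the Cheeger-Simons Pontryagin and first Chern classes \cite{MR827262}, the smooth class
$$\hat\hA^c(\bar o):=\hat\hA(\overline{\mathbf{T^v\bar p}})\cup e^{\frac12\hat c_\Q(\overline{\bL^2})}\in\hat H^{ev}([0,1]\times W,\Q)$$
restricts along $i_k\colon W\hookrightarrow\{k\}\times W$ to $\hat\hA^c(o_k)$. Its curvature is $R(\hat\hA^c(\bar o))=\hA^c(\overline{\tilde\nabla})$ by the Chern-Weil property of $\hat\hA^c$. Applying the homotopy formula Lemma \ref{lem23} (with base $W$) to $\hat\hA^c(\bar o)$ gives
$$\hat\hA^c(o_1)-\hat\hA^c(o_0)=a\!\left(\int_{[0,1]\times W/W}\hA^c(\overline{\tilde\nabla})\right)\ ,$$
and the right-hand integral is precisely the transgression form $\tilde\hA^c(\tilde\nabla_1,\tilde\nabla_0)$ by Definition \ref{uzu4}. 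Combining with $a(\sigma_1-\sigma_0)=a(\tilde\hA^c(\tilde\nabla_1,\tilde\nabla_0))$ gives the claim.

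No substantial obstacle is expected: the only point to verify is that a convex-combination interpolation of the data is admissible on $[0,1]\times W$ (standard), and that the Cheeger-Simons machinery of \cite{MR827262} is indeed natural under pullback so that $i_k^*\hat\hA^c(\bar o)=\hat\hA^c(o_k)$. Both are routine applications of the naturality axioms of the smooth extension $\hat H(\dots,\Q)$ recalled in Subsection \ref{cher}, and Lemma \ref{lem23} is already established in the required generality.
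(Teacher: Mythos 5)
Your proposal is correct and follows essentially the same route as the paper: interpolate the geometric data over $[0,1]\times W$, use naturality of the Cheeger–Simons classes to identify $i_k^*\hat\hA^c(\bar o)=\hat\hA^c(o_k)$, apply the homotopy formula Lemma \ref{lem23}, recognize the resulting integral as the transgression form $\tilde\hA^c(\tilde\nabla_1,\tilde\nabla_0)$, and conclude via the defining relation $\sigma_1-\sigma_0=\tilde\hA^c(\tilde\nabla_1,\tilde\nabla_0)$.
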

\begin{proof}
This is a consequence of the homotopy formula Lemma \ref{lem23}. 
Given two representatives $o_0,o_1$ of a smooth $K$-orientation we can choose
a representative $\tilde o$ of a smooth $K$-orientation on $\id_\R\times
p\colon \R\times W\to \R\times B$ which restricts to $o_k$ on $\{k\}\times B$,
$k=0,1$. The construction of the class $\hat \hA^c(o)$ is compatible with
pull-back. Therefore by the definition of the transgression form \ref{uzu4} we have
$$\hat \hA^c(o_1)- \hat \hA^c(o_0)=i_1^*\hat \hA^c(\tilde o)-i_0^*\hat \hA^c(\tilde o)=
a\left[\int_{[0,1]\times W/W} R(\hat \hA^c(\tilde o))\right]=a\left[\tilde \hA^c(\tilde \nabla_1,\tilde \nabla_0)\right]\ .$$ 
By the definition of equivalence of representatives of smooth $K$-orientations we have
$$\sigma(o_1)-\sigma(o_0)=\tilde \hA^c(\tilde \nabla_1,\tilde \nabla_0)\ .$$
Therefore
$$\hat \hA^c(o_1)-a(\sigma(o_1))=\hat \hA^c(o_0)-a(\sigma(o_0))\ .$$
\end{proof}

\subsubsection{}

We use the class $\hat \hA^c(o)\in \hat H^{ev}(W,\Q)$ in order to define the
push-forward 
\begin{equation}\label{eq9811}
\hat p^A_!:=\hat p_!([\hat \hA^c(o)-a(\sigma(o))]\cup\dots)\colon \hat H(W,\Q)\to \hat H(B,\Q)\ ,
\end{equation}
where $\hat p_!\colon \hat H(W,\Q)\to \hat H(B,\Q)$ is the  push-forward in smooth rational cohomology (see \ref{ratdel})
fixed by the underlying ordinary orientation of $p$.
By Lemma  \ref{weldioe} also $\hat p_!^A$ only depends to the smooth
$K$-orientation of $p$ and not on the choice 
of the representative.

If $f\colon B^\prime\to B$ is a smooth map then
we consider the pull-back diagram 
$$\xymatrix{W^\prime\ar[d]^{p^\prime}\ar[r]^F&W\ar[d]^p\\B^\prime\ar[r]^f&B}\ .$$
The smooth $K$-orientation $o$ of $p$ induces (see \ref{pap101}) a smooth $K$-orientation $o^\prime$ of $p^\prime$. We have
$\hat \hA(o^\prime)=F^*\hat \hA(o)$ and
$\hat p^{\prime A}_!\circ F^*=f^*\circ\hat p^A_!$.

% Consider a composition $$W\stackrel{p}{\to} B\stackrel{r}{\to} A$$
% of proper submersion with closed fibres.

\subsubsection{}
As in \ref{tz123}
we consider the composition of proper smoothly $K$-oriented submersions
$$\xymatrix{W\ar@/_0.5cm/[rr]_q\ar[r]^p&B\ar[r]^r&A}\ .$$
The composition $q:=r\circ p$ has an induced smooth $K$-orientation
(Definition \ref{def100} and Lemma \ref{lem19123}). In this situation we have
push-forwards $\hat p^A_!$, $\hat r^A_!$ and $\hat q^A_!$ in smooth rational
cohomology given
by  (\ref{eq9811}). 
\begin{lem}\label{functk}
We have the  equality $$\hat r_!^A\circ\hat p^A_!=\hat q^A_!$$ of maps $\hat
H(W,\Q)\to \hat H(B,\Q)$.
\end{lem}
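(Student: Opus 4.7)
The plan is to reduce the claim to an identity of smooth rational cohomology classes on $W$, which can then be checked via the multiplicativity of the modified $\hat{\hA}^c$-class under composition of $K$-orientations, combined with the projection formula for $\hat H(\cdot,\Q)$ of \cite{MR2179587} and the functoriality $\hat q_! = \hat r_!\circ \hat p_!$ of \cite{Koehler}.

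Set $\beta_p := \hat\hA^c(o_p) - a(\sigma(o_p)) \in \hat H^{ev}(W,\Q)$ and define $\beta_r, \beta_q$ analogously; by Lemma \ref{weldioe} each depends only on the respective smooth $K$-orientation. Using these two tools I compute, for $x\in \hat H(W,\Q)$,
\begin{align*}
\hat r^A_!\bigl(\hat p^A_!(x)\bigr) &= \hat r_!\bigl(\beta_r \cup \hat p_!(\beta_p\cup x)\bigr) \\
&= \hat r_!\bigl(\hat p_!(p^*\beta_r \cup \beta_p \cup x)\bigr) = \hat q_!\bigl(p^*\beta_r \cup \beta_p \cup x\bigr),
\end{align*}
so that the assertion reduces to the identity
$$
\beta_q = p^*\beta_r \cup \beta_p \quad\text{in}\quad \hat H^{ev}(W,\Q).
$$

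To prove this identity I would work with the representative $o_q = o_r \circ_\lambda o_p$ of Definition \ref{def100} and pass to the adiabatic limit $\lambda\to 0$, in which the transgression term $\tilde\hA^c(\tilde\nabla^{adia},\tilde\nabla^\lambda_q)$ in $\sigma(o_q^\lambda)$ vanishes and the $Spin^c$-connection $\tilde\nabla^{adia}$ splits as a direct sum. Expanding $p^*\beta_r\cup \beta_p$ via the rule $a(\omega)\cup y = a(\omega\wedge R(y))$ from Definition \ref{multdef1} (applied to smooth rational cohomology), and comparing with the adiabatic-limit formula for $\sigma(o_q^\lambda)$ from Definition \ref{def100}, the identity reduces to the smooth multiplicativity statement
$$
\hat\hA^c(\tilde\nabla^{adia}) = \hat\hA^c(\tilde\nabla_p) \cup p^*\hat\hA^c(\tilde\nabla_r)
$$
in $\hat H^{ev}(W,\Q)$.

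The main obstacle will be this last multiplicativity statement at the smooth level: its de Rham analog (\ref{eq3001}) and its underlying topological version are classical, but lifting it to an equality of Cheeger--Simons classes requires a direct verification using the multiplicativity of the smooth Cheeger--Simons Pontrjagin and Chern classes of \cite{MR827262} applied to the Chern--Weil expression defining $\hat\hA^c$. Alternatively, one can apply the homotopy formula (Lemma \ref{lem23}) on $[0,1]\times W$ to a one-parameter deformation from $\tilde\nabla^{adia}$ to a product connection; the resulting transgression's form component agrees with $\tilde\hA^c(\tilde\nabla^{adia},\tilde\nabla^\lambda_q)$, and together with the $\lambda$-independence guaranteed by Lemma \ref{lem19123} this fixes the smooth class and completes the argument.
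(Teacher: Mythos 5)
Your proposal follows essentially the same route as the paper: reduce via the projection formula and the functoriality $\hat q_!=\hat r_!\circ \hat p_!$ of the ordinary push-forward to the class identity $\hat\hA^c(o_q)-a(\sigma(o_q)) = (\hat\hA^c(o_p)-a(\sigma(o_p)))\cup p^*(\hat\hA^c(o_r)-a(\sigma(o_r)))$, then establish this via the adiabatic limit $\lambda\to 0$ of $o_q^\lambda=o_r\circ_\lambda o_p$, using the $\lambda$-independence of Lemma \ref{lem19123} together with Lemma \ref{weldioe}. The only place you hedge --- the smooth-level multiplicativity $\hat\hA^c(\tilde\nabla^{adia})=\hat\hA^c(\tilde\nabla_p)\cup p^*\hat\hA^c(\tilde\nabla_r)$ --- is not actually a gap: since $\tilde\nabla^{adia}$ is literally the direct-sum $Spin^c$-connection on $T^vq\cong T^vp\oplus p^*T^vr$ with $L_q^2\cong L_p^2\otimes p^*L_r^2$, it follows directly from the Whitney sum formula for the Cheeger--Simons Pontrjagin classes and the additivity of $\hat c_1$ under tensor product, both established in \cite{MR827262}; the paper relies on exactly this fact without spelling it out, so you should simply cite it rather than attempting the alternative homotopy argument (which in any case would have nothing to deform, as $\tilde\nabla^{adia}$ already has the required product form).
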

\begin{proof} We choose representatives of smooth $K$-orientations $o_p$ of $p$ and $o_r$ of
$r$, and  we let $o^\lambda_q:=o_p\circ_\lambda o_r$ be the composition.
We consider the class (see Definition \ref{def100})
\begin{eqnarray*}\lefteqn{
\hat \hA^c(o_q^\lambda)-a(\sigma(o^\lambda_q))}&&\\&=&\hat \hA^c(o_q^\lambda)-a\left(
\sigma(o_p)\wedge p^*\hA^c(o_r)+\hA^c(o_p)\wedge
p^*\sigma(o_r)- \tilde\hA^c (\tilde \nabla^{adia},\tilde \nabla^\lambda_q)-d\sigma(o_p)\wedge p^*\sigma(o_r)\right)\ .
\end{eqnarray*}

By Lemma \ref{weldioe} and Lemma \ref{lem19123} this class is independent of
$\lambda$. 
If we let $\lambda\to 0$, then the connection $\nabla^{T^vq}$ tends to the
direct sum  connection $\nabla^{T^vp}\oplus p^*\nabla^{T^vr}$.
Furthermore, the transgression  $\tilde\hA^c (\tilde \nabla^{adia},\tilde \nabla^\lambda_q)$ tends to zero.  Therefore
\begin{eqnarray*}\lefteqn{
\lim_{\lambda\to 0}[\hat \hA^c(o_q^\lambda)-a(\sigma(o^\lambda_q))]}&&\\&=&\hat \hA^c(o_p)\cup p^*\hat \hA^c(o_r)-a\left(\sigma(o_p)\wedge p^*\hA^c(o_r)+\hA^c(o_p)\wedge
p^*\sigma(o_r)-  d\sigma(o_p)\wedge p^*\sigma(o_r)\right)\\
&=&(\hat \hA^c(o_p)-a(\sigma(o_p)))\cup p^*(\hat \hA^c(o_r)-a(\sigma(o_r)))\ .
\end{eqnarray*}
For $x\in \hat H(W,\Q)$ we get using the  projection formula and the
functorialty $\hat q_!=\hat r_!\circ \hat p_!$ for the push-forward in smooth
rational cohomology
\begin{eqnarray*}
\hat r_!^A\circ \hat p^A_!(x)&=&\hat r_!\left(\left[\hat
    \hA^c(o_r)-a(\sigma(o_r))\right]\cup \hat p_!\left(\left[\hat
      \hA^c(o_p)-a(\sigma(o_p))\right]\cup x\right)\right)\\
&=&\hat q_! \left(p^*\left[\hat \hA^c(o_r)-a(\sigma(o_r))\right]\cup
  \left[\hat \hA^c(o_p)-a(\sigma(o_p))\right]\cup x\right)\\
&=&
\hat q_!\left((\hat \hA^c(o_q^a)-a(\sigma(o^a_q)))\cup x\right)\\
&=&\hat q_!^A(x)\ .
\end{eqnarray*}
\end{proof}

\subsubsection{}

Recall Definition \ref{ddd1} that 
the smooth $K$-orientation determines a push-down
$$\hat p_!\colon \hat K(W)\to \hat K(B)\ .$$
We can now formulate the index theorem.
\begin{theorem}\label{main}
The following square commutes
$$%\xymatrix{\hat K(W)\ar[d]^{p_!}\ar[r]^{\hat \ch}&\hat H(W,\Q)\ar[d]^{p^K_!
  %}\\\hat K(B)\ar[r]^{\hat \ch}&\hat H(B,\Q)}\ 
\begin{CD}
  \hat K(W) @>{\hat\ch}>> \hat H(W,\Q)\\
  @VV{\hat p_!}V @VV{\hat p_!^A}V\\
  \hat K(B) @>{\hat\ch}>> \hat H(B,\Q)
\end{CD}
.$$
\end{theorem}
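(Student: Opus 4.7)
My plan follows the three-stage strategy outlined in the paragraph preceding the theorem statement. Introduce
\[
\Delta(x) := \hat\ch\bigl(\hat p_!(x)\bigr) - \hat p_!^A\bigl(\hat\ch(x)\bigr)\in\hat H(B,\Q),\qquad x\in\hat K(W),
\]
and aim to show $\Delta\equiv 0$.

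First I would show that $\Delta$ factors as $\Delta = \bar\Delta\circ I$ for a natural transformation $\bar\Delta\colon K(W)\to \hat H_{flat}(B,\Q)$. Lemma \ref{lem24}, the parallel identity $R\circ\hat p_!^A = p_!^o\circ R$ (derivable from the definition \ref{eq9811} of $\hat p_!^A$ together with the multiplicative axioms in Definition \ref{multdef1}), and $R\circ\hat\ch = R$ from \ref{eq5001} combine to give $R\circ\Delta = 0$. Dually, Proposition \ref{mainprop}, its analog $\hat p_!^A\circ a = a\circ p_!^o$ (obtained via $x\cup a(\omega) = a(R(x)\wedge\omega)$), and $\hat\ch\circ a = a$ yield $\Delta\circ a = 0$. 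Exactness of \ref{exax} then forces the factorization. Naturality of $\bar\Delta$ under cartesian pullback squares follows from Lemma \ref{cartcomp} and its rational analog, while the homotopy formula (Lemma \ref{lem23}) applied to an interpolating smooth $K$-orientation on $[0,1]\times W\to[0,1]\times B$ shows that $\bar\Delta$ depends only on the underlying topological $K$-orientation of $p$.

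Next I would handle the universal base case $B=*$, in which only odd-dimensional $W$ is nontrivial (so $\bar\Delta\colon K(W)\to\R/\Q$). The plan is to prove $\bar\Delta$ is a $Spin^c$-bordism invariant: given a zero-bordism $V$ of $W$ whose smooth $K$-orientation $\tilde o$ restricts to $o$, and a geometric lift $y\in\hat K(W)$ of $x$ that extends to $\tilde y\in\hat K(V)$, Proposition \ref{bordin} applied to $\hat p_!(y)$ and Lemma \ref{bbetwzew} applied to $\hat p_!^A(\hat\ch(y))$ each express a term of $\Delta(y)$ as $a$ of a $V$-integral: of $R(\tilde y)$ in the first case, of $(\hA^c(\tilde o)-d\sigma(\tilde o))\wedge R(\hat\ch(\tilde y))$ in the second. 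Since $R\circ\hat\ch=R$ and $R(\hat\hA^c(\tilde o)-a(\sigma(\tilde o)))=\hA^c(\tilde o)-d\sigma(\tilde o)$, the two integrands coincide, so $\Delta(y)=0$. Hence $\bar\Delta$ descends to the rational $Spin^c$-$K$-bordism classes of pairs (closed odd-dimensional $Spin^c$-manifold, virtual vector bundle), a group which is rationally trivial; this forces $\bar\Delta\equiv 0$ over a point.

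Finally I would reduce the general case to $B=*$. For any closed $Spin^c$-manifold $\Sigma$ and smooth $f\colon\Sigma\to B$, form the cartesian pullback with $F\colon W'\to W$ and $p'\colon W'\to\Sigma$, and equip $r\colon\Sigma\to *$ with the smooth $K$-orientation from its $Spin^c$-structure, so that $q:=r\circ p'\colon W'\to *$ inherits a smooth $K$-orientation via Definition \ref{def100}. Theorem \ref{funktt}, Lemma \ref{functk}, and the multiplicativity of $\hat\ch$ (Proposition \ref{rrr}) combine into the identity
\[
\Delta^q(F^*x) \;=\; \Delta^r\bigl(\hat p'_!(F^*x)\bigr) + \hat r^A_!\bigl(\Delta^{p'}(F^*x)\bigr).
\]
The previous stage applied to $r$ and to $q$ kills $\Delta^r$ and $\Delta^q$, leaving $\hat r^A_!(f^*\bar\Delta(x))=0$ for every such $(\Sigma,f)$. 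Since this pairing detects elements of $H^*(B,\R/\Q)$ rationally (using surjectivity of $\Omega^{Spin^c}_*(B)\otimes\Q\twoheadrightarrow H_*(B,\Q)$ and the pairing with $K\R/\Z$-theory of \cite{MR1312690}), one concludes $\bar\Delta(x)=0$. The hardest part will be stage two: one must extend the geometric cycle $(\cE,\rho)$ and every ingredient of the smooth $K$-orientation (vertical metric, horizontal distribution, $Spin^c$-connection, and $\sigma$) consistently across $V$ with product structure near the boundary, and then verify that the two $V$-integrals truly match on the nose, including a careful treatment of transgression terms and the adiabatic-limit corrections that enter via Proposition \ref{bordin}.
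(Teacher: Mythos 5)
Your proposal follows essentially the same three-stage strategy as the paper's proof: factor the defect $\Delta$ through $\bar\Delta\colon K(W)\to H(B,\R/\Q)$ (using $R\circ\Delta=0$, $\Delta\circ a=0$, $I\circ\Delta=0$ and the exact sequence (\ref{exax})), establish naturality and independence from the smooth (and then topological) $K$-orientation, prove the base case over a point by a $Spin^c$-bordism argument identical in spirit to Lemma~\ref{rtrtrt}, and reduce the general base via pull-back along closed $Spin^c$-manifolds using the rational surjectivity $\Omega^{Spin^c}_*(B)\otimes\Q\twoheadrightarrow H_*(B,\Q)$ of Lemma~\ref{lem13000}. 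Your telescoping identity $\Delta^q = \Delta^r\circ\hat p'_! + \hat r^A_!\circ\Delta^{p'}$ is a tidy packaging of the chain of equalities the paper writes out for case~(a), and follows from Theorem~\ref{funktt}, Lemma~\ref{functk}, and multiplicativity of $\hat\ch$, just as you say.

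There is, however, a genuine gap in stage~3. Your stage~2 establishes $\bar\Delta=0$ over a point only for odd-dimensional $W$ and $K^0$-classes: that is where the relevant bordism group $\Omega^{Spin^c}_{\mathrm{odd}}(\Z\times BU)\otimes\Q$ vanishes (as $\Omega^{Spin^c}_*\otimes\Q$ and $H_*(\Z\times BU,\Q)$ are both concentrated in even degrees). But your reduction to a point does not stay in this regime. Track degrees: with $n:=\dim T^vp$ and $\epsilon$ the $K$-degree of $x$, detecting $\bar\Delta(x)\in H^{\epsilon-n-1}(B,\R/\Q)$ requires $\Sigma$ with $\dim\Sigma\equiv\epsilon-n-1\pmod 2$, and then $\dim W'=\dim\Sigma+n$ and the class $F^*x\in K^\epsilon(W')$. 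Only when $n$ is even and $\epsilon=0$ do both $r\colon\Sigma\to *$ and $q\colon W'\to *$ land in the odd-dimensional/$K^0$ case your stage~2 covers. For instance, if $n$ is odd and $\epsilon=1$ (case~(b)), then $W'$ is even-dimensional and $F^*x\in K^1(W')$; the corresponding bordism group $\Omega^{Spin^c}_{\mathrm{ev}}(U)\otimes\Q$ is \emph{not} zero (e.g.\ $H_4(U,\Q)$ contributes via $x_1x_3$), so the bordism argument does not close there. The paper handles this by first proving case~(a) and then reducing cases~(b),(c),(d) by crossing with $S^1$, which requires the additional ingredients that $(\hat\pr_2)_!\colon\hat K^0(S^1\times B)\to\hat K^1(B)$ is surjective (from the projection formula and $\hat p_!(x_{S^1})=1$) and that $\Delta_{\pr_2}=0$ (Lemma~\ref{s1}, a consequence of the definition of the odd Chern character via (\ref{eq5000})). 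Your outline omits this suspension reduction, and without it the appeal to stage~2 in the telescoping identity is not justified for three of the four parity combinations. Adding those suspension lemmas and the case split would bring your outline into agreement with the paper's proof.
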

\begin{proof}
We consider the difference
$$\Delta:=\hat \ch\circ \hat p_!-\hat p_!^A\circ \hat \ch\ .$$ 
It suffices to show that $\Delta=0$.

\subsubsection{}
Let $x\in \hat K(W)$.
\begin{lem}\label{lem666}
We have $R(\Delta(x))=0$.
\end{lem}
\begin{proof}
This Lemma is essentially equivalent to the local index theorem.
We have by Definition \ref{def313} and Lemma \ref{lem24}
$$R(\hat \ch\circ \hat p_!(x))=R(\hat p_!(x))=p_!(R(x))=\int_{W/B}
\left(\hA^c(o)-d\sigma(o)\right)\wedge R(x)\ .$$ 
On the other hand, since
$R\left(\hat \hA^c(o)-a(\sigma(o))\right)=\hA^c(o)-d\sigma(o)$ we get
$$R\left(\hat p^A_!\circ \hat \ch(x)\right)=\int_{W/B}
\left(\hA^c(o)-d\sigma(o)\right)\wedge R(\hat \ch(x))=\int_{W/B}
\left(\hA^c(o)-d\sigma(o)\right)\wedge R(x)\ .$$ 
Therefore $R(\Delta(x))=0$. \end{proof} 

\subsubsection{}

\begin{lem}
We have $I(\Delta(x))=0$
\end{lem}
\begin{proof}
This is the usual index theorem.
Indeed, $$I(\hat \ch\circ \hat p_!(x))=\ch\circ I(\hat p_!(x))=\int_{W/B}
\hA^c(T^vp)\cup \ch(I(x))$$ and 
$$I\left(\hat p^A_!\circ \hat \ch(x)\right)=\int_{W/B}\hA^c(T^vp)\cup I(\hat \ch(x))=\int_{W/B} \hA^c(T^vp)\cup \ch(I(x))\ .$$
The equality of the right-hand sides proves the Lemma.
Alternatively one could observe that the Lemma is a consequence of Lemma \ref{lem666}.
\end{proof}

\subsubsection{}

Let $\omega\in \Omega(W)/\im(d)$. 
\begin{lem}\label{lemuuu7}
We have $\Delta(a(\omega))=0$.
\end{lem}
\begin{proof}
We have by Proposition \ref{mainprop}
$$\hat \ch\circ \hat p_!(a(\omega))=\hat \ch\circ a
(p_!(\omega))=a\left(\int_{W/B}\left(\hA^c(o)-d\sigma(o)\right)\wedge
  \omega\right)\ .$$  
On the other hand, by (\ref{eq5001}) and 
$$\left[\hat \hA^c(o)-a(\sigma(o))\right]\cup a(\omega)=
a\left(R\left(\hat \hA(o)-a(\sigma(o))\right)\wedge \omega\right)=a\left(\left(\hA^c(o)-d\sigma(o)\right)\wedge \omega\right),$$

$$\hat p^A_!\circ \hat \ch(a(\omega))=\hat p^A_!(a(\omega))= 
a\left(\int_{W/B}\left(\hA^c(o)-d\sigma(o)\right)\wedge \omega\right)\ .$$
\end{proof} 

\subsubsection{}

Let $o_0,o_1$ represents two smooth refinements of the same topological $K$-orientation of $p$. Assume that $\Delta_k$ is defined with
the choice $o_k$, $k=0,1$.

\begin{lem}
We have $\Delta_0=\Delta_1$. 
\end{lem}
\begin{proof}
We can assume that
$o_k=(g^{T^vp},T^hp,\tilde\nabla,\sigma_k)$ for $\sigma_k\in
\Omega^{odd}(W)/\im(d)$.
  
Then we have for $x\in \hat K(W)$
\begin{eqnarray*}
\Delta_1(x)-\Delta_0(x)&=&-a\left(\int_{W/B} (\sigma_1-\sigma_0)\wedge R(x)
\right) 
+\int_{W/B}a(\sigma_1-\sigma_0)\cup
\hat \ch(x)\\
&=&-a\left(\int_{W/B} (\sigma_1-\sigma_0)\wedge R(x)\right) 
+\int_{W/B}a\left[(\sigma_1-\sigma_0)\wedge
R\circ \hat \ch(x)\right]\\
&=&0
\end{eqnarray*}
since $R\circ \hat \ch(x)=R(x)$ and $a\circ \int_{W/B}=\int_{W/B}\circ a$.
\end{proof}

\subsubsection{}

It follows from Lemma \ref{lem666} and (\ref{eq9000}) that
$\Delta$ factorizes through a transformation
$$\Delta\colon \hat K(W)\to H(B,\R/\Q)\ .$$
By Lemma \ref{lemuuu7} and \ref{prop1}
the map $\Delta$ factors over a map
$$\bar \Delta\colon  K(W)\to H(B,\R/\Q)\ .$$
This map only depends on the topological $K$-orientation of $p$.
It is our goal to show that $\bar \Delta=0$.

 \subsubsection{}

Next we want to show that the transformation $\bar \Delta$ is natural.
For the moment we write $\Delta_p:=\bar \Delta$. 
Let  $f\colon B^\prime\rightarrow B$ be a smooth map and
form the cartesian diagram
$$\xymatrix{W^\prime\ar[d]^{p^\prime}\ar[r]^F&W\ar[d]^p\\B^\prime\ar[r]^f&B}\ .$$
The map $p^\prime$ is a proper submersion with closed fibres which has an induced topological $K$-orientation.

\begin{lem} \label{lemnat33}
We have the equality of maps $K(W)\to H(B^\prime,\R/\Q)$
$$\Delta_{p^\prime}\circ F^*=f^*\circ \Delta_p\ .$$
\end{lem}
\begin{proof}
This follows from the naturality of $\hat\ch$, $\hat p_!$, and $\hat p_!^A$
with respect to the base $B$.
\end{proof}

\subsubsection{}

\begin{lem}\label{s1}
If $\pr_2\colon S^1\times B\rightarrow B$ is the trivial bundle with the topological $K$-orientation given by the
bounding spin structure, then
$\Delta_{\pr_2}\colon K^0(S^1\times B)\rightarrow H^{odd}(B,\R/\Q)$ vanishes.
\end{lem}
\begin{proof}
The odd Chern character is defined such that for $x\in K^0(S^1\times B)$ we have
$\hat \ch_1((\hat\pr_2)_!x)=(\hat\pr_2)_!\hat\ch_0(x)$  (see (\ref{eq5000})).
With the choice of the smooth $K$-orientation of $\pr_2$ given in
\ref{pullpush1} we have $\hat \hA(o)-a(\sigma(o))=1$ so that $\hat p_!^A=\hat
p_!$.
This implies the Lemma.
\end{proof} 
 
\subsubsection{}\label{pap8877}

The group $H^2(W,\Z)$ acts simply transitive on the set of $Spin^c$-structures
of $T^vp$. Let $Q\to W$ be a unitary line bundle classified by $c_1(Q)\in H^2(W,\Z)$.
We choose a hermitean connection $\nabla^Q$ and form the geometric line bundle $\bQ:=(Q,h^Q,\nabla^Q)$. Let $o:=(T^vp,T^hp,\tilde \nabla,\rho)$ represent a smooth $K$-orientation refining the given  topological $K$-orientation of $p$. Note that $\tilde \nabla$ is completely determined by the Clifford connection on the Spinor bundle $S^c(T^vp)$. The spinor bundle of the shift of the topological $K$-orientation by $c_1(Q)$ is given by $S^c(T^vp)^\prime=S^c(T^vp)\otimes Q$.
We  construct a corresponding smooth $K$-orientation $o^\prime=(T^vp,T^hp,\tilde \nabla\otimes\nabla^Q,\rho)$.
We let $\hat p_!$ and $\hat p_!^\prime$ denote the corresponding push-forwards
in smooth $K$-theory.
Let $\cQ$ be the geometric family over $W$ with zero-dimensional fibre given by the bundle $\bQ$ (see \ref{zerofibre}).
The push-forwards $\hat p_!$ and $\hat p_!^\prime$  are now related as
follows:
\begin{lem}
$$\hat p_!^\prime(x)=\hat p_!([\cQ,0]\cup x),\qquad \forall x\in \hat K(W).$$
\end{lem}
\begin{proof}
Let $x=[\cE,\rho]$.
By an inspection of the constructions leading to Definition \ref{ddd7771} we
see that 
$$p^{\prime \lambda}_!\cE=p^\lambda_!(\cQ\times_W \cE)\ .$$ 
Furthermore we have
$c_1(\tilde \nabla\otimes \nabla^Q)=c_1(\tilde \nabla)+c_1(\nabla^Q)$ so that
$$\hA^c(o^\prime)=\hA^c(o)\wedge e^{c_1(\nabla^Q)}\ .$$
On the other hand, since $\Omega(\cQ)= e^{c_1(\nabla^Q)}$ we have $$[\cQ,0]\cup [\cE,\rho]=[\cQ\times_W\cE,e^{c_1(\nabla^Q)}\wedge \rho]$$
Using the explicit formula (\ref{eq300}) we get
$$\hat p_!^\prime([\cE,\rho])-\hat p_!([\cQ,0]\cup [\cE,\rho])=[\emptyset,
\tilde \Omega^\prime(\lambda,\cE)-\tilde \Omega(\lambda,\cE)]$$ for all small
$\lambda>0$. Since both transgression forms vanish in the limit $\lambda=0$
we get the desired result.
\end{proof}

In the notation of \ref{pap88771} we have $\bL^\prime=\bL\otimes \bQ$.
Therefore
$$\hat c_\Q(\bL^{\prime 2})=\hat c_\Q(\bL^2)+ 2\hat c_\Q(\bQ)$$
and hence we can express $\hat p_!^{\prime,A}$ according to (\ref{eq9811}) as 
$$\hat p^{\prime A}_!(x)=\hat p_!\left[\left(\hat \hA^c(o)\cup e^{\hat
      c_\Q(\bQ)}-a(\sigma(o))\right)\cup x\right]\ .
$$
 
\subsubsection{}

As before, let $p\colon W\to B$ be a proper oriented submersion which admits
topological $K$-orientations.
\begin{lem}\label{ddedwd}
If $\Delta_p=0$ for some topological $K$-orientation of $p$, then it vanishes  for every topological
$K$-orientation of $p$.
\end{lem}
\begin{proof}
We fix the $K$-orientation of $p$ such that $\Delta_p=0$ and let $p^\prime$ denote the same map with the topological $K$-orientation shifted by $c_1(Q)\in H^2(W,\Z)$. We continue to use the notation of \ref{pap8877}.
We choose a representative $o$ of a smooth $K$-orientation of $p$ refining the topological $K$-orientation.
For simplicity we take $\sigma(o)=0$. Furthermore, we take $o^\prime$ as above.
Using $\hat \ch([\cQ,0])=e^{\hat c_\Q(\bQ)}$ and the multiplicativity of the Chern character we get
\begin{eqnarray*}
\hat p^{\prime A}_!\circ \hat \ch(x)- \hat \ch\circ \hat p^\prime_!(x)
&=&\hat p_!\left[\hat \hA^c(o)\cup e^{\hat c_\Q(\bQ)} \cup \hat
  \ch(x)\right]-\hat \ch\circ \hat p_!\left([\cQ,0]\cup x\right)\\
&=&\hat p_!\left[\hat \hA^c(o)\cup\hat \ch([\cQ,0])  \cup \hat
  \ch(x)\right]-\hat p_!^A\circ \hat \ch\left([\cQ,0]\cup x\right)\\ 
&=&\hat p^A_! \circ \hat \ch([\cQ,0]  \cup x)-\hat p_!^A\circ \hat
\ch([\cQ,0]\cup x)\\
&=&0\ .
\end{eqnarray*}
\end{proof}

\subsubsection{}

We now consider the special case that  $B={*}$ and $W$ is an odd-dimensional $Spin^c$-manifold.
Since $H(*,\R/\Q)\cong \R/\Q$  we get a homomorphism
$$\Delta_p\colon K(W)\rightarrow \R/\Q\ .$$
\begin{prop}\label{point}
If $B\cong *$, then $\Delta_p=0$
\end{prop}
\begin{proof}
First note that $\Delta_p$ is trivial on $K^1(W)$ for degree reasons.
It therefore suffices to study $\Delta_p\colon K^0(W)\to \R/\Q$.
Let $x\in K^0(W)$ be classified by $\xi\colon W\to \Z\times BU$.
It gives rise to an element $[\xi]\in \Omega_{\dim(W)}^{Spin^c}(\Z\times BU)$ of the
$Spin^c$-bordism group of $\Z\times BU$.

\begin{lem}\label{rtrtrt}
If $[\xi]=0$, then $\Delta_p=0$.
\end{lem}
\begin{proof}
Assume that $[\xi]=0$. In this case there exists a compact $Spin^c$-manifold
$V$ with boundary $\partial V\cong W$ (as $Spin^c$-manifolds), and a map
$\nu\colon V\to \Z\times BU$ such that $\nu_{|\partial V}=\xi$.

We can choose a $\Z/2\Z$-graded vector bundle $E\to V$ which represents the class
of $\nu$ in $K^0(V)$. We refine $E$ to a geometric bundle
$\bE:=(E,h^E,\nabla^E)$ and form the associated geometric family $\cE$ with zero-dimensional fibre.

We choose a representative $\tilde o$ of a smooth $K$-orientation of the map
$q\colon V\to *$ which refines the topological $K$-orientation given by the
$Spin^c$-structure and which has a product structure near the boundary. For simplicity we assume that $\sigma(\tilde o)=0$. The restriction of $\tilde o$ to the boundary $\partial V$ defines a smooth $K$-orientation of $p$. 

We let $\hat y:=[\cE,0]\in \hat K(V)$, and we define $\hat x:=\hat y_{|\partial V}$ such that $I(\hat x)=x$.  By Proposition \ref{bordin} we have
$$\hat \ch\circ \hat p_!(\hat x)=\hat \ch\circ \hat p_!(\hat y_{|W})=\hat
\ch([\emptyset,q_!(R(\hat y))])=-a\left(\int_{V}\hA^c(\tilde o)\wedge R(\hat
  y)\right)\ .$$
On the other hand, the bordism formula for the push-forward in smooth rational cohomology, Lemma \ref{bbetwzew}, gives
$$\hat p^A_!\circ \hat \ch(\hat x)=\hat p_!\left(\hat \hA^c(o) \cup \hat
  \ch(\hat x)\right) =\hat p_!\left(\hat \hA^c(\tilde o)_{|W}\cup \hat
  \ch(\hat y)_{|W}\right) 
=-a\left(\int_{V}  \hA^c(\tilde o)\wedge R(\hat y)\right)\ .$$
These two formulas imply that $\Delta_p=0$.
\end{proof}

\subsubsection{}

We now finish the proof of Proposition \ref{point}.
We claim that there exists $c\in \nat$   such that $c[\xi]=0$.
In view of Lemma \ref{rtrtrt} we then have
 $$0=\Delta_{cp} =c\Delta_p\ ,$$ and this implies the Proposition since the target $\R/\Q$ of $\Delta_p$
is a $\Q$-vector space.

Note that the graded ring  $\Omega^{Spin^c}_*\otimes \Q$ is concentrated in even degrees.
Using that $\Omega^{SO}_*\otimes \Q$ is concentrated in even degrees, one can
see this as follows.
In \cite[p.~352]{MR0248858} it is shown that the homomorphism
$Spin^c\to U(1)\times SO$ induces an injection $\Omega^{Spin^c}_*\to \Omega^{SO}_*(BU(1))$.
Since $H_*(BU(1),\Z)\cong \Z[z]$ with $\deg(z)=2$ lives in even degrees, we see using the Atiyah-Hirzebruch spectral sequence that $\Omega^{SO}(BU(1))\otimes \Q$ lives in even degrees, too. This implies that
$\Omega^{Spin^c}_*\otimes \Q$ is concentrated in even degrees.

Since $H_*(\Z\times BU,\Z)$ is also concentrated in even degrees it follows again from the Atiyah-Hirzebruch spectral sequence that $\Omega^{Spin^c}_*(\Z\times BU)\otimes \Q$ is concentrated in even degrees. 

Since $[\xi]$ is of odd degree we conclude the claim that $c[\xi]=0$ for an appropriate $c\in \nat$.
\end{proof}
This finishes the proof of Proposition \ref{point}. \end{proof}

\subsubsection{}

We now consider the general case. Let $p\colon W\to B$ be a proper submersion with closed fibres
with a topological $K$-orientation.
\begin{prop}\label{prop:DeltaNull}
We have $\Delta_p=0$.
\end{prop}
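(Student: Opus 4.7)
The plan is to reduce the general case to the point case handled in Proposition \ref{point} by exploiting the functoriality of both push-forwards together with naturality. Given any closed $Spin^c$-manifold $M$ of dimension $n$ and smooth map $f\colon M\to B$, I would form the cartesian pullback $p_M\colon W_M\to M$ of $p$ along $f$ with base-change map $F\colon W_M\to W$, set $q_M\colon M\to *$, and equip $M$ and the composition with their induced smooth $K$-orientations (Definition \ref{def100} for the composite). Expanding via $\hat{(q_M\circ p_M)}_!=\hat q_{M!}\circ\hat p_{M!}$ of Theorem \ref{funktt} and $\hat{(q_M\circ p_M)}^A_!=\hat q^A_{M!}\circ\hat p^A_{M!}$ of Lemma \ref{functk} yields the identity
$$\Delta_{q_M\circ p_M}\;=\;\Delta_{q_M}\circ\hat p_{M!}\;+\;\hat q^A_{M!}\circ\Delta_{p_M}.$$
Both $q_M\circ p_M\colon W_M\to *$ and $q_M\colon M\to *$ target a point, so Proposition \ref{point} forces the left-hand side and the first right-hand summand to vanish. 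Combined with the naturality $\Delta_{p_M}(F^*x)=f^*\Delta_p(x)$ of Lemma \ref{lemnat33}, I obtain $\hat q^A_{M!}(f^*\Delta_p(x))=0$ in $\hat H(*,\Q)$ for every $x\in \hat K(W)$.

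Next I would unpack this abstract vanishing as an explicit pairing. Since $\Delta_p(x)$ is flat and multiplication by the smooth class $\hat\hA^c(o_M)$ (chosen with $\sigma(o_M)=0$) preserves flatness, the class $\hat\hA^c(o_M)\cup f^*\Delta_p(x)$ remains flat. Using the multiplicative identity $\hat y\cup a(\omega)=a(R(\hat y)\wedge\omega)$ together with the intertwining $\hat q_{M!}\circ a=a\circ q_{M!}^o$ (middle square of Proposition \ref{mainprop} in smooth rational cohomology), and the isomorphism $\hat H^{k+1}_{flat}(*,\Q)\cong H^{k}(*,\R/\Q)$, the vanishing translates into
$$\int_M \hA(TM)\cup e^{c_1(L^2)/2}\cup f^*\bar\Delta_p(x)\;=\;0\ \in\ \R/\Q,$$
for every closed $Spin^c$-manifold $M$ and smooth $f\colon M\to B$, where $L^2\to M$ is the line bundle of the $Spin^c$-structure as in \ref{l2def}.

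To conclude $\bar\Delta_p(x)=0\in H(B,\R/\Q)$, I would induct on degree. Decompose $\bar\Delta_p(x)=\sum_k\beta_k$ with $\beta_k\in H^k(B,\R/\Q)$. The case $k=0$ is immediate by taking $M$ to be a point. For the inductive step, assume $\beta_j=0$ for $j<n$ and pair with $M$ of dimension $n$: every contribution $\int_M[\hA(TM)\cup e^{c_1(L^2)/2}]_{n-j}\wedge f^*\beta_j$ with $j<n$ vanishes by the inductive hypothesis, while the degree-zero part of $\hA(TM)\cup e^{c_1(L^2)/2}$ equals $1$, so the surviving $j=n$ contribution is $\int_M f^*\beta_n=\beta_n(f_*[M])=0\in\R/\Q$. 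Since the map $\Omega^{Spin^c}_n(B)\otimes\Q\to H_n(B,\Q)$ is surjective (by rational collapse of the Atiyah-Hirzebruch spectral sequence) and $H^n(B,\R/\Q)\cong\Hom_\Q(H_n(B,\Q),\R/\Q)$ by universal coefficients with divisible target, this forces $\beta_n=0$.

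The main technical obstacle will be the careful bookkeeping of degree shifts between the flat part of smooth rational cohomology and ordinary $\R/\Q$-cohomology in the translation step, together with the explicit identification of the push-forward $\hat q^A_{M!}$ on flat classes as $\int_M \hA(TM)\cup e^{c_1(L^2)/2}\cup(\cdot)$; once these identifications are in place, the functoriality identity and the vanishing at the point base do all the work.
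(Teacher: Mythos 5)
Your reduction scheme---pulling back $p$ over a closed $Spin^c$-manifold $M\to B$, applying the functorialities Theorem \ref{funktt} and Lemma \ref{functk} to obtain the cocycle identity
$\Delta_{q_M\circ p_M}=\Delta_{q_M}\circ\hat p_{M!}+\hat q^A_{M!}\circ\Delta_{p_M}$,
and invoking Proposition \ref{point} to kill the two terms with point target---is conceptually the same as the paper's chain of equalities for $\Delta_p(x)\bigl(f_*(\PD(\hA^c(TZ)))\bigr)$. Where you differ is in the final step: the paper quotes Lemma \ref{lem13000} (built on the Baum-Douglas surjectivity $\Omega^{Spin^c}_*(B)\twoheadrightarrow K_*(B)$) to conclude that the pairings $\langle\Delta_p(x),f_*\PD(\hA^c(TZ))\rangle$ detect all of $H_*(B,\Q)$, whereas you unwind the pairing as $\int_M\hA(TM)\wedge e^{c_1(L^2)/2}\wedge f^*\bar\Delta_p(x)$ and strip the $\hA$-factor by a degree induction, needing only the rational collapse of the Atiyah-Hirzebruch spectral sequence. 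That last step is a perfectly legitimate and slightly more elementary alternative.

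However, there is a genuine gap in the middle. You invoke Proposition \ref{point} uniformly for both $q_M\colon M\to *$ and $q_M\circ p_M\colon W_M\to *$, but the paper proves Proposition \ref{point} only for an \emph{odd}-dimensional total space $W$ and, after noting the $K^1$-case vanishes trivially for degree reasons, only for classes in $K^0(W)$. In your setup the dimension of $M$ is forced to match the parity of the cohomological degree of $\bar\Delta_p(x)$, which in turn is opposite to the parity of (degree of $x$) $+$ (fibre dimension of $p$). Running through the four cases one finds that unless $x\in K^0(W)$ and $p$ has even-dimensional fibres, at least one of $M$ or $W_M$ is even-dimensional with a $K^1$-class, a configuration not covered by the paper's Proposition \ref{point}. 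The paper side-steps this exactly by the two suspension arguments with $\pr_2\colon S^1\times B\to B$ (invoking Lemma \ref{s1} and Theorem \ref{funktt}/Lemma \ref{functk}) before applying the reduction, and your proposal has no analogue of those steps. The gap is fixable---the dual bordism argument works: $\Omega^{Spin^c}_*(U)\otimes\Q$ is concentrated in odd degrees since $H_*(U,\Q)$ is an exterior algebra on odd generators, so an even-degree class $[\xi]\in\Omega^{Spin^c}_{even}(U)$ is rationally torsion, giving the $K^1$/even-dimensional analogue of Proposition \ref{point}---but as written you are applying a statement outside its proved scope, and you should either supply that extension or insert the suspension trick before the reduction.
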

We give the proof in the next couple of subsections.
\subsubsection{}

For a closed oriented manifold $Z$ let $\PD\colon H^*(Z,\Q)\stackrel{\sim}{\to} H_*(Z,\Q)$ denote the Poincar{\'e} duality isomorphism. 
\begin{lem}\label{lem13000}
The group
$H_*(B,\Q)$ is generated by classes of the
form $f_*\left(\PD(\hA^c(TZ))\right)$, where $Z$ is a closed $Spin^c$-manifold
and $f\colon Z\rightarrow B$. 
\end{lem}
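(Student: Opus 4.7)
The plan is to reduce the statement to two ingredients: first, the classical fact that rationally every homology class comes from a $Spin^c$-bordism cycle; second, an inductive argument exploiting that $\hA^c$ is unipotent (equals $1$ plus higher-degree pieces).

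First I would verify that the natural transformation
\[
\Omega_n^{Spin^c}(B)\otimes\Q \longrightarrow H_n(B,\Q),\qquad [Z,f]\mapsto f_*[Z],
\]
is surjective. This is a standard consequence of the rational Atiyah--Hirzebruch spectral sequence
\[
E^2_{p,q} = H_p(B,\Omega_q^{Spin^c}\otimes\Q) \Longrightarrow \Omega_{p+q}^{Spin^c}(B)\otimes\Q,
\]
which collapses since $\Omega_*^{Spin^c}\otimes\Q$ is a free graded commutative algebra (as used at the end of the proof of Proposition \ref{point}); the edge homomorphism in the bottom row gives the required surjection, using the unit $\Q\hookrightarrow \Omega_0^{Spin^c}\otimes\Q$. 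Consequently, $H_*(B,\Q)$ is $\Q$-generated by classes of the form $f_*[Z]$ with $Z$ a closed $Spin^c$-manifold.

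Next, let $G_*\subseteq H_*(B,\Q)$ denote the $\Q$-subspace generated by the classes $f_*(\PD(\hA^c(TZ)))$. I would prove $H_n(B,\Q)\subseteq G_*$ by induction on $n$. For the base case $n=0$, for each component of $B$ take $Z$ to be a point mapped into that component; then $\hA^c(TZ)=1$ and $f_*(\PD(\hA^c(TZ)))=f_*[Z]$, which clearly generate $H_0(B,\Q)$. For the inductive step, write
\[
\hA^c(TZ) = 1 + \alpha,\qquad \alpha\in\bigoplus_{k\ge 1}H^{2k}(Z,\Q),
\]
so that
\[
\PD(\hA^c(TZ)) = [Z] + \PD(\alpha),\qquad \PD(\alpha)\in\bigoplus_{m<n} H_m(Z,\Q).
\]
Therefore
\[
f_*[Z] \;=\; f_*(\PD(\hA^c(TZ))) \;-\; f_*(\PD(\alpha)).
\]
The first term lies in $G_*$ by construction, while $f_*(\PD(\alpha))$ is a sum of classes in $H_m(B,\Q)$ with $m<n$, hence lies in $G_*$ by the induction hypothesis. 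Combining this with the first step, every generator of $H_n(B,\Q)$ lies in $G_*$, completing the induction.

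There is no real obstacle here: both ingredients are essentially formal, and the only non-trivial input is the surjectivity of the rational $Spin^c$-bordism edge map, which is well documented. The place to be a bit careful is the induction bookkeeping, which requires noting that $\hA^c(TZ)$ begins with $1$ so that the leading term of $\PD(\hA^c(TZ))$ is precisely the fundamental class $[Z]$; this is what allows one to pass from the bordism representatives produced in the first step to the twisted representatives demanded by the lemma.
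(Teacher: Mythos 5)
Your argument is correct, but it takes a different route from the paper's. The paper factors the claim through $K$-homology: it uses the surjectivity of the Baum--Douglas map $\Omega^{Spin^c}_*(B)\to K_*(B)$ (citing Baum--Douglas and Baum--Higson--Schick), the fact that the homological Chern character $K_*(B)\to H_*(B,\Q)$ is a rational isomorphism, and the explicit formula $\beta\circ\alpha\colon [Z,f]\mapsto f_*(\PD(\hA^c(TZ)))$ for the composite; surjectivity of $\beta\circ\alpha$ then gives the lemma directly. You instead bypass $K$-homology entirely: you prove the rational surjectivity of the ordinary fundamental-class map $[Z,f]\mapsto f_*[Z]$ via the collapsing Atiyah--Hirzebruch spectral sequence, and then correct for the $\hA^c$-factor by a downward induction on degree using unipotence of $\hA^c(TZ)=1+(\text{higher})$, so that $f_*[Z]$ and $f_*(\PD(\hA^c(TZ)))$ span the same subspace. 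Your version is more elementary and self-contained (it avoids the non-trivial Baum--Douglas surjectivity theorem and the identification of the $K$-homological Chern character), at the cost of an extra inductive bookkeeping step; the paper's version is shorter given the machinery already at hand, and identifies the composite transformation conceptually. Both proofs are valid.
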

\begin{proof}
We consider the sequence of transformations of homology theories
$$\Omega^{Spin^c}_*(B)\stackrel{\alpha}{\to} K_*(B) \stackrel{\beta}{\to} H_*(B,\Q)\ .$$
The transformation $\alpha$ is the $K$-orientation of the $Spin^c$-cobordism theory, and 
$\beta$ is the homological Chern character. We consider all groups as $\Z/2\Z$-graded.
The homological Chern character is a rational isomorphism.
Furthermore one knows by \cite{MR658506}, \cite{math.KT/0701484} that $\Omega^{Spin^c}_*(B)\stackrel{\alpha}{\to} K_*(B)$
is surjective.
It follows that the composition
$$\beta\circ \alpha\colon \Omega^{Spin^c}(B)\otimes\Q\to H^*(B,\Q)$$
is surjective. An explicit description of $\beta\circ \alpha$ is  given as follows.
Let $x\in \Omega^{Spin^c}(B)$ be represented by a map $f\colon Z\to B$ from
a closed $Spin^c$-manifold $Z$ to $B$. Let
$\PD\colon H^*(Z,\Q)\stackrel{\sim}{\to} H_*(Z,\Q)$ denote the Poincar{\'e} duality isomorphism.
Then we have
$$\beta\circ \alpha(x)=f_*\left(\PD(\hA^c(TZ))\right)\ .$$\end{proof}

\subsubsection{}

For the proof of Proposition \ref{prop:DeltaNull} we first consider the case that $p$ has even-dimensional fibres, and that
$x\in K^0(W)$.
By Lemma \ref{lem13000}, in order to show that $\Delta_p(x)=0$, it suffices to show that
all evaluations 
$\Delta_p(x)\left(f_*(\PD(\hA^c(TZ)))\right)$ vanish.
In the following, if $x$ denotes a $K$-theory class, then $\hat x$ denotes a smooth $K$-theory class
such that $I(\hat x)=x$.

  We choose a representative $o_q$ of a  smooth 
$K$-orientation which refines the topological $K$-orientation 
of the map $q\colon Z\rightarrow *$ induced by the $Spin^c$-structure on $TZ$.
Furthermore, we consider  the diagram with a cartesian square
$$\xymatrix{V\ar@/_1cm/[dd]^s\ar[d]^r\ar[r]^F&W\ar[d]^p\\Z\ar[d]^q\ar[r]^f&B\\ {*}&}\ .$$
In the present case
$\Delta_p(x)\in H^{odd}(B,\R/\Q)$, and  we can assume that $Z$ is odd-dimensional. We calculate 
\begin{eqnarray*}
\Delta_p(x)\left(f_*(\PD(\hA^c(TZ)))\right)
&=&f^*\Delta_p(x)\left(\PD(\hA^c(TZ))\right)\\
&\stackrel{Lemma \:\ref{lemnat33}}{=}&
\Delta_r(F^*x)\left(\PD(\hA^c(TZ))\right)\\
&=&
(\hA^c(\nabla^{TZ})\cup \Delta_r(F^*x))[Z]\\
&=&\int_{Z}\hA^c(o)\wedge \Delta_r(F^*x)\\
&=&\hat q_!\left(\hat \hA^c(o_q)\cup  \Delta_r(F^*x)\right)\\
&=&\hat q_!^A\left( \Delta_r(F^*\hat x)\right)\\
&=&\hat q_!^A\left[\hat \ch\circ \hat r_!(F^* \hat x)-\hat r^A_!\circ \hat \ch(F^*\hat x)\right]\\
&=&\hat q_!^A\circ\hat \ch\circ \hat r_!(F^*\hat x) -\hat s^A_!\circ \hat
\ch(F^*\hat x)\\  
 &\stackrel{Proposition \: \ref{point}}{=}&
\hat \ch\circ \hat q_!\circ \hat r_!(F^*\hat x)-\hat s^A_!\circ \hat
\ch(F^*\hat x)\\
&=&\hat \ch\circ \hat s_!(F^*\hat x)-\hat s^A_!\circ \hat \ch(F^*\hat x)\\
&=&\Delta_s(F^*x)\\
&\stackrel{Proposition \: \ref{point}}{=}&0\ .
\end{eqnarray*}
We thus have shown that $$0=\Delta_p\colon K^0(W)\to H^{odd}(B,\R/\Q)$$ if $p$ has even-dimensional fibres.

\subsubsection{}

If $p$ has odd-dimensional fibres and $x\in K^1(W)$, then we can choose $y\in K^0(S^1\times W)$ such that
$(\hat\pr_2)_!(y)=x$. Since $p\circ \pr_2$ has even-dimensional fibres we get
using the Lemmas \ref{functk} and \ref{funktt} 
\begin{eqnarray*}
\Delta_p(x)&=&\hat \ch\circ \hat p_!\circ (\hat \pr_2)_!(\hat y)-\hat p^A_!\circ \hat \ch\circ (\hat\pr_2)_!(\hat y)\\
&\stackrel{Lemma \:\ref{s1}}{=}&\hat \ch\circ (\widehat{p\circ \pr_2})_!(\hat
y)-\hat p^A_!\circ (\hat\pr_2)^A_!\circ \hat \ch(\hat y)\\&=&\hat \ch\circ
(\widehat{p\circ \pr_2})_!(\hat y)-(\widehat{p\circ \pr_2})^A_!\circ \hat
\ch(\hat y)\\
&=&\Delta_{p\circ \pr_2}(y)\\
&=&0\ .
\end{eqnarray*}
Therefore $$0=\Delta_p\colon K^1(W)\to H^{odd}(B,\R/\Q)$$ 
if $p$ has odd-dimensional fibres.

\subsubsection{}

Let us now consider the case that $p$ has even-dimensional fibres, and that $x\in K^1(W)$. In this case we consider the diagram
$$%% \xymatrix{S^1\times W\ar[r]^{\Pr_2}\ar[d]^{t:=\id_{S^1}\times
  %% p}&W\ar[d]^p\\S^1\times B\ar[r]^{\pr_2}&B}
\begin{CD}
  S^1\times W @>{\Pr_2}>> W\\
  @VV{t:=\id_{S^1}\times
   p}V @VV{p}V\\
  S^1\times B @>{\pr_2}>> B
\end{CD}
\ .$$
We choose a class $y\in K^0(S^1\times W)$ such that $(\Pr_2)_!(y)=x$. We further  choose a smooth refinement
$\hat y\in \hat K^0(S^1\times W)$ of $y$ and  set $\hat x:=(\hat\Pr_2)_!(\hat
y)$. 
Then we calculate using the Lemmas \ref{functk} and \ref{funktt}
\begin{eqnarray*}
\Delta_p(x)&=&\hat \ch\circ \hat p_!(\hat x)-\hat p^A_!\circ \hat \ch(\hat
x)\\ 
&=&\hat \ch\circ \hat p_!\circ (\hat\Pr_2)_!(\hat y)-\hat p_!^A\circ \hat \ch\circ (\hat\Pr_2)_!(\hat y)\\
&\stackrel{Lemma \:\ref{s1}}{=}&\hat \ch\circ \hat p_!\circ (\hat
\Pr_2)_!(\hat y)-\hat p_!^A\circ (\hat\Pr_2)^A_!\circ \hat \ch\circ (\hat y)\\
&=&\hat \ch\circ (\widehat{p\circ \Pr_2})_!(\hat y)-(\widehat{p\circ \Pr_2})^A_!\circ \hat \ch(\hat y)\\
&=&\hat \ch\circ (\widehat{\pr_2\circ t})_!(\hat y)-(\widehat{\pr_2\circ
  t})^A_!\circ \hat \ch(\hat y)\\
&=&\hat \ch\circ \hat\pr_{2!}\circ \hat t_!(\hat y)-\hat\pr_{2!}^A\circ \hat
t^A_!\circ \hat \ch(\hat y)\\ 
&\stackrel{Lemma \:\ref{s1}}{=}&(\hat\pr_2)^A_!\left[\hat \ch\circ \hat
  t_!(\hat 
y)-\hat t^A_!\circ \hat \ch(\hat y)\right]\\
&=&(\hat\pr_2)^A_!\circ \Delta_t(y)=0\ .
\end{eqnarray*}
Therefore
$$0=\Delta_p\colon K^1(W)\to H^{ev}(B,\R/\Q)$$
if $p$ has even-dimensional fibres.

\subsubsection{}

In the final case $p$ has odd-dimensional fibres and $x\in K^0(W)$.
In this case we consider the sequence of projections
$$S^1\times S^1\times W\stackrel{\pr_{23}}{\to} S^1\times W\stackrel{\pr_2}{\to} W\ .$$
We choose a class $y\in K^0(S^1\times S^1\times W)$ such that
$(\pr_2\circ \pr_{23})_!(y)=x$. We further choose a smooth refinement $\hat
y\in \hat  K^0(S^1\times S^1\times W)$ of $y$ and set $\hat x:=(\widehat{\pr_2\circ \pr_{23}})_!(\hat y)$.
Then we calculate using the already known cases and the Lemmas \ref{functk} and \ref{funktt},
\begin{eqnarray*}
\Delta_p(x)&=&\hat \ch\circ \hat p_!(\hat x)-\hat p_!^A\circ \hat \ch(\hat
x)\\ 
&=& \hat \ch\circ \hat p_!\circ (\hat\pr_2)_!\circ (\hat\pr_{23})_!(\hat
y)-\hat p_!^A\circ \hat \ch\circ (\hat\pr_2)_!\circ (\hat\pr_{23})_!(\hat
y)\\&=& 
 \hat \ch\circ (\widehat{p\circ \pr_2})_!\circ (\hat\pr_{23})_!(\hat y)- \hat p_!^A\circ \hat \ch\circ (\widehat{\pr_2\circ
\pr_{23}})_!(\hat y)\\
&=&(\widehat{p\circ \pr_2})_!^A\circ \hat \ch\circ (\hat\pr_{23})_!(\hat
y)-\hat p_!^A\circ (\widehat{\pr_2\circ
\pr_{23}})_!^A\circ \hat \ch(\hat y)\\
&=&(\widehat{p\circ \pr_2})_!^A\circ \Delta_{\pr_{23}}(\hat y)\\
&\stackrel{Lemma \:\ref{s1}}{=}&0\ .
\end{eqnarray*}
This finishes the proof of Theorem \ref{main}.
\hB

\section{Conclusion}
\label{sec:backm-bibl-bibl}

We have now constructed a geometric model for smooth K-theory, built out of
geometric families of Dirac-type operators. We equipped it with a compatible
multiplicative structure, and we have given an explicit construction of a
push-down map for fibre bundles with all the expected properties. For the
verification of these properties we heavily used local index theory.

We presented a collection of natural examples of smooth K-theory classes and
showed in particular that several known secondary analytic-geometric
invariants can be understood in this framework very naturally. This involved
also the consideration of bordisms in this framework.

Finally, we constructed a smooth lift of the Chern character and proved a
smooth version of the Grothendieck-Riemann-Roch theorem. This also involved
certain considerations from homotopy theory which are special to K-theory.

Important open questions concern the construction of equivariant versions of
this theory, or even better versions which work for orbifolds or similar
singular spaces.

In a different direction, we have  addressed the construction of geometric
models of smooth bordism theories along similar lines in \cite{BSSW07}; using singular bordism 
this has also been achieved for smooth ordinary cohomology in \cite{BKS09}.

\backmatter

\bibliographystyle{smfalpha}

%\bibliography{inertia}

\begin{thebibliography}{BGV04}

\bibitem[ABS64]{MR0167985}
{\scshape M.~F. Atiyah, R.~Bott {\normalfont \smfandname} A.~Shapiro} -- {\og
  Clifford modules\fg}, \emph{Topology} \textbf{3} (1964), no.~suppl. 1,
  p.~3--38.

\bibitem[APS75]{MR0397797}
{\scshape M.~F. Atiyah, V.~K. Patodi {\normalfont \smfandname} I.~M. Singer} --
  {\og Spectral asymmetry and {R}iemannian geometry. {I}\fg}, \emph{Math. Proc.
  Cambridge Philos. Soc.} \textbf{77} (1975), p.~43--69.

\bibitem[AS71]{AS71}
{\scshape M.~F. Atiyah {\normalfont \smfandname} I.~M. Singer} -- {\og The
  index of elliptic operators. {IV}\fg}, \emph{Ann. of Math. (2)} \textbf{93}
  (1971), p.~119--138.

\bibitem[BC90a]{MR1042214}
{\scshape J.-M. Bismut {\normalfont \smfandname} J.~Cheeger} -- {\og Families
  index for manifolds with boundary, superconnections, and cones. {I}.
  {F}amilies of manifolds with boundary and {D}irac operators\fg}, \emph{J.
  Funct. Anal.} \textbf{89} (1990), no.~2, p.~313--363.

\bibitem[BC90b]{MR1052337}
\bysame , {\og Families index for manifolds with boundary, superconnections and
  cones. {II}.\ {T}he {C}hern character\fg}, \emph{J. Funct. Anal.} \textbf{90}
  (1990), no.~2, p.~306--354.

\bibitem[BC91]{MR1173033}
\bysame , {\og Remarks on the index theorem for families of {D}irac operators
  on manifolds with boundary\fg}, in \emph{Differential geometry}, Pitman
  Monogr. Surveys Pure Appl. Math., vol.~52, Longman Sci. Tech., Harlow, 1991,
  p.~59--83.

\bibitem[BD82]{MR658506}
{\scshape P.~Baum {\normalfont \smfandname} R.~G. Douglas} -- {\og Index
  theory, bordism, and {$K$}-homology\fg}, in \emph{Operator algebras and
  $K$-theory (San Francisco, Calif., 1981)}, Contemp. Math., vol.~10, Amer.
  Math. Soc., Providence, R.I., 1982, p.~1--31.

\bibitem[Bera]{math.DG/0611281}
{\scshape A.~Berthomieu} -- {\og {Direct image for multiplicative and relative
  $K$-theories from transgression of the families index theorem, part 1}\fg}.

\bibitem[Berb]{math.DG/0703916}
\bysame , {\og {Direct image for multiplicative and relative $K$-theories from
  transgression of the families index theorem, part 2}\fg}.

\bibitem[BGV04]{bgv}
{\scshape N.~Berline, E.~Getzler {\normalfont \smfandname} M.~Vergne} --
  \emph{Heat kernels and {D}irac operators}, Grundlehren Text Editions,
  Springer-Verlag, Berlin, 2004, Corrected reprint of the 1992 original.

\bibitem[BHS]{math.KT/0701484}
{\scshape P.~Baum, N.~Higson {\normalfont \smfandname} T.~Schick} -- {\og {On
  the Equivalence of Geometric and Analytic K-Homology}\fg}, Pure and Applied
  Mathematics Quarterly 3, 1--24, (2007).

\bibitem[Bis85]{B85}
{\scshape J.-M. Bismut} -- {\og The {A}tiyah-{S}inger index theorem for
  families of {D}irac operators: two heat equation proofs\fg}, \emph{Invent.
  Math.} \textbf{83} (1985), no.~1, p.~91--151.

\bibitem[Bis05]{MR2129894}
{\scshape J.~M. Bismut} -- {\og Eta invariants, differential characters and
  flat vector bundles\fg}, \emph{Chinese Ann. Math. Ser. B} \textbf{26} (2005),
  no.~1, p.~15--44, With an appendix by K. Corlette and H. Esnault.

\bibitem[BJ83]{BJ}
{\scshape S.~Baaj {\normalfont \smfandname} P.~Julg} -- {\og Th{\'e}orie
  bivariante de {K}asparov et op{\'e}rateurs non born{\'e}s dans les {$C\sp{\ast}
  $}-modules hilbertiens\fg}, \emph{C. R. Acad. Sci. Paris S{\'e}r. I Math.}
  \textbf{296} (1983), no.~21, p.~875--878.

\bibitem[BL95]{MR1303026}
{\scshape J.-M. Bismut {\normalfont \smfandname} J.~Lott} -- {\og Flat vector
  bundles, direct images and higher real analytic torsion\fg}, \emph{J. Amer.
  Math. Soc.} \textbf{8} (1995), no.~2, p.~291--363.

\bibitem[Bry93]{MR1197353}
{\scshape J.-L. Brylinski} -- \emph{Loop spaces, characteristic classes and
  geometric quantization}, Progress in Mathematics, vol. 107, Birkh{\"a}user
  Boston Inc., Boston, MA, 1993.

\bibitem[Bun]{math.DG/0201112}
{\scshape U.~Bunke} -- {\og {Index theory, eta forms, and Deligne
  cohomology}\fg}, to appear in Memoirs AMS.

\bibitem[Bun95]{Bun95}
{\scshape U.~Bunke} -- {\og A {$K$}-theoretic relative index theorem and
  {C}allias-type {D}irac operators\fg}, \emph{Math. Ann.} \textbf{303} (1995),
  no.~2, p.~241--279.

\bibitem[Bun02]{MR1899699}
\bysame , {\og On the functoriality of {L}ott's secondary analytic index\fg},
  \emph{K-Theory} \textbf{25} (2002), no.~1, p.~51--58.

\bibitem[BKS09]{BKS09}
{\scshape U.~Bunke, M.~Kreck {\normalfont \smfandname} T.~Schick} --- {\og A geometric description of smooth cohomology\fg}, \emph{arXiv:0903.5290}


\bibitem[BM04]{MR2072502}
{\scshape U.~Bunke {\normalfont \smfandname} X.~Ma} -- {\og Index and secondary
  index theory for flat bundles with duality\fg}, in \emph{Aspects of boundary
  problems in analysis and geometry}, Oper. Theory Adv. Appl., vol. 151,
  Birkh{\"a}user, Basel, 2004, p.~265--341.

\bibitem[BS09]{bs2009}
{\scshape U.~Bunke {\normalfont \smfandname}  Th.~Schick}--
{\og Uniqueness of smooth extensions of generalized cohomology theories\fg},
{\emph arXiv:0901.4423}


\bibitem[BSSW07]{BSSW07}
{\scshape U.~Bunke, Th.~Schick,  I.~Schr{\"o}der  {\normalfont \smfandname}  M.~Wiethaup}--
{\og Landweber exact formal group laws and smooth cohomology theories\fg},
{\emph arXiv:0711.1134}


\bibitem[CS85]{MR827262}
{\scshape J.~Cheeger {\normalfont \smfandname} J.~Simons} -- {\og Differential
  characters and geometric invariants\fg}, in \emph{Geometry and topology
  (College Park, Md., 1983/84)}, Lecture Notes in Math., vol. 1167, Springer,
  Berlin, 1985, p.~50--80.

\bibitem[DL05]{MR2179587}
{\scshape J.~L. Dupont {\normalfont \smfandname} R.~Ljungmann} -- {\og
  Integration of simplicial forms and {D}eligne cohomology\fg}, \emph{Math.
  Scand.} \textbf{97} (2005), no.~1, p.~11--39.

\bibitem[FH00]{MR1769477}
{\scshape D.~S. Freed {\normalfont \smfandname} M.~Hopkins} -- {\og On
  {R}amond-{R}amond fields and {$K$}-theory\fg}, \emph{J. High Energy Phys.}
  (2000), no.~5, p.~Paper 44, 14.

% \bibitem[FMacP81]{MR609831}
% {\scshape W.~Fulton {\normalfont \smfandname} R.~MacPherson} -- {\og
%   Categorical framework for the study of singular spaces\fg}, \emph{Mem. Amer.
%   Math. Soc.} \textbf{31} (1981), no.~243, p.~vi+165.

\bibitem[FM92]{MR1144425}
{\scshape D.~S. Freed {\normalfont \smfandname} R.~B. Melrose} -- {\og A mod
  {$k$} index theorem\fg}, \emph{Invent. Math.} \textbf{107} (1992), no.~2,
  p.~283--299.

\bibitem[Fre00]{hep-th/0011220}
{\scshape D.~S. Freed} -- {\og Dirac charge quantization and generalized
  differential cohomology\fg}, in \emph{Surveys in differential geometry},
  Surv. Differ. Geom., VII, Int. Press, Somerville, MA, 2000, p.~129--194.

\bibitem[Gom]{math.AT/0411043}
{\scshape K.~Gomi} -- {\og {Differential characters and the Steenrod
  squares}\fg} {arXiv: math.AT/0411043}


\bibitem[GT00]{MR1772521}
{\scshape K.~Gomi {\normalfont \smfandname} Y.~Terashima} -- {\og A fiber
  integration formula for the smooth {D}eligne cohomology\fg}, \emph{Internat.
  Math. Res. Notices} (2000), no.~13, p.~699--708.

\bibitem[HS05]{MR2192936}
{\scshape M.~J. Hopkins {\normalfont \smfandname} I.~M. Singer} -- {\og
  Quadratic functions in geometry, topology, and {M}-theory\fg}, \emph{J.
  Differential Geom.} \textbf{70} (2005), no.~3, p.~329--452.

\bibitem[K{\"o}7]{Koehler}
{\scshape M.~K{\"o}hler} -- \emph{Die {I}ntegrationsabbildung in {D}eligne
  {K}ohomologie und {V}erkn{\"u}pfung von {F}aserb{\"u}ndeln},
  \smfmastersthesisname, Fachbereich Mathematik, Georg-August-Universit{\"a}t
  G{\"o}ttingen, 2007.

\bibitem[Kar87]{MR913964}
{\scshape M.~Karoubi} -- {\og Homologie cyclique et {$K$}-th{\'e}orie\fg},
  \emph{Ast{\'e}risque} (1987), no.~149, p.~147.

\bibitem[Kar97]{MR1478702}
\bysame , {\og Sur la {$K$}-th{\'e}orie multiplicative\fg}, in \emph{Cyclic
  cohomology and noncommutative geometry (Waterloo, ON, 1995)}, Fields Inst.
  Commun., vol.~17, Amer. Math. Soc., Providence, RI, 1997, p.~59--77.

\bibitem[Kas81]{0464.46054}
{\scshape G.~Kasparov} -- {\og {The operator K-functor and extensions of
  C*-algebras.}\fg}, \emph{Math. USSR, Izv.} \textbf{16} (1981), p.~513--572
  (English).

\bibitem[Lau99]{MR1660325}
{\scshape G.~Laures} -- {\og The topological {$q$}-expansion principle\fg},
  \emph{Topology} \textbf{38} (1999), no.~2, p.~387--425.

\bibitem[Lot94]{MR1312690}
{\scshape J.~Lott} -- {\og {${\bf R}/{\bf Z}$} index theory\fg}, \emph{Comm.
  Anal. Geom.} \textbf{2} (1994), no.~2, p.~279--311.

\bibitem[Lot00]{MR1724894}
\bysame , {\og Secondary analytic indices\fg}, in \emph{Regulators in analysis,
  geometry and number theory}, Progr. Math., vol. 171, Birkh{\"a}user Boston,
  Boston, MA, 2000, p.~231--293.

\bibitem[MP97a]{MR1472895}
{\scshape R.~B. Melrose {\normalfont \smfandname} P.~Piazza} -- {\og Families
  of {D}irac operators, boundaries and the {$b$}-calculus\fg}, \emph{J.
  Differential Geom.} \textbf{46} (1997), no.~1, p.~99--180.

\bibitem[MP97b]{MR1484046}
\bysame , {\og An index theorem for families of {D}irac operators on
  odd-dimensional manifolds with boundary\fg}, \emph{J. Differential Geom.}
  \textbf{46} (1997), no.~2, p.~287--334.

\bibitem[MR07]{MR}
{\scshape R.~Melrose {\normalfont \smfandname} F.~Rochon} -- {\og Periodicity
  and the determinant bundle\fg}, \emph{Comm. Math. Phys.} \textbf{274} (2007),
  no.~1, p.~141--186.

\bibitem[MZ04]{ma-2004}
{\scshape X.~Ma {\normalfont \smfandname} W.~Zhang} -- {\og Eta-invariants,
  torsion forms and flat vector bundles\fg}, arXiv.org:math/0405599, 2004.

\bibitem[Pek93]{MR1246617}
{\scshape O.~Pekonen} -- {\og A {$K$}-theoretic approach to
  {C}hern-{C}heeger-{S}imons invariants\fg}, in \emph{Proceedings of the Winter
  School ``Geometry and Physics'' (Srn{\'\i}, 1991)}, no.~30, 1993, p.~27--34.

\bibitem[SS]{math.AT/0701077}
{\scshape J.~Simons {\normalfont \smfandname} D.~Sullivan} -- {\og {Axiomatic
  Characterization of Ordinary Differential Cohomology}\fg}.

\bibitem[Sto68]{MR0248858}
{\scshape R.~E. Stong} -- \emph{Notes on cobordism theory}, Mathematical notes,
  Princeton University Press, Princeton, N.J., 1968.

 

\end{thebibliography}

%\end{document}

\providecommand{\bysame}{\leavevmode ---\ }
\providecommand{\og}{``}
\providecommand{\fg}{''}
\providecommand{\smfandname}{\&}
\providecommand{\smfedsname}{{\'e}ds.}
\providecommand{\smfedname}{{\'e}d.}
\providecommand{\smfmastersthesisname}{M{\'e}moire}
\providecommand{\smfphdthesisname}{Th{\`e}se}

\end{document}